\documentclass[a4paper,10pt]{amsart}

\usepackage[english]{babel}
\usepackage{bm}

\let\WT=\widetilde

\usepackage{stmaryrd}
\usepackage{upgreek}

\usepackage[lmargin=1.2in,rmargin=1.2in,tmargin=1.3in,bmargin=1.1in]{geometry}

\usepackage{color}

\usepackage[utf8]{inputenc}
\usepackage{hyperref}
\usepackage{relsize}
\usepackage{amssymb,amsmath}
\usepackage{dsfont}
\usepackage[all]{xy}
\usepackage{mathtools}
\usepackage{mathrsfs}

\renewcommand{\theta}{\uptheta}
\renewcommand{\iota}{\upiota}
\renewcommand{\alpha}{\upalpha}
\renewcommand{\beta}{\upbeta}
\renewcommand{\gamma}{\upgamma}
\renewcommand{\delta}{\updelta}
\renewcommand{\zeta}{\upzeta}
\renewcommand{\pi}{\uppi\hspace{0.05em}}
\renewcommand{\xi}{\upxi}
\renewcommand{\chi}{\upchi}
\renewcommand{\sigma}{\upsigma}
\renewcommand{\Lambda}{\Uplambda}
\renewcommand{\Gamma}{\Upgamma}
\renewcommand{\phi}{\upphi}
\renewcommand{\nu}{\upnu}
\renewcommand{\tau}{\uptau}
\renewcommand{\mu}{\upmu}
\renewcommand{\eta}{\upeta}

\newtheorem{theorem}{Theorem}[section]
\newtheorem{thmx}{Theorem}

\newtheorem{thmdef}[theorem]{Theorem/Definition}
\newtheorem{proposition}[theorem]{Proposition}
\newtheorem{lemma}[theorem]{Lemma}
\newtheorem{conjecture}[theorem]{Conjecture}
\newtheorem{corollary}[theorem]{Corollary}

\theoremstyle{definition}
\newtheorem{definition}[theorem]{Definition}
\newtheorem{assumption}[theorem]{Assumption}

\theoremstyle{remark}
\newtheorem{example}[theorem]{Example}
\newtheorem{remark}[theorem]{Remark}

\newcommand{\dvs}{\mathbb{N}^{Q_0}}
\newcommand{\dvst}{\Lambda_{\theta}^{\zeta}}

\newcommand{\CC}{\mathbb{C}}
\newcommand{\SSN}{\mathcal{SSN}}
\DeclareMathOperator{\B}{B\!}

\newcommand{\SN}{\mathcal{SN}}
\DeclareMathOperator{\Yang}{\mathbf{Y}}

\newcommand{\LLL}{\mathscr{L}}

\newcommand{\prim}{\mathfrak{pr}}
\newcommand{\Prim}{\mathcal{P}r}
\newcommand{\kac}{\mathtt{a}}
\newcommand{\dd}{\mathbf{d}}

\newcommand{\ee}{\mathbf{e}}

\newcommand{\JH}{\mathtt{JH}}
\DeclareMathOperator{\CStab}{\mathcal{S}ta}
\DeclareMathOperator{\fstab}{\mathfrak{st}}
\DeclareMathOperator{\fisot}{\mathfrak{is}}
\DeclareMathOperator{\CIsot}{\mathcal{I}so}
\DeclareMathOperator{\lb}{lb}
\DeclareMathOperator{\bdd}{b}

\newcommand{\sssct}{\subsubsection{}}

\let \ol=\overline
\let \ul=\underline
\newcommand{\DbMMHM}{\mathcal{D}^{\lb}\!\MMHM}
\newcommand{\DfbMMHM}{\mathcal{D}^{\bdd}\!\MMHM}
\newcommand{\DfbMHM}{\mathcal{D}^{\bdd}\!\MHM}

\newcommand{\DbMHM}{\mathcal{D}^{\lb}\!\MHM}

\newcommand{\FX}{\mathfrak{X}}
\DeclareMathOperator{\sph}{\scriptscriptstyle{sph}}

\DeclareMathOperator{\DlMMHM}{\mathcal{D}\!\MMHM^{\shortrightarrow}}
\DeclareMathOperator{\DlMHM}{\mathcal{D}\!\MHM^{\shortrightarrow}}

\DeclareMathOperator{\DcMMHM}{\mathcal{D}\!\MMHM^{\shortleftarrow}}

\newcommand{\HA}{\mathcal{DT}}
\newcommand{\exte}{\mathtt{e}}
\newcommand{\ff}{\mathbf{f}}
\newcommand{\GG}{\mathbb{G}}

\newcommand{\fc}{\mathfrak{cu}}

\DeclareMathOperator{\udim}{\underline{dim}}

\DeclareMathOperator{\lP}{\mathfrak{L}}

\DeclareMathOperator{\mult}{\mathbf{m}}

\DeclareMathOperator{\TTr}{\mathcal{T}r}

\DeclareMathOperator{\opp}{op}

\newcommand{\HG}{\HO_G}

\newcommand{\nIC}{\underline{\ICst}}
\newcommand{\nnIC}{\WT{\ICst}}

\newcommand{\g}{\mathfrak{gl}}
\newcommand{\NN}{\mathbb{N}}

\newcommand{\QQ}{\mathbb{Q}}

\DeclareMathOperator{\TTTr}{\mathfrak{T}r}

\newcommand{\WW}{\mathcal{T}r(W)}
\newcommand{\WWW}{\mathfrak{Tr}(W)}

\newcommand{\XX}{\mathcal{X}}
\newcommand{\YY}{\mathcal{Y}}
\newcommand{\ZZ}{\mathbb{Z}}

\newcommand{\Mst}{\mathfrak{M}}

\newcommand{\Msp}{\mathcal{M}}

\newcommand{\ICS}{\mathcal{IC}}

\newcommand{\phim}[1]{\phi^{\mon}_{#1}}

\newcommand{\AAA}[1]{\mathbb{A}^{#1}}

\DeclareMathOperator{\mathG}{G}
\newcommand{\GQ}[1]{\mathG_{#1}}

\DeclareMathOperator{\image}{Image}
\DeclareMathOperator{\Imag}{im}
\DeclareMathOperator{\edge}{edge}
\DeclareMathOperator{\sst}{-ss}
\DeclareMathOperator{\ssst}{-ss^{\circ}}
\DeclareMathOperator{\stab}{-st}
\DeclareMathOperator{\sstab}{sst}

\DeclareMathOperator{\nilp}{nilp}
\DeclareMathOperator{\hype}{hyp}
\DeclareMathOperator{\isot}{iso}

\DeclareMathOperator{\crit}{crit}

\DeclareMathOperator{\cyc}{cyc}

\DeclareMathOperator{\Sp}{\mathcal{S}}
\DeclareMathOperator{\SP}{\mathcal{S}}

\DeclareMathOperator{\Hom}{Hom}

\DeclareMathOperator{\mon}{mon}
\DeclareMathOperator{\End}{End}
\DeclareMathOperator{\weight}{wt}
\DeclareMathOperator{\wt}{\chi_{\weight}}

\DeclareMathOperator{\Exp}{Exp}
\DeclareMathOperator{\simp}{simp}

\DeclareMathOperator{\AS}{\mathbb{A}}

\DeclareMathOperator{\MMHM}{MMHM}
\DeclareMathOperator{\MMHS}{MMHS}

\DeclareMathOperator{\ICst}{\mathcal{IC}}
\DeclareMathOperator{\Gr}{\mathbf{Gr}}

\DeclareMathOperator{\Vect}{Vect}

\DeclareMathOperator{\smooth}{\scriptscriptstyle{sm}}

\DeclareMathOperator{\Id}{Id}
\DeclareMathOperator{\rat}{\mathbf{rat}}

\DeclareMathOperator{\lmod}{-mod}

\DeclareMathOperator{\BM}{BM}
\DeclareMathOperator{\mathc}{c}
\newcommand{\HOc}{\HO_{\mathc}}

\newcommand{\HOBM}{\HO^{\BM}}
\newcommand{\fg}{\mathfrak{g}}
\newcommand{\fn}{\mathfrak{n}}

\DeclareMathOperator{\Nak}{\mathbf{M}}
\DeclareMathOperator{\Lus}{\mathbf{L}}

\DeclareMathOperator{\supp}{supp}

\DeclareMathOperator{\Perv}{Perv}
\DeclareMathOperator{\MHM}{MHM}

\DeclareMathOperator{\Sym}{Sym}

\DeclareMathOperator{\red}{\scriptscriptstyle{red}}

\DeclareMathOperator{\Gl}{GL}

\DeclareMathOperator{\gl}{\mathfrak{gl}}
\DeclareMathOperator{\Sl}{SL}

\DeclareMathOperator{\UEA}{\mathbf{U}}

\DeclareMathOperator{\id}{id}
\DeclareMathOperator{\Jac}{Jac}
\DeclareMathOperator{\Symp}{Sp}

\DeclareMathOperator{\Tr}{Tr}

\DeclareMathOperator{\pt}{pt}
\DeclareMathOperator{\Tot}{Tot}

\DeclareMathOperator{\cone}{cone}

\DeclareMathOperator{\vir}{vir}
\DeclareMathOperator{\Ob}{Ob}
\DeclareMathOperator{\Ho}{\mathcal{H}}
\DeclareMathOperator{\HO}{\mathbf{H}}

\DeclareMathOperator{\Coha}{\mathcal{A}}
\DeclareMathOperator{\HCoha}{\HO\!\mathcal{A}}
\DeclareMathOperator{\rCoha}{\mathcal{RA}}

\DeclareMathOperator{\CCCoh}{\mathcal{C}oh}
\DeclareMathOperator{\NS}{NS}
\DeclareMathOperator{\HHiggs}{\mathfrak{H}iggs}
\DeclareMathOperator{\CHiggs}{\mathcal{H}iggs}
\DeclareMathOperator{\TS}{\mathtt{TS}}

\DeclareMathOperator{\Wt}{W}
\DeclareMathOperator{\Lie}{Lie}
\DeclareMathOperator{\Betti}{\scriptscriptstyle{Betti}}
\DeclareMathOperator{\Borch}{\mathbf{Bor}}
\DeclareMathOperator{\Cusp}{\mathcal{C}\!\mathit{u}}

\DeclareMathOperator{\Mall}{\mathcal{N}}
\DeclareMathOperator{\Coh}{Coh}
\newcommand{\HMall}{\HO\Mall}
\DeclareMathOperator{\BPS}{BPS}
\DeclareMathOperator{\CCoh}{\mathfrak{C}oh}
\newcommand{\cdotsh}{\!\cdot\!}
\newcommand{\BPSh}{\mathcal{BPS}}

\newcommand{\Db}{\mathcal{D}^{b}}

\newcommand{\VD}{\mathbb{D}}
\newcommand{\Dmon}{\mathbb{D}^{\mon}}

\DeclareMathOperator{\reel}{real}
\title[BPS Lie algebras and less perverse filtrations]{BPS Lie algebras and the less perverse filtration on the preprojective CoHA}
\author{Ben Davison}

\begin{document}

\begin{abstract}
The affinization morphism for the stack $\Mst(\Pi_Q)$ of representations of a preprojective algebra $\Pi_Q$ is a local model for the morphism from the stack of objects in a general 2-Calabi--Yau category to the good moduli space.  We show that the derived direct image of the dualizing complex along this morphism is pure, and admits a decomposition in the sense of the Beilinson--Bernstein--Deligne--Gabber decomposition theorem.  

We introduce a new perverse filtration on the Borel--Moore homology of $\Mst(\Pi_Q)$, using this decomposition.  We show that the zeroth piece of the resulting filtration on the cohomological Hall algebra built out of the Borel--Moore homology of $\Mst(\Pi_Q)$ is isomorphic to the universal enveloping algebra of an associated BPS Lie algebra $\mathfrak{g}_{\Pi_Q}$.  This Lie algebra is defined via the Kontsevich--Soibelman theory of critical cohomological Hall algebras for 3-Calabi--Yau categories.  We then lift this Lie algebra to a Lie algebra object in the category of perverse sheaves on the coarse moduli space of $\Pi_Q$-modules, and use this algebra structure to prove results about the summands appearing in the above decomposition theorem.  In particular, we prove that the intersection cohomology of singular spaces of semistable $\Pi_Q$-modules provide ``cuspidal cohomology'' -- a conjecturally complete subspace of canonical generators for $\mathfrak{g}_{\Pi_Q}$.
\end{abstract}
\maketitle
\setcounter{tocdepth}{1}

\section{Introduction}
\subsection{Background and motivation}
Let $p \colon X\rightarrow Y$ be a projective morphism of complex varieties, and assume that $X$ is smooth.  The decomposition theorem of Beilinson, Bernstein, Deligne and Gabber \cite{BBD} states that there is an isomorphism 
\begin{equation}
\label{decompp1}
p_*\QQ_X\cong \bigoplus_{n\in\ZZ}{}^{\mathfrak{p}}\!\Ho^n(p_*\QQ_X)[-n]
\end{equation}
where $p_*\QQ_X$ denotes the derived direct image of the constant sheaf with $\QQ$ coefficients.  The theorem moreover states that each perverse sheaf ${}^{\mathfrak{p}}\!\Ho^n(p_*\QQ_X)$ appearing on the right hand side of \eqref{decompp1} admits a canonical decomposition
\begin{equation}
\label{decompp2}
{}^{\mathfrak{p}}\!\Ho^n(p_*\QQ_X)\cong \bigoplus_{i\in S_n}\ICS_{\ol{Z_i}}(\mathcal{F}_i)
\end{equation}
where $Z_i\subset Y$ are locally closed smooth connected subvarieties, $\mathcal{F}_i$ are semisimple local systems on them, and $\ICS_{\ol{Z_i}}(\mathcal{F}_i)$ is the intermediate extension of $\mathcal{F}_i$ to $Y$.  While the first decomposition is not in general canonical, the resulting inclusions of graded vector spaces 
\begin{equation}
\label{Pfiltgd}
\mathfrak{P}_{\leq m}=\HO\left(Y,\bigoplus_{n\leq m}{}^{\mathfrak{p}}\!\Ho^n(p_*\QQ_X)[-n]\right)\hookrightarrow \HO\left(Y,\bigoplus_{n\in \ZZ}{}^{\mathfrak{p}}\!\Ho^n(p_*\QQ_X)[-n]\right)\cong \HO(X,\QQ)
\end{equation}
\textit{are} canonical, and define the \textit{perverse filtration} of $\HO(X,\QQ)$ with respect to $p$.  Perverse filtrations constructed this way have played an extremely important role in nonabelian Hodge theory (giving the ``P'' in the famous P=W conjecture\footnote{Since the first version of this paper appeared, this has become a theorem: see \cite{MaSh22,HMMS22,MSY23b} for different proofs.}) \cite{ngo2006fibration,ngo2010lemme, deCat12,deC19, deC20, CHS20, felisetti2022p} and algebraic geometry \cite{HLSY21,ShenYin22}.  Additionally, perverse filtrations and decompositions of direct images as in \eqref{decompp2} have played a central role in various studies in geometric representation theory: see \cite{chriss2009representation,hotta2007d,achar2021perverse} for representative background.  Under favourable circumstances, the second decomposition \eqref{decompp2} also plays a crucial role in understanding the topology of $X$, since it offers the means to study the complicated direct image $p_*\QQ_X$ via the restriction to the possibly nicer locus $Z_i$.  For instance, this is precisely the strategy offered by Ng\^{o}'s support theorem \cite{ngo2010lemme}, which states that for morphisms given by weak abelian fibrations, each of the support varieties $Z_i$ appearing in \eqref{decompp2} is dense.  Such fibrations play a very prominent role in nonabelian Hodge theory and the study of moduli spaces of sheaves on surfaces, see e.g. \cite{MS23} for applications and further developments.

Let $Q$ be an arbitrary quiver, let $\overline{Q}$ be its double, and let $\Pi_Q\coloneqq\CC \ol{Q}/\langle \sum_{a\in Q_1}[a,a^*]\rangle$ be the associated preprojective algebra.  The morphism $\JH\colon \Mst(\Pi_Q)\rightarrow \Msp(\Pi_Q)$ from the stack of $\Pi_Q$-modules to the coarse moduli space provides the \'etale local model for the morphism from the stack of objects in a 2-Calabi--Yau category to its good moduli space; we come to our motivating examples of such categories shortly.    For now we remark that the category of semistable Higgs bundles of fixed slope provides one such category, and also that the morphism $\JH$ is closely related to the study of Nakajima quiver varieties, a central object in geometric representation theory.  Despite these relations to areas of mathematics referenced above, $\JH$ is a long way from satisfying the assumptions of the BBDG decomposition theorem: most importantly, $\Mst(\Pi_Q)$ is in general highly singular.   We nonetheless extend the decomposition theorem to $\JH$, providing versions of the decompositions \eqref{decompp1} and \eqref{decompp2} for the direct image of the dualising complex\footnote{Or, equivalently, the direct image with compact supports of $\QQ_{\Mst(\Pi_Q)}$.} along $\JH$ (see Theorem \ref{thma} for a precise statement).

We find that, in contrast to Ng\^{o}'s result, the subvarieties $Z_i$ appearing in our analogue of \eqref{decompp2} are almost never dense in the base.  In this paper we will use a  sheaf-theoretic lift of cohomological Hall algebra structures to describe the summands that do not have full support, and write a precise conjectural description of the direct image $\JH_*\VD\ul{\QQ}_{\Mst(\Pi_Q)}$ in terms of convolution tensor products of shifted intersection complexes (Conjecture \ref{mainConj}).  The derived global sections of the summands in the decomposition \eqref{decompp1} will turn out to be tightly connected to the theory of BPS Lie algebras, certain Lie algebras that arise in the study of cohomological Hall algebras and algebras of BPS states \cite{HM98,KS2,Da13,QEAs}: see Theorem \ref{newthmB} for the main result in this direction.

The morphisms $\JH$, as we vary the quiver $Q$, provide the local model of the morphism from the stack of objects in a 2-Calabi--Yau category $\mathscr{C}$ to the good moduli space of objects in $\mathscr{C}$, enabling us to study the \textit{global} topology of stacks of objects in general 2-Calabi--Yau categories.  In this paper we mainly concentrate on the decomposition theorem and its implications for BPS Lie algebras of preprojective algebras, in essence the \textit{local} model for 2CY categories.  Further examples and applications of the global theory, which are explored in other papers building on the results here, include
\begin{enumerate}
\item
(Nonabelian Hodge theory): Categories of semistable Higgs bundles on a smooth projective curve of fixed slope, and the categories of representations of the fundamental group of the once-punctured Riemann surface, form two sides of the nonabelian Hodge correspondence.  The decomposition theorem we prove is an essential tool in extending the nonabelian Hodge isomorphism to the Borel--Moore homology of singular stacks appearing on these two sides of nonabelian Hodge theory \cite{DHSM22, Henn23}, and formulating a version of the P=W conjecture in this generality \cite{Dav21c}.
\item
(Algebraic geometry): The present paper was motivated in large part by the study of mixed Hodge structures on Borel--Moore homology of stacks of coherent sheaves on K3 surfaces, and especially the paper \cite{HL08}, in which it was first suggested that results in cohomological Donaldson--Thomas theory could be used to prove new purity results for moduli stacks of coherent sheaves on K3 surfaces.  We discuss this application in greater depth in \S \ref{HLC_sec}.
\item
(Geometric representation theory):  As mentioned above, the most natural way to explain and control the failure of density of the supports $Z_i$ in the decomposition theorem is via Hall algebras.  Moreover, the ``local'' geometry we consider, that of the morphism $\JH$, is very close to the affinization morphism for Nakajima quiver varieties, an extremely fertile ground for studying geometric representation theory.  In a subsequent paper with Hennecart, Schlegel Mejia \cite{DHSM23}, we completely describe the decompositions appearing in this paper, and prove conjectures of Bozec and Schiffmann \cite{BoSch19} regarding cuspidal polynomials, as well as Okounkov's conjecture relating Kac polynomials to Maulik--Okounkov Yangians \cite{botta2023okounkov} constructed from the cohomology of Nakajima quiver varieties and stable envelopes \cite{MO19}.
\end{enumerate}

\subsection{Main results}
\label{mrs}
Let $\ol{Q}$ be the double of a quiver $Q$, obtained by adding an arrow $a^*$ to $Q$ for every $a$ an arrow of $Q$,  with the opposite orientation to that of $a$.  Let $\Pi_Q\coloneqq\CC \ol{Q}/\langle \sum_{a\in Q_1}[a,a^*]\rangle$ be the preprojective algebra associated to $Q$.  Let $\SP$ be a Serre subcategory of the category of $\CC\ol{Q}$-modules.  We set 
\[
\HCoha^{\SP}_{\Pi_Q}\coloneqq \bigoplus_{\dd\in\dvs}\HOBM\!\left( \Mst^{\SP}_{\dd}(\Pi_Q),\QQ\right)\otimes \LLL^{-\chi_{Q}(\dd,\dd)},
\]
the (shifted) Borel--Moore homology of the stack $\Mst^{\SP}(\Pi_Q)$ of finite-dimensional $\Pi_Q$-modules which are objects of $\SP$.  Here $\LLL=\HOc(\AAA{1},\QQ)$ is a Tate twist, which is introduced so that the object $\HCoha^{\SP}_{\Pi_Q}$ carries an associative multiplication, (see \S \ref{CCSec} for the definition).  The resulting algebra plays a key role in geometric representation theory; it is the algebra of all conceivable raising operators on the cohomology of Nakajima's quiver varieties, and so via several decades of work \cite{Nak98, Nak94, Groj95, Var00, ScVa13, MO19} contains half of various quantum groups associated to $Q$.

Let $\JH\colon \Mst(\Pi_Q)\rightarrow \Msp(\Pi_Q)$ be, as above, the semisimplification (equivalently, affinization) morphism to the coarse moduli space of $\Pi_Q$-modules.  We study $\HCoha^{\SP}_{\Pi_Q}$ via the richer object
\[
\rCoha_{\Pi_Q}\coloneqq \bigoplus_{\dd\in\dvs} \JH_*\VD\ul{\QQ}_{\Mst(\Pi_Q)}\otimes \LLL^{-\chi_Q(\dd,\dd)},
\]
the derived direct image of the dualizing mixed Hodge module.  The derived category of mixed Hodge modules on $\Msp(\Pi_Q)$ is a tensor category via convolution along the direct sum map, and we may consider $\rCoha_{\Pi_Q}$ as an algebra object in this category, from which we recover $\HCoha^{\SP}_{\Pi_Q}$ by restricting to $\Msp^{\SP}(\Pi_Q)$ and taking hypercohomology.
\begin{thmx}[Corollary \ref{relPurity}\footnote{In the interests of digestibility, in the introduction we state all results without reference to extra gauge groups $G$, stability conditions or slopes.  The results in the main body incorporate these generalisations.}]
\label{thma}
There is an isomorphism of complexes of mixed Hodge modules
\begin{equation}
\label{mainDecomp}
\rCoha_{\Pi_Q}\cong\bigoplus_{n\in2\cdot\ZZ_{\geq 0}}\Ho^n(\rCoha_{\Pi_Q})[-n]
\end{equation}
and each $\Ho^n(\rCoha_{\Pi_Q})$ is pure of weight $n$, i.e. $\rCoha_{\Pi_Q}$ is \textbf{pure}.  As a consequence, we may write
\begin{equation}
\label{decomppp2}
\Ho^n(\rCoha_{\Pi_Q})\cong \bigoplus_{i\in S_n}\ICS_{\ol{Z_i}}(\underline{\mathcal{F}}_i)
\end{equation}
where $Z_i\subset \Msp(\Pi_Q)$ are locally closed smooth connected subvarieties, and $\ICS_{\ol{Z_i}}(\underline{\mathcal{F}}_i)$ is the intermediate extension of a simple pure weight $n$ variation of Hodge structure on $Z_i$.
\end{thmx}
The theorem contains the statement that the derived direct image with compact support $\JH_!\ul{\QQ}_{\Mst(\Pi_Q)}$ is pure, i.e. this complex satisfies the statement of the decomposition theorem of Beilinson, Bernstein, Deligne and Gabber \cite{BBD}, or more precisely, Saito's version in the language of mixed Hodge modules \cite{Saito88,Saito90}.  Note that the functor taking a complex of mixed Hodge modules to its underlying complex of perverse sheaves is exact, so that the cohomology functors $\Ho^n$ are the lifts of the cohomology functors ${}^{\mathfrak{p}}\!\Ho^n$ appearing in \eqref{decompp1}.

\sssct
As a result of the decomposition \eqref{mainDecomp}, the mixed Hodge structure $\HCoha^{\SP}_{\Pi_Q}$ carries an ascending perverse filtration $\lP_{\bullet}\!\HCoha^{\SP}_{\Pi_Q}$ defined as in \eqref{Pfiltgd}, starting in degree zero.  This filtration is defined by setting
\[
\lP_{i}\!\HCoha^{\SP}_{\Pi_Q}\coloneqq \HO\left(\Msp^{\Sp}(\Pi_Q),\varpi'^*\bm{\tau}^{\leq i}\rCoha_{\Pi_Q}\right),
\]
where $\varpi'\colon \Msp^{\Sp}(\Pi_Q)\hookrightarrow \Msp(\Pi_Q)$ is the inclusion, and $\bm{\tau}^{\leq i}$ is the $i$th truncation functor for the natural t structure on the category of complexes of mixed Hodge modules on $\Msp(\Pi_Q)$ (see \S \ref{MMHMs} for definitions).  As mentioned, the mixed Hodge structure $\HCoha^{\SP}_{\Pi_Q}$  carries an associative algebra structure, and it is our strategy to use the compatibility of this algebra structure with the perverse filtration, obtained by lifting the algebra structure to an algebra structure at the level of mixed Hodge modules, to investigate the summands appearing in the decomposition \eqref{decomppp2}.  This filtration also offers insight into the algebra $\HCoha^{\SP}_{\Pi_Q}$ itself, reflected in our second main theorem:
\begin{thmx}[Proposition \ref{respProp}, Theorem \ref{UEAthm}]
\label{newthmB}
The filtration $\lP_{\bullet}\!\HCoha^{\SP}_{\Pi_Q}$ is respected by the algebra structure on $\HCoha^{\SP}_{\Pi_Q}$.  Moreover, there is an isomorphism\footnote{Note that the algebra structure on the domain of this ismorphism involves a sign twist, which we recall in \S \ref{psitwists}.} of algebras $\lP_{0}\!\HCoha^{\SP}_{\Pi_Q}\cong \UEA(\fg^{\SP}_{\Pi_Q})$ where $\fg^{\SP}_{\Pi_Q}$ is isomorphic to the \textbf{BPS Lie algebra} \cite{QEAs} $\fg_{\tilde{Q},\tilde{W}}^{\tilde{\Sp}}$ determined by the tripled quiver $\WT{Q}$, cubic potential $\WT{W}$ and Serre subcategory $\WT{\SP}$ of the category of $\CC\WT{Q}$-modules defined in \S \ref{3desc}.
\end{thmx}
The construction of the BPS Lie algebra for arbitrary symmetric quivers with potential is recalled in \S \ref{PBWsec}.  These were introduced in joint work with Sven Meinhardt in \cite{QEAs}, as part of a project to realise the cohomological Hall algebras defined by Kontsevich and Soibelman \cite{KS2} as positive halves of generalised Yangians.  Note that the BPS Lie algebra is defined by a quite different perverse filtration, on vanishing cycle cohomology of a different Calabi--Yau category.  

Outside of cases with zero Lie bracket, no examples of BPS Lie algebras have been calculated before this paper; we start to remedy this situation here.  We identify the zeroth cohomologically graded piece of $\fg_{\Pi_Q}^{\SP}$ (for various choices of $\Sp$) with various variants of the Kac--Moody Lie algebras associated to the underlying graph of $Q$ (see \S \ref{zpf}).  In particular, we provide the first examples of nonabelian BPS Lie algebras.

\sssct One motivation for proving the purity statement in Theorem \ref{thma} is the study of generalised Yangians in the work of Maulik and Okounkov \cite{MO19}.  Conjecturally\footnote{After the first version of this paper appeared this conjecture was proved; see \cite{botta2023okounkov,SV23O}.}, at least after extending scalars by some ring $\mathbb{K}$, there is an isomorphism between the algebra\footnote{We will adopt the convention throughout that where an expected $\SP$ superscript is missing, we assume that $\SP$ is the whole category $\CC\ol{Q}\lmod$.} $\HCoha_{\Pi_Q}$ and the positive half of their Yangian $\Yang_{\mathtt{MO},Q}$.  On the other hand, the Lie algebra $\mathfrak{g}_{\mathtt{MO},Q}$ that (along with the action of tautological classes) generates $\Yang_{\mathtt{MO},Q}$ is given by a subspace of the cohomology of certain Nakajima quiver varieties.  By Saito's version of the decomposition theorem, the direct image of the constant sheaf on Nakajima quiver varieties to the coarse moduli space is pure, hence the expected purity of Theorem \ref{thma}.  One may interpret the theorem as evidence for the conjecture that $\HCoha_{\Pi_Q}\otimes\mathbb{K}\cong\Yang_{\mathtt{MO},Q}^+$, and as evidence for Okounkov's conjecture\footnote{Similarly, this is no longer a conjecture, see again \cite{botta2023okounkov,SV23O}.} that the graded dimensions of $\mathfrak{g}_{\mathtt{MO},Q}$ are given by the coefficients of Kac polynomials.

\subsection{Cuspidal cohomology}
For general $Q$ and $\dd\in\mathbb{N}^{Q_0}$, the $\dd$th graded piece of the BPS Lie algebra $\fg_{\Pi_Q}$ satisfies the condition on the dimensions of the cohomologically graded pieces
\[
\sum_{n\in\mathbb{Z}}\dim(\HO^n(\fg_{\Pi_Q,\dd}))q^{n/2}=\kac_{Q,\dd}(q^{-1})
\]
where the polynomials on the right hand side are the polynomials introduced by Victor Kac in \cite{Kac83}, counting $\dd$-dimensional absolutely indecomposable $Q$-representations over a finite field of order q.  A conjecture of Bozec and Schiffmann \cite[Conj.1.3]{BoSch19} states that the Kac polynomials $\kac_{Q,\dd}(q^{-1})$ are the characteristic functions of the $\dvs$-graded pieces of a cohomologically graded Borcherds algebra, and so it is natural to suspect that $\fg_{\Pi_Q}$ itself is the positive half of a cohomologically graded Borcherds algebra.  In particular, $\fg_{\Pi_Q}$ should be given by some cohomologically graded Cartan datum, including the data of (usually infinitely many) imaginary simple roots.  This paper initiates the search for these imaginary simple roots; it is the shadow, under the derived global sections functor, of the search for new summands in the decomposition \eqref{decomppp2}.

\sssct One of the motivations for pursuing a lift of the BPS Lie algebra to the category of mixed Hodge modules is a question of Schiffmann \cite{SchICM}, itself a quiver analogue of an older question of Deligne \cite{De15}: 
{\begin{center}
Is there any geometric description of the Cartan datum, for example some algebraic variety $\Msp_{\mathrm{cusp},\dd}(\Pi_Q)$ along with a natural embedding $\Psi\colon\HO(\Msp_{\mathrm{cusp},\dd}(\Pi_Q),\QQ)\hookrightarrow \fg_{\Pi_Q,\dd}$ as the space of imaginary simple roots of weight $\dd$? \end{center}
}
\smallbreak
Such a construction would answer in the affirmative the complex geometric analogue of Conjecture 3.5 of \cite{SchICM}.  We can make progress on the above question even without proving that $\fg_{\Pi_Q}$ is the positive part of a Borcherds algebra.  Precisely, we do so via (the special case $\SP=\CC\ol{Q}\lmod$ of) our more general theorem on primitive generators:
\begin{thmx}[Theorem \ref{thmBdone}]
\label{mainThmB}
Let $\dd$ be such that there exists a simple $\dd$-dimensional $\Pi_Q$-module, let $\varpi'\colon \Msp^{\SP}_{\dd}(\Pi_Q)\hookrightarrow \Msp_{\dd}(\Pi_Q)$ be the inclusion, and set
\[
\fc_{\Pi_Q,\dd}^{\SP}\coloneqq \HO\!\left(\Msp^{\SP}_{\dd}(\Pi_Q),\varpi'^!\ICS_{\Msp_{\dd}(\Pi_Q)}(\QQ)\right)\otimes\LLL^{1+\chi_Q(\dd,\dd)}.
\]
There is a canonical decomposition $\fg^{\SP}_{\Pi_Q,\dd}\cong \fc^{\Sp}_{\Pi_Q,\dd}\oplus \mathfrak{l}$ of mixed Hodge structures, such that the Lie bracket $\fg^{\SP}_{\Pi_Q,\dd'}\otimes\fg^{\SP}_{\Pi_Q,\dd''}\xrightarrow{[\cdot,\cdot]}\fg^{\SP}_{\Pi_Q,\dd}$ for $\dd'+\dd''=\dd$ factors through the inclusion of $\mathfrak{l}$.  In particular, the mixed Hodge structures $\fc^{\Sp}_{\Pi_Q,\dd}$ give a collection of \textbf{canonical} subspaces of generators for $\fg^{\SP}_{\Pi_Q}$.
\end{thmx}
The proof of the above theorem uses the construction of the new ``less''\footnote{See \S \ref{morePerverse} for an explanation of the name.} perverse filtration on $\HCoha_{\Pi_Q}^{\SP}$ arising from Theorem \ref{thma}, and the resulting lift of the Lie algebra $\fg_{\Pi_Q}$ to a Lie algebra object in the category of pure Hodge modules on $\Msp(\Pi_Q)$.  In particular, the decomposition into generators and non-generators in the BPS Lie algebra arises from the decomposition theorem for perverse sheaves/mixed Hodge modules.  We conjecture\footnote{Since this paper first appeared, this conjecture has become a theorem, see \cite{DHSM23}.} that aside from the known simple roots of $\dvs$-degree $1_i$ for $i$ a vertex of $Q$, or $\dd$ such that $\chi_Q(\dd,\dd)=0$, these are \textit{all} of the generators; see Conjecture \ref{mainConj} for the precise statement.  

\sssct Since by the decomposition theorem there is a canonical embedding 
\[
\HO\!\left(\Msp_{\dd}(\Pi_Q),\ICS_{\Msp_{\dd}(\Pi_Q)}(\QQ)\right)\subset \HO\!\left(X,\QQ\right)
\]
where $X\rightarrow \Msp_{\dd}(\Pi_Q)$ is a semi-small resolution, Theorem \ref{mainThmB} suggests that the answer to the question above is ``yes'', and the above embedding provides a route towards \cite[Conj.3.5]{SchICM}.  For example, any symplectic\footnote{See \cite{BeSch} for a comprehensive treatment of when we may expect to find such a resolution.} resolution is a semi-small \cite{Kal09} resolution of singularities, and thus its cohomology contains ``cuspidal'' cohomology as a canonical summand.

\subsection{Halpern--Leistner's conjecture}
\label{HLC_sec}
Our purity theorem is independent from the statement (proved in \cite{preproj}) that the mixed Hodge structure on $\HCoha_{\Pi_Q}$ is pure.  We explain the particular utility of the purity statement of the current paper, with reference to a particular application: the proof of a conjecture of Halpern-Leistner \cite{HL08}.

Let $X$ be a K3 surface, fix a generic ample class $H\in\NS(X)_{\QQ}$, and fix a Hilbert polynomial $P(t)$.  Then there is a moduli stack $\CCoh_{P(t)}^H(X)$ of $H$-semistable coherent sheaves with Hilbert polynomial $P(t)$, and Halpern-Leistner conjectures that the mixed Hodge structure on $\HOBM(\CCoh_{P(t)}^H(X),\QQ)$ is pure.  The above-mentioned purity result of \cite{preproj} encouraged this statement, while the purity result of the current paper provides the means to prove it.  The idea of the proof is that locally, the morphism $p\colon\CCoh_{P(t)}^H(X)\rightarrow \CCCoh_{P(t)}^H(X)$ to the coarse moduli space is modelled as the morphism $\Mst_{\dd}(\Pi_Q)\rightarrow \Msp_{\dd}(\Pi_Q)$ for some quiver $Q$, and so Theorem \ref{thma} tells us that the direct image $p_!\ul{\QQ}_{\Mst_{\dd}(\Pi_Q)}$ is locally, and hence globally, pure.  The result then follows from the fact that the direct image of a pure complex of mixed Hodge modules along a projective morphism is pure: Saito's version \cite{Saito90} of the decomposition theorem.  Full details of this proof, and other global applications of the results of the current paper, are worked out in \cite{Da21a}.

\subsection{The algebras $\UEA(\fg_{C})$ and $\UEA(\fg_{\Sigma_g})$}
The construction and results of the present paper can be applied in nonabelian Hodge theory, and the construction of ``higher genus'' BPS lie algebras, since they concern any category for which the moduli of objects is locally modeled by moduli stacks of modules for preprojective algebras.  Here we briefly explain one such application, referring the reader to \cite{Da21a,DHSM22} for further details.  

\sssct Let $C$ be a smooth genus $g$ complex projective curve, which for ease of exposition we assume to be defined over $\mathbb{Z}$, and let $\HHiggs^{\sstab}_{r,0}(C)$ denote the complex algebraic stack of semistable rank $r$ degree zero Higgs bundles on $C$. By \cite{MoSc20} there is an equality
\begin{align}
\label{MSid}
&\sum_{r\geq 0, i,n\in\ZZ}\dim(\Gr^W_n\!(\HOBM_{-i}(\HHiggs^{\sstab}_{r,0}(C),\QQ)))(-1)^iq^{n/2+(g-1)r^2}T^r\\&=\Exp_{q^{1/2},T}\left(\sum_{r\geq 1}\Omega_{C,r,0}(q^{1/2})(1-q)^{-1}T^r\right)\nonumber
\end{align}
where $\Omega_{C,r,0}(q^{1/2})=\kac_{C,r,0}(q^{1/2},\ldots,q^{1/2})$ is a specialisation of Schiffmann's polynomial, counting absolutely indecomposable vector bundles of rank $r$ on $C$ over $\mathbb{F}_q$.  On the right hand side we have taken the plethystic exponential, an operation which satisfies the identity
\begin{equation}
\label{plethy}
\Exp\left(\sum_{r,i\in \mathbb{Z}_{>0}\times\mathbb{Z}}(-1)^i\dim(\fg_{r,i})q^{i/2}T^r\right)=\sum_{r,i\in \mathbb{Z}_{>0}\times\mathbb{Z}}(-1)^i\dim (\UEA(\fg)_{r,i})q^{i/2}T^r
\end{equation}
for $\fg$ any $\mathbb{Z}_{> 0}\times\mathbb{Z}$-graded Lie algebra with finite-dimensional graded pieces.  We presume that the second $\ZZ$-grading agrees with the cohomological grading, so that the Koszul sign rule is in effect with respect to it, e.g.
\[
[a,b]=(-1)^{\lvert a\lvert \lvert b\lvert+1}[b,a]
\]
for $\lvert a\lvert$ and $\lvert b\lvert$ the $\ZZ$-degrees of $a$ and $b$ respectively.  This explains the introduction of the signs in \eqref{plethy}.  The Borel--Moore homology of $\HHiggs^{\sstab}_{r,0}(C)$ is pure (see \cite[Sec.7.4]{Da21a}), so that the only terms that contribute on the left hand side of \eqref{MSid} have $n=i$.  
\sssct
After staring at \eqref{MSid} and \eqref{plethy} together, it is natural to conjecture (as in \cite{SchICM}) that there is some Lie algebra $\fg_C$, and an isomorphism of bigraded Hodge structures
\[
\mathcal{H}^{\mathrm{Higgs}}_{C}\coloneqq \bigoplus_{r\geq 0}\HOBM(\HHiggs_{r,0}^{\sstab}(C),\QQ)\otimes \LLL^{(g-1)r^2}\cong \UEA_q(\fg_C[u])
\]
where the right hand side is the universal enveloping algebra of a current algebra for some Lie algebra $\fg_C$, which should be a ``curve'' cousin of the Kac--Moody Lie algebras associated to quivers.  This Lie algebra should be defined as the BPS Lie algebra associated to the non-compact Calabi--Yau threefold $Y=\Tot_C(\omega_C\oplus\mathcal{O}_C)$.  Technically, this presents some well-known complications: stacks of coherent sheaves on $Y$ do not have a global critical locus description, so that the definition of vanishing cycle sheaves on them requires a certain amount of extra machinery (see \cite{Jo15,BBBD}).  
\sssct
As a consequence of the results of this paper there is a ``less perverse'' definition of $\UEA(\fg_C)$ ready off the shelf, avoiding d critical structures, vanishing cycles etc.: we may \textit{define}
\[
\UEA(\fg_C)\coloneqq \bigoplus_{r\geq 0}\HO(\CHiggs_{r,0}^{\sstab}(C),\bm{\tau}^{\leq 0}p_*\VD\ul{\QQ}_{\HHiggs_{r,0}^{\sstab}(C)})\otimes\LLL^{(g-1)r^2}
\]
where $p\colon \HHiggs_{r,0}^{\sstab}(C)\rightarrow \CHiggs_{r,0}^{\sstab}(C)$ is the morphism to the coarse moduli space, and the multiplication is via the correspondences in the CoHA of Higgs sheaves as in \cite{SS20,Mi20}.  Similarly, we define 
\begin{align*}
\UEA(\fg^{\nilp}_C)\coloneqq &\bigoplus_{r\geq 0}\HO(\CHiggs_{r,0}^{\sstab}(C),g_*g^!\bm{\tau}^{\leq 0}p_*\VD\ul{\QQ}_{\HHiggs_{r,0}^{\sstab}(C)})\otimes\LLL^{(g-1)r^2}\\ &\subset \mathcal{H}^{\mathrm{Higgs},\nilp}_{C}\coloneqq \bigoplus_{r\geq 0}\HOBM(\HHiggs_{r,0}^{\sstab,\nilp}(C),\QQ)\otimes \LLL^{(g-1)r^2}
\end{align*}
where $g\colon \CHiggs_{r,0}^{\sstab,\nilp}(C)\rightarrow \CHiggs_{r,0}^{\sstab}(C)$ is the inclusion of the locus for which the Higgs field is nilpotent, to define the correct enveloping algebra inside the CoHA of nilpotent Higgs bundles \cite{SS20}.  

\sssct
Whenever the morphism $p$ from the stack of objects in a category $\mathscr{C}$ to the coarse moduli space is locally modeled as the semisimplification morphism from the stack of representations of a preprojective algebra, the definition of the enveloping algebra of the BPS Lie algebra for $\mathscr{C}$ is forced by Theorem \ref{thma}; we likewise define
\begin{align*}
\UEA(\fg_{\Sigma_g})\coloneqq &\bigoplus_{r\geq 0}\HO(\Msp^{\Betti}_{g,r},\bm{\tau}^{\leq 0}p_*\VD\ul{\QQ}_{\Mst^{\Betti}_{g,r}})\otimes\LLL^{(g-1)r^2}\\
\subset&\mathcal{H}_{\Sigma_g}\coloneqq \bigoplus_{r\geq 0}\HOBM(\Mst^{\Betti}_{g,r},\QQ)\otimes\LLL^{(g-1)r^2}
\end{align*}
where $p\colon \Mst^{\Betti}_{g,r}\rightarrow \Msp^{\Betti}_{g,r}$ is the semisimplification morphism from the moduli stack of $r$-dimensional $\pi_1(\Sigma_g)$-modules to the coarse moduli space, for $\Sigma_g$ a genus $g$ Riemann surface without boundary\footnote{This case is slightly different, since the moduli stack of $\pi_1(\Sigma_g)[\omega]$-modules \textit{is} written as a global critical locus; see \cite{Da16}.}.  The object $\mathcal{H}_{\Sigma_g}$ is the CoHA of representations of the stack of $\CC[\pi_q(\Sigma_g)]$-modules defined in \cite{Da16}.  We leave the detailed study of the algebras $\UEA(\fg_C)$ and $\UEA(\fg_{\Sigma_g})$ to future work (see \cite{DHSM22,Dav21c}), and refer the reader to \cite{Da21a} for a general treatment of the perverse filtration on CoHAs for 2CY categories \cite{PS19,KV19}.
\subsection{Notation and conventions}
\label{NaCs}
All schemes and stacks are defined over $\CC$, and assumed to be locally of finite type.  All quivers are finite.  All functors are derived.  

If $G$ is an algebraic group we define $\HG\coloneqq\HO(\B G,\QQ)$.

We write $\HO(X,\mathbb{Q})=\HO(X_{\mathrm{an}},\mathbb{Q})$.

If an algebraic group $G$ acts on a scheme $X$, we denote by $X/G$ the stack-theoretic quotient.

If $\XX$ is a scheme or stack, and $p\colon \XX\rightarrow \pt$ is the morphism to  point, we often write $\HO$ for the derived functor $p_*$, and $\HO^i$ for the $i$th cohomology of $\HO$, i.e. for $\mathcal{F}$ an analytically constructible complex of sheaves on $X$, we abbreviate
\[
\HO(\mathcal{F})\coloneqq\HO(\XX,\mathcal{F});\quad\quad
\HO^i(\mathcal{F})\coloneqq \HO^i(\XX,\mathcal{F}).
\]
By a variety we mean a finite type reduced scheme $X$.  In particular we do not assume that $X$ is irreducible.

For $X$ an irreducible scheme or stack, we write $\HO(X,\QQ)_{\vir}=\HO(X,\QQ)\otimes \LLL^{-\dim(X)/2}$ where the half Tate twist is as in \S \ref{ICsec}.  For example, 
\[
\HO(\B \CC^*,\QQ)_{\vir}\coloneqq \HO(\B \CC^*,\QQ)\otimes\LLL^{1/2}.
\]
We define $\HOBM(\XX,\QQ)=\HO(\VD\:\!\ul{\QQ}_{\XX})$ where $\VD$ is the Verdier duality functor.

If $\mathscr{C}$ is a triangulated category equipped with a t structure we write $\Ho(\mathcal{F})=\bigoplus_{i\in \mathbb{Z}}\Ho^i(\mathcal{F})[-i]$ when the right hand side exists in $\mathscr{C}$.

If $V$ is a cohomologically graded vector space with finite-dimensional graded pieces, we define 
\begin{equation}
\label{PPdef}
\chi_t(V)\coloneqq \sum_{i\in \mathbb{Z}}(-1)^i\dim(V^i)t^{i/2}.
\end{equation}
If $V$ also carries a weight filtration $W_{\bullet}V$, we define the weight polynomial
\begin{equation}
\label{VPPdef}
\wt(V)\coloneqq \sum_{i,n\in\ZZ}(-1)^i\dim(\Gr^{\mathrm{W}}_n(V^i))t^{n/2}.
\end{equation}
We define $\mathbb{N}=\mathbb{Z}_{\geq 0}$.
\subsection{Acknowledgements}
During the writing of the paper, I was supported by the starter grant ``Categorified Donaldson--Thomas theory'' No. 759967 of the European Research Council.  I was also supported by a Royal Society university research fellowship.  I would like to thank Andrei Okounkov and Olivier Schiffmann for helpful conversations, Tristan Bozec for patiently explaining his work on crystals to me, Lucien Hennecart and Shivang Jindal for helpful comments regarding an earlier version of the paper, and an anonymous referee for a careful reading of the paper and many helpful suggestions.  Finally, I offer my heartfelt gratitude to Paul, Sophia, Sacha, Kristin and Nina, for their help and support throughout the writing of this paper.

\section{Background on CoHAs}
\label{bocsec}
\subsection{Monodromic mixed Hodge modules}
\label{MMHMs}
\subsubsection{Mixed Hodge modules}
Let $X$ be a locally finite type reduced scheme.  We define as in \cite{Saito89,Saito90} the category $\MHM(X)$ of mixed Hodge modules on $X$.  If $X$ is not reduced we write $\MHM(X)\coloneqq \MHM(X_{\mathrm{red}})$.  There is an exact functor 
\[
\rat_X\colon\Db(\MHM(X))\rightarrow \Db(\Perv(X))
\]
and the functor $\rat_X\colon \MHM(X)\rightarrow \Perv(X)$ is faithful.  There is a six functor formalism, at the level of derived categories of mixed Hodge modules, worked out in \cite{Saito90}, lifting the analogous formalism at the level of constructible complexes, in the sense that the six functors all naturally commute with $\rat_X$.  In particular we may define $\VD\ul{\QQ}_X\coloneqq (X\rightarrow \pt)^!\ul{\QQ}_{\pt}$, the dualizing complex, which is a lift of the constant sheaf $\QQ_X$ to a complex of mixed Hodge modules.  More generally we have the Verdier duality functor $\VD_X\colon \mathcal{F}\mapsto \mathcal{H}\mathrm{om}(\mathcal{F},\VD\ul{\QQ}_X)$.

  If $q\colon X\rightarrow Y$ is smooth of dimension $d$, there is a functor (at the level of Abelian categories of mixed Hodge modules) $f^*[d]\colon \MHM(X)\rightarrow \MHM(Y)$.  Moreover via these functors, the category of mixed Hodge modules on schemes smooth over $Y$ forms a stack, so that we may define mixed Hodge modules on a locally finite type Artin stacks $\mathfrak{X}$ in the natural way, as global sections of the resulting sheaf on the (small) smooth site $\mathfrak{X}_{\mathrm{sm}}$, see \cite{emhm} for details.

\subsubsection{Monodromic mixed Hodge modules}
Let $\FX$ continue to denote a locally finite type Artin stack.  We will make light use of the larger category of monodromic mixed Hodge modules $\MMHM(\FX)$ considered in \cite{KS2,QEAs}, which is defined to be the Serre quotient $\mathscr{B}_\FX/\mathscr{C}_\FX$, of two full subcategories of $\MHM(\FX\times\AAA{1})$. Here, $\mathscr{B}_\FX$ is the full subcategory containing those objects for which the cohomology mixed Hodge modules are locally constant, away from the origin, when restricted to $\{x\}\times\AAA{1}$, for each $x\in \FX$ a closed point.  The category $\mathscr{C}_\FX$ is the full subcategory containing those $\mathcal{F}$ for which such restrictions have globally constant cohomology sheaves.  

The functor $F=(\FX\times\GG_m\hookrightarrow \FX\times\AAA{1})_!$ provides an equivalence of categories between $\MMHM(\FX)$ and the full subcategory of mixed Hodge modules on $\FX\times\GG_m$ containing those $\mathcal{F}$ satisfying the condition that the restriction to each $\{x\}\times\GG_m$ has locally constant cohomology sheaves.  Write $G$ for a quasi-inverse to $F$.  We define the inclusion 
\[
\tau\colon \FX\hookrightarrow \FX\times\GG_m;\quad\quad
x\mapsto (x,1).
\]
Then there is a faithful functor
\[
\rat^{\mon}_\FX=\rat_\FX \circ\tau^*[-1]\circ G\colon \MMHM(\FX)\rightarrow\Perv(\FX).
\]
In words, this is the forgetful functor that forgets both the monodromy action on a monodromic mixed Hodge module, and then only remembers the underlying perverse sheaf of the resulting mixed Hodge module.

Let $z_\FX\colon \FX\hookrightarrow \FX\times\AAA{1}$ be the inclusion of the zero section.  Then $z_{\FX,*}\colon \MHM(\FX)\rightarrow \MMHM(\FX)$ is an inclusion of tensor categories, where the tensor product on the target is the one described in \S \ref{sfs}.  We refer to monodromic mixed Hodge modules, or complexes of monodromic mixed Hodge modules, obtained via applying $z_{\FX,*}$ as \textit{monodromy-free}.  We write $\MMHS\coloneqq \MMHM(\pt)$.  The category of graded-polarizable mixed Hodge structures is a full subcategory of $\MMHS$ via $z_{\pt,*}$.  

\subsubsection{Six functors for MMHMs}\label{sfs}
In this section, $X,Y,Z$ denote locally finite type schemes.  Excepting the definition of tensor products, the six functor formalism for categories of monodromic mixed Hodge modules is induced in a straightforward way by that of mixed Hodge modules, e.g. for $f\colon X\rightarrow Y$ a morphism of varieties we define
\[
f_*,f_!\colon\Db(\MMHM(X))\rightarrow \Db(\MMHM(Y))
\]
to be the functors induced by
\[
(f\times\id_{\AAA{1}})_*,(f\times\id_{\AAA{1}})_!\colon \Db(\MHM(X\times\AAA{1}))\rightarrow \Db(\MHM(Y\times\AAA{1}))
\]
respectively.  The functor $\VD_X\colon \MHM(X\times \AAA{1})\rightarrow \MHM(X\times \AAA{1})^{\opp}$ sends objects of $\mathscr{C}_X$ to objects of $\mathscr{C}_X^{\opp}$, inducing the functor $\Dmon_X\colon\MMHM(X)\rightarrow \MMHM(X)^{\opp}$.  We may omit the $\mon$ superscript when doing so is unlikely to cause confusion.

If $X$ and $Y$ are varieties over a variety $S$, and $\mathcal{F}\in\Ob(\Db(\MMHM(X)))$, $\mathcal{G}\in\Ob(\Db(\MMHM(Y)))$, then taking their external tensor product (as mixed Hodge modules) we obtain 
\[
\mathcal{J}\in\Ob(\Db(\MHM(Z\times\AAA{2}))),
\]
where $Z=X\times_S Y$.  Write $+\colon \AAA{2}\rightarrow \AAA{1}$ for the addition map.  We define
\[
\mathcal{F}\boxtimes_S \mathcal{G}\coloneqq (\id_Z\times +)_*\mathcal{J}\in\Ob(\Db(\MMHM(Z))).
\]
If $S=\pt$ one may show that this monoidal product is bi-exact \cite[Sec.4.2]{KS2}.  Let $X$ be a monoid over $S$.  I.e. there exist $S$-morphisms 
\[
\nu\colon X\times_S X\rightarrow X;\quad\quad
i\colon S\rightarrow X
\]
satisfying the standard axioms.  For $\mathcal{F},\mathcal{G}\in\Ob(\Db(\MMHM(X)))$, we define 
\[
\mathcal{F}\boxtimes_{\nu} \mathcal{G}\coloneqq \nu_*\!\left(\mathcal{F}\boxtimes_S\mathcal{G}\right)\in\Ob(\Db(\MMHM(X))).
\]
This monoidal product is symmetric if $\nu$ is commutative, and is exact if $\nu$ is finite.  If $\nu$ is commutative, we define
\[
\Sym_{\nu}(\mathcal{F})\coloneqq \bigoplus_{i\geq 0}\Sym_{\nu}^i(\mathcal{F})
\]
where $\Sym_{\nu}^i(\mathcal{F})$ is the $\mathfrak{S}_i$-invariant part of
\[
\underbrace{\mathcal{F}\boxtimes_{\nu}\ldots\boxtimes_{\nu}\mathcal{F}}_{i \textrm{ times}},
\]
and the $\mathfrak{S}_i$-action is defined via the isomorphism 
\begin{equation}
\label{toS}
\underbrace{\mathcal{F}\boxtimes_{\nu}\ldots\boxtimes_{\nu}\mathcal{F}}_{i \textrm{ times}}\cong e^*\!\left(\underbrace{\mathcal{F}\boxtimes\ldots\boxtimes\mathcal{F}}_{i \textrm{ times}}\right)
\end{equation}
where $e\colon X\times_S\cdots\times_S X\hookrightarrow X\times\cdots \times X$ is the natural embedding.  By \cite{MSS11} (see \cite[Sec.3.2]{QEAs}), the target of \eqref{toS} carries a natural $\mathfrak{S}_i$-action.  The functor $z_*\colon\MHM(X)\rightarrow \MMHM(X)$ from \S \ref{MMHMs} is a symmetric monoidal functor.
\subsubsection{MMHM complexes on stacks}
Let $\XX$ be a locally finite type Artin stack, which throughout the paper we assume may be written as a global quotient stack $\XX=X/G$, with $G$ an affine algebraic group.  We define the bounded derived category of mixed Hodge modules $\DfbMHM(\XX)$ as in \cite{emhm}, using $n$-acyclic covers as in \cite{BL94}.  

We give $\AAA{1}$ the trivial $G$-action.  We define the category $\DfbMMHM(\XX)$ to be the full subcategory of $\DfbMMHM((X\times\mathbb{G}_m)/G)$ containing complexes $\mathcal{F}$ satisfying the condition that for each inclusion $\iota\colon \pt \rightarrow X/G$ of a closed point, the cohomology sheaves of $(\iota\times\id_{\mathbb{G}_m})^*\mathcal{F}$ are locally constant.  Since, aside from some half Tate twists, almost all monodromic mixed Hodge modules in this paper will be monodromy-free, the reader may safely replace $\DfbMMHM((X\times\mathbb{G}_m)/G)$ with the category of $G$-equivariant mixed Hodge modules described in \cite{emhm} if they would prefer.  


The category $\DfbMMHM(\XX)$ admits a natural t structure for which the heart is the category $\MMHM(\XX)$ of monodromic mixed Hodge modules on $\XX$, which admits the faithful functor $\rat_{\XX}^{\mon}$ to the category of perverse sheaves on $\XX$.  We write $\bm{\tau}^{\leq n}$ and $\bm{\tau}^{\geq n}$ for the truncation functors defined with respect to this t structure.  If we write $\XX\cong X/G$ as a global quotient stack, then up to a cohomological shift by $\dim(G)$ this is the category of $G$-equivariant perverse sheaves on $X$.  

If $\XX$ is not necessarily connected we define
\[
\DbMMHM(\XX)\coloneqq \prod_{\XX'\in\pi_0(\XX)}\DfbMMHM(\XX).
\]

Let $\XX$ be a connected locally finite type Artin stack.  We define the category $\DlMMHM(\XX)$ by setting the objects to be $\mathbb{Z}$-tuples of objects $\mathcal{F}^{\leq n}\in\DfbMMHM(\XX)$ such that $\Ho^m(\mathcal{F}^{\leq n})=0$ for $m>n$, along with the data of isomorphisms $\bm{\tau}^{\leq n-1}\mathcal{F}^{\leq n}\cong\mathcal{F}^{\leq n-1}$.  We define $\DcMMHM(\XX)$ in the analogous way, by considering tuples of objects $\mathcal{F}^{\geq n}$ along with isomorphisms $\bm{\tau}^{\geq n}\mathcal{F}^{\geq n-1}\cong\mathcal{F}^{\geq n}$.  If $\XX$ is a disjoint union of locally finite type Artin stacks we define
\[
\DlMMHM(\XX)\coloneqq \prod_{\XX'\in \pi_0(\XX)}\DlMMHM(\XX')
\]
and likewise for $\DcMMHM(\XX)$.  For $f\colon \XX\rightarrow \YY$ a morphism of Artin stacks we define functors $f_*\colon \DlMMHM(\XX)\rightarrow \DlMMHM(\YY)$ and $f_!\colon \DcMMHM(\XX)\rightarrow \DcMMHM(\YY)$ in the natural way.  Alternatively, since we work throughout with global quotient stacks, these direct image functors may be defined via approximation to the Borel construction, as in \cite[Sec.2.3]{Da21a}.  

We define $\DbMHM(\XX),\DlMHM(\XX)$ etc. the same way, and consider these categories as subcategories of their monodromic counterparts via $z_{X,*}$.  The Verdier duality functor $\Dmon_X$ preserves $\DbMMHM(\XX)$ and $\DbMHM(\XX)$ (embedded via $z_*$), and interchanges $\DcMMHM(\XX)$ with $\DlMMHM(\XX)$.

\subsubsection{Weight filtrations}
If $X$ is a locally finite scheme, an object $\mathcal{F}\in\Ob(\MMHM(X))$ inherits a weight filtration from its weight filtration in $\MHM(X\times\AAA{1})$, and is called pure of weight $n$ if $\Gr^W_i\!(\mathcal{F})=0$ for $i\neq n$.  For $\XX$ a stack, an object $\mathcal{F}\in\DlMMHM(\XX)$ is called \textbf{pure} if $\Ho^i(\mathcal{F})$ is pure of weight $i$ for every $i$.  An object of $\DlMMHM(\XX)$ or $\DcMMHM(\XX)$ is pure if its pullback along a smooth atlas is pure.  The following theorem, due to Saito, plays a crucial role in what follows, and is the lift of the BBDG decomposition theorem to mixed Hodge modules that we will use.
\begin{theorem}[\cite{Saito90}]
\label{SDC}
Let $\XX$ be a connected locally finite type quotient stack, and let $p\colon \XX\rightarrow \YY$ be a projective morphism of stacks.  Let $\mathcal{F}\in\DlMMHM(\XX)$ be pure,  and assume moreover that $\mathcal{F}\cong \mathcal{G}\otimes \LLL^{m/2}$ for some $m$, and some pure mixed Hodge module complex $\mathcal{G}\in\DlMHM(\XX)$.   Then there is an isomorphism $p_*\mathcal{F}\cong\Ho(p_*\mathcal{F})$, and each $\Ho^n(p_*\mathcal{F})$ can further be decomposed
\[
\Ho^n(p_*\mathcal{F})\cong \bigoplus_{i\in S_n}\ICS_{\ol{Z_i}}(\underline{\mathcal{F}}_i)\otimes \LLL^{m/2}[m]
\]
with $Z_i\subset \XX$ locally closed, smooth, connected substacks, and $\ICS_{\ol{Z_i}}(\underline{\mathcal{F}}_i)$ intermediate extensions of pure weight $n-2m$ variations of mixed Hodge structure on them.  
\end{theorem}
\begin{proof}
Since both the functors $\Ho$ and $p_*$ commute with $-\otimes \LLL^{m/2}$, it is sufficient to prove the proposition under the assumption that $m=0$.  Since $p_*$ has cohomological amplitude bounded below, it is sufficient to prove the proposition under the condition that $\mathcal{F}\in\DfbMMHM(\XX)$, i.e. $\mathcal{F}$ has cohomological amplitude $M\in\mathbb{N}$.  Recall that we assume that $\YY=Y/G$ and $\XX=X/G'$ are  global quotient stacks, with $G$ and $G'$ affine algebraic groups.  After possibly replacing $Y$ with $Y\times G'$ and $X$ with $X\times G$ we may assume that $G=G'$.  Embed $G\subset \Gl_m$ for some $m$.  Then pick $N\gg 0$ and let $U\subset \Hom(\CC^N,\CC^m)$ be the open subvariety of surjective morphisms; an object of $\DfbMMHM(\XX)$ with cohomological amplitude bounded above by $M$ is given by its pullback along $X\times_G U$, along with descent data, and similarly for its direct image along $p$ (see \cite{emhm} for details).  We reduce, then, to the case in which $\XX$ and $\YY$ are varieties and $\mathcal{F}\in \DfbMHM(\XX)$.  Now this is precisely Saito's version of the decomposition theorem \cite{Saito90}.
\end{proof}

\begin{remark}
We suspect that the above decomposition theorem remains true without the strong assumption that $\mathcal{F}\cong \mathcal{G}\otimes \LLL^{m/2}$ with $\mathcal{G}$ monodromy-free.  The above version is all that we need in this paper, so we leave questions regarding this generalisation to further study.
\end{remark}
\begin{example}
For the uninitiated, we calculate the weight filtrations on a couple of objects in $\MMHM(\pt)$.  Consider the morphism 
\[
p\colon \mathbb{C}^*\rightarrow \mathbb{C}^*;\quad\quad
t\mapsto t^2
\]
and the constant weight one mixed Hodge module $\mathcal{P}=\underline{\mathbb{Q}}_{\mathbb{C}^*}[1]$.  Let $j\colon \mathbb{C}^*\rightarrow \mathbb{A}^1$ be the inclusion.  Then $j_!p_*\mathcal{P}$ is clearly locally constant away from the origin, and so defines an object of $\MMHM(\pt)$.  We may write $p_*\mathcal{P}=\mathcal{R}_{1}\oplus \mathcal{R}_{-1}$ where $\mathcal{R}_m$ is a rank one local system on $\mathbb{C}^*$ with monodromy given by multiplication by $m$.  Then $j_!\mathcal{R}_{-1}=j_*\mathcal{R}_{-1}$ is the intermediate extension of $\mathcal{R}_{-1}$, and is a simple MHM of weight one.  On the other hand, from the exact sequence in $\MHM(\mathbb{A}^1)$
\[
0\rightarrow \underline{\mathbb{Q}}_0\rightarrow j_!\mathcal{R}_1\rightarrow \underline{\mathbb{Q}}_{\mathbb{Q}^1}[1]\rightarrow 0
\]
and the identity $\underline{\mathbb{Q}}_{\mathbb{Q}^1}=0$ in $\MMHM(\pt)$ we deduce that $j_!\mathcal{R}_1$ has weight zero.
\end{example}

\subsubsection{Intersection cohomology complexes}
\label{ICsec}

Let $\XX$ be a stack.  Then $\ul{\QQ}_{\XX}\in\Ob(\DfbMMHM(\XX))$ is defined by fixing $q^*\ul{\QQ}_{\XX}=\ul{\QQ}_X$ for all smooth morphisms $q\colon X\rightarrow \XX$ with $X$ a scheme, and using naturality of the isomorphisms $r^*\ul{\QQ}_Y\cong \underline{\mathbb{Q}}_X$ for $r\colon X\rightarrow Y$ a morphism of schemes.

Likewise, if $\XX$ is irreducible we define $\ICS_{\XX}(\QQ)$ by the property that for all smooth morphisms $q\colon X\rightarrow \XX$ there is a natural isomorphism $q^*\ICS_{\XX}(\QQ)\cong \ICS_{X}(\QQ)$, between the inverse image along $q$ and the intersection mixed Hodge module complex on $X$.  Note that unless $\XX$ is zero-dimensional, $\ICS_{\XX}(\QQ)$ is not a mixed Hodge module, but rather a complex with cohomology concentrated in degree $d=\dim(\XX)$.  This complex is pure, i.e. its $d$th cohomology mixed Hodge module is pure of weight $d$.  

Consider the morphism $s\colon \AAA{1}\xrightarrow{x\mapsto x^2}\AAA{1}$.  We define
\[
\LLL^{1/2}=\cone(\ul{\QQ}_{\AAA{1}}\rightarrow s_*\ul{\QQ}_{\AAA{1}})\in\Db(\MMHM(\pt)).
\]
This complex has cohomology concentrated in degree 1, and is pure.  Moreover there is an isomorphism $(\LLL^{1/2})^{\otimes 2}\cong\LLL$, justifying the notation.  We define
\begin{equation}
\label{nICdef}
\nIC_{\XX}\coloneqq \ICS_{\XX}(\QQ)\otimes\LLL^{-\dim(\XX)/2}.
\end{equation}
Since $\LLL^{1/2}$ is pure, this is a pure weight zero monodromic mixed Hodge module.

\subsubsection{G-equivariant MMHMs}
Assume that we have fixed an algebraic group $G$, and let $\XX=X/H$ be a global quotient stack, where an embedding $G\subset H$ is understood.  Examples relevant to this paper will be $\XX=\Mst^{G,\zeta\sst}(Q)$ or $\XX=\Msp^{G,\zeta\sst}(Q)$, defined in \S \ref{SCsec}.  We define 
\begin{align}
\label{nnICdef}\nnIC_{\XX}\coloneqq &\nIC_{\XX}\otimes\LLL^{-\dim(G)/2}.
\end{align}
The motivation for introducing the extra Tate twist in \eqref{nnICdef} alongside the one in \S \ref{ICsec} comes from the case $H=G$.  Thinking of the underlying complex of perverse sheaves for $\nnIC_{\XX}$ as a $G$-equivariant complex of perverse sheaves on $X$, the extra twist of \eqref{nnICdef} means that this complex is a genuine perverse sheaf (without shifting).

Continuing in the same vein, we shift the natural t structure on $\DlMMHM(\XX)$, defining truncation functors
\[
\bm{\tau}^{G,\leq i}\coloneqq \bm{\tau}^{\leq i-\dim(G)};\quad\quad
\bm{\tau}^{G,\geq i}\coloneqq \bm{\tau}^{\geq i-\dim(G)}.
\]
So for example if $X$ is an irreducible $G$-equivariant variety then $\Ho^{G,i}\!\left(\nnIC_{X/G}\right)\neq 0$ if and only if $i=0$, where the cohomology functor is with respect to the shifted t structure.  We denote by $\MMHM^G(\XX)$ the heart of this t structure (i.e. the shift by $\dim(G)$ of the usual t structure).

\subsubsection{Vanishing cycles}
Let $\XX$ be an algebraic stack 
and let $f\in\Gamma(\XX)$ be a regular function on it.  An integral part of Saito's theory is the construction of a functor $\phi_f[-1]\colon\MHM(\XX)\rightarrow \MHM(\XX)$ lifting the usual vanishing cycle functor $\varphi_f[-1]\colon\Perv(\XX)\rightarrow \Perv(\XX)$, in the sense that there is a natural equivalence $\rat_\XX  \phi_f\cong\varphi_f  \rat_\XX$.  There is a further lift $\phim{f}$ satisfying $\rat^{\mon}_\XX \phim{f}\cong \varphi_f \rat_\XX$, defined by
\[
\phim{f}\colon \MHM(\XX)\rightarrow \MMHM(\XX);\quad\quad
\mathcal{F}\mapsto j_!\phi_{f/u}(\XX\times\GG_m\rightarrow \XX)^*\mathcal{F}
\]
where $u$ is a coordinate for $\GG_m$ and $j\colon \XX\times\GG_m\rightarrow \XX\times\AAA{1}$ is the natural inclusion.  The vanishing cycle functor commutes with Verdier duality, i.e. by \cite{Sai89duality} there is a natural isomorphism of functors
\[
\phim{f}\VD_{\XX}\cong \VD_{\XX}^{\mon}\phim{f}\colon\MHM(\XX)\rightarrow \MMHM(\XX).
\]
Let $g\in\Gamma(\YY)$ be a regular function on the stack $\YY$.  Then there is a Thom--Sebastiani natural isomorphism \cite{Saito10}
\[
\phim{f}\boxtimes\phim{g}\rightarrow \left(\phim{f\boxplus g}(\bullet\boxtimes \bullet)\right)_{f^{-1}(0)\times g^{-1}(0)}\colon\MHM(\XX)\times\MHM(\YY)\rightarrow \MMHM(\XX\times\YY).
\]

\subsection{Quivers and their representations}
\label{qrepsec}
In this section we fix some notation regarding quiver representations.  By a quiver $Q$ we mean a pair of finite sets $Q_1$ and $Q_0$ (the arrows and vertices respectively) along with a pair of morphisms $s,t\colon Q_1\rightarrow Q_0$ taking each arrow to its source and target, respectively.  We say that $Q$ is symmetric if for every pair of vertices $i,j\in Q_0$ there are as many arrows $a$ satisfying $s(a)=i$ and $t(a)=j$ as there are arrows satisfying $s(a)=j$ and $t(a)=i$.  

We refer to elements $\dd\in\dvs$ as dimension vectors.  We denote the Euler form and its symmetrisation, respectively, on the set of dimension vectors by
\begin{align}
\label{Eulerform}
\chi_Q(\dd',\dd'')=\sum_{i\in Q_0}\dd'_i\dd''_i-\sum_{a\in Q_1}\dd'_{s(a)}\dd''_{t(a)};&&(\dd,\dd')_Q=\chi_Q(\dd,\dd')+\chi_Q(\dd',\dd).
\end{align}
If $Q$ is symmetric the Euler form $\chi_Q(\bullet,\bullet)$ is symmetric.

For $K$ a field, we denote by $K Q$ the free path algebra of $Q$ over $K$.  This algebra contains $\lvert Q_0\lvert $ mutually orthogonal idempotents $e_i$ for $i\in Q_0$, the ``lazy paths'' of length zero which begin and end at $i$.  We define the dimension vector $\udim(\rho)\in\dvs$ of a $K Q$-representation via $\udim(\rho)_i=\dim_K(e_i\cdot \rho)$.  Fixing $K=\CC$, if $W\in \CC Q_{\cyc}$ is a linear combination of cyclic words in $Q$, we define $\Jac(Q,W)$ as in \cite{ginz} to be the quotient of $\CC Q$ by the two-sided ideal generated by the noncommutative derivatives $\partial W/\partial a$ for $a\in Q_1$.
\subsubsection{Extra gauge group}
For each pair of (not necessarily distinct) vertices $i,j$ fix a complex vector space $V_{i,j}$ with basis the arrows from $i$ to $j$.  Set $\GQ{Q}\coloneqq \prod_{i,j\in Q_0}\Gl(V_{i,j})$.  Then $\GQ{Q}$ acts on $\AS_{\dd}(Q)$ via the isomorphism\footnote{Here we employ the standard abuse of notation, identifying vector spaces with their total spaces, considered as algebraic varieties.}
\[
\AS_{\dd}(Q)\cong\bigoplus_{i,j\in Q_0}V_{i,j}\otimes\Hom(\CC^{\dd_i},\CC^{\dd_j}).
\]
We fix a complex affine algebraic group $G$, and fix a homomorphism $G\rightarrow \GQ{Q}$.  We define
\[
\Gl_{\dd}\coloneqq\prod_{i\in Q_0}\Gl_{\dd_i};\quad\quad
\gl_{\dd}\coloneqq\prod_{i\in Q_0}\gl_{\dd_i};\quad\quad
\WT{\Gl}_{\dd}\coloneqq \Gl_{\dd}\times G.
\]
For $G$ an algebraic group we write $\HG\coloneqq\HO(\B G,\QQ)$.
\subsubsection{Stability conditions}
\label{SCsec}
By a \textbf{King stability condition} we mean a tuple $\zeta\in\QQ_+^{Q_0}$.  The slope of a nonzero dimension vector $\dd\in\dvs$ is defined by
\[
\mu^{\zeta}(\dd)=\frac{\zeta\cdot\dd}{\sum_{i\in Q_0}\dd_i}.
\]
We define the slope of a nonzero $KQ$-module by setting $\mu^{\zeta}(\rho)=\mu^{\zeta}(\udim(\rho))$.  For $\theta\in\QQ$ we define
\[
\Lambda_{\theta}^{\zeta}\coloneqq \{\dd\in\dvs\setminus\{0\}\colon \mu^{\zeta}(\dd)=\theta\}\cup\{0\}.
\]
A $K Q$-module $\rho$ is called $\zeta$\textit{-stable} if for all proper nonzero submodules $\rho'\subset \rho$ we have $\mu^{\zeta}(\rho')<\mu^{\zeta}(\rho)$, and is $\zeta$\textit{-semistable} if we have $\mu^{\zeta}(\rho')\leq \mu^{\zeta}(\rho)$.  We denote by
\[
\AS^{\zeta\sst}_{\dd}(Q)\subset \AS_{\dd}(Q)\coloneqq \prod_{a\in Q_1}\Hom(\CC^{\dd_{s(a)}},\CC^{\dd_{t(a)}})
\]
the open subvariety of $\zeta$-semistable $\CC Q$-modules.  We set
\[
\Mst^{G,\zeta\sst}_{\dd}(Q)\coloneqq \AS^{\zeta\sst}_{\dd}(Q)/\WT{\Gl}_{\dd}.
\]
If $G$ is trivial this stack is isomorphic to the stack of $\zeta$-semistable $\dd$-dimensional $\CC Q$-modules.  In \cite{King} King constructs $\Msp^{\zeta\sst}_{\dd}(Q)$, the coarse moduli space of $\zeta$-semistable $\dd$-dimensional $\CC Q$-representations, as a GIT quotient.  We define
\[
\Msp^{G,\zeta\sst}_{\dd}(Q)=\Msp^{\zeta\sst}_{\dd}(Q)/G,
\]
the stack theoretic quotient of this space by the residual $G$-action.  We denote by 
\[
\JH^{G}\colon\Mst^{G,\zeta\sst}(Q)\rightarrow \Msp^{G,\zeta\sst}(Q)
\]
the natural map.  At the level of points, this morphism takes a $\dd$-dimensional $\zeta$-semistable $\CC Q$-module $\rho$ with Jordan--H\"older filtration $0\subset \rho^{(1)}\subset\ldots\subset\rho^{(l)}\subset \rho$ (inside the category of semistable modules of fixed slope) to the associated polystable $\CC Q$-module $\bigoplus_{1\leq j\leq l}\rho^{(j)}/\rho^{(j-1)}$.

Given an algebra $A$, presented as a quotient of a free path algebra $\CC Q$ by some two-sided ideal $R$ preserved by $G$, we denote by $\Mst^{G,\zeta\sst}(A)$ the quotient by the $G$-action of the stack of $\zeta$-semistable $A$-modules, and by $\Mst^{G,\zeta\sst}_{\dd}(A)$ the substack of $\dd$-dimensional $G$-equivariant $A$-modules.  As above, we denote by $\Msp^{G,\zeta\sst}(A)$ the stack-theoretic quotient of the coarse moduli scheme by the residual $G$-action.

\subsubsection{Monoidal structure}
The stack $\Msp^{G,\zeta\sst}_{\theta}(Q)$ is a monoid in the category of stacks over $\B G$, via the morphism
\[
\oplus^G\colon\Msp^{G,\zeta\sst}_{\theta}(Q)\times_{\B G}\Msp^{G,\zeta\sst}_{\theta}(Q)\rightarrow \Msp^{G,\zeta\sst}_{\theta}(Q)
\]
taking a pair of $\zeta$-polystable $\CC Q$-modules to their direct sum.  This morphism is finite and commutative \cite[Lem.2.1]{Meinhardt14}, and so the monoidal product $\mathcal{F}\boxtimes_{\oplus^G}\mathcal{G}\coloneqq \oplus^G_*\!\left(\mathcal{F}\boxtimes_{\B G}\mathcal{G}\right)$ for $\mathcal{F},\mathcal{G}\in\DlMMHM(\Msp^{G,\zeta\sst}_{\theta}(Q))$ is bi-exact and symmetric.  The natural morphism $\Msp^{G,\zeta\sst}_{\theta}(Q)\times_{\B G}\Msp^{G,\zeta\sst}_{\theta}(Q)\rightarrow \Msp^{G,\zeta\sst}_{\theta}(Q)\times\Msp^{G,\zeta\sst}_{\theta}(Q)$ and the K\"unneth isomorphism induce the morphism $\HO(\mathcal{F})\otimes_{\HO_G} \HO(\mathcal{G})\rightarrow \HO(\mathcal{F}\boxtimes_{\oplus^G}\mathcal{G})$, making $\HO$ into a lax monoidal functor to the category of $\HO_G$-modules.

\subsubsection{Subscript conventions}
Throughout the paper, if $\mathbb{X}$ is some object that admits a decomposition with respect to dimension vectors $\dd\in\dvs$, we denote by $\mathbb{X}_{\dd}$ the subobject corresponding to the dimension vector $\dd$.  If $\mathcal{F}$ is a sheaf or mixed Hodge module defined on $\XX$, a stack that admits a decomposition indexed by dimension vectors, we denote by $\mathcal{F}_{\dd}$ its restriction to $\XX_{\dd}$.  Finally, if $f\colon\XX\rightarrow \YY$ is a morphism preserving natural decompositions of $\XX$ and $\YY$ indexed by dimension vectors, we denote by $f_{\dd}\colon\XX_{\dd}\rightarrow \YY_{\dd}$ the induced morphism.

If a stability condition $\zeta$ is fixed, we set $\XX_{\theta}=\coprod_{\dd\in\dvst}\XX_{\dd}$, and extend the conventions of the previous paragraph in the obvious way to objects admitting decompositions indexed by dimension vectors in $\dvst$, as well as morphisms that preserve these decompositions.

\subsubsection{Serre subcategories}
Throughout the paper, with a quiver $Q'$ fixed, $\Sp$ will be used to denote a Serre subcategory of the category of $\CC Q'$-modules, i.e. $\Sp$ is a full subcategory such that if
\[
0\rightarrow \rho'\rightarrow \rho\rightarrow\rho''\rightarrow 0
\]
is a short exact sequence of $\CC Q'$-modules then $\rho$ is an object of $\Sp$ if and only if $\rho'$ and $\rho''$ are.  We assume that $\Sp$ admits a geometric definition, in the sense that there is an inclusion of stacks
\[
\varpi\colon\Mst^{\Sp,G,\zeta\sst}(Q')\hookrightarrow \Mst^{G,\zeta\sst}(Q')
\]
which at the level of complex points is the inclusion of the set of objects of $\Sp$, and a corresponding inclusion $\varpi'\colon\Msp^{\SP,G,\zeta\sst}(Q')\hookrightarrow \Msp^{G,\zeta\sst}(Q')$ of coarse moduli spaces.

If the definition of an object $\mathcal{F}^{\SP}$ depends on a choice of some Serre subcategory $\SP$ of the category of $\CC Q'$-modules, for some quiver $Q'$, we omit the superscript $\SP$ as shorthand for the case in which we choose $\SP$ to be the entire category of $\CC Q'$-modules.

\subsection{Critical CoHAs}
\label{CCSec}
The CoHA associated to a quiver with potential was first introduced by Kontsevich and Soibelman in \cite[Sec.7]{KS2}, in order to categorify the theory of Donaldson--Thomas invariants arising in the study of 3--Calabi--Yau categories (see \cite{Thomas1} and then \cite{JoyceDT} and then references therein), and provide a mathematical framework for the study of algebras of BPS states initiated by Harvey and Moore \cite{HM98}.  We begin by recalling a generalisation of their construction, concentrating on the relative, or sheaf-theoretic version, studied in \cite{QEAs}.

For fixed quiver $Q$, Serre subcategory $\Sp$ of the category of $\CC Q$-modules, slope $\theta$, potential $W$, stability condition $\zeta$, and gauge group $G$, we set 
\begin{align*}
\HA_{Q,W,\theta}^{\SP,G,\zeta}\coloneqq &\varpi_*\varpi^!\phim{\WWW}\nnIC_{\Mst^{G,\zeta\sst}(Q)}\\
\rCoha_{Q,W,\theta}^{\Sp,G,\zeta}\coloneqq &\JH^G_*\HA_{Q,W,\theta}^{\Sp,G,\zeta}\cong \varpi'_*\varpi'^!\JH^G_*\HA_{Q,W,\theta}^{G,\zeta}\\
\HCoha_{Q,W,\theta}^{\Sp,G,\zeta}\coloneqq &\HO\!\left(\Mst^{G,\zeta\sst}(Q),\varpi_*\varpi^!\phim{\WWW}\nnIC_{\Mst^{G,\zeta\sst}(Q)}\right).
\end{align*}
\begin{assumption}
\label{3dassumption}
We will assume throughout that we have chosen $Q,W,\theta,\SP,G,\zeta$ so that $\HCoha_{Q,W,\theta}^{\Sp,G,\zeta}$ is a free $\HG$-module.  
\end{assumption}
\begin{remark}
Via the usual spectral sequence argument the purity of $\HCoha_{Q,W,\theta}^{\SP,\zeta}$ is a sufficient, but not necessary condition for Assumption \ref{3dassumption} to hold.  The assumption implies that the natural transformation $\HCoha_{Q,W,\theta}^{\Sp,G,\zeta}\otimes_{\HO_G}\HCoha_{Q,W,\theta}^{\Sp,G,\zeta}\rightarrow \HO\left(\JH^G_*\HA_{Q,W,\theta}^{\Sp,G,\zeta}\boxtimes_{\B G}\JH^G_*\HA_{Q,W,\theta}^{\Sp,G,\zeta}\right)$ is an isomorphism.  See \cite[Sec.9]{preproj} for more details.
\end{remark}

Given dimension vectors $\dd',\dd''\in \dvst$ with $\dd=\dd'+\dd''$ we define $\AS_{\dd',\dd''}^{\zeta\sst}(Q)\subset \AS^{\zeta\sst}_{\dd}(Q)$ to be the subset of linear maps preserving the $Q_0$-graded subspace $\CC^{\dd'}\subset \CC^{\dd}$, and we define $\Gl_{\dd',\dd''}\subset \Gl_{\dd}$ to be the subgroup preserving the same subspace.  We define $\pi'_1,\pi'_2,\pi'_3$ to be the natural morphisms from $\AS_{\dd',\dd''}^{\zeta\sst}(Q)$ to $\AS_{\dd'}^{\zeta\sst}(Q)$, $\AS_{\dd}^{\zeta\sst}(Q)$ and $\AS_{\dd''}^{\zeta\sst}(Q)$ respectively.  We define
\[
\Mst_{\dd',\dd''}^{G,\zeta\sst}(Q)\coloneqq \AS_{\dd',\dd''}^{\zeta\sst}(Q)/\left(\Gl_{\dd',\dd''}\times G\right).
\]
Finally we define $\Mst_{\theta}^{G ,\zeta\sst}(Q)_{(2)}$ to be the union of the stacks $\Mst_{\dd',\dd''}^{G,\zeta\sst}(Q)$ across all $\dd',\dd''\in\dvst$.  

Consider the commutative diagram
\begin{equation}
\label{3dc}
\xymatrix{
&\Mst_{\theta}^{G ,\zeta\sst}(Q)_{(2)}\ar[dl]_{\pi_1\times\pi_3\;\;}\ar[dr]^{\pi_2}\\
\Mst_{\theta}^{G ,\zeta\sst}(Q)\times_{\B G} \Mst_{\theta}^{G ,\zeta\sst}(Q)\ar[d]^{\JH^{G }\times_{\B G} \JH^{G }}&&\Mst_{\theta}^{G ,\zeta\sst}(Q)\ar[d]^{\JH^{G }}\\
\Msp_{\theta}^{G ,\zeta\sst}(Q)\times_{\B G} \Msp_{\theta}^{G ,\zeta\sst}(Q)\ar[rr]^-{\oplus^{G }}&&\Msp_{\theta}^{G ,\zeta\sst}(Q)\\
}
\end{equation}
where $\pi_1,\pi_2,\pi_3$ are induced by $\pi'_1\pi'_2,\pi'_3$ respectively.  Set
\[
\mathbb{A}=\Mst_{\theta}^{G ,\zeta\sst}(Q)\times_{\B G} \Mst_{\theta}^{G ,\zeta\sst}(Q);\quad\quad
\mathbb{B}=\Mst_{\theta}^{G ,\zeta\sst}(Q)_{(2)};\quad\quad
\mathbb{O}=\Mst_{\theta}^{G ,\zeta\sst}(Q).
\]
Via the Thom--Sebastiani isomorphism and the composition of appropriate Tate twists of the morphisms
\[
\varpi'_*\varpi'^!(\JH^G\times_{\B G}\JH^G)_*\phim{\WWW}\!\left(\ul{\QQ}_{\mathbb{A}}\rightarrow (\pi_1\times\pi_3)_*\ul{\QQ}_{\mathbb{B}}\right)
\]
and
\[
\varpi'_*\varpi'^!\JH^G_*\phim{\WWW}\Dmon_{\mathbb{O}}\!\left(\ul{\QQ}_{\mathbb{O}}\rightarrow \pi_{2,*}\ul{\QQ}_{\mathbb{B}}\right)
\]
we define the morphism
\begin{equation}
\label{stardef}
\star\colon\rCoha_{Q,W,\theta}^{\Sp,G,\zeta}\boxtimes_{\oplus^G}\rCoha_{Q,W,\theta}^{\Sp,G,\zeta}\rightarrow \rCoha_{Q,W,\theta}^{\Sp,G,\zeta}
\end{equation}
i.e. a multiplication operation on $\rCoha_{Q,W,\theta}^{\Sp,G,\zeta}$.  The proof that this operation is associative is standard, and is as in \cite[Sec.2.3]{KS2}.  Applying $\HO$ to this morphism we obtain a morphism
\begin{equation}
\label{RelHH}
\mult\colon\HCoha_{Q,W,\theta}^{\Sp,G,\zeta}\otimes_{\HG}\HCoha_{Q,W,\theta}^{\Sp,G,\zeta}\rightarrow \HCoha_{Q,W,\theta}^{\Sp,G,\zeta}
\end{equation}
and obtain a $\HG$-linear (associative) product on $\HCoha_{Q,W}^{\Sp,G,\zeta}$ by precomposing \eqref{RelHH} with the surjection
\[
\HCoha_{Q,W,\theta}^{\Sp,G,\zeta}\otimes\HCoha_{Q,W,\theta}^{\Sp,G,\zeta}\rightarrow\HCoha_{Q,W,\theta}^{\Sp,G,\zeta}\otimes_{\HG}\HCoha_{Q,W,\theta}^{\Sp,G,\zeta}.
\]
\subsubsection{$\psi$-twists}
\label{psitwists}
To state the PBW theorem, we need to slightly twist the multiplication on $\rCoha_{Q,W,\theta}^{\Sp,G,\zeta}$, as explained in \cite[Sec.1.6]{QEAs}.  First, we define the bilinear form
\[
\tau(\dd',\dd'')\coloneqq \chi_Q(\dd',\dd')\chi_Q(\dd'',\dd'')+\chi_Q(\dd',\dd'').
\]
Then $\tau(\dd,\dd)$ is even for all $\dd\in\mathbb{Z}^{Q_0}$, and so we can find a bilinear form $\psi$ on $\mathbb{Z}^{Q_0}$ such that 
\begin{align}
\label{psidef}
\psi(\dd',\dd'')+\psi(\dd'',\dd')=\tau(\dd',\dd'')&\quad\textrm{mod }2.
\end{align}
We define $\rCoha_{Q,W,\theta}^{\Sp,G,\zeta,\psi}$ to be the algebra object with the same underlying complex of MMHMs as $\rCoha_{Q,W,\theta}^{\Sp,G,\zeta}$, but with the modified product $\star'_{\dd',\dd''}=(-1)^{\psi(\dd',\dd'')}\star_{\dd',\dd''}$, where the LHS and RHS of this equation denote the restrictions of the respective products to $\rCoha_{Q,W,\dd'}^{\Sp,G,\zeta}\boxtimes_{\oplus^G}\rCoha_{Q,W,\dd''}^{\Sp,G,\zeta}$.  We define $\HCoha_{Q,W,\theta}^{\Sp,G,\zeta,\psi}$ via the same $\psi$-twist.
\subsection{The PBW theorem}
\label{PBWsec}
We next recall some fundamental results for critical CoHAs from 
\cite{QEAs}.  For ease of exposition we assume that $Q$ is symmetric, though for generic stability conditions more general results are results are stated more generally in \cite{QEAs}, to which we refer for a more leisurely presentation of what follows.

Firstly, there is an isomorphism $\JH^G_*\HA_{Q,W,\theta}^{G,\zeta}\cong\Ho\!\left(\JH^G_*\HA_{Q,W,\theta}^{G,\zeta}\right)$, meaning (see \S \ref{NaCs}) $\JH^G_*\HA_{Q,W,\theta}^{G,\zeta}$ is isomorphic to the direct sum of its (cohomologically shifted) cohomology MMHMs.  In other words \textit{one half} of the BBDG decomposition theorem \eqref{decompp1} holds. Moreover, 
\begin{equation}
\label{JHvanishing}
\bm{\tau}^{G,\leq 0}\!\left(\JH^G_*\HA_{Q,W,\theta}^{G,\zeta}\right)=0.  
\end{equation}
We thus have 
\begin{equation}
\label{decomp}
\rCoha_{Q,W,\theta}^{\Sp,G,\zeta}\cong\varpi'_*\varpi'^!\Ho\!\left(\JH^G_*\HA_{Q,W,\theta}^{G,\zeta}\right).
\end{equation}

Setting
\begin{equation}
\label{BPShdef}
\BPSh_{Q,W,\theta}^{\SP,G,\zeta}\coloneqq \varpi'_*\varpi'^!\bm{\tau}^{G,\leq 1}\JH^G_*\HA_{Q,W,\theta}^{G,\zeta}\otimes\LLL^{-1/2}
\end{equation}
for each $\dd\in\dvst$ there is an isomorphism
\begin{equation}
\label{BPSgive}
\BPSh_{Q,W,\dd}^{\SP,G,\zeta}\cong \begin{cases} \varpi'_*\varpi'^!\phim{\WW}\nnIC_{\Msp_{\dd}^{G,\zeta\sst}(Q)}& \textrm{if }\Msp_{\dd}^{\zeta\stab}(Q)\neq \emptyset\\
0& \textrm{otherwise.}\end{cases}
\end{equation}
We define the BPS cohomology
\[
\BPS_{Q,W,\theta}^{\SP,G,\zeta}\coloneqq \HO\!\left(\Msp^{G,\zeta\sst}_{\theta}(Q),\BPSh_{Q,W,\theta}^{\SP,G,\zeta}\right).
\]
Denote by $J_{\dd}$ the composition $\Mst_{\dd}^{G ,\zeta\sst}(Q)\rightarrow \B \Gl_{\dd}\xrightarrow{\B \mathrm{Det}}\B \CC^*$, and set $J^+_{\dd}\colon\Mst_{\dd}^{G ,\zeta\sst}(Q)\xrightarrow {\id\times J_{\dd}}\Mst_{\dd}^{G ,\zeta\sst}(Q)\times \B\CC^*$.    If $\mathcal{F}\in \DlMMHM(\pt)$ is a complex of monodromic mixed Hodge structures, and $\mathfrak{X}$ is a stack, we abuse notation and denote by $\mathcal{F}$ also the inverse image $(\mathfrak{X}\rightarrow \pt)^*\mathcal{F}$.  The morphisms $J^+_{\dd}$ induce an action of $\HO_{\CC^*}$ on $\rCoha_{Q,W,\theta}^{\SP,G,\zeta}$, i.e. a morphism $\rCoha_{Q,W,\theta}^{\SP,G,\zeta}\otimes  \HO_{\CC^*}\rightarrow \rCoha_{Q,W,\theta}^{\SP,G,\zeta}$, and this induces the morphism
\begin{equation}
\label{BCA}
\BPSh_{Q,W,\theta}^{\SP,G,\zeta}\otimes\HO(\B \CC^*,\QQ)_{\vir}\rightarrow \rCoha_{Q,W,\theta}^{\SP,G,\zeta}.
\end{equation}
Given an algebra $A$ and an $A$-bimodule $L$ we define
\[
\mathrm{T}_A(L)\coloneqq \bigoplus_{i\geq 0} \underbrace{L\otimes_A\cdots\otimes_A L}_{i\textrm{ times}}
\]
where the $i=0$ summand is the $A$-bimodule $A$.  Given an $A$-linear Lie algebra $\mathfrak{g}$, i.e. a Lie algebra $\mathfrak{g}$, with an $A$-action such that the Lie algebra map $\mathfrak{g}\otimes\mathfrak{g}\rightarrow\mathfrak{g}$ factors through the surjection $\mathfrak{g}\otimes\mathfrak{g}\rightarrow \mathfrak{g}\otimes_A\mathfrak{g}$
we define the $A$-linear universal enveloping algebra as the quotient algebra
\[
\UEA_A(\mathfrak{g})\coloneqq \mathrm{T}_A(\mathfrak{g})/\langle a\otimes b-b\otimes a-[a,b]_{\mathfrak{g}}\rangle.
\]
Likewise, if $N$ is an $A$-bimodule we define $\Sym_A(N)\coloneqq \mathrm{T}_A(\mathfrak{g})/\langle a\otimes b-b\otimes a\rangle$.  The main structural result for the critical CoHA is the following PBW theorem:
\begin{theorem}
\label{PBWtheorem}
The morphism 
\begin{equation}
\label{relPBW}
\Phi^{\SP,\psi}\colon \Sym_{\oplus^G}\!\left(\BPSh_{Q,W,\theta}^{\SP,G,\zeta}\otimes\HO(\B \CC^*,\QQ)_{\vir}\right)\rightarrow \rCoha_{Q,W,\theta}^{\SP,G,\zeta,\psi}
\end{equation}
obtained by combining \eqref{BCA} with the iterated CoHA multiplication map is an isomorphism in $\DlMMHM(\Msp^{\SP,G,\zeta\sst}_{\theta}(Q))$.  Moreover 
\[
\Ho(\Phi)\colon \Sym_{\oplus^G}\!\left(\BPSh_{Q,W,\theta}^{G,\zeta}\otimes\HO(\B \CC^*,\QQ)_{\vir}\right)\rightarrow \Ho(\rCoha_{Q,W,\theta}^{G,\zeta,\psi})
\]
is an isomorphism of algebra objects in $\DlMMHM(\Msp^{G,\zeta\sst}_{\theta}(Q))$, and $\HO(\Phi^{\SP,\psi})$ is an isomorphism of $\dvst$-graded monodromic mixed Hodge structures
\begin{equation}
\label{absPBW}
\Sym_{\HG}\!\left(\BPS_{Q,W,\theta}^{\SP,G,\zeta}\otimes \HO(\B \CC^*,\QQ)_{\vir}\right)\rightarrow \HCoha^{\SP,G,\zeta}_{Q,W,\theta}.
\end{equation}
\end{theorem}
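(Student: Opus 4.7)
The plan is to reduce to the PBW theorem for critical cohomological Hall algebras proved in \cite{QEAs}, extending through the methods of \cite{preproj1} to accommodate the extra gauge group $G$. The construction of $\Phi$ proceeds in stages. The decomposition \eqref{decomp} already tells us that $\rCoha_{Q,W,\theta}^{\SP,G,\zeta}$ splits as a direct sum $\bigoplus_i \varpi'_*\varpi'^!\Ho^i(\JH^G_*\HA_{Q,W,\theta}^{G,\zeta})[-i]$, so the key geometric input is the relative purity of $\JH^G_*\HA_{Q,W,\theta}^{G,\zeta}$. The morphism \eqref{BCA} then embeds $\BPSh_{Q,W,\theta}^{\SP,G,\zeta}\otimes\HO(\B\CC^*,\QQ)_{\vir}$ into $\rCoha_{Q,W,\theta}^{\SP,G,\zeta}$, and iterating the CoHA multiplication $\star$ from \eqref{stardef} produces a morphism out of the tensor algebra $\mathrm{T}_{\oplus^G}(\bullet)$; the commutativity of $\oplus^G$ together with the braiding \eqref{toS} (combined with the Koszul sign rule on the cohomological grading of $\HO(\B\CC^*,\QQ)_{\vir}$) lets us descend to $\Sym_{\oplus^G}$.

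The bulk of the work lies in proving that $\Phi$ is an isomorphism of complexes in $\DlMMHM(\Msp^{\SP,G,\zeta\sst}_{\theta}(Q))$. The essential mechanism is the support lemma of \cite{QEAs}: each $\Ho^i(\JH^G_*\HA_{Q,W,\theta}^{G,\zeta})$ is supported on the image under $\oplus^G$ of products of stable loci, and one shows by induction on the dimension vector, using strata indexed by polystable types, that $\Ho^i$ decomposes as the $i$-th symmetric-type convolution power of $\BPSh\otimes\HO(\B\CC^*,\QQ)_{\vir}$. Concretely, on each stratum the restriction is computed via the Thom--Sebastiani isomorphism, which turns the restriction of $\phim{\WWW}\nnIC$ along $\oplus^G$ into an external product of BPS summands tensored with the $\B\CC^*$-factor controlling the polystable multiplicities. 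Matching Tate twists is forced by the normalization \eqref{nICdef}, \eqref{nnICdef} and the half Tate twist in the definition \eqref{BPShdef}.

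The upgrade to a morphism of algebra objects for $\Ho(\Phi)$ follows tautologically from the construction, since $\Phi$ is built from iterated applications of $\star$ and $\star$ is associative; the fact that $\Ho(\rCoha)$ then splits as $\bigoplus \Ho^i[-i]$ in $\DlMMHM$ makes this a statement in the heart $\MMHM^G(\Msp^{G,\zeta\sst}_{\theta}(Q))$. Applying $\HO$ and invoking Assumption \ref{3dassumption} (freeness of $\HCoha$ over $\HG$) lets $\HO$ commute with $\Sym_{\oplus^G}$ and turns the sheaf-theoretic statement into \eqref{absPBW}.

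The main obstacle is the verification in the second paragraph: that the higher perverse pieces $\Ho^i$ really are the symmetric convolution powers of the lowest piece, rather than merely having the same character. In the non-equivariant case this is the heart of \cite{QEAs}, requiring both the relative integrality / support argument and an explicit identification of the iterated multiplication morphism on each stratum. In the $G$-equivariant setting one must further check that the $G$-action does not obstruct the eigenspace decomposition with respect to the $\CC^*$-scaling action on $\phim{\WWW}$ that feeds into the support lemma; this is precisely where Assumption \ref{3dassumption} is used, and where \cite{preproj1} supplies the necessary framework for working with $G$-equivariant monodromic mixed Hodge modules on the global quotient stacks $\Mst^{G,\zeta\sst}_{\theta}(Q)$ and $\Msp^{G,\zeta\sst}_{\theta}(Q)$.
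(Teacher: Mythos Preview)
Your proposal is essentially correct in spirit, and indeed it sketches the strategy of \cite{QEAs} together with the $G$-equivariant upgrade of \cite{preproj1}. Note, however, that the paper does not actually prove Theorem~\ref{PBWtheorem}: it is stated as a recall of known results, with the footnote ``For the extension to the $G$-equivariant case considered here, we refer the reader to \cite{preproj1}.'' So there is no paper-proof to compare against; your sketch is a reasonable outline of what those references contain.

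One small correction: you say that Assumption~\ref{3dassumption} (freeness over $\HG$) is what ``lets $\HO$ commute with $\Sym_{\oplus^G}$'' to deduce \eqref{absPBW}. This is not quite right. The passage from the relative isomorphism \eqref{relPBW} to the absolute isomorphism \eqref{absPBW} goes via applying $\HO$ to an isomorphism in $\DlMMHM$, which is automatic; the role of the freeness assumption is rather in ensuring that the decomposition \eqref{decomp} behaves well after applying $\varpi'_*\varpi'^!$ and $\HO$, and more fundamentally in guaranteeing that the $G$-equivariant version of the integrality/support argument goes through (see the discussion in \cite{preproj1}). The commutation of $\HO$ with $\Sym_{\oplus^G}$ comes instead from $\oplus^G$ being a finite morphism of monoids, so that $\boxtimes_{\oplus^G}$ is exact and hypercohomology of a convolution product is the tensor product of hypercohomologies over $\HG$.
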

\begin{remark}
The above Theorem was proved in \cite{QEAs} for $G$ trivial, although extending the proof to general $G$ is routine: let $q\colon \Msp^{G,\zeta\sst}_{\theta}(Q)\rightarrow \Msp^{\zeta\sst}_{\theta}(Q)$ be the quotient map.  Then it is proved in \cite{QEAs} that $q^*\Phi^{\psi}$ is an isomorphism in $\DlMMHM(\Msp^{G,\zeta\sst}_{\theta}(Q))$, and that $\Ho(q^*\Phi^{\psi})$ is an isomorphism of algebra objects.  The case for general $G$ follows, since the statement that a given morphism in $\DlMMHM(\Msp^{G,\zeta\sst}_{\theta}(Q))$ is an isomorphism can be checked on a smooth cover, as can the statement that $\Ho(\alpha)=\Ho(\alpha')$, for two morphisms $\alpha,\alpha'\colon L\boxtimes_{\oplus^G}L\rightarrow \Ho(\rCoha_{Q,W,\theta}^{G,\zeta})$, where $L$ is the LHS of \ref{relPBW}.  The case for general $\SP$ then follows by restriction.
\end{remark}
By \eqref{decomp} the monodromic mixed Hodge structure $\HCoha_{Q,W,\theta}^{\SP,G,\zeta}$ carries a filtration defined by 
\begin{equation}
\label{OPdef}
\mathfrak{P}_{i}\!\HCoha_{Q,W,\theta}^{\SP,G,\zeta}\coloneqq \HO\!\left(\Msp^{G,\zeta\sst}_{\theta}(Q),\varpi'_*\varpi'^!\bm{\tau}^{G,\leq i}\JH^G_*\HA_{Q,W,\theta}^{\SP,G,\zeta}\right).
\end{equation}
We define\footnote{A priori, the underlying object of $\mathfrak{g}_{Q,W,\theta}^{\SP,G,\zeta,\psi}$ does not depend on $\psi$, although the Lie algebra structure might.}
\begin{align*}
\mathfrak{g}_{Q,W,\theta}^{\SP,G,\zeta,\psi}\coloneqq &\mathfrak{P}_{1}\!\HCoha_{Q,W,\theta}^{\SP,G,\zeta}
\\
\cong&\HO\!\left(\Msp^{\SP,G,\zeta\sst}_{\theta}(Q),\varpi'_*\varpi'^!\!\Ho^{G,1}\!\left(\JH^G_*\HA_{Q,W,\theta}^{\SP,G,\zeta}\right)\right)\\
\cong&\BPS_{Q,W,\theta}^{\SP,G,\zeta}\otimes\LLL^{1/2}.
\end{align*}

By Theorem \ref{PBWtheorem} the associated graded algebra $\Gr_{\mathfrak{P}}\!\HCoha_{Q,W,\theta}^{\SP,G,\zeta,\psi}$ is supercommutative, and so $\mathfrak{g}_{Q,W,\theta}^{\SP,G,\zeta,\psi}$ is closed under the commutator bracket in $\HCoha_{Q,W,\theta}^{\SP,G,\zeta,\psi}$.  The resulting Lie algebra is called the \textbf{BPS Lie algebra} \cite{QEAs}.  Since we have an inclusion of Lie algebras $\mathfrak{g}_{Q,W,\theta}^{\SP,G,\zeta,\psi}\hookrightarrow  \HCoha_{Q,W,\theta}^{\SP,G,\zeta,\psi}$ where the Lie bracket on the target is the commutator, by the universal property of the enveloping algebra there is a unique morphism of algebras 
\[
\tau\colon \UEA_{\HG}(\mathfrak{g}_{Q,W,\theta}^{\SP,G,\zeta,\psi})\rightarrow \HCoha_{Q,W,\theta}^{\SP,G,\zeta,\psi}
\]
extending this embedding.

\begin{proposition}
\label{UEApart}
The universal map $\tau$ is an inclusion of algebras.
\end{proposition}
\begin{proof}
The projection $\BPS_{Q,W,\theta}^{\SP,G,\zeta}\otimes\HO(\B \CC^*,\QQ)_{\vir}\rightarrow \mathfrak{g}_{Q,W,\theta}^{\SP,G,\zeta,\psi}$ induces a morphism
\[
\pi\colon \Sym_{\HG}\!\left(\BPS_{Q,W,\theta}^{\SP,G,\zeta}\otimes\HO(\B \CC^*,\QQ)_{\vir}\right)\rightarrow \Sym_{\HG}\!\left( \mathfrak{g}_{Q,W,\theta}^{\SP,G,\zeta,\psi} \right)
\]
which is a left inverse to the morphism
\[
\Sym_{\HG}\!\left( \mathfrak{g}_{Q,W,\theta}^{\SP,G,\zeta,\psi} \right)\rightarrow \Sym_{\HG}\!\left(\BPS_{Q,W,\theta}^{\SP,G,\zeta}\otimes\HO(\B \CC^*,\QQ)_{\vir}\right)
\]
induced by the inclusion $\mathfrak{g}_{Q,W,\theta}^{\SP,G,\zeta,\psi}\hookrightarrow \BPS_{Q,W,\theta}^{\SP,G,\zeta}\otimes\HO(\B \CC^*,\QQ)_{\vir}$.  We obtain the commutative diagram of $\dvst$-graded cohomologically graded mixed Hodge structures
\[
\xymatrix{
\UEA_{\HG}(\mathfrak{g}_{Q,W,\theta}^{\SP,G,\zeta,\psi})\ar[r]^{\tau}&\HCoha_{Q,W,\theta}^{\SP,G,\zeta,\psi}\ar[r]^-{\Phi^{-1}}&\Sym_{\HG}\!\left(\mathfrak{g}_{Q,W,\theta}^{\SP,G,\zeta,\psi}\otimes\HO(\B \CC^*,\QQ)\right)\ar[d]^\pi\\
\ar[u]^{\mathrm{PBW}}_\cong\Sym_{\HG}\!\left(\mathfrak{g}_{Q,W,\theta}^{\SP,G,\zeta,\psi}\right)\ar[rr]^=&&\Sym_{\HG}\!(\mathfrak{g}_{Q,W,\theta}^{\SP,G,\zeta,\psi})
}
\]
so that $\tau$ is indeed injective.
\end{proof}

\section{Preprojective CoHAs}
\label{pcsec}
\subsection{The 2-dimensional approach}
\label{2desc}
Given a quiver $Q$ we define the doubled quiver $\ol{Q}$ by setting $\ol{Q}_0=Q_0$ and $\ol{Q}_1=Q_1\coprod Q_1^{\opp}$, where $Q_1^{\opp}$ is the set $\{a^*\colon a\in Q_1\}$, and we set $s(a^*)=t(a)$ and $t(a^*)=s(a)$.  We define the \textbf{preprojective algebra} as in the introduction: $\Pi_Q\coloneqq \CC \ol{Q}/ \langle \sum_{a\in Q_1}[a,a^*]\rangle$.  For each $i,j\in Q_0$ let $V_{i,j}$ be the vector space with basis given by the set of arrows from $i$ to $j$.  We set
\begin{align}
\Gl'_{\edge}\coloneqq\prod_{i\neq j}\Gl(V_{i,j}) \times \prod_{i}\Symp(V_{i,i});\quad\quad
\Gl_{\edge}\coloneqq \Gl'_{\edge}\times \CC^*_{\hbar}\label{Gledge}
\end{align}
where $\CC^*_{\hbar}$ is a copy of $\CC^*$.  Decomposing
\begin{align*}
\AS_{\dd}(\ol{Q})=&\prod_{i\neq j} \left(V_{i,j}\otimes \Hom(\CC^{\dd_{s(a)}},\CC^{\dd_{t(a)}})\right)^*\times \left(V_{i,j}\otimes \Hom(\CC^{\dd_{s(a)}},\CC^{\dd_{t(a)}})\right)\\ \times &\prod_i ((V_{i,i}\oplus V_{i,i}^*)\otimes\Hom(\CC^{\dd_i},\CC^{\dd_i}))
\end{align*}
it follows that $\AS_{\dd}(\ol{Q})$ carries an action of $\Gl'_{\edge}$ preserving the natural symplectic form.  We let $\CC^*_{\hbar}$ act by scaling all of $\AS_{\dd}(\ol{Q})$, so that it acts with weight two on the symplectic form.  In the following, we assume that the gauge group action $G\rightarrow \GQ{\ol{Q}}$ factors through the morphism $\Gl_{\edge}\rightarrow \GQ{\ol{Q}}$ that we have defined here.

We denote by 
\[
\oplus^G_{\red}\colon \Msp_{\theta}^{G,\zeta\sst}(\ol{Q})\times_{\B G}\Msp_{\theta}^{G,\zeta\sst}(\ol{Q})\rightarrow \Msp_{\theta}^{G,\zeta\sst}(\ol{Q})
\]
the morphism taking a pair of polystable $\CC\ol{Q}$-modules to their direct sum.
\subsubsection{Serre subcategories}
Let $\Sp$ be a Serre subcategory of the category of $\CC\ol{Q}$-modules  As in \cite{BSV17} we are mostly interested in the following examples:
\begin{enumerate}
\item
$\mathcal{N}$ is the full subcategory of $\CC\ol{Q}$-modules $\rho$ for which there is a flag of $Q_0$-graded subspaces $0\subset L_1\ldots\subset L$ of the underlying vector space of $\rho$ such that $\rho(a)(L^i)\subset L^{i-1}$ and $\rho(a^*)(L^i)\subset L^{i-1}$ for every $a\in Q_1$.
\item
$\mathcal{SN}$ is the full subcategory of $\CC\ol{Q}$-modules $\rho$ for which there is a flag of $Q_0$-graded subspaces as above, satisfying the weaker condition that $\rho(a)(L^i)\subset L^{i-1}$ and $\rho(a^*)(L^i)\subset L^i$.
\item
$\mathcal{SSN}$ is the full subcategory of $\CC\ol{Q}$-modules satisfying the same conditions as for $\SN$, but with the added condition that each of the subquotients $L^i/L^{i-1}$ is supported at a single vertex.
\end{enumerate}

Let 
\[
\varpi_{\red}\colon \Mst^{\Sp,G,\zeta\sst}(\ol{Q})\rightarrow \Mst^{G,\zeta\sst}(\ol{Q});\quad\quad
\varpi'_{\red}\colon \Msp^{\Sp,G,\zeta\sst}(\ol{Q})\rightarrow \Msp^{G,\zeta\sst}(\ol{Q})
\]
denote the inclusion of the stack, or respectively the stack-theoretic quotient of the coarse moduli space, of modules in $\Sp$.  We denote by
\[
\iota \colon \Mst^{G,\zeta\sst}(\Pi_Q)\rightarrow \Mst^{G,\zeta\sst}(\ol{Q});\quad\quad
\iota'\colon \Msp^{G,\zeta\sst}(\Pi_Q)\rightarrow \Msp^{G,\zeta\sst}(\ol{Q})
\]
the natural closed embeddings.  Fix a slope $\theta\in\QQ$.  We define
\begin{align*}
\HA_{\Pi_Q,\dd}^{\SP,G,\zeta}\coloneqq&\varpi_{\red,*}\varpi_{\red}^!\iota_*\iota^!\ul{\QQ}_{\Mst_{\dd}^{G,\zeta\sst}(\ol{Q})}\otimes\LLL^{\chi_{\WT{Q}}(\dd,\dd)/2} \\
\HCoha_{\Pi_Q,\dd}^{\Sp,G,\zeta}\coloneqq &\HO\!\left(\Mst_{\dd}^{G,\zeta\sst}(\ol{Q}),\HA_{\Pi_Q,\dd}^{\SP,G,\zeta}\right)\\
\HCoha_{\Pi_Q,\theta}^{\Sp,G,\zeta}\coloneqq &\bigoplus_{\dd\in\dvst}\HCoha_{\Pi_Q,\dd}^{\Sp,G,\zeta}.
\end{align*}
\begin{remark}
Since $\chi_{\WT{Q}}(\cdot,\cdot)$ only takes even values, these are genuine mixed Hodge structures, as opposed to monodromic mixed Hodge structures.  
\end{remark}

By (shifted) self Verdier duality of $\ul{\QQ}_{\Mst_{\dd}^{G,\zeta\sst}(\ol{Q})}$, there is an isomorphism
\[
\HCoha_{\Pi_Q,\dd}^{\Sp,G,\zeta}\cong \HO^{\BM}(\Mst^{\SP,G,\zeta\sst}_{\dd}(\Pi_Q),\QQ)\otimes\LLL^{-\dim(G)-\chi_Q(\dd,\dd)}.
\]

\begin{assumption}
\label{2dassumption}
We will always choose $\SP$ so that $\HCoha_{\Pi_Q,\theta}^{\Sp,G,\zeta}$ is free as a $\HG$-module.
\end{assumption}
It is a consequence of purity that this assumption holds if we set $\SP$ to be any of $\CC\ol{Q}\lmod$, $\mathcal{N}$, $\mathcal{SN}$ or $\mathcal{SSN}$ --- see \cite{preproj}, \cite{ScVa20} for further details.
\sssct
The $\dvst$-graded mixed Hodge structure $\HCoha_{\Pi_Q,\theta}^{\Sp,G,\zeta}$ carries a Hall algebra structure introduced by Schiffmann and Vasserot in the case of the Jordan quiver \cite{ScVa13}.  It is defined in terms of correspondences.  Since the algebra defined this way is isomorphic \cite{RS17,YZ18} to the critical CoHA introduced in \S \ref{CCSec} we refrain from giving this definition, instead referring the reader to \cite[Sec.4]{ScVa13} and \cite{YZ18} for details.  We define this algebra structure below, incorporating a sign twist as in \S \ref{psitwists}, so that we can prove the PBW theorem for this algebra.

Likewise if we set 
\[
\rCoha_{\Pi_Q,\dd}^{\Sp,G,\zeta}\coloneqq \JH_{\red}^G\varpi_{\red,*}\varpi_{\red}^!\iota_*\iota^!\ul{\QQ}_{\Mst_{\dd}^{G,\zeta\sst}(\ol{Q})}\otimes\LLL^{\chi_{\WT{Q}}(\dd,\dd)/2};\quad\quad
\rCoha_{\Pi_Q,\theta}^{\Sp,G,\zeta}\coloneqq \bigoplus_{\dd\in\dvst}\rCoha_{\Pi_Q,\dd}^{\Sp,G,\zeta}
\]
the correspondence diagrams that are used to define the Hall algebra structure on $\HCoha_{\Pi_Q,\theta}^{\Sp,G,\zeta}$ can be used to define an algebra structure on $\rCoha_{\Pi_Q,\theta}^{\Sp,G,\zeta}$ with respect to the monoidal structure $\boxtimes_{\oplus_{\red}^G}$.  Since this algebra object will again be isomorphic to the direct image of an algebra object $\rCoha^{\WT{\SP},G,\zeta,\chi}_{\WT{Q},\WT{W},\theta}$ for $\WT{\SP},\WT{Q},\WT{W}$ given in \S \ref{3desc} we do not recall the definition here, instead relying on the next section to provide us with a definition of the algebra structure.

\subsection{The 3-dimensional description}
\label{3desc}
For the description of the preprojective CoHA in terms of vanishing cycles, we introduce a particular class of quivers with potential.   We start with a quiver $Q$ (not assumed to be symmetric).  Then we define the tripled quiver $\WT{Q}$ to be the quiver $\ol{Q}$, with an additional set of edge-loops $\Omega=\{\omega_i\colon i\in Q_0\}$ added to the set of arrows $\ol{Q}_1$, where $s(\omega_i)=t(\omega_i)=i$.  The quiver $\WT{Q}$ is symmetric.  For all $\dd',\dd''\in\mathbb{N}^{Q_0}$ we have the equality
\begin{align}
\chi_Q(\dd',\dd'')+\chi_Q(\dd'',\dd')=\chi_{\tilde{Q}}(\dd',\dd')\chi_{\tilde{Q}}(\dd'',\dd'')+\chi_{\tilde{Q}}(\dd',\dd'')& \quad\textrm{modulo }2
\end{align}
so that $\chi_Q$ satisfies the equation \eqref{psidef} defining allowable choices of $\psi$-twist in \S \ref{psitwists}.  In what follows we use the superscript $\chi$ to denote this choice of bilinear form $\psi$.

We extend the action of $\Gl_{\edge}$ to an action on
\begin{equation}
\label{WTdecomp}
\AS_{\dd}(\WT{Q})\cong \AS_{\dd}(\ol{Q})\times\prod_{i\in Q_0}\gl_{\dd_i}
\end{equation}
by letting $\Gl'_{\edge}$ act trivially on $\prod_{i\in Q_0}\gl_{\dd_i}$ and letting $\CC_{\hbar}^*$ act on $\prod_{i\in Q_0}\gl_{\dd_i}$ with weight $-2$.  In what follows, we assume that the $G$-action, defined by some morphism $G\rightarrow \GQ{\WT{Q}}$, factors through the inclusion of $\Gl_{\edge}$.

We fix
\begin{equation}
\label{WTWdef}
\WT{W}=\sum_{a\in Q_1}[a,a^*]\sum_{i\in Q_0}\omega_i.
\end{equation}
The function $\Tr(\WT{W})$ is $\Gl_{\edge}$-invariant, and thus induces a function $\TTTr(\WT{W})$ on $\Mst^{G}(\WT{Q})$.

We denote by
\[
r\colon\Mst^{G}(\WT{Q})\rightarrow \Mst^{G}(\ol{Q})
\]
the forgetful map taking a $\CC \WT{Q}$-module to its underlying $\CC\ol{Q}$-module.  This morphism is the projection map from the total space of a vector bundle.  The function $\mathfrak{T}r(\WT{W})$ has weight one with respect to the function that scales along the fibres of $r$.

Recall that we denote by $\iota\colon \Mst^{G}(\Pi_Q)\hookrightarrow \Mst^G(\ol{Q})$ the inclusion of the substack of representations satisfying the preprojective algebra relations.  Then $\iota$ is also the inclusion of the set of points $x$ for which $r^{-1}(x)\subset\mathfrak{T}r(\WT{W})^{-1}(0)$.  By \cite[Thm.A.1]{Da13} there is a natural isomorphism, the ``dimensional reduction isomorphism'':
\begin{equation}
\label{DRI}
\iota_*\iota^!\rightarrow r_*\phim{\mathfrak{T}r(\WT{W})}r^*.
\end{equation}
\sssct
Let $\Sp$ be a Serre subcategory of the category of $\CC \ol{Q}$-modules.  We denote by $\WT{\SP}$ the Serre subcategory of the category of $\CC \WT{Q}$-modules $\rho$ satisfying the condition that the underlying $\CC\ol{Q}$-module of $\rho$ is an object of $\SP$.  As in \S \ref{CCSec} we define
\begin{align*}
\HA^{\WT{\SP},G,\zeta}_{\WT{Q},\WT{W},\theta}\coloneqq&\varpi_*\varpi^!\phim{\mathfrak{T}r(\WT{W})}\nnIC_{\Mst^{G,\zeta\sst}(\WT{Q})}\\
\rCoha^{\WT{\SP},G,\zeta,\chi}_{\WT{Q},\WT{W},\theta}\coloneqq&\JH^G_{*}\HA^{\WT{\SP},G,\zeta}_{\WT{Q},\WT{W},\theta}\\
\HCoha^{\WT{\SP},G,\zeta,\chi}_{\WT{Q},\WT{W},\theta}\coloneqq&\HO\!\left(\Mst^{G,\zeta\sst}(\WT{Q}),\HA^{\WT{\SP},G,\zeta}_{\WT{Q},\WT{W},\theta}\right)
\end{align*}
where the last two objects carry algebra structures via the diagram of correspondences \eqref{3dc}, with $\psi$-twist provided by $\chi$.  \subsubsection{Stability conditions and dimensional reduction}
Via the dimensional reduction isomorphism \eqref{DRI} there is a natural isomorphism
\[
r_*\HA^{G}_{\WT{Q},\WT{W},\dd}\cong\iota_*\iota^!\ul{\QQ}_{\Mst^G(\ol{Q})}\otimes \LLL^{\chi_{\WT{Q}}(\dd,\dd)/2}.
\]
Applying $\varpi_{\red,*}\varpi^!_{\red}$ and base change to this isomorphism, gives a natural isomorphism
\begin{equation}
\label{withvp}
r_*\HA^{\WT{\SP},G}_{\WT{Q},\WT{W},\dd}\cong\varpi_{\red,*}\varpi_{\red}^!\iota_*\iota^!\ul{\QQ}_{\Mst^G(\ol{Q})}\otimes \LLL^{\chi_{\WT{Q}}(\dd,\dd)/2}.
\end{equation}
We would like to be able to incorporate stability conditions into isomorphism \eqref{withvp} but there is an obvious problem: far from being the projection from a total space of a vector bundle, the forgetful morphism from $\Mst^{G,\zeta\sst}_{\dd}(\WT{Q})\rightarrow \Mst^{G,\zeta\sst}_{\dd}(\ol{Q})$ is not even defined!  This is because the underlying $\CC\ol{Q}$-module of a $\zeta$-semistable $\CC \WT{Q}$-module may be unstable.  On the way to resolving the problem, we define
\[
\Mst^{G,\zeta\ssst}_{\dd}(\WT{Q})\coloneqq r_{\dd}^{-1}(\Mst^{G,\zeta\sst}_{\dd}(\ol{Q})).
\]
Then the morphism $r_{\dd}^{\circ}\colon \Mst^{G,\zeta\ssst}_{\dd}(\WT{Q})\rightarrow \Mst^{G,\zeta\sst}_{\dd}(\ol{Q})$ obtained by restricting $r_{\dd}$ is the projection from the total space of a vector bundle, as required in the statement of the dimensional reduction theorem.  We will resolve the above problem by use of the following helpful fact.
\begin{proposition}\cite[Lem.6.5]{preproj}\label{SSL}
The critical locus of the function $\TTTr(\WT{W})$ on $\Mst^{G,\zeta\sst}(\WT{Q})$ lies inside $\Mst^{G,\zeta\ssst}(\WT{Q})$.  As a consequence, the support of $\HA^{\WT{\SP},G,\zeta}_{\WT{Q},\WT{W},\dd}$ is contained in $\Mst^{G,\zeta\ssst}_{\dd}(\WT{Q})$.
\end{proposition}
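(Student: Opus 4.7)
The plan is to establish the statement on closed points first, and then deduce the sheaf-theoretic consequence as a formal corollary.

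First I would compute the Jacobi relations attached to $\WT{W}=\sum_{a\in Q_1}[a,a^*]\sum_{i\in Q_0}\omega_i$. Since the edge-loops $\omega_i$ appear linearly, the cyclic derivatives $\partial\WT{W}/\partial\omega_i$ recover the preprojective relations $\sum_{a\in Q_1}[a,a^*]_i=0$, while the derivatives with respect to $a$ and $a^*$ produce intertwiner relations of the form $\rho(a)\rho(\omega_{s(a)})=\rho(\omega_{t(a)})\rho(a)$ and $\rho(a^*)\rho(\omega_{t(a)})=\rho(\omega_{s(a)})\rho(a^*)$. Equivalently, a point in the critical locus of $\TTTr(\WT{W})$ on $\Mst^{G,\zeta\sst}(\WT{Q})$ corresponds to a $\zeta$-semistable $\CC\WT{Q}$-module $\rho$ whose underlying $\CC\ol{Q}$-module $V=\rho|_{\ol{Q}}$ satisfies the preprojective relations, and for which the $Q_0$-graded linear map $\rho(\omega)\colon V\to V$ assembled from the $\rho(\omega_i)$ is a $\CC\ol{Q}$-module endomorphism of $V$.

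The key step is to deduce from these relations that $V$ is $\zeta$-semistable as a $\CC\ol{Q}$-module. I would argue by contradiction: assume $V$ is not semistable, and let $V'\subset V$ denote its maximal destabilizing $\CC\ol{Q}$-submodule, so $V'$ is semistable of slope $\mu:=\mu^{\zeta}(V')>\mu^{\zeta}(V)$ and is the unique largest $\CC\ol{Q}$-submodule of $V$ all of whose Harder--Narasimhan slopes are at least $\mu$. The crucial observation is that any $\CC\ol{Q}$-endomorphism $f$ of $V$ preserves $V'$: the image $f(V')$ is a quotient of the semistable module $V'$, hence has all its HN slopes $\geq\mu$; as a submodule of $V$, all its HN slopes are also $\leq\mu$; so $f(V')$ is semistable of slope $\mu$, and by the maximality characterization of $V'$ one has $f(V')\subset V'$. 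Applying this to $f=\rho(\omega)$, the subspace $V'$ is $\rho(\omega)$-stable, hence promotes to a $\CC\WT{Q}$-submodule of $\rho$ of slope strictly greater than $\mu^{\zeta}(\rho)$, contradicting $\zeta$-semistability of $\rho$.

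For the second assertion, the support of the vanishing cycle complex $\phim{\TTTr(\WT{W})}\nnIC_{\Mst^{G,\zeta\sst}(\WT{Q})}$ is contained in the critical locus of $\TTTr(\WT{W})$ on $\Mst^{G,\zeta\sst}(\WT{Q})$, which by the previous paragraphs lies inside $\Mst^{G,\zeta\ssst}(\WT{Q})$; since $\varpi_{*}\varpi^{!}$ can only shrink support, the same containment holds for $\HA^{\WT{\SP},G,\zeta}_{\WT{Q},\WT{W},\dd}$. The principal obstacle is the HN argument in the third paragraph: the clean fact that a maximal destabilizing subobject is automatically preserved by every $\CC\ol{Q}$-endomorphism is precisely what converts the $\CC\ol{Q}$-linearity of $\rho(\omega)$ (a shadow of the edge-loop perturbation in $\WT{W}$) into preservation of $\zeta$-semistability. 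Steps 1 and 3 are essentially bookkeeping.
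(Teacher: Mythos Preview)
Your argument is correct. The paper does not give its own proof of this proposition; it is imported wholesale from \cite[Lem.~6.5]{preproj}, and your argument (computing the cyclic derivatives to see that at a critical point the operator $\rho(\omega)$ is a $\CC\ol{Q}$-module endomorphism, then using uniqueness of the maximal destabilising $\CC\ol{Q}$-submodule to promote it to a destabilising $\CC\WT{Q}$-submodule) is exactly the standard one and is what the cited lemma does.
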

\subsubsection{The absolute CoHA}
\label{3dAbs}
Let $\kappa\colon \Mst^{G,\zeta\ssst}_{\dd}(\WT{Q})\hookrightarrow \Mst^{G,\zeta\sst}_{\dd}(\WT{Q})$ be the open embedding.  We define
\[
\HA^{\circ,\WT{\SP},G,\zeta}_{\WT{Q},\WT{W},\dd}\coloneqq \kappa^*\HA^{\WT{\SP},G,\zeta}_{\WT{Q},\WT{W},\dd}.
\]
By dimensional reduction \eqref{DRI} there is an isomorphism
\begin{equation}
\label{DRI2}
r^{\circ}_{\dd,*}\HA^{\circ,\WT{\SP},G,\zeta}_{\WT{Q},\WT{W},\dd}\cong\HA_{\Pi_Q,\dd}^{\SP,G,\zeta},
\end{equation}
and so there is an isomorphism of $\HG$-modules
\begin{equation}
\label{decr}
\HO\!\left(\Mst^{G,\zeta\ssst}_{\theta}(\WT{Q}),\HA^{\circ,\WT{\SP},G,\zeta}_{\WT{Q},\WT{W},\theta}\right)\cong \HCoha_{\Pi_Q,\theta}^{\Sp,G,\zeta,\chi}.
\end{equation}
On the other hand by Proposition \ref{SSL} we deduce that there are isomorphisms
\begin{align}
\label{deca}\HO\!\left(\Mst^{G,\zeta\ssst}_{\theta}(\WT{Q}),\HA^{\circ,\WT{\SP},G,\zeta}_{\WT{Q},\WT{W},\theta}\right)\cong &\HO\!\left(\Mst_{\theta}^{G,\zeta\sst}(\WT{Q}),\HA^{\WT{\SP},G,\zeta}_{\WT{Q},\WT{W},\theta}\right)\\
=&\HCoha_{\WT{Q},\WT{W},\theta}^{\WT{\SP},G,\zeta,\chi}.\nonumber
\end{align}
Combining \eqref{deca} and \eqref{decr} yields the following isomorphism of $\HG$-modules in the category of $\dvst$-graded, cohomologically graded mixed Hodge structures:
\begin{equation}
\label{drnoalg}
\HCoha_{\Pi_Q,\theta}^{\Sp,G,\zeta}\cong \HCoha_{\WT{Q},\WT{W},\theta}^{\WT{\SP},G,\zeta,\chi}.
\end{equation}
As such, $\HCoha_{\Pi_Q,\theta}^{\SP,G,\zeta}$ inherits a $\HG$-linear algebra structure from the algebra structure on $\HCoha_{\WT{Q},\WT{W},\theta}^{\WT{\SP},G,\zeta,\chi}$.

\subsubsection{The relative CoHA}
\label{2drelCoHA}
In this section we lift the absolute CoHA constructed in \S \ref{3dAbs} to an algebra structure on the object $\rCoha_{\Pi_Q,\theta}^{\Sp,G,\zeta}$ in the category $\DlMHM(\Msp^{G,\zeta\sst}_{\theta}(\ol{Q}))$.

We denote by $\kappa'\colon \Msp^{G,\zeta\ssst}(\WT{Q})\rightarrow \Msp^{G,\zeta\sst}(\WT{Q})$ the inclusion of the substack\footnote{It is not hard to show that this is an open substack; we leave the proof to the reader.} of $\CC\WT{Q}$-modules for which the underlying $\CC\ol{Q}$-module is $\zeta$-semistable, and we denote by
\[
r'\colon \Msp^{G,\zeta\ssst}(\WT{Q})\rightarrow \Msp^{G,\zeta\sst}(\ol{Q})
\]
the forgetful morphism.  To make it easier to keep track of them all, we arrange some of the morphisms introduced in this section into a commutative diagram\footnote{We indicate the version where $G=\{1\}$.  In general, there should be $G$ superscripts everywhere.}:
\begin{align}
\label{drcd}
\xymatrix{
\Mst(\WT{Q})\ar[rr]^r&&\Mst(\ol{Q})
\\
\ar@{^{(}->}[u]\Mst^{\zeta\sst}(\WT{Q})\ar[d]^{\JH}&\ar@{_{(}->}[l]_{\kappa}\Mst^{\zeta\ssst}(\WT{Q})\ar[d]^{\JH^{\circ}}\ar[r]^-{r^{\circ}}& \Mst^{\zeta\sst}(\ol{Q})\ar[d]^{\JH_{\red}}\ar@{^{(}->}[u]\\
\Msp^{\zeta\sst}(\WT{Q})&\ar@{_{(}->}[l]_{\kappa'}\Msp^{\zeta\ssst}(\WT{Q})\ar[r]^-{r'}&\Msp^{\zeta\sst}(\ol{Q}).
}
\end{align}
By dimensional reduction there is a natural isomorphism
\begin{equation}
\label{2DIR}
r'_{*}\kappa'^*\rCoha_{\WT{Q},\WT{W},\dd}^{\SP,G,\zeta,\chi}\cong \rCoha_{\Pi_Q,\dd}^{\Sp,G,\zeta}
\end{equation}
obtained via commutativity of the diagram \eqref{drcd}.  Since $\kappa'$ and $r'$ are morphisms of monoids, and $\kappa'$ is smooth, the object $\rCoha_{\Pi_Q,\dd}^{\Sp,G,\zeta}$ inherits an algebra structure, via the isomorphism \eqref{2DIR}, as promised in \S \ref{2desc}.  Furthermore, by Proposition \ref{SSL} we obtain the first of the isomorphisms
\begin{align*}
\HO(\Msp^{G,\zeta\ssst}(\WT{Q}),\kappa'^*\rCoha_{\WT{Q},\WT{W}}^{\SP,G,\zeta,\chi})&\cong \HO(\Msp^{G,\zeta\sst}(\WT{Q}),\rCoha_{\WT{Q},\WT{W}}^{\SP,G,\zeta,\chi})\\
&\cong \HCoha_{\Pi_Q,\theta}^{\Sp,G,\zeta}
\end{align*}
(the second is \eqref{drnoalg}).  The Hall algebra structure on $\HCoha_{\Pi_Q,\theta}^{\Sp,G,\zeta}$ comes from applying $\HO$ to the Hall algebra structure on $\rCoha_{\Pi_Q,\theta}^{\Sp,G,\zeta}$, i.e. $\rCoha_{\Pi_Q,\theta}^{\Sp,G,\zeta}$ is a lift of the CoHA $\HCoha_{\Pi_Q,\theta}^{\Sp,G,\zeta}$ to the category $\DlMHM(\Msp^{G,\zeta\sst}_{\theta}(\ol{Q}))$.  Since the support of $\rCoha_{\Pi_Q,\theta}^{\Sp,G,\zeta}$ clearly lies in $\Msp^{G,\zeta\sst}_{\theta}(\Pi_Q)$, we may alternatively consider it as a lift of the CoHA to $\DlMHM(\Msp^{G,\zeta\sst}_{\theta}(\Pi_Q))$.

\section{Results on BPS sheaves}
\label{bpsss}
\subsection{Generalities on BPS sheaves for $(\WT{Q},\WT{W})$}
Let $Q$ be a quiver, then we define $\WT{Q}$ and $\WT{W}$ as in \S \ref{3desc}, pick a stability condition $\zeta\in\QQ^{Q_0}$ and a slope $\theta\in \QQ$, as well as an extra gauge group $G$, along with a homomorphism $G\rightarrow \Gl_{\edge}$ as in \S \ref{2desc}.  We furthermore pick a $G$-invariant Serre subcategory $\SP$ of the category of $\CC\ol{Q}$-modules, satisfying Assumption \ref{2dassumption}, and define $\WT{\SP}$ as in \S \ref{3desc}.  Then $\WT{\SP}$ satisfies Assumption \ref{3dassumption} via the isomorphism \eqref{drnoalg}.  With this data fixed, we define the BPS sheaf
\[
\BPSh_{\WT{Q},\WT{W},\theta}^{\WT{\SP},G,\zeta}\in\DbMMHM(\Msp_{\theta}^{G,\zeta\sst}(\WT{Q}))
\]
as in \eqref{BPShdef}.  If $\SP=\CC\ol{Q}\lmod$, so $\WT{\SP}=\CC\WT{Q}\lmod$, this is a $G$-equivariant monodromic mixed Hodge module, otherwise, it may be a complex of monodromic mixed Hodge modules with cohomology in several degrees.

\subsubsection{The 2d BPS sheaf}

We define $\Gl_{\edge}$ as in \eqref{Gledge}.  We let $\Gl_{\edge}$ act on $\AAA{1}$ via the projection to $\CC^*_{\hbar}$, and the weight -2 action of $\CC^*_{\hbar}$ on $\AAA{1}$.  The inclusion 
\[
\AAA{1}\rightarrow \gl_{\dd};\quad\quad
t\mapsto (t\cdot \Id_{\CC^{\dd_i}})_{i\in Q_0},
\]
along with the decomposition \eqref{WTdecomp}, induces a $\Gl_{\edge}$-equivariant inclusion $\AS_{\dd}(\ol{Q})\times\AAA{1}\hookrightarrow \AS_{\dd}(\WT{Q})$.
This induces the inclusion
\[
l\colon \Msp^{G,\zeta\sst}(\ol{Q})\times_{\B G}(\AAA{1}/G)\hookrightarrow \Msp^{G,\zeta\sst}(\WT{Q}).
\]
We denote the projection by 
\[
h\colon \Msp^{G,\zeta\sst}(\ol{Q})\times_{\B G}(\AAA{1}/G)\rightarrow \Msp^{G,\zeta\sst}(\ol{Q}).
\]
The following theorem is proved as in \cite[Lem.4.1]{preproj}.
\begin{thmdef}
\label{2dBPSdef}
There exists a unique (up to isomorphism) object
\begin{equation}
\label{rbps1}
\BPSh_{\Pi_Q,\theta}^{\SP,G,\zeta}\in\DbMMHM(\Msp^{G,\zeta\sst}_{\theta}(\ol{Q}))
\end{equation}
along with an isomorphism
\begin{equation}
\label{rbps2}
\BPSh_{\WT{Q},\WT{W},\theta}^{\WT{\SP},G,\zeta}\cong l_*(h^*\BPSh_{\Pi_Q,\theta}^{\SP,G,\zeta}\otimes\LLL^{-1/2}).
\end{equation}
\end{thmdef}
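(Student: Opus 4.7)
The approach is to adapt the argument of \cite[Lem.4.1]{preproj}, with the modifications needed to accommodate the Serre subcategory $\SP$, the extra gauge group $G$, and the stability condition $\zeta$ handled as in \cite{preproj1}. The underlying structural input is that the potential $\WT{W}$ is linear in the loop variables $\omega_i$, and moreover $\TTTr(\WT{W})$ is invariant under the additive $\AAA{1}$-action on $\AS_{\dd}(\WT{Q})$ shifting $\omega_i\mapsto \omega_i+t\Id_{\dd_i}$ by a common scalar: such a shift changes $\Tr(\WT{W})$ by $t\cdot\Tr(\sum_a[\rho(a),\rho(a^*)])=0$.

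The first step is to split the loop space as
\[
\prod_{i\in Q_0}\gl_{\dd_i}\cong S_{\dd}\oplus N_{\dd},
\]
where $S_{\dd}\cong\AAA{1}$ is the line of common scalars $t\mapsto (t\Id_{\dd_i})_i$ and $N_{\dd}$ is a $\Gl_{\dd}$-stable complement. With this decomposition, $\TTTr(\WT{W})$ is pulled back from $\AS_{\dd}(\ol{Q})\times N_{\dd}$ and depends linearly on the $N_{\dd}$-coordinate. Applying the dimensional reduction theorem \cite[Thm.A.1]{Da13} along the $N_{\dd}$-direction then produces, up to a Tate twist, a natural isomorphism identifying $\phim{\TTTr(\WT{W})}\nnIC_{\Mst^{G,\zeta\ssst}(\WT{Q})}$ with the pushforward along $\iota\times\id_{\AAA{1}}$ of the dualizing complex of $\Mst^{\SP,G,\zeta\sst}(\Pi_Q)\times_{\B G}\AAA{1}$, where the surviving $\AAA{1}$-factor records the $S_{\dd}$-coordinate. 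Pushing forward by $\JH^G_*$ to the coarse moduli space converts this embedding into the morphism $l$ and the $\AAA{1}$-factor into the target of $h$.

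With this isomorphism in hand, $\BPSh_{\Pi_Q,\theta}^{\SP,G,\zeta}$ is defined on the preprojective side of the correspondence by the direct analogue of \eqref{BPShdef}, namely as the $\tau_{\leq 1}$-truncation of $\rCoha_{\Pi_Q,\theta}^{\SP,G,\zeta}$, with a Tate twist chosen so that the factor $\LLL^{-1/2}$ appearing in \eqref{rbps2} precisely absorbs the half-dimension contributed by the $S_{\dd}$-direction. By construction, the isomorphism \eqref{rbps2} then follows from the dimensional reduction correspondence above.

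The main obstacle lies in verifying that the dimensional reduction isomorphism is compatible with the truncation $\tau_{\leq 1}$ appearing in the definition \eqref{BPShdef} of the BPS sheaf, and that it intertwines the Serre-subcategory functors $\varpi'_*\varpi'^!$ on both sides. Both compatibilities reduce to standard base change arguments, but they depend essentially on Proposition \ref{SSL}: without the guarantee that the critical locus of $\TTTr(\WT{W})$ lies inside $\Mst^{G,\zeta\ssst}(\WT{Q})$, the forgetful morphism to $\Mst^{G,\zeta\sst}(\ol{Q})$ would not be defined on the support of the relevant sheaves, and the dimensional reduction theorem could not be applied as stated.
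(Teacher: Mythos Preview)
Your approach has a genuine gap: dimensional reduction along $N_{\dd}$ does not establish the key support statement, which is the substantive content of the theorem. The assertion is that $\BPSh_{\WT{Q},\WT{W},\theta}^{\WT{\SP},G,\zeta}$ lies in the essential image of $l_*\circ(h^*(-)\otimes\LLL^{-1/2})$; since $l$ is a closed immersion and $h$ a smooth projection, this amounts to (a) the BPS sheaf being supported on $\mathrm{im}(l)$, i.e.\ on polystable modules for which all generalised eigenvalues of the $\omega_i$ agree, and (b) its being constant along the surviving $\AAA{1}$. Part (b) does follow from the translation equivariance you note. Part (a) does not follow from your argument. Dimensional reduction along $N_{\dd}$ produces an isomorphism only \emph{after} applying the projection $r^N_*$ forgetting $N_{\dd}$: it identifies $r^N_*\phim{\TTTr(\WT{W})}\nnIC$ with an object on $\Mst^{G,\zeta\sst}(\ol{Q})\times_{\B G}\AAA{1}$. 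But the BPS sheaf is $\tau^G_{\leq 1}$ of $\JH^G_*$ of the vanishing cycle complex, and $\JH^G$ lands in $\Msp^{G,\zeta\sst}(\WT{Q})$. There is no commutative square relating the projection $r^N$ to $\JH^G$ that would transport your isomorphism into a statement about support on $\Msp^{G,\zeta\sst}(\WT{Q})$; the sentence ``pushing forward by $\JH^G_*$ converts this embedding into the morphism $l$'' conflates a projection with an inclusion going the opposite way. Concretely, $\JH^G_*\HA^{G,\zeta}_{\WT{Q},\WT{W}}$ is supported on \emph{all} polystable $\Jac(\WT{Q},\WT{W})$-modules, and these can have distinct $\omega$-eigenvalues on different simple summands---a locus strictly larger than $\mathrm{im}(l)$. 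The restriction to $\mathrm{im}(l)$ appears only upon taking $\tau^G_{\leq 1}$, and nothing in your argument explains why truncation forces it.

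The argument of \cite[Lem.4.1]{preproj} works instead directly on the coarse moduli space, via the description $\BPSh_{\WT{Q},\WT{W}}^{\zeta}\cong\phim{\TTr(\WT{W})}\nIC_{\Msp^{\zeta\sst}(\WT{Q})}$. Near a polystable module with distinct $\omega$-eigenvalues the coarse moduli space admits an \'etale-local product decomposition indexed by those eigenvalues, and a Thom--Sebastiani argument combined with the simplicity of the intersection complex forces the BPS sheaf to vanish there. This support lemma is the step your proposal is missing; dimensional reduction along $N_{\dd}$ is not a substitute for it.
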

In words, the theorem says that $\BPSh_{\WT{Q},\WT{W},\theta}^{\WT{\SP},G,\zeta}$ is supported on the locus containing those $\CC\WT{Q}$-modules for which all of the generalised eigenvalues of all of the operators $\omega_i\cdot$ are the same complex number $t$, and the sheaf does not depend on this complex number.

By \eqref{rbps2} there is an isomorphism of $\dvst$-graded, cohomologically graded mixed Hodge structures
\begin{equation}
\label{rbps3}
\HO\!\left(\Msp^{G,\zeta\sst}_\theta(\WT{Q}),\BPSh_{\WT{Q},\WT{W},\theta}^{\WT{\SP},G,\zeta}\right)\otimes\LLL^{1/2}\cong \HO\!\left(\Msp^{G,\zeta\sst}_\theta(\ol{Q}),\BPSh_{\Pi_Q,\theta}^{\SP,G,\zeta}\right).
\end{equation}
\begin{definition}
We define the Lie algebra\footnote{Since, in defining the Lie algebra $\fg_{\Pi_Q,\theta}^{\SP,G,\zeta}$ we fix the $\psi$-twist $\chi$, we omit it from the notation.}
\[
\fg_{\Pi_Q,\theta}^{\SP,G,\zeta}\coloneqq \HO\!\left(\Msp^{G,\zeta\sst}_\theta(\ol{Q}),\BPSh_{\Pi_Q,\theta}^{\SP,G,\zeta}\right).
\]
The Lie algebra structure is induced by isomorphism \eqref{rbps3} and the Lie algebra structure on
\[
\fg_{\Pi_Q,\theta}^{\SP,G,\zeta}\cong \fg_{\WT{Q},\WT{W},\theta}^{\WT{\SP},G,\zeta,\chi}\cong \HO\!\left(\Msp^G_{\theta}(\WT{Q}),\BPSh_{\WT{Q},\WT{W},\theta}^{\SP,G,\zeta}\right)\otimes\LLL^{1/2}.
\]
\end{definition}

Combining with \eqref{drnoalg} and \eqref{absPBW} there is a PBW isomorphism
\begin{equation}
\label{rPBW}
\Sym_{\HG}\!\left(\fg_{\Pi_Q,\theta}^{\SP,G,\zeta}\otimes \HO(\B\CC^*,\QQ)\right) \rightarrow \HCoha^{\SP,G,\zeta}_{\Pi_Q,\theta}.
\end{equation}
\begin{remark}
In contrast with \eqref{absPBW} there is no half Tate twist in \eqref{rPBW}, and all of the terms on the right hand side of \eqref{rPBW} are defined as mixed Hodge structures without any monodromy.
\end{remark}
We note that the image of $l$ lies within $\Mst^{G,\zeta\ssst}(\WT{Q})$, and thus there is an isomorphism
\begin{equation}
\label{pfBP}
r'_*\kappa'^*\BPSh^{\WT{\SP},G,\zeta}_{\WT{Q},\WT{W},\theta}\otimes\LLL^{1/2}\cong\BPSh_{\Pi_Q,\theta}^{\SP,G,\zeta}
\end{equation}
and so, via \eqref{2DIR} and the PBW theorem \eqref{relPBW}, an isomorphism
\begin{equation}
\label{2dInt}
\Sym_{\oplus_{\red}^G}\!\left( \BPSh_{\Pi_Q,\theta}^{\SP,G,\zeta}\otimes \HO(\B\CC^*,\QQ)\right)\cong \rCoha_{\Pi_Q,\theta}^{\SP,G,\zeta}
\end{equation}
lifting \eqref{rPBW}.

\subsection{Restricted Kac polynomials}
In this section we assume that $G$ is trivial, so we drop it from the notation.  Also, we will work with the degenerate stability condition $\zeta=(0,\ldots,0)$ and slope $\theta=0$, so that we may drop $\zeta$ and $\theta$ from the notation too.  We will explain the connection between the BPS Lie algebra $\mathfrak{g}^{\SP}_{\Pi_Q}\cong \HO\!\left(\Msp(\WT{Q}),\BPSh_{\WT{Q},\WT{W}}^{\WT{\SP}}\right)\otimes\LLL^{1/2}$ and (possibly restricted) Kac polynomials.  
\sssct
In the case in which $\SP=\CC\ol{Q}\lmod$, it is proved in \cite{preproj} that $\mathfrak{g}^{\SP}_{\Pi_Q}$ is pure, of Tate type, and has vanishing odd cohomology.  Thus we have the equality of polynomials
\begin{align}
\label{pureKac}
\chi_t(\mathfrak{g}_{\Pi_Q,\dd})= &\wt(\mathfrak{g}_{\Pi_Q,\dd}),
\end{align}
with definitions as in \eqref{PPdef} and \eqref{VPPdef}.  By \cite{Moz11} there is an equality
\begin{equation}
\label{MozId}
\wt(\mathfrak{g}_{\Pi_Q,\dd})=\kac_{Q,\dd}(t^{-1})
\end{equation}
where $\kac_{Q,\dd}(t)$ is the Kac polynomial for $Q$ \cite{Kac83}, defined to be the polynomial such that if $q=p^r$ is a prime power, $\kac_{Q,\dd}(q)$ is the number of isomorphism classes of absolutely indecomposable $\dd$-dimensional $\CC Q$-modules.  Combining \eqref{pureKac} and \eqref{MozId} we deduce
\begin{equation}
\label{Kaccha}
\chi_t(\mathfrak{g}_{\Pi_Q,\dd})=\kac_{Q,\dd}(t^{-1}).
\end{equation}

Similarly, by \cite[Sec.7.2]{preproj} the mixed Hodge structures on $\mathfrak{g}_{\Pi_Q}^{\mathcal{SN}},\mathfrak{g}_{\Pi_Q}^{\mathcal{SSN}}$ are pure, of Tate type.  In a little more detail, by purity of $\HCoha_{\Pi_Q}^{\mathcal{SN}}$ and $\HCoha_{\Pi_Q}^{\mathcal{SSN}}$, proved in \cite[Sec.4.3]{ScVa20}, along with the PBW theorem \eqref{absPBW}, we deduce that $\mathfrak{g}^{\mathcal{SN}}_{\Pi_Q,\dd}$ and $\mathfrak{g}^{\mathcal{SSN}}_{\Pi_Q,\dd}$ are pure of Tate type, since they are subobjects of pure mixed Hodge structures of Tate type.
\sssct
There are analogues of the Kac polynomials for these Serre subcategories.  We recall from \cite{BSV17} that a representation of $Q$ is called 1-nilpotent if there is a flag $0=F_0\subset F_1\subset\ldots\subset F_r=\CC^{\dd_i}$ for every $i\in Q_0$ such that $\rho(a)(F_n)\subset F_{n-1}$ for every $n$, for every $a$ an edge-loop at $i$.  We define $\kac_{Q,\dd}^{\mathcal{SN}}(t)$ to be the polynomial counting the isomorphism classes of absolutely indecomposable 1-nilpotent $\dd$-dimensional $\mathbb{F}_qQ$-modules, and $\kac_{Q,\dd}^{\mathcal{SSN}}(t)$ to be the analogous count of absolutely indecomposable nilpotent representations.  Then by \cite{BSV17} (see also \cite[Sec.7.2]{preproj} for details on the passage to BPS cohomology) there are identities
\begin{align}
\label{kssn0}
\wt(\fg_{\Pi_Q,\dd}^{\mathcal{SN}})=\kac^{\mathcal{SN}}_{Q,\dd}(t);\quad\quad
\wt(\fg_{\Pi_Q,\dd}^{\mathcal{SSN}})=\kac^{\mathcal{SSN}}_{Q,\dd}(t).
\end{align}
On the other hand since the mixed Hodge structures on $\fg_{\Pi_Q,\dd}^{\mathcal{SN}}$ and $\fg_{\Pi_Q,\dd}^{\mathcal{SSN}}$ are pure, their weight polynomials agree with their characteristic polynomials.  So \eqref{kssn0} yields
\begin{align}
\label{kssn}
\chi_t(\fg_{\Pi_Q,\dd}^{\mathcal{SN}})=\kac^{\mathcal{SN}}_{Q,\dd}(t);\quad\quad
\chi_t(\fg_{\Pi_Q,\dd}^{\mathcal{SSN}})=\kac^{\mathcal{SSN}}_{Q,\dd}(t)
\end{align}
respectively.

\subsubsection{Serre relation for BPS algebras}
Let $i,j\in Q_0$ be distinct elements of $Q_0$, and assume that $Q$ has no edge-loops at $i$.  Let $\dd=(e+1)\cdot 1_{i}+1_j$ where $e$ is the number of edges between $i$ and $j$ in the underlying graph of $Q$.  In Proposition \ref{BPSvanProp} we prove a vanishing theorem for BPS sheaves, that strengthens the identity $\HO^0(\fg_{\Pi_Q,\dd})=0$ resulting from the Serre relations in $\fg_Q$ (see \S \ref{KacH} below), by showing that the identity remains true for all Serre subcategories, stability conditions, and cohomological degrees.  Firstly we will need a proposition due to Yukinobu Toda:

\begin{proposition}\cite[Lem.4.7]{To17}
\label{TodaProp}
Let $Q'$ be a symmetric quiver, let $W'\in\CC Q'_{\cyc}$ be a superpotential, let $\zeta\in\mathbb{Q}^{Q_0}$ be a stability condition, and let $\dd\in\dvs$ be a dimension vector.  Let $q_{\dd}\colon \Msp^{\zeta\sst}_{\dd}(Q')\rightarrow \Msp_{\dd}(Q')$ be the affinization map.  Then there is an isomorphism $q_{\dd,*}\BPSh_{Q',W',\dd}^{\zeta}\cong\ \BPSh_{Q',W',\dd}$.
\end{proposition}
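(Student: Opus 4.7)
The plan is to combine proper base change for vanishing cycles with Saito's decomposition theorem, thereby reducing the statement to a support-vanishing for the ``extra'' summands appearing in the decomposition theorem. Since $\Msp^{\zeta\sst}_{\dd}(Q')$ is by construction projective over the affine GIT quotient $\Msp_{\dd}(Q')$, the affinization morphism $q_{\dd}$ is projective, so proper base change for vanishing cycles produces a natural isomorphism
\[
q_{\dd,*}\phim{\WW}\nnIC_{\Msp^{\zeta\sst}_{\dd}(Q')} \;\cong\; \phim{\WW}\,q_{\dd,*}\nnIC_{\Msp^{\zeta\sst}_{\dd}(Q')}.
\]

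Since $\nnIC_{\Msp^{\zeta\sst}_{\dd}(Q')}$ is pure, Saito's decomposition theorem yields a splitting
\[
q_{\dd,*}\nnIC_{\Msp^{\zeta\sst}_{\dd}(Q')} \;\cong\; \nnIC_{\Msp_{\dd}(Q')} \oplus \mathcal{E},
\]
where the distinguished summand is present because every simple $\CC Q'$-module is automatically $\zeta$-stable, so $q_{\dd}$ restricts to an isomorphism over the open locus of simple modules (this locus is nonempty precisely when both BPS sheaves are nonzero; otherwise the statement is a vacuous $0\cong 0$). The residual term $\mathcal{E}$ is a direct sum of shifted IC extensions of local systems supported on the closed non-simple strata of $\Msp_{\dd}(Q')$, each labelled by a polystable type $\rho=\bigoplus_i\rho_i^{\oplus m_i}$ with $\sum_i m_i\geq 2$. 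The remaining task is to prove $\phim{\WW}\mathcal{E}=0$.

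This last step is the main obstacle. I would attack it via the étale local description of $\Msp_{\dd}(Q')$ at a polystable point $\rho$: by Luna's slice theorem, an étale neighbourhood of $\rho$ is modelled on $\Msp_{\dd''}(Q'')$ for an auxiliary symmetric quiver $Q''$ whose vertices index the isomorphism classes of the summands $\rho_i$ and whose arrow counts are the dimensions of $\Ext^1$-spaces between them, with $\TTTr(\WW)$ pulling back to the trace of an induced potential $W''$ on the slice. Under this equivalence the stratum summand of $\mathcal{E}$ of type $(m_i)$ becomes, up to pure Tate twists accounting for the multiplicity data, an IC extension whose support lies strictly outside the open substack of simple $Q''$-modules. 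The support lemma for BPS sheaves \cite[Thm.A]{QEAs} then forces the vanishing cycle $\phim{W''}$ of such a contribution to vanish, and an induction on $\sum_i m_i$ (with base case $\rho$ simple, giving the distinguished summand $\nnIC_{\Msp_{\dd}(Q')}$) completes the proof. The bookkeeping of shifts, Tate twists and Künneth factors from the multiplicities $m_i$ is where the argument requires the most care.
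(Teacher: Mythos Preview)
The paper itself does not prove this proposition; it is quoted directly from \cite[Lem.~4.7]{To17} and used as a black box. So there is no in-paper argument to compare against, only Toda's original proof.

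Your steps 1 and 2 are correct and standard: $q_{\dd}$ is projective, so proper base change gives $q_{\dd,*}\phim{\WW}\cong\phim{\WW}q_{\dd,*}$, and since $q_{\dd}$ is an isomorphism over the simple locus, the decomposition theorem produces $\nnIC_{\Msp_{\dd}(Q')}$ as a canonical summand. The entire content of the proposition is then indeed the vanishing $\phim{\WW}\mathcal{E}=0$.

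Step 3, however, is not established by your sketch. Two concrete problems:
\begin{itemize}
\item The result you cite as the ``support lemma'' concerns the support of BPS sheaves themselves, i.e.\ of $\phim{\WW}\nnIC$. It does not assert that $\phim{\WW}$ annihilates an arbitrary IC summand supported off the simple locus, which is what you need for $\mathcal{E}$. You have not identified the local model of a summand of $\mathcal{E}$ with anything to which that lemma applies.
\item Your induction on $\sum_i m_i$ does not terminate. Passing to the \'etale slice at $\rho=\bigoplus\rho_i^{m_i}$ replaces $(\Msp_{\dd}(Q'),W')$ by $(\Msp_{\mathbf m}(Q''),W'')$, but the origin of the slice is again a polystable point with the same total multiplicity $\sum_i m_i$. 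In particular, when $\rho$ is the semisimple module at the origin (all $\rho_i$ one-dimensional), the slice is the original space and nothing has been reduced. Skyscraper summands of $\mathcal{E}$ at the origin are not killed by $\phim{\WW}$ for formal reasons, so this case genuinely needs an argument.
\end{itemize}

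The argument that actually closes this gap does not proceed by direct \'etale-local analysis of $\mathcal{E}$. Instead one uses the relative PBW/integrality theorem (Theorem~\ref{PBWtheorem} here) for both the trivial and the $\zeta$-stability, together with the Harder--Narasimhan decomposition relating $\rCoha_{Q',W'}$ to the ordered $\boxtimes$-product of the $q_{\theta,*}\rCoha^{\zeta}_{Q',W',\theta}$ over slopes $\theta$. Pushing forward the $\zeta$-PBW isomorphism along $q$ and comparing with the PBW isomorphism for the trivial stability, uniqueness of the ``logarithm'' in $\Sym_{\oplus}$ forces $\BPSh_{Q',W'}\cong\bigoplus_{\theta}q_{\theta,*}\BPSh^{\zeta}_{Q',W',\theta}$; restricting to a single dimension vector gives the statement. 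This is the route taken in \cite{To17}. Your reduction to $\phim{\WW}\mathcal{E}=0$ is a valid reformulation, but proving that vanishing still requires the integrality theorem as input rather than replacing it.
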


\begin{proposition}
\label{BPSvanProp}
Let $\zeta\in\QQ^{Q_0}$ be an arbitrary stability condition, let $\SP$ be arbitrary, and let $\dd=(e+1)\cdot 1_i+1_j$ with $i$, $j$ as above.  There is an identity in $\DbMMHM(\Msp^{\zeta\sst}_{\dd}(\ol{Q}))$
\begin{equation}
\label{BPSvan}
\BPSh_{\Pi_Q,\dd}^{\SP,G,\zeta}=0.
\end{equation}
\end{proposition}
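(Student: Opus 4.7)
The plan is to reduce to a single convenient stability condition where vanishing follows from empty support, and then propagate the vanishing to all $\zeta$ via Proposition \ref{TodaProp} and the decomposition theorem.

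First I would pick any $\zeta^*\in\QQ^{Q_0}$ with $\zeta^*_i>\zeta^*_j$ and show that $\BPSh^{\SP,G,\zeta^*}_{\Pi_Q,\dd}=0$ by verifying that $\Msp^{\SP,G,\zeta^*\sst}_{\dd}(\Pi_Q)=\emptyset$. Let $M$ be any $\Pi_Q$-module of dimension $\dd$. Since $Q$ has no loops at $i$, the arrows of $\ol{Q}$ with source $i$ are precisely the $e$ arrows going to $j$ (the arrows of $Q_1$ between $i$ and $j$, together with the duals of the arrows of $Q_1$ going in the opposite direction). Setting $K\coloneqq\bigcap_{a\in\ol{Q}_1,\,s(a)=i}\ker\rho(a)\subset M_i$, a dimension count yields $\dim K\geq(e+1)-e=1$, and $K\oplus 0\subset M$ is a proper nonzero $\Pi_Q$-submodule: arrows out of $i$ vanish on $K$ by definition, while arrows into $i$ from $j$ land in $0$ since the $j$-component of the submodule is trivial. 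Its slope $\mu^{\zeta^*}(K\oplus 0)=\zeta^*_i$ strictly exceeds $\mu^{\zeta^*}(M)=(\zeta^*_i(e+1)+\zeta^*_j)/(e+2)$ under the assumption $\zeta^*_i>\zeta^*_j$, so $M$ is not $\zeta^*$-semistable. By \eqref{pfBP}, $\BPSh^{\SP,G,\zeta^*}_{\Pi_Q,\dd}$ equals (up to a Tate twist) $r'_*\kappa'^*\BPSh^{\WT{\SP},G,\zeta^*}_{\WT{Q},\WT{W},\dd}$; by Proposition \ref{SSL} and the dimensional reduction isomorphism \eqref{DRI2}, this is supported on the image under $r'\circ\kappa'$ of the critical locus of $\TTTr(\WT{W})$, which is the $\zeta^*$-semistable $\Pi_Q$-locus inside $\Msp^{G,\zeta^*\sst}_\dd(\ol{Q})$. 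Since this locus is empty, $\BPSh^{\SP,G,\zeta^*}_{\Pi_Q,\dd}=0$, and then by Theorem/Definition \ref{2dBPSdef} also $\BPSh^{\WT{\SP},G,\zeta^*}_{\WT{Q},\WT{W},\dd}=0$.

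I would then invoke Proposition \ref{TodaProp} applied to $(\WT{Q},\WT{W})$ to spread this vanishing to all $\zeta$. For the affinization $q_\dd$ one has $q_{\dd,*}\BPSh^\zeta_{\WT{Q},\WT{W},\dd}\cong\BPSh_{\WT{Q},\WT{W},\dd}$ for every $\zeta$. Specialising $\zeta=\zeta^*$ and using the previous step, the left-hand side vanishes, giving $\BPSh_{\WT{Q},\WT{W},\dd}=0$ at trivial stability. For arbitrary $\zeta$ we then obtain $q_{\dd,*}\BPSh^\zeta_{\WT{Q},\WT{W},\dd}=0$; since $q_\dd$ is proper and $\BPSh^\zeta_{\WT{Q},\WT{W},\dd}$ is pure, the decomposition theorem expresses it as a direct sum of shifted IC sheaves of closed irreducible subvarieties, each of which has nonzero proper pushforward under $q_\dd$, so the sheaf itself must vanish. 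A final application of Theorem/Definition \ref{2dBPSdef} yields $\BPSh^{\SP,G,\zeta}_{\Pi_Q,\dd}=0$, as required. The main technical subtlety is the support claim in the first step: one must carefully combine \eqref{pfBP}, Proposition \ref{SSL}, and the dimensional reduction identification of the critical locus of $\TTTr(\WT{W})$ with the $\Pi_Q$-locus to deduce that the support of the preprojective BPS sheaf lies in $\Msp^{\zeta^*\sst}_\dd(\Pi_Q)$, so that emptiness of this locus upgrades to sheaf-level vanishing; the rest is a formal Toda-plus-decomposition-theorem argument.
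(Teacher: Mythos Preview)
Your argument is correct and follows the same overall strategy as the paper: show that the $\zeta$-semistable locus of $\Pi_Q$-modules of dimension $\dd$ is empty for a suitable choice of $\zeta$, deduce vanishing of the BPS sheaf there from the support lemma (Proposition~\ref{SSL}), and then use Toda's Proposition~\ref{TodaProp} to propagate. Two points of comparison are worth noting.

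First, your destabilising submodule is different from (and arguably cleaner than) the one the paper writes down: you take the common kernel $K\subset M_i$ of the $e$ arrows $i\to j$ in $\ol{Q}$, which is nonzero by a dimension count, and observe that $K\oplus 0$ is a submodule destabilising for $\zeta^*_i>\zeta^*_j$. The paper instead works with the case $\zeta_i<\zeta_j$ and exhibits a different sub. One small imprecision in your write-up: it is not literally true that ``the arrows of $\ol{Q}$ with source $i$ are precisely the $e$ arrows going to $j$'', since $i$ may be connected to other vertices; what is true (and what you need) is that for a module of dimension vector $\dd$ all other outgoing arrows are automatically zero, so the kernel computation goes through.

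Second, the paper treats the three cases $\zeta_i<\zeta_j$, $\zeta_i=\zeta_j$, $\zeta_i>\zeta_j$ separately: the outer two by the emptiness argument and the middle one by a single application of Proposition~\ref{TodaProp}. You instead handle one case directly and then use Proposition~\ref{TodaProp} \emph{twice}, together with purity of $\BPSh^{\zeta}_{\WT{Q},\WT{W},\dd}$ and the decomposition theorem, to conclude for arbitrary $\zeta$. This is valid, but note that purity of the BPS sheaf is Corollary~\ref{3dpurityThm}, which in the paper is proved \emph{after} Proposition~\ref{BPSvanProp}; there is no circularity (the proof of Theorem~\ref{purityThm} does not invoke Proposition~\ref{BPSvanProp}), but you are making a forward reference. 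Finally, you should make explicit the reduction to $G=\{1\}$ and $\SP=\CC\ol{Q}\lmod$ (as the paper does in its first paragraph) before invoking Proposition~\ref{TodaProp}, which is stated only in that setting; your transition from the $(\SP,G)$-version in the first paragraph to the undecorated version in the second is currently a silent jump.
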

\begin{proof}
Since $\BPSh_{\Pi_Q,\dd}^{\SP,G,\zeta}=\varpi'_{\red,*}\varpi'^!_{\red}\BPSh_{\Pi_Q,\dd}^{G,\zeta}$ it is sufficient to prove \eqref{BPSvan} under the assumption that $\SP=\CC\ol{Q}\lmod$.  In addition, we may assume that $G$ is trivial, since a $G$-equivariant perverse sheaf is trivial if and only if the underlying perverse sheaf is.  

By Theorem/Definition \ref{2dBPSdef}, we may equivalently prove that $\BPSh_{\WT{Q},\WT{W},\dd}^{\zeta}=0$.  There are three cases to consider:
\begin{enumerate}
\item
$\zeta_i<\zeta_j$
\item
$\zeta_i=\zeta_j$
\item
$\zeta_i>\zeta_j$.
\end{enumerate}
The proofs for (1) and (3) are the same, while (2) follows from (1) and the identity
\[
\BPSh_{\WT{Q},\WT{W},\dd}\cong \left(\Msp_{\dd}^{\zeta\sst}(\WT{Q})\rightarrow \Msp_{\dd}(\WT{Q})\right)_*\BPSh_{\WT{Q},\WT{W},\dd}^{\zeta},
\]
which is a special case of Proposition \ref{TodaProp}.  So we concentrate on (1).  We claim that
\begin{equation}
\label{epty}
\Mst_{\dd}^{\zeta\sst}(\WT{Q})\cap \crit(\TTTr(\WT{W}))=\emptyset.
\end{equation}
We first note that a point in the left hand side of \eqref{epty} represents a $\zeta$-semistable $\Jac(\WT{Q},\WT{W})$-module.  By Proposition \ref{SSL}, the underlying $\Pi_Q$-module of $\rho$ is $\zeta$-semistable.  On the other hand, there are no $\zeta$-semistable $\dd$-dimensional $\Pi_Q$-modules $\rho$, as for such a $\rho$ the subspace spanned by $e_j\cdotsh \rho,b_1\cdotsh\rho,\ldots,b_{e}\cdotsh\rho$ is a submodule, where $b_1,\ldots,b_{e}$ are the arrows in $\ol{Q}$ with source $j$ and target $i$.  This proves the claim.

Now the proposition follows from the definition \eqref{BPShdef} and the equality
\[
\supp\!\left(\HA_{\WT{Q},\WT{W},\theta}^{\zeta}\right)=\Mst_{\dd}^{\zeta\sst}(\WT{Q})\cap \crit(\TTTr(\WT{W}))
\]
which follows from the fact that for $f$ a regular function on a smooth space $X$, $\phi_f\ul{\QQ}_X$ is supported on the critical locus of $f$.
\end{proof}

\subsection{Purity of BPS sheaves}
\begin{definition}
Let $\XX$ be a stack.  We say that $\mathcal{G}\in\DlMMHM(\XX)$ is \textit{pure below} if for all integers $m<n$ there is an equality $\Gr^m_{\Wt}\!\left(\Ho^n\!\mathcal{G}\right)= 0$.  Similarly, we say that $\mathcal{G}$ is \textit{pure above} if the equality holds for all $m>n$.  
\end{definition}

Purity is the combination of being pure above and pure below.  If $X$ is a smooth variety then $\HO(X,\QQ)$ is pure below (considered as a mixed Hodge module on a point), while if $X$ is projective, $\HO(X,\QQ)$ is pure above.  By Poincar\'e duality, it follows that $\HO_c(X,\QQ)$ is pure above if $X$ is smooth.  From the long exact sequence in compactly supported cohomology, and the fact that a variety can be stratified into smooth pieces, it follows that $\HO_c(X,\QQ)$ is pure above for \textit{all} varieties $X$.  We will use the following generalisation of this fact.
\begin{lemma}
\label{PureAbove}
Let $\XX$ be a finite type stack.  Let $p\colon \XX\rightarrow \YY$ be a morphism of stacks.  Then $p_!\ul{\QQ}_{\XX}$ is pure above.
\end{lemma}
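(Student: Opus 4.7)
The lemma is the stacky extension of the classical fact (outlined by the author just before the statement) that $\HO_c^n(X,\QQ)$ has weight $\leq n$ for any variety $X$. My strategy has three steps: first reduce to $\YY = \Spec\CC$, then stratify $\XX$ into smooth quotient stacks, and finally treat each such piece via equivariant approximation.

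\textit{Reduction to a point.} Since "pure above" for an object of $\DlMMHM(\YY)$ can be checked after pulling back along a smooth atlas $a\colon Y \to \YY$ (up to a shift by the relative dimension), I would apply smooth base change to write $a^* p_! \ul{\QQ}_\XX \cong p'_! \ul{\QQ}_{\XX'}$ where $\XX' = \XX \times_\YY Y$ and $p'\colon \XX' \to Y$. This reduces to the case of $\YY$ a scheme, and further to $\YY = \Spec\CC$ by restriction to points. The claim becomes: for any finite-type Artin stack $\XX$, the mixed Hodge structure $\HO_c^n(\XX,\QQ)$ has weight $\leq n$ for every $n$.

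\textit{Reduction to smooth quotient stacks.} Stratify $\XX = \bigsqcup_\alpha \XX_\alpha$ into locally closed substacks, each of which is a smooth quotient stack $[X_\alpha / G_\alpha]$ with $X_\alpha$ a smooth variety and $G_\alpha$ a linear algebraic group; such a stratification exists for any finite-type Artin stack over $\CC$. The open–closed distinguished triangle $j_!\ul{\QQ}_U \to \ul{\QQ}_\XX \to i_*\ul{\QQ}_Z \to$, together with the long exact sequence in compactly supported cohomology and induction on the number of strata, reduces the claim (via the two-out-of-three principle for pure-aboveness in triangles) to the case $\XX = [X/G]$ with $X$ smooth and $G$ linear algebraic.

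\textit{Equivariant approximation.} For $\XX = [X/G]$, I would invoke the Totaro / Edidin--Graham approximation: fix a $G$-representation $V$ and a free open subset $U \subset V$ with $\codim_V(V \setminus U) \geq N$. Then $X_N \coloneqq (X \times U)/G$ is a smooth variety (since $G$ acts freely on $X\times U$ and $X$ is smooth), and for $n$ in a range growing with $N$ one has $\HO_c^n([X/G],\QQ) \cong \HO_c^{n+2d}(X_N,\QQ)(d)$, where $d = \dim U - \dim G$. Deligne's weight bound applied to the smooth variety $X_N$ gives weight $\leq n+2d$ on the right-hand side; the Tate twist by $d$ brings this down to weight $\leq n$ on $\HO_c^n([X/G],\QQ)$. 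Letting $N \to \infty$ completes the argument.

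\textit{Main obstacle.} The technical heart is the third step: one must check that the equivariant approximation isomorphism is compatible with the weight filtration (not merely with underlying rational cohomology), in a range of degrees that grows with $N$. In the framework of this paper, where MMHMs on stacks are defined via smooth descent from $\MHM(X \times \AAA{1})$, the compatibility should be inherited from the analogous statement on each term of the Čech nerve of the smooth atlas $X \times U \to [X/G]$, but the bookkeeping must be done carefully.
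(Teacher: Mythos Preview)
Your stratification into smooth pieces and the two-out-of-three argument via open--closed triangles are exactly what the paper does. The substantive difference is in how the smooth pieces are handled, and this is where your argument has a gap.

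\textbf{The gap.} Your reduction ``further to $\YY=\Spec\CC$ by restriction to points'' is not justified. ``Pure above'' for $\mathcal{G}\in\Db(\MHM(Y))$ is the condition $\Gr^W_m\Ho^n\mathcal{G}=0$ for $m>n$, a statement about the weight filtration on the \emph{perverse} cohomology sheaves. Since $i_y^*$ is not perverse t-exact, you cannot simply check this after pulling back to each closed point $y\in Y$; the implication ``$i_y^*\mathcal{G}$ pure above for all $y$ $\Rightarrow$ $\mathcal{G}$ pure above'' is not a tautology in Saito's framework (contrast the $\ell$-adic setting, where weights are \emph{defined} via Frobenius eigenvalues on stalks). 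You do not cite or prove such a pointwise detection criterion, and it is in fact the deepest step in your outline --- deeper than the approximation compatibility you flag as the main obstacle.

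\textbf{How the paper avoids this.} The paper never reduces to a point. It stratifies $\XX$ into smooth locally closed substacks and then invokes directly that $q_!$ decreases weights for morphisms of stacks: Saito proves this for morphisms of varieties, and since $q_!$ on stacks is built from $q_!$ on varieties (in the companion paper cited as \cite{preproj1}), the weight bound is inherited. Thus for each smooth stratum $\XX_i$ the complex $p_{i,!}\ul{\QQ}_{\XX_i}$ is pure above over $\YY$ itself, and the induction runs.

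\textbf{How to fix your argument.} Drop the reduction to a point and run your steps 2 and 3 relatively over the scheme $Y$. Your Totaro/Edidin--Graham approximation then produces, for each smooth quotient stratum $[X_\alpha/G_\alpha]\to Y$, a tower of morphisms of smooth varieties $X_{\alpha,N}\to Y$; Saito's theorem gives that each $(X_{\alpha,N}\to Y)_!\ul{\QQ}$ is pure above, and passing to the limit recovers exactly the stacky weight bound the paper cites as a black box. In other words, done relatively your step 3 is a rederivation of what \cite{preproj1} provides, and your proof becomes the paper's.
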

\begin{proof}
Since $p_!\ul{\QQ}_{\XX}$ only depends on the reduced structure of $\XX$, we may assume that $\XX$ is reduced.  We first claim that $\XX$ can be written as a disjoint union $\XX=\bigcup_{i\in I}\XX_i$ of locally closed smooth substacks, where $I=\{1,\ldots n\}$ is ordered so that $\XX_i$ is open inside 
\[
\XX_{\leq i}\coloneqq \bigcup_{j\leq i}\XX_j.
\]
This follows from the fact that $\XX_{\smooth}$ is smooth and dense inside the reduced stack $\XX$, Noetherian induction, and our assumption that $\XX$ is of finite type.

For $q$ a morphism of varieties, $q_!$ decreases weights \cite[\S 4.5]{Saito90}.  Since, for $q\colon \mathcal{Z}\rightarrow \mathcal{Z}'$ a morphism of stacks, $q_!$ is still defined in terms of morphisms of varieties, it still decreases weights.  Thus $q_!\ul{\QQ}_{\mathcal{Z}}$ is pure above if $\mathcal{Z}$ is smooth.

We define 
\[
p_i\colon \XX_i\rightarrow \YY;\quad\quad
p_{\leq i}\colon \XX_{\leq i}\rightarrow \YY
\]
to be the restrictions of $p$.  Under our assumptions on $\XX$, there are distinguished triangles
\[
p_{i,!}\ul{\QQ}_{\XX_i}\rightarrow p_{\leq i,!}\ul{\QQ}_{\XX_{\leq i}}\rightarrow p_{\leq i-1}\ul{\QQ}_{\XX_{\leq i-1}}.
\]
The first term term is pure above, the last term is pure above by induction on $i$, and so the middle term is pure above, by the long exact sequence in cohomology.  In particular, since $p_{\leq n}=p$, we deduce that $p_!\ul{\QQ}_{\XX}$ is pure above.
\end{proof}

\subsubsection{Main result on BPS sheaves}
We come to our main result on 2d BPS sheaves:
\begin{theorem}
\label{purityThm}
The monodromic mixed Hodge module complex $\BPSh_{\Pi_Q,\theta}^{G,\zeta}\in\DbMMHM(\Msp^{G,\zeta\sst}_{\theta}(\ol{Q}))$ is pure.  Moreover, it is a mixed Hodge module, meaning that it lies in the heart of the natural t structure, and is in the image of the embedding $\MHM(\Msp^{G,\zeta\sst}_{\theta}(\ol{Q}))\rightarrow \MMHM(\Msp^{G,\zeta\sst}_{\theta}(\ol{Q}))$.
\end{theorem}
\begin{proof}
The second statement follows from the isomorphism \eqref{rbps2} in the special case in which $\SP$ is the entire subcategory of $\Pi_Q$-modules, the fact that $\BPSh_{\WT{Q},\WT{W},\theta}^{G,\zeta}$ is a monodromic mixed Hodge module by \eqref{JHvanishing} and \ref{BPShdef}, and the observation that the right hand side of \eqref{2dInt} has is monodromy-free since the constant sheaf is.  So the main job is to prove purity, which we do now.

Setting $q\colon \Msp^{\zeta\sst}_{\theta}(\tilde{Q})\rightarrow \Msp^{G,\zeta\sst}_{\theta}(\tilde{Q})$ to be the quotient map, we have 
\[
\BPSh_{\WT{Q},\WT{W},\theta}^{\zeta}\cong q^*\BPSh_{\WT{Q},\WT{W},\theta}^{G,\zeta}
\]
and so it is enough to prove that $\mathcal{F}=\BPSh_{\WT{Q},\WT{W},\theta}^{\zeta}$ is pure, i.e. we can assume that $G=\{1\}$.  By \eqref{BPSgive} $\mathcal{F}_{\dd}\cong \phim{\TTr(W)}\nIC_{\Msp^{\zeta\sst}_{\dd}(\WT{Q})}$ or $\mathcal{F}_{\dd}=0$.  Since $\phim{\TTr(W)}$ commutes with Verdier duality \cite{Sai89duality}, and $\nIC_{\Msp_{\dd}^{\zeta\sst}(\WT{Q})}\cong\VD_{\Msp_{\dd}^{\zeta\sst}(\WT{Q})}\nIC_{\Msp_{\dd}^{\zeta\sst}(\WT{Q})}$, there is an isomorphism
\begin{equation}
\label{VSD}
\VD_{\Msp_{\dd}^{\zeta\sst}(\WT{Q})}\mathcal{F}_{\dd}\cong\mathcal{F}_{\dd}.
\end{equation}
Now for $X$ a variety and $\mathcal{G},\mathcal{L}$ objects of $\DlMMHM(X)$ and $\MMHM(X)$ respectively, there are isomorphisms
\[
\Ho^n\!\left(\VD_{X}\mathcal{G}\right)\cong \VD_{X}\!\Ho^{-n}\mathcal{G};\quad\quad
\VD_X(\Gr^n_{\Wt}\mathcal{L})\cong\Gr^{-n}_{\Wt}\VD_X\!\left(\mathcal{L}\right)
\]
and so the existence of the isomorphism \eqref{VSD} implies that for $m,n\in\mathbb{Z}$ we have $\Gr^m_{\Wt}\!\left(\Ho^n\mathcal{F}\right)\neq 0$ if and only if $\Gr^{-m}_{\Wt}\!\left(\Ho^{-n}\mathcal{F}\right)\neq 0$.  In particular, $\mathcal{F}$ is pure below if and only if it is pure above.  Since by \eqref{rbps2} we may write $\mathcal{F}\cong l_*\!\left(\BPSh^{\zeta}_{\Pi_Q,\theta}\boxtimes\nIC_{\AAA{1}}\right)$ and $\nIC_{\AAA{1}}$ is pure, we deduce that the same \textit{symmetry of impurity} holds for $\BPSh^{\zeta}_{\Pi_Q,\theta}$: 
\begin{itemize}
\item[($\ast$)]
The mixed Hodge module $\BPSh^{\zeta}_{\Pi_Q,\theta}$ is pure below if and only if it is pure above.
\end{itemize}
We will complete the proof by showing that $\BPSh^{\zeta}_{\Pi_Q,\theta}$ is pure below.  Fix $\dd\in\Lambda^{\zeta}_{\theta}$.  We again consider the commutative diagram \eqref{drcd}.  By \ref{DRI2} there is an isomorphism
\[
r^{\circ}_*\kappa^*\HA^{\zeta}_{\WT{Q},\WT{W},\dd}\cong\iota_*\iota^!\ul{\QQ}_{\Mst_{\dd}^{\zeta\sst}(\ol{Q})}\otimes\LLL^{\chi_{\WT{Q}}(\dd,\dd)/2}
\]
and thus an isomorphism
\begin{align}
\label{dre}
r'_*\JH^{\circ}_*\kappa^*\HA^{\zeta}_{\WT{Q},\WT{W},\dd}\cong \JH_{\red,*}\iota_*\iota^!\ul{\QQ}_{\Mst_{\dd}^{\zeta\sst}(\ol{Q})}\otimes\LLL^{\chi_{\WT{Q}}(\dd,\dd)/2}.
\end{align}
Applying Verdier duality to the right hand side of \eqref{dre}, we get
\begin{equation}
\label{Dbpu}
\VD_{\Msp_{\dd}^{\zeta\sst}(\ol{Q})}\JH_{\red,*}\iota_*\iota^!\ul{\QQ}_{\Mst_{\dd}^{\zeta\sst}(\ol{Q})}\otimes\LLL^{\chi_{\WT{Q}}(\dd,\dd)/2}\cong \JH_{\red,!}\iota_!\ul{\QQ}_{\Mst_{\dd}^{\zeta\sst}(\Pi_Q)}\otimes\LLL^{\chi_{Q}(\dd,\dd)}.
\end{equation}
The Tate twist comes from the calculations (assuming $\Mst_{\dd}^{\zeta\sst}(\ol{Q})$ non-empty)
\[
\dim(\Mst_{\dd}^{\zeta\sst}(\ol{Q}))=-\chi_{\WT{Q}}(\dd,\dd)-\dd\cdot \dd;\quad\quad
\chi_Q(\dd,\dd)=\dd\cdot \dd+\chi_{\WT{Q}}(\dd,\dd)/2.
\]
By Lemma \ref{PureAbove} the isomorphic objects of \eqref{Dbpu} are pure above, and thus the objects of \eqref{dre} are pure below.  On the other hand, there are isomorphisms
\begin{align}
\label{drb}
r'_*\JH^{\circ}_*\kappa^*\HA^{\zeta}_{\WT{Q},\WT{W},\theta}\cong& r'_*\kappa'^*\JH_*\HA^{\zeta}_{\WT{Q},\WT{W},\theta}\\ \nonumber
\cong &r'_*\kappa'^*\Sym_{\oplus}\!\left( \BPSh_{\WT{Q},\WT{W},\theta}^{\zeta}\otimes\HO(\B \CC^*,\QQ)_{\vir}\right)\\ \nonumber
\cong &\Sym_{\oplus}\!\left( \BPSh_{\Pi_Q,\theta}^{\zeta}\otimes \HO(\B \CC^*,\QQ)_{\vir}\otimes\LLL\right)
\end{align}
where we have used the PBW theorem \eqref{relPBW}, and the fact that $r'$ and $\kappa'$ are morphisms of monoids to commute them past $\Sym_{\oplus}$.  Combining \eqref{dre} and \eqref{drb} there is a split inclusion
\begin{equation}
\label{splinc}
\BPSh_{\Pi_Q,\dd}^{\zeta}\otimes\LLL^{1/2}\subset \JH_{\red,*}\iota_*\iota^!\ul{\QQ}_{\Mst_{\dd}^{\zeta\sst}(\ol{Q})}\otimes\LLL^{\chi_{\WT{Q}}(\dd,\dd)/2}.
\end{equation}
We deduce that $\BPSh_{\Pi_Q,\dd}^{\zeta}$ is pure below, since the right hand side of \eqref{splinc} is, and thus also pure above by ($\ast$).
\end{proof}
We deduce, by Saito's results (in particular, Theorem \ref{SDC}) that there is a decomposition
\[
\BPSh_{\Pi_Q,\theta}^{G,\zeta}\cong \bigoplus_{i\in S_n}\ICS_{\ol{Z_i}}(\underline{\mathcal{F}}_i)
\]
with $Z_i\subset \Msp^{G,\zeta\sst}_{\theta}(\ol{Q})$ locally closed, smooth, connected substacks, and $\ICS_{\ol{Z_i}}(\underline{\mathcal{F}}_i)$ intermediate extensions of pure weight zero variations of mixed Hodge structure on them.
\begin{corollary}
\label{3dpurityThm}
The BPS sheaf $\BPSh^{G,\zeta}_{\WT{Q},\WT{W},\theta}\in\MMHM^G(\Msp_{\theta}^{\zeta\sst}(\WT{Q}))$ is a pure monodromic mixed Hodge module.
\end{corollary}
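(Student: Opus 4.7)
The plan is to deduce the corollary directly from Theorem~\ref{purityThm}, which establishes the purity of $\BPSh_{\Pi_Q}^{G,\zeta}$, by propagating that purity along the identification provided by Theorem/Definition~\ref{2dBPSdef}, namely the isomorphism
\[
\BPSh_{\WT{Q},\WT{W}}^{G,\zeta}\cong l_*\!\left(h^*\BPSh_{\Pi_Q}^{G,\zeta}\otimes\LLL^{-1/2}\right).
\]
Following the same rewriting already employed in the proof of Theorem~\ref{purityThm}, the right-hand side may be expressed equivalently as $l_*\!\left(\BPSh_{\Pi_Q}^{G,\zeta}\boxtimes \nIC_{\AAA{1}}\right)$, which makes the interaction of the various Tate twists more transparent.

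First I would invoke Theorem~\ref{purityThm} to conclude that $\BPSh_{\Pi_Q}^{G,\zeta}$ is pure in the sense of \S\ref{MMHMs}. Next, $\nIC_{\AAA{1}}$ is pure by construction: it is the normalised intersection cohomology complex of the smooth variety $\AAA{1}$, with the half Tate twist in \eqref{nICdef} tuned precisely so that it sits in the diagonal ``weight-equals-degree'' regime.  The exterior tensor product $\boxtimes$ of two pure complexes of (monodromic) mixed Hodge modules is again pure, since weights add and $\boxtimes$ is t-exact for the MMHM t-structure. Finally, $l$ is a closed immersion, so $l_*$ is t-exact and preserves weights, hence sends pure objects to pure objects.

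Putting these observations together shows that the right-hand side of \eqref{rbps2} is pure, and therefore so is $\BPSh^{G,\zeta}_{\WT{Q},\WT{W}}$. I do not anticipate any genuine obstacle here: the corollary is obtained by propagating the purity of the two-dimensional BPS sheaf along an identification already established in Theorem/Definition~\ref{2dBPSdef}, which is why it is a corollary rather than a theorem. The only point that requires a modicum of care is matching up the Tate twists and the t-structure shift by $\dim(G)$ built into the definition of $\MMHM^G$, but this bookkeeping is automatically consistent on both sides of \eqref{rbps2}.
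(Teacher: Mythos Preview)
Your proposal is correct and takes essentially the same approach as the paper. The paper's proof is a single line invoking Theorem~\ref{purityThm} and the isomorphism~\eqref{rbps2}; you have simply spelled out the routine verifications (purity of $\nIC_{\AAA{1}}$, preservation of purity under $\boxtimes$ and under pushforward along the closed immersion $l$) that make that one-line deduction work.
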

\begin{proof}
Again, the fact that $\BPSh^{G,\zeta}_{\WT{Q},\WT{W}}$ is in the heart of the t structure on $\DbMMHM^G(\Msp^{\zeta\sst}(\WT{Q}))$ follows by \eqref{JHvanishing} and \ref{BPShdef}.  The result then follows from Theorem \ref{purityThm} and the isomorphism \eqref{rbps2}.
\end{proof}

The purity statement of Theorem \ref{thma} is a special case of the following corollary of Theorem \ref{purityThm}:
\begin{corollary}
\label{relPurity}
The underlying objects of the relative CoHAs in $\Msp^{G,\zeta\sst}_{\theta}(\WT{Q})$ and $\Msp^{G,\zeta\sst}_{\theta}(\ol{Q})$, i.e.
\[
\rCoha_{\Pi_Q,\theta}^{G,\zeta}\in \DlMHM(\Msp^{G,\zeta\sst}_{\theta}(\ol{Q}));\quad\quad
\rCoha_{\WT{Q},\WT{W},\theta}^{G,\zeta}\in \DlMMHM(\Msp^{G,\zeta\sst}_{\theta}(\WT{Q}))
\]
respectively, are pure.  In particular, applying Verdier duality to the first of these statements, and taking the appropriate Tate twist, the complex of mixed Hodge modules $\JH_{\red,!}\ul{\QQ}_{\Mst^{G,\zeta\sst}(\Pi_Q)}$ is pure.
\end{corollary}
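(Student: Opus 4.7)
The plan is to bootstrap from the purity of the BPS sheaves established in Theorem \ref{purityThm} and Corollary \ref{3dpurityThm} to the purity of the two relative CoHAs via the PBW isomorphisms \eqref{relPBW} and \eqref{2dInt}, and then to extract the final statement by Verdier-dualising the $\Pi_Q$-version and chasing the calculation already made in \eqref{dre} and \eqref{Dbpu}.

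For the purity of the relative CoHAs, I would argue that the functor $\Sym_{\oplus^G}(-\otimes \HO(\B \CC^*,\QQ)_{\vir})$ preserves purity. The object $\HO(\B \CC^*,\QQ)_{\vir}$ is pure, since it is a Tate-twisted polynomial ring in a single generator of weight $2$. The monoidal product $\boxtimes_{\oplus^G}$ is the composite of $\boxtimes_{\B G}$, which preserves purity as an external product, with $\oplus^G_*$, which preserves purity because $\oplus^G$ is finite \cite[Lem.2.1]{Meinhardt14} and so Saito's purity statement for proper pushforwards applies in the monodromic setting. Taking $\mathfrak{S}_i$-invariants is a retract in characteristic zero and therefore preserves purity; and, crucially, for each fixed dimension vector $\dd\in\dvst$ only finitely many symmetric powers $\Sym^i_{\oplus^G}$ contribute, so the sum defining $\Sym_{\oplus^G}$ is pure component by component. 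Inserting the pure BPS sheaves of Theorem \ref{purityThm} and Corollary \ref{3dpurityThm} into the PBW isomorphisms \eqref{relPBW} and \eqref{2dInt} then yields the purity of $\rCoha_{\WT{Q},\WT{W},\theta}^{G,\zeta}$ and $\rCoha_{\Pi_Q,\theta}^{G,\zeta}$ respectively.

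To deduce the final claim, I would take $\SP=\CC\ol{Q}\lmod$, so that $\varpi_{\red}$ is the identity and $\rCoha_{\Pi_Q,\theta}^{G,\zeta}$ coincides with $\JH_{\red,*}^G\iota_*\iota^!\ul{\QQ}_{\Mst^{G,\zeta\sst}(\ol{Q})}\otimes\LLL^{\chi_{\WT{Q}}(\dd,\dd)/2}$, as recorded in \eqref{dre}. The Verdier-duality computation \eqref{Dbpu} then identifies $\VD(\rCoha_{\Pi_Q,\theta}^{G,\zeta})$ with $\JH_{\red,!}^G\iota_!\ul{\QQ}_{\Mst^{G,\zeta\sst}(\Pi_Q)}\otimes\LLL^{\chi_Q(\dd,\dd)}$. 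Since $\iota$ is a closed immersion, $\iota_!=\iota_*$, so the right-hand side is a Tate twist of the complex $\JH_{\red,!}\ul{\QQ}_{\Mst^{G,\zeta\sst}(\Pi_Q)}$ appearing in the statement. Both Verdier duality and Tate twists preserve purity, so the purity of $\rCoha_{\Pi_Q,\theta}^{G,\zeta}$ established in the previous paragraph transfers directly to $\JH_{\red,!}\ul{\QQ}_{\Mst^{G,\zeta\sst}(\Pi_Q)}$.

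The main subtlety in this plan is the claim that $\boxtimes_{\oplus^G}$, and hence $\Sym_{\oplus^G}$, preserves purity in $\DlMMHM$. The key inputs are the finiteness of $\oplus^G$ and the extension of Saito's purity theorem for proper morphisms to the monodromic setting, but once these are in hand the argument reduces to standard manipulations with Tate twists and symmetric powers.
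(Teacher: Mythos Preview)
Your proposal is correct and follows essentially the same route as the paper: the paper's proof is the one-liner that purity of the two relative CoHAs follows from Theorem~\ref{purityThm} and Corollary~\ref{3dpurityThm} via the PBW isomorphisms \eqref{2dInt} and \eqref{relPBW}, and you have simply unpacked why $\Sym_{\oplus^G}(-\otimes\HO(\B\CC^*,\QQ)_{\vir})$ preserves purity and made explicit the Verdier-duality step for the final claim (which the paper leaves implicit, having already recorded it as \eqref{Dbpu}).
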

\begin{proof}
These purity statements follow from Theorem \ref{purityThm} and Corollary \ref{3dpurityThm}, respectively, via \eqref{2dInt} and \eqref{relPBW}, respectively.  
\end{proof}

\begin{remark}
Since the morphism $\JH\colon \Mst^{G,\zeta\sst}(\WT{Q})\rightarrow \Msp^{G,\zeta\sst}(\WT{Q})$ is approximated by proper maps (in the sense of \cite{QEAs}) and thus sends pure mixed Hodge modules to pure mixed Hodge modules, it might feel natural, in light of Corollary \ref{relPurity}, to conjecture that $\HA_{\WT{Q},\WT{W}}^{G,\zeta}$ is a pure monodromic mixed Hodge module on $\Mst^{G,\zeta\sst}(\WT{Q})$.  However this statement turns out to be false.  For example in the case of $Q$ the Jordan quiver, $G=\{1\}$ $\zeta=(0,\ldots,0)$ and $\dd=4$, impurity follows from the main result of \cite{DS09}.  It seems that purity goes no ``higher'' than BPS sheaves.
\end{remark}
\begin{remark}
For more impurity above the level of BPS sheaves, consider the constant sheaf $\mathbb{Q}_{\Mst_{(1,1)}(\Pi_Q)}[-1]$ where $Q$ is the quiver with two vertices and one arrow between them.  While it is easy to check that this sheaf is actually perverse, it is also not hard to show that the weight filtration on its lift to a mixed Hodge module is nontrivial; see \cite[Ex.3.2]{Da21a} for details.
\end{remark}

\section{The less perverse filtration}
\label{lpfsec}
\subsection{The Hall algebra in $\DlMHM(\Msp^{G,\zeta\sst}(\overline{Q}))$}
We consider the following diagram, where the top three rows are defined from diagram \eqref{3dc} (substituting $\WT{Q}$ for $Q$ there) by pulling back along the open embeddings
\[
\kappa\colon \Mst_{\theta}^{G ,\zeta\ssst}(\WT{Q})\hookrightarrow \Mst_{\theta}^{G ,\zeta\sst}(\WT{Q});\quad\quad
\kappa'\colon \Msp_{\theta}^{G ,\zeta\ssst}(\WT{Q})\hookrightarrow \Msp_{\theta}^{G ,\zeta\sst}(\WT{Q}).
\]
\begin{equation}
\label{2dc}
\xymatrix{
&\Mst_{\theta}^{G ,\zeta\ssst}(\WT{Q})_{(2)}\ar[dl]_{\pi_1\times\pi_3\;\;}\ar[dr]^{\pi_2}\\
\Mst_{\theta}^{G ,\zeta\ssst}(\WT{Q})\times_{\B G} \Mst_{\theta}^{G ,\zeta\ssst}(\WT{Q})\ar[d]^{\JH^{\circ}\times_{\B G} \JH^{\circ}}&&\Mst_{\theta}^{G ,\zeta\ssst}(\WT{Q})\ar[d]^{\JH^{\circ}}\\
\Msp_{\theta}^{G ,\zeta\ssst}(\WT{Q})\times_{\B G} \Msp_{\theta}^{G ,\zeta\ssst}(\WT{Q})\ar[d]^{r'\times_{\B G}\;r'}\ar[rr]^-{\oplus^{G }}&&\Msp_{\theta}^{G ,\zeta\ssst}(\WT{Q})\ar[d]^{r'}\\
\Msp_{\theta}^{G ,\zeta\sst}(\ol{Q})\times_{\B G} \Msp_{\theta}^{G ,\zeta\sst}(\ol{Q})\ar[rr]^-{\oplus_{\red}^G}&&\Msp_{\theta}^{G ,\zeta\ssst}(\ol{Q}).
}
\end{equation}
By dimensional reduction there is a natural isomorphism
\begin{equation}
\label{dwg}
\psi\colon \rCoha_{\Pi_Q,\theta}^{\SP,G,\zeta\sst}\cong r'_*\kappa^*\rCoha_{\WT{Q},\WT{W},\theta}^{\WT{\SP},G,\zeta\sst}.
\end{equation}
Since the bottom square of \eqref{2dc} commutes, the complex of monodromic mixed Hodge modules on the left hand side of \eqref{dwg} inherits an algebra structure in $\DlMHM(\Msp_{\theta}^{\Sp,G,\zeta\sst}(\ol{Q}))$, i.e. we define the morphism
\begin{equation}
\label{rcmap}
m\colon \rCoha_{\Pi_Q,\theta}^{\SP,G,\zeta\sst}\boxtimes_{\oplus^{G}_{\red}}\rCoha_{\Pi_Q,\theta}^{\SP,G,\zeta\sst}\rightarrow \rCoha_{\Pi_Q,\theta}^{\SP,G,\zeta\sst}
\end{equation}
so that the diagram
\[
\xymatrix{
\rCoha_{\Pi_Q,\theta}^{\SP,G,\zeta\sst}\boxtimes_{\oplus^G_{\red}}\rCoha_{\Pi_Q,\theta}^{\SP,G,\zeta\sst}\ar[rr]^-{\psi\boxtimes_{\oplus^G_{\red}}\psi}\ar[dd]^m&&r'_*\kappa^*\rCoha_{\WT{Q},\WT{W},\theta}^{\WT{\SP},G,\zeta\sst}\boxtimes_{\oplus^G_{\red}}r'_*\kappa^*\rCoha_{\WT{Q},\WT{W},\theta}^{\WT{\SP},G,\zeta\sst}\ar[d]^{\cong}\\ && r'_*\kappa^*\left(\rCoha_{\WT{Q},\WT{W},\theta}^{\WT{\SP},G,\zeta\sst}\boxtimes_{\oplus^G}\rCoha_{\WT{Q},\WT{W},\theta}^{\WT{\SP},G,\zeta\sst}\right)\ar[d]^{r'_*\kappa^*m'}\\
\rCoha_{\Pi_Q,\theta}^{\SP,G,\zeta\sst}&&r'_*\kappa^*\rCoha_{\WT{Q},\WT{W},\theta}^{\WT{\SP},G,\zeta\sst}\ar[ll]^{\psi^{-1}}
}
\]
commutes, where $m'$ is the relative CoHA product for $\rCoha_{\WT{Q},\WT{W},\theta}^{\WT{\SP},G,\zeta,\chi}$.  Applying the functor $\HO$ to this morphism we recover the algebra structure on $\HCoha_{\Pi_Q,\theta}^{\SP,G,\zeta}$.

\subsection{The relative Lie algebra in $\MHM^G(\Msp_{\theta}^{\zeta\sst}(\ol{Q}))$}
\label{RLAsec}
By Theorem \ref{PBWtheorem} there is a split inclusion
\begin{equation}
\label{uwg}
\BPSh_{\WT{Q},\WT{W},\theta}^{G,\zeta}\otimes\LLL^{1/2}\rightarrow \rCoha_{\WT{Q},\WT{W},\theta}^{G,\zeta,\chi}.
\end{equation}
The commutator Lie bracket coming from the Hall algebra product on the right hand side of \eqref{uwg} provides a morphism
\[
[\cdot,\cdot]\colon \left(\BPSh_{\WT{Q},\WT{W},\theta}^{G,\zeta}\otimes\LLL^{1/2}\right)\boxtimes_{\oplus^G}\left(\BPSh_{\WT{Q},\WT{W},\theta}^{G,\zeta}\otimes\LLL^{1/2}\right)\rightarrow \mathcal{T}=\bm{\tau}^{G,\leq 2}\rCoha_{\WT{Q},\WT{W},\theta}^{G,\zeta,\chi}
\]
and by Theorem \ref{PBWtheorem} again, the target $\mathcal{T}$ fits into a split triangle
\[
\BPSh_{\WT{Q},\WT{W},\theta}^{G,\zeta}\otimes\LLL^{1/2}\rightarrow \mathcal{T}\rightarrow \Ho^{G,2}(\mathcal{T}).
\]
and the composition
\[
\left(\BPSh_{\WT{Q},\WT{W},\theta}^{G,\zeta}\otimes\LLL^{1/2}\right)\boxtimes_{\oplus^G}\left(\BPSh_{\WT{Q},\WT{W},\theta}^{G,\zeta}\otimes\LLL^{1/2}\right)\rightarrow \Ho^{G,2}(\mathcal{T})
\]
is the zero morphism, since after applying the functor $\Ho(\cdot)$, the Hall algebra product is supercommutative (here it is essential that we twist by the sign $\chi$).  The commutator Lie bracket thus induces a morphism
\begin{equation}
\label{relLA}
[\cdot,\cdot]\colon \left(\BPSh_{\WT{Q},\WT{W},\theta}^{G,\zeta}\otimes\LLL^{1/2}\right)\boxtimes_{\oplus^G}\!\left(\BPSh_{\WT{Q},\WT{W},\theta}^{G,\zeta}\otimes\LLL^{1/2}\right)\rightarrow  \BPSh_{\WT{Q},\WT{W},\theta}^{G,\zeta}\otimes\LLL^{1/2}
\end{equation}
and applying $r'_*\kappa'^*$ we obtain the Lie bracket
\begin{equation}
\label{relLA2}
[\cdot,\cdot]\colon \BPSh_{\Pi_Q,\theta}^{G,\zeta}\boxtimes_{\oplus_{\red}^G}\BPSh_{\Pi_Q,\theta}^{G,\zeta}\rightarrow \BPSh_{\Pi_Q,\theta}^{G,\zeta}.
\end{equation}
This is a Lie algebra object inside the category\footnote{We use here that the 2d BPS sheaf is supported on $\Msp^{\zeta\sst}_{\theta}(\Pi_Q)$.} $\MHM^G(\Msp^{\zeta\sst}_{\theta}(\Pi_Q))$, from which we obtain $\fg_{\Pi_Q,\theta}^{\zeta}$ by applying $\HO$.  Likewise, applying $\varpi'_{\red,*}\varpi'^!_{\red}$ to \eqref{relLA2} we obtain a Lie algebra structure on the object $\BPSh_{\Pi_Q,\theta}^{\SP,G,\zeta}\in \DbMHM(\Msp^{\zeta\sst}_{\theta}(\Pi_Q))$, which becomes the Lie algebra $\fg^{\SP,G,\zeta}_{\Pi_Q,\theta}$ after applying $\HO$.
\subsection{Definition of the filtration}
By base change, the algebra morphism \eqref{rcmap} is given by applying $\varpi'_{\red,*}\varpi'^!_{\red}$ to 
\[
\oplus_{\red,*}^G\!\left(\rCoha_{\Pi_Q,\theta}^{G,\zeta}\boxtimes_{\B G}\rCoha_{\Pi_Q,\theta}^{G,\zeta}\right)\rightarrow \rCoha_{\Pi_Q,\theta}^{G,\zeta},
\]
a morphism in $\DlMHM(\Msp_{\theta}^{G,\zeta\sst}(\Pi_Q))$.  Furthermore by \eqref{2dInt} there is an isomorphism of underlying complexes
\[
\rCoha_{\Pi_Q,\theta}^{G,\zeta}\cong \Sym_{\oplus_{\red}^G}\!\left(\BPSh_{\Pi_Q,\theta}^{G,\zeta}\otimes\HO(\B \CC^*,\QQ) \right).
\]
Since $\BPSh_{\Pi_Q,\theta}^{G,\zeta}\in\MHM^G(\Msp_{\theta}^{\zeta\sst}(\Pi_Q))$ there is an isomorphism
\begin{equation}
\label{nisoBPS}
\Ho\!\left( \BPSh_{\Pi_Q,\theta}^{G,\zeta}\otimes\HO(\B \CC^*,\QQ) \right)\cong \BPSh_{\Pi_Q,\theta}^{G,\zeta}\otimes\HO(\B \CC^*,\QQ).
\end{equation}
In other words the right hand side is isomorphic to its total cohomology.  Since $\boxtimes_{\oplus_{\red}^G}$ is exact, the same property holds for the symmetric product, meaning that there is also an isomorphism
\begin{equation}
\label{decomp2}
\Sym_{\oplus_{\red}^G}\!\left(\BPSh_{\Pi_Q,\theta}^{G,\zeta}\otimes\HO(\B \CC^*,\QQ) \right)\cong \Ho\!\left(\Sym_{\oplus_{\red}^G}\!\left(\BPSh_{\Pi_Q,\theta}^{G,\zeta}\otimes\HO(\B \CC^*,\QQ) \right)\right).
\end{equation}

It follows that for every $p\in\mathbb{Z}$ the morphism $\bm{\tau}^{G,\leq p}\!\rCoha_{\Pi_Q,\theta}^{G,\zeta}\rightarrow \rCoha_{\Pi_Q,\theta}^{G,\zeta}$ has a left inverse $\alpha_p$, and so $\HO\varpi'_{\red,*}\varpi'^!_{\red}\alpha_p$ provides a left inverse to the natural morphism
\[
\HO\!\left(\Msp_{\theta}^{G,\zeta\sst}(\ol{Q}),\bm{\tau}^{G,\leq p}\varpi'_{\red,*}\varpi'^!_{\red}\rCoha_{\Pi_Q,\theta}^{G,\zeta}\right)\rightarrow \HCoha_{\Pi_Q,\theta}^{\SP,G,\zeta}.
\]
Thus the objects
\[
\lP_{p}\!\HCoha_{\Pi_Q,\theta}^{\SP,G,\zeta}\coloneqq \HO\!\left(\Msp_{\theta}^{G,\zeta\sst}(\ol{Q}),\varpi'_{\red,*}\varpi'^!_{\red}\bm{\tau}^{G,\leq p}\!\rCoha_{\Pi_Q,\theta}^{G,\zeta}\right)
\]
provide an ascending filtration of $\HCoha_{\Pi_Q,\theta}^{\SP,G,\zeta}$, the \textbf{less perverse filtration}.
\begin{proposition}
\label{respProp}
The algebra structure on $\HCoha_{\Pi_Q,\theta}^{\SP,G,\zeta}$ respects the less perverse filtration; if $\mult$ is the multiplication on $\HCoha_{\Pi_Q,\theta}^{\SP,G,\zeta}$ defined in \eqref{RelHH} then 
\[
\mult\left(\lP_{p}\!\HCoha_{\Pi_Q,\theta}^{\SP,G,\zeta}\otimes_{\HO_G}\lP_{p'}\!\HCoha_{\Pi_Q,\theta}^{\SP,G,\zeta}\right)\subset \lP_{p+p'}\!\HCoha_{\Pi_Q,\theta}^{\SP,G,\zeta}.
\]
\end{proposition}
\begin{proof}
The morphism $\mult$ is obtained by applying $\HO \omega_*\omega^!$ to the morphism $\star\colon\rCoha_{\Pi_Q,\theta}^{G,\zeta}\boxtimes_{\oplus_{\red}^G}\rCoha_{\Pi_Q,\theta}^{G,\zeta}\rightarrow \rCoha_{\Pi_Q,\theta}^{G,\zeta}$ defined in \S \ref{2drelCoHA} for which $\SP$ is the entire category of $\Pi_Q$-modules.  By exactness of $\boxtimes_{\oplus_{\red}^G}$, the natural morphism 
\[
\bm{\tau}^{G,\leq p}\rCoha_{\Pi_Q,\theta}^{G,\zeta}\boxtimes_{\oplus_{\red}^G} \bm{\tau}^{G,\leq p'}\rCoha_{\Pi_Q,\theta}^{G,\zeta}\rightarrow \rCoha_{\Pi_Q,\theta}^{G,\zeta}\boxtimes_{\oplus_{\red}^G}\rCoha_{\Pi_Q,\theta}^{G,\zeta}
\]
factors through 
\[
\bm{\tau}^{G,\leq p+p'}\left(\rCoha_{\Pi_Q,\theta}^{G,\zeta}\boxtimes_{\oplus_{\red}^G}\rCoha_{\Pi_Q,\theta}^{G,\zeta}\right)\rightarrow \rCoha_{\Pi_Q,\theta}^{G,\zeta}\boxtimes_{\oplus_{\red}^G}\rCoha_{\Pi_Q,\theta}^{G,\zeta}.
\]
Applying $\HO \omega_*\omega^!\bm{\tau}^{G,\leq p+p'}$ to the morphism $\star$, the result follows.
\end{proof}


\subsubsection{Splitting the filtration}
Since $\BPSh_{\Pi_Q,\theta}^{G,\zeta}$ is a mixed Hodge module, and $\HO(\B \CC^*,\QQ)$ is a cohomologically graded vector space, the isomorphism \eqref{nisoBPS} is \textit{natural}, and thus, the isomorphism \eqref{decomp2} is also natural.  Explicitly, writing $\HO(\B \CC^*,\QQ)=\bigoplus_{n\geq 0}\LLL^n$, we may write
\[
\Sym_{\oplus_{\red}^G}\!\left(\BPSh_{\Pi_Q,\theta}^{G,\zeta}\otimes\HO(\B \CC^*,\QQ) \right)=\bigoplus_{\substack{a_1,\ldots,a_r\in\NN\\r\in\NN}}\bigotimes_{i=1}^r\Sym^{a_i}_{\oplus_{\red}^G}\!\left(\BPSh_{\Pi_Q,\theta}^{G,\zeta}\otimes\LLL^i\right).
\]
Then if $n$ is even, $\Ho^n$ of the left hand side is given by taking the sum of all summands on the right hand side satisfying $\sum_{i=1}^ria_i=n/2$, while if $n$ is odd, $\Ho^n$ of the left hand side is zero.  Passing to derived global sections, we deduce that the perverse filtration we have defined here naturally splits.  We use only the filtration in what follows, since this splitting is not in general respected by the cohomological Hall algebra structure.  For an example of this see \cite{Da22}.  In particular, the left hand side of the relation in \cite[Prop.5.2]{Da22} is of homogeneous degree 2 while the right hand side is of homogeneous degree 0.

\subsubsection{A warning}
A variant of \cite[Warning 5.5]{QEAs} is in force here; if $\SP$ is not the entire category $\CC \ol{Q}\lmod$, the perverse filtration that we have defined here may be quite different from the perverse filtration given by applying perverse truncation functors to $\JH_{\red,*}\HA_{\Pi_Q,\theta}^{\SP,G,\zeta}$. (assuming this latter filtration makes sense)  For instance, let $Q$ be the Jordan quiver, with one loop, and consider the Serre subcategory $\SSN$.  Then one may easily verify that 
\begin{equation}
\label{warningeq}
\JH_{\red,*}\HA_{\Pi_Q,1}^{\SSN}\cong i_*\ul{\QQ}_{\AAA{1}}\otimes\HO(\B\CC^*,\QQ)
\end{equation}
where $i\colon \AAA{1}\hookrightarrow \AAA{2}$ is the inclusion of a coordinate hyperplane.  In particular, the zeroth perverse cohomology of \eqref{warningeq} is zero, while if instead we apply $\varpi'_{\red,*}\varpi'^!_{\red}$ to the zeroth cohomology of 
\[
\JH_{\red,*}\HA_{\Pi_Q,1}\cong \ul{\QQ}_{\AAA{2}}\otimes\HO(\B\CC^*,\QQ)\otimes\LLL^{-1}
\]
we get the (shifted) mixed Hodge module $\ul{\QQ}_{\AAA{1}}$, and we find $\lP_{0}\!\HCoha_{\Pi_Q,1}^{\SSN}\cong \HO(\AAA{1},\QQ)\neq 0$.  This distinction between the two choices of filtration on $\HCoha_{\Pi_Q}^{\SSN}$ is crucial in \S \ref{BBA}.

\subsection{Comparison with the perverse filtration of \cite{QEAs}}
\label{morePerverse}
We call the filtration introduced in Theorem \ref{thma} the \textit{less} perverse filtration, in order to distinguish it from a different perverse filtration, that was introduced in \cite{QEAs} on the way towards the definition of BPS sheaves, which recalled in \eqref{OPdef}.  This is a perverse filtration on the critical CoHA $\HCoha^{\WT{\SP}}_{\WT{Q},\WT{W}}$, defined via perverse truncation functors on the category of monodromic mixed Hodge modules on the coarse moduli space of $\CC\tilde{Q}$-modules.  This perverse filtration is the crucial part of the definition of the BPS Lie algebra for a \textit{general} quiver with potential, and in particular the definition of the BPS Lie algebra $\fg_{\tilde{Q},\tilde{W}}^{\tilde{\Sp}}$ for the tripled quiver with its canonical cubic potential in Theorem \ref{thma}.
Switching to the ordinary English meaning of the word, the filtration $\lP_{\bullet}\!\HCoha_{\Pi_Q}^{\SP}$ of Theorem \ref{thma} seems less perverse than $\mathfrak{P}_{ \bullet}\!\HCoha_{\tilde{Q},\tilde{W}}^{\tilde{\SP}}$, since it comes directly from the geometry of the map $\Mst(\Pi_Q)\rightarrow \Msp(\Pi_Q)$, rather than the more circuitous route of dimensional reduction, vanishing cycles, and the semisimplification morphism $\Mst(\WT{Q})\rightarrow \Msp(\WT{Q})$ for the auxiliary quiver $\WT{Q}$.  The two filtrations are rather different\footnote{As a consequence of this difference, there is value in considering them both simultaneously; see \S \ref{DPFsec} for example.}; for instance, the BPS Lie algebra embeds in $\lP_{0}\!\HCoha_{\Pi_Q}^{\SP,\psi}$, while $\mathfrak{P}_{0}\!\HCoha_{\tilde{Q},\tilde{W}}^{\tilde{\SP},\psi}=0$.  In general, perverse degrees with respect to the new filtration are lower than for the old one.  It is for these two reasons that we call the new filtration the \textit{less} perverse filtration.


\subsection{Deformed dimensional reduction}
\label{DDRsec}
We can generalise the results of this paper, incorporating deformed potentials as introduced in joint work with Tudor P\u adurariu \cite{DaPa20}.  We indicate how this goes in this section.  We will not use this generalisation of the less perverse filtration, except in the statement of Proposition \ref{bels} and the example of \S \ref{defEx}.

Let $W_0\in\CC \ol{Q}_{\cyc}$ be a $G$-invariant  linear combination of cyclic words in $\ol{Q}$.  We make the assumption that there is a grading of the arrows of $\WT{Q}$ so that $\WT{W}+W_0$ is quasihomogeneous of positive degree.  Then in \cite{DaPa20} it was shown that there is a natural isomorphism.
\[
\HCoha^{G,\chi}_{\WT{Q},\WT{W}+W_0}=\HO(\HA^G_{\WT{Q},\WT{W}+W_0})\cong \HO(\phim{\TTr(W_0)}\JH^G_{\red,*}\HA^G_{\Pi_Q})
\]
which one may show is an isomorphism of algebras, i.e. $\HCoha^{G,\chi}_{\WT{Q},\WT{W}+W_0}$ is obtained by applying the derived global sections functor to the algebra object $\phim{\TTr(W_0)}\JH^G_{\red,*}\HA^G_{\Pi_Q}$ obtained by applying the symmetric monoidal functor $\phim{\TTr(W_0)}$ to the algebra object $\JH^G_{\red,*}\HA^G_{\Pi_Q}$, with $\psi$-twist given by $\chi$ as in \S \ref{3desc}.  Since $\phim{\TTr(W_0)}$ is exact, and $\JH^G_{\red,*}\HA^G_{\Pi_Q}$ is pure by Theorem \ref{thma}, there is an isomorphism
\[
\Ho(\phim{\TTr(W_0)}\JH^G_{\red,*}\HA^G_{\Pi_Q})\cong \phim{\TTr(W_0)}\JH^G_{\red,*}\HA^G_{\Pi_Q}
\]
and so $\HCoha^{G,\chi}_{\WT{Q},\WT{W}+W_0}$ carries a less perverse filtration, defined in the same way as the less perverse filtration for $\HCoha^{G,\chi}_{\WT{Q},\WT{W}}$, i.e.
\[
\lP_{p}\!\HCoha^{G,\chi}_{\WT{Q},\WT{W}+W_0}\coloneqq \HO\!\left(\Msp_{\theta}^{G,\zeta\sst}(\ol{Q}),\bm{\tau}^{G,\leq p}\!\:\phim{\TTr(W_0)}\JH^G_{\red,*}\HA^G_{\Pi_Q}\right)
\]
As in \S \ref{RLAsec}, we obtain a Lie algebra structure on $\BPSh^G_{\Pi_Q,W_0}\coloneqq \phim{\TTr(W_0)}\BPSh^G_{\Pi_Q}$ and we define the BPS Lie algebra $\fg^{G}_{\Pi_Q,W_0}\coloneqq \HO(\phim{\TTr(W_0)}\BPSh^G_{\Pi_Q})$.

For a final layer of generality, for $\SP$ a Serre subcategory of $\CC\ol{Q}\lmod$ and $\WT{W}+W_0$ a quasihomogeneous potential as above, one may consider the object $\HA^{\WT{\SP},G}_{\WT{Q},\WT{W}+W_0}\coloneqq\varpi_*\varpi^!\HA_{\WT{Q},\WT{W}+W_0}^G$, and the associated Hall algebra $\HCoha^{\WT{\SP},G,\chi}_{\WT{Q},\WT{W}+W_0}$ which carries a (less) perverse filtration defined by
\[
\lP_{i}\! \HCoha^{\WT{\SP},G,\chi}_{\WT{Q},\WT{W}+W_0}\coloneqq \HO \varpi'_{\red,*}\varpi_{\red}'^!\bm{\tau}^{G,\leq i}\phim{\TTr(W_0)}\JH_{\red,*}^G\HA^G_{\Pi_Q}
\]
with 2d BPS sheaf $\BPSh^{\SP,G}_{\Pi_Q,W_0}=\varpi'_{\red,*}\varpi'^!_{\red}\phim{\TTr(W_0)}\BPSh^{\SP,G}_{\Pi_Q}$ and associated BPS Lie algebra $\fg_{\Pi_Q,W_0}^{\SP,G}=\HO\!\left(\Msp^G(\ol{Q}),\BPSh^{\SP,G}_{\Pi_Q,W_0}\right)$.  Even at this maximal level of generality, we will find that the spherical Lie subalgebra is a Kac--Moody Lie algebra: see Proposition \ref{bels}.

\section{The zeroth piece of the filtration}
\label{zpf}
\subsection{The subalgebra $\lP_{0}\!\HCoha^{\Sp,G,\zeta}_{\Pi_Q,\theta}$}
By \eqref{2dInt}, the less perverse filtration on $\HCoha^{\Sp,G,\zeta}_{\Pi_Q,\theta}$ begins in degree zero, and since the multiplication respects the less perverse filtration, the cohomologically graded vector space $\lP_{0}\!\HCoha^{\Sp,G,\zeta}_{\Pi_Q,\theta}$ is closed under the CoHA multiplication.  It turns out that this subalgebra has a very natural description in terms of the BPS Lie algebra, completing the proof of Theorem \ref{thma}:
\begin{theorem}
\label{UEAthm}
There is an isomorphism of algebras $\lP_{0}\!\HCoha^{\Sp,G,\zeta}_{\Pi_Q,\theta}\cong \UEA_{\HO_G}(\mathfrak{g}_{\Pi_Q,\theta}^{G,\SP,\zeta})$.
\end{theorem}
\begin{proof}
Applying $\bm{\tau}^{G,\leq 0}$ to the isomorphism \eqref{2dInt} in the special case $\SP=\CC\ol{Q}\lmod$ yields the isomorphism of mixed Hodge modules
\begin{equation}
\label{halfzero}
\Sym_{\oplus_{\red}^G}\!\left( \BPSh_{\Pi_Q,\theta}^{G,\zeta}\right)\cong \bm{\tau}^{G,\leq 0}\!\left(\rCoha_{\Pi_Q,\theta}^{G,\zeta}\right)
\end{equation}
defined via the relative CoHA multiplication (although this is not an isomorphism of algebra objects).  Now applying $\HO\varpi'_{\red,*}\varpi'^!_{\red}$ to \eqref{halfzero} we obtain the isomorphism
\[
\Sym_{\HO_G}(\fg_{\Pi_Q,\theta}^{\SP,G,\zeta})\xrightarrow{\cong}\lP_{0}\!\HCoha^{\Sp,G,\zeta}_{\Pi_Q,\theta}.
\]
By Proposition \ref{UEApart}, the image of the induced embedding $\Sym_{\HO_G}(\fg_{\Pi_Q,\theta}^{\SP,G,\zeta})\hookrightarrow \HCoha^{\Sp,G,\zeta}_{\Pi_Q,\theta}$ is precisely the subalgebra $\UEA_{\HO_G}(\mathfrak{g}_{\Pi_Q,\theta}^{G,\SP,\zeta})$.
\end{proof}

\subsubsection{The perverse filtration $\mathfrak{P}_{\bullet}\!\lP_{0}\!\HCoha^{\Sp,G,\zeta}_{\Pi_Q,\theta}$}
\label{DPFsec}

Since via \eqref{drnoalg} there is an inclusion 
\[
I\colon \lP_{ 0}\!\HCoha^{\Sp,G,\zeta}_{\Pi_Q,\theta}\hookrightarrow \HCoha_{\WT{Q},\WT{W},\theta}^{\WT{\SP},G,\zeta}
\]
and $\HCoha_{\WT{Q},\WT{W},\theta}^{\WT{\SP},G,\zeta}$ carries the ``more'' perverse filtration $\mathfrak{P}_{\bullet}\!\HCoha_{\WT{Q},\WT{W},\theta}^{\WT{\SP},G,\zeta}$ defined in \cite{QEAs}, we obtain a perverse filtration on 
$\lP_{0}\!\HCoha^{\Sp,G,\zeta}_{\Pi_Q,\theta}$ itself, for which the $i$th piece is
\begin{equation}
\label{PLP}
\lP_{0}\!\HCoha^{\Sp,G,\zeta}_{\Pi_Q,\theta}\cap\: I^{-1}\!\left(\mathfrak{P}_{i}\!\HCoha^{\WT{\Sp},G,\zeta}_{\WT{Q},\WT{W},\theta}\right).
\end{equation}
Writing 
\begin{align*}
\lP_{0}\!\HCoha^{\Sp,G,\zeta}_{\Pi_Q,\theta}\cong \HO\varpi'_*\varpi'^!\Sym_{\oplus^G}\!\left(\BPSh_{\WT{Q},\WT{W},\theta}^{G,\zeta}\otimes\LLL^{1/2}\right),
\end{align*}
we have that
\begin{align*}
\lP_{0}\!\HCoha^{\Sp,G,\zeta}_{\Pi_Q,\theta}\cap\: I^{-1}\!\left(\mathfrak{P}_{n}\!\HCoha^{\WT{\Sp},G,\zeta}_{\WT{Q},\WT{W},\theta}\right)\cong&\HO\varpi'_*\varpi'^!\bm{\tau}^{G,\leq n}\Sym_{\oplus^G}\!\left(\BPSh_{\WT{Q},\WT{W},\theta}^{G,\zeta}\otimes\LLL^{1/2}\right)\\
\cong& \HO\varpi'_*\varpi'^!\bigoplus_{i=0}^n\left(\Sym^i_{\oplus^G}\!\left(\BPSh_{\WT{Q},\WT{W},\theta}^{G,\zeta}\otimes\LLL^{1/2}\right)\right)\\
\cong& \HO\bigoplus_{i=0}^n\left(\Sym^i_{\oplus^G}\!\left(\BPSh_{\WT{Q},\WT{W},\theta}^{\WT{\SP},G,\zeta}\otimes\LLL^{1/2}\right)\right)\\
\cong& \bigoplus_{i=0}^n\left(\Sym^i_{\HO_G}\!\left(\fg_{\Pi_Q,\theta}^{\SP,G,\zeta}\right)\right).
\end{align*}
We deduce the following
\begin{proposition}
Under the isomorphism of Theorem \eqref{UEAthm}, the perverse filtration \eqref{PLP} is sent to the order filtration on the universal enveloping algebra $\UEA_{\HO_G}(\mathfrak{g}_{\Pi_Q,\theta}^{G,\SP,\zeta})$.
\end{proposition}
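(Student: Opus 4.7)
The plan is to show that the isomorphism \eqref{UEAisoo} carries $F_n\UEA_B(\fg_{\Pi_Q,\theta}^{\SP,G,\zeta})$ (the subspace spanned by products of at most $n$ elements of the BPS Lie algebra) onto the $n$th piece of the filtration \eqref{PLP}, by combining multiplicativity of the ``more'' perverse filtration with the PBW-style computation already performed in the display immediately preceding the proposition.

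The first step is to verify multiplicativity:
\[
\mathfrak{P}_{\leq m}\!\HCoha^{\WT{\SP},G,\zeta}_{\WT{Q},\WT{W},\theta}\cdot \mathfrak{P}_{\leq n}\!\HCoha^{\WT{\SP},G,\zeta}_{\WT{Q},\WT{W},\theta}\subset \mathfrak{P}_{\leq m+n}\!\HCoha^{\WT{\SP},G,\zeta}_{\WT{Q},\WT{W},\theta}.
\]
This is standard (compare \cite{QEAs}, and the parallel observation already used in \S\ref{DPFsec} for $\lP_{\leq \bullet}$): it reduces to the fact that the truncation $\tau^G_{\leq \bullet}$ is compatible with the relative convolution $\boxtimes_{\oplus^G}$, which in turn follows from $t$-exactness of $\oplus^G_*$ (since $\oplus^G$ is finite) together with the fact that if $\tau^G_{\leq m}\mathcal{F}\cong\mathcal{F}$ and $\tau^G_{\leq n}\mathcal{G}\cong\mathcal{G}$ then $\mathcal{F}\boxtimes_{\B G}\mathcal{G}$ lies in $\tau^G_{\leq m+n}$. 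By construction $\fg_{\Pi_Q,\theta}^{\SP,G,\zeta}=\mathfrak{P}_{\leq 1}\!\HCoha^{\WT{\SP},G,\zeta}_{\WT{Q},\WT{W},\theta}$, and $\mathfrak{P}_{\leq 0}=0$. A straightforward induction on $n$ then yields
\[
F_n\UEA_B(\fg_{\Pi_Q,\theta}^{\SP,G,\zeta})\subset \lP_{\leq 0}\!\HCoha^{\SP,G,\zeta}_{\Pi_Q,\theta}\cap\: I^{-1}\!\left(\mathfrak{P}_{\leq n}\!\HCoha^{\WT{\SP},G,\zeta}_{\WT{Q},\WT{W},\theta}\right).
\]

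For equality, the display directly preceding the proposition has already produced an isomorphism of $\dvst$-graded, cohomologically graded $B$-modules
\[
\lP_{\leq 0}\!\HCoha^{\SP,G,\zeta}_{\Pi_Q,\theta}\cap\: I^{-1}\!\left(\mathfrak{P}_{\leq n}\!\HCoha^{\WT{\SP},G,\zeta}_{\WT{Q},\WT{W},\theta}\right)\cong \bigoplus_{i=0}^n\Sym_B^i(\fg_{\Pi_Q,\theta}^{\SP,G,\zeta}).
\]
On the other hand, the $B$-linear PBW theorem for $\UEA_B(\fg_{\Pi_Q,\theta}^{\SP,G,\zeta})$ (applicable because $B=\HG$ is commutative, and freeness of $\fg_{\Pi_Q,\theta}^{\SP,G,\zeta}$ over $B$ is guaranteed by Assumption \ref{2dassumption} combined with Theorem \ref{PBWtheorem}) identifies $F_n\UEA_B(\fg_{\Pi_Q,\theta}^{\SP,G,\zeta})$ with the same $B$-module. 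The inclusion from the previous step is then a map between two free $B$-modules with identical graded characters, and on the $i$-th summand it restricts to the canonical symmetrization map; hence it is an isomorphism, and the proposition follows.

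The step I expect to be the main obstacle is verifying multiplicativity of $\mathfrak{P}_{\leq \bullet}$ in a form adapted to the present conventions: while morally clear from the factorisation of the CoHA multiplication through the relative product $\boxtimes_{\oplus^G}$, one has to track carefully how the truncation functors $\tau^G_{\leq \bullet}$ interact with the convolution in the presence of the extra gauge group $G$ and the Tate-twist normalisations already built into the identification $\fg_{\Pi_Q,\theta}^{\SP,G,\zeta}\cong\mathfrak{P}_{\leq 1}\!\HCoha^{\WT{\SP},G,\zeta}_{\WT{Q},\WT{W},\theta}$. Once that is in place, the rest of the argument is a bookkeeping comparison of two free $B$-modules, both of which are already canonically identified with $\bigoplus_{i=0}^n\Sym_B^i(\fg_{\Pi_Q,\theta}^{\SP,G,\zeta})$.
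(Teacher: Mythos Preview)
Your proposal is correct and follows essentially the same route as the paper. The paper's entire argument is the displayed computation you cite: since \eqref{UEAisoo} is constructed via the relative PBW map, the identification of the filtration piece with $\bigoplus_{i=0}^n\Sym^i_B(\fg_{\Pi_Q,\theta}^{\SP,G,\zeta})$ already \emph{is} the identification with $F_n\UEA_B$, so your separate multiplicativity step (while correct) is redundant --- the display directly computes where the PBW map sends each $\Sym^i$ with respect to $\mathfrak{P}$, rather than first proving an inclusion and then matching sizes.
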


\subsection{Nakajima quiver varieties}
\label{NQVs}

As preparation for the proof of Theorem \ref{KMLA} below, we recall some fundamental results regarding the action of $\HCoha_{\Pi_Q}$ on the cohomology of Nakajima quiver varieties, recasting these results in terms of vanishing cycle cohomology along the way.  For the rest of \S \ref{zpf} we focus on the case $G=1$.  
\subsubsection{Nakajima quiver varieties as critical loci}
Given a quiver $Q$ and a dimension vector $\ff\in\dvs$, we define the quiver $Q_{\ff}$ by adding one vertex $\infty$ to the vertex set $Q_0$, and for each vertex $i\in Q_0$ we add $\ff_i$ arrows $a_{i,1},\ldots,a_{i,\ff_i}$ with source $\infty$ and target $i$.

Given a dimension vector $\dd\in\dvs$ we denote by $\dd^+$ the dimension vector for $Q_{\ff}$ defined by $\dd^+\lvert_{Q_0}=\dd$ and $\dd^+_{\infty}=1$.  From the quiver $Q_{\ff}$ we form the quiver $\WT{Q_{\ff}}$ via the tripling construction of \S \ref{3desc}.  For each $i\in Q_0$ there are $\ff_i$ arrows $a^*_{i,1},\ldots,a^*_{i,\ff_i}$ in $(\WT{Q_{\ff}})_1$ with source $i$ and target $\infty$.  We denote by $\WT{W_{\ff}}$ the canonical cubic potential for $\WT{Q_{\ff}}$.  We form $Q^+$ by removing the loop $\omega_{\infty}$ from $\WT{Q_{\ff}}$, and form $W^+$ from $\WT{W_{\ff}}$ by removing all paths containing $\omega_{\infty}$.  So we have
\[
W^+=\left(\sum_{a\in Q_1}[a,a^*]+\sum_{i\in Q_0}\sum_{m=1}^{\ff_i}a_{i,m}a^*_{i,m}\right)\left(\sum_{i\in Q_0}\omega_i\right).
\]

We define the stability condition $\zeta\in\QQ^{Q^+_0}$ by setting $\zeta_i=0$ for $i\in Q_0$ and $\zeta_{\infty}=1$.  Then a $\dd^+$-dimensional $\CC Q^+$-module $\rho$ is $\zeta^+$-stable if and only if it is $\zeta^+$-semistable.  This occurs if and only if the vector space $e_{\infty}\cdotsh \rho\cong\CC$ generates $\rho$ under the action of $\CC Q^+$.

We define the fine moduli space $\Msp_{\ff,\dd}(Q)=\AS^{\zeta^+\sst}_{\dd^+}(Q^+)/\Gl_{\dd}$, which carries the function $\TTr(W^+)_{\dd}$.  Following Nakajima \cite{Nak98}, we define $\Nak(\ff,\dd)\subset \AS_{\dd^+}^{\zeta^+\sst}(\ol{Q_{\ff}})/\Gl_{\dd}$ to be the $\Gl_{\dd}$ quotient of the intersection of the stable locus with the zero set of the moment map
\begin{align*}
\AS_{\dd}(Q)\times\AS_{\dd}(Q^{\opp})\times \left(\prod_{i\in Q_0} (\CC^{\dd_i})^{\ff_i}\right)\times \left(\prod_{i\in Q_0} ((\CC^{\dd_i})^{\ff_i})^*\right)&\rightarrow \g_{\dd}\\
(A,A^*,I,J)&\mapsto [A,A^*]+IJ.
\end{align*}
We define the embedding $\iota\colon\Nak(\ff,\dd)\hookrightarrow \Msp_{\ff,\dd}(Q)$ by extending a $\CC\ol{Q_{\ff}}$-module to a $\CC Q^+$ module, setting the action of each of the $\omega_i$ for $i\in Q_0$ to be zero.  If $\Nak(\ff,\dd)\neq \emptyset$ then it is smooth and
\begin{equation}
\label{dimM}
\dim(\Nak(\ff,\dd))=2\ff\cdot \dd-2\chi_Q(\dd,\dd).
\end{equation}

\begin{proposition}
\label{NQVDR}
There is an equality of subschemes $\crit\!\left(\TTr(W^+)_{\dd}\right)=\Nak(\ff,\dd)$.  Moreover, there is an isomorphism of mixed Hodge modules\footnote{These are indeed mixed Hodge modules, since by \eqref{dimM} there are an even number of half Tate twists in the definition \eqref{nICdef} of the right hand side of \eqref{rDIM}.}
\begin{equation}
\label{rDIM}
\phim{\TTr(W^+)_{\dd}}\nIC_{\Msp_{\ff,\dd}(Q)}\cong \iota_*\nIC_{\Nak(\ff,\dd)},
\end{equation}
so that, in particular, there is an isomorphism of mixed Hodge structures
\begin{equation}
\label{HDIM}
\HO\!\left(\Msp_{\ff,\dd}(Q),\phim{\TTr(W^+)_{\dd}}\nIC_{\Msp_{\ff,\dd}(Q)}\right)\cong \HO\!\left( \Nak(\ff,\dd),\QQ\right)\otimes\LLL^{\chi_Q(\dd,\dd)-\ff\cdot \dd}.
\end{equation}
\end{proposition}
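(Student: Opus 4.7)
The proof splits naturally into two parts: first identifying $\crit(\TTr(W^+)_\dd) = \Nak(\ff,\dd)$ by an explicit critical-point analysis, and then promoting this identification to the sheaf-level statement \eqref{rDIM} via dimensional reduction.

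For the critical locus, I would exploit the factorization $W^+ = \sum_{i\in Q_0}\omega_i \mu_i$, where $\mu_i = \sum_{a\colon t(a)=i} aa^* - \sum_{a\colon s(a)=i} a^*a + \sum_m a_{i,m}a^*_{i,m}$ is the $i$-th component of the moment map path. Computing cyclic derivatives at a critical point $\rho$ produces four families of equations: (i) $\partial/\partial \omega_i$ recovers the moment map equations $\mu_i(\rho)=0$; (ii) $\partial/\partial a$ and $\partial/\partial a^*$ for $a \in Q_1$ force $\omega = (\omega_i)_i$ to intertwine every arrow of $\ol{Q}$, hence $\omega$ is a $\CC\ol{Q}$-module endomorphism of $\rho|_{Q_0}$; and (iii) $\partial/\partial a_{i,m}$ and $\partial/\partial a^*_{i,m}$ force $\omega_i$ to annihilate every framing vector $a_{i,m}(1)$ and to have image in $\ker a^*_{i,m}$. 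Combining (ii) and (iii), the $\CC\ol{Q_\ff}$-submodule $V \subset \rho$ generated by the framing is $\omega$-stable with $\omega|_V = 0$. Hence $V$ is actually a $\CC Q^+$-submodule containing $e_\infty \cdot \rho$, so $\zeta^+$-stability of $\rho$ forces $V = \rho$ and consequently $\omega = 0$. The remaining equations are exactly those cutting out $\Nak(\ff,\dd)$, giving the claimed set-theoretic (and, by tracing the same argument through the coordinate ring together with smoothness of $\Nak(\ff,\dd)$ on the $\zeta^+$-stable locus, scheme-theoretic) equality.

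For the sheaf-level statement, restrict attention to the open subvariety $U \subset \Msp_{\ff,\dd}(Q)$ of representations whose restriction to $\CC\ol{Q_\ff}$ is already $\zeta^+$-stable. By the first part, the critical locus is contained in $U$, and the forgetful map
\[
r\colon U \longrightarrow \AS^{\zeta^+\sst}_{\dd^+}(\ol{Q_\ff})/\Gl_\dd
\]
presents $U$ as the total space of a vector bundle of rank $\sum_i \dd_i^2$ with fibres $\prod_i \gl_{\dd_i}$, on which $\TTr(W^+)_\dd$ is linear on fibres with coefficient section equal to the moment map. The dimensional reduction theorem \cite[Thm.A.1]{Da13} then yields a canonical isomorphism
\[
\iota_{0,*}\iota_0^!\ul{\QQ}_{\AS^{\zeta^+\sst}_{\dd^+}(\ol{Q_\ff})/\Gl_\dd} \cong r_*\phim{\TTr(W^+)_\dd}\ul{\QQ}_{U},
\]
where $\iota_0\colon \Nak(\ff,\dd)\hookrightarrow \AS^{\zeta^+\sst}_{\dd^+}(\ol{Q_\ff})/\Gl_\dd$ is the moment-map zero locus. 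Since $\phim{\TTr(W^+)_\dd}\ul{\QQ}_U$ is supported on the zero section of $r$ over $\Nak(\ff,\dd)$, on which $r$ restricts to an isomorphism, this pushforward identity upgrades (after extension by zero to the complement of $U$, which is harmless since the vanishing cycle sheaf is supported in $U$) to the sheaf-level isomorphism $\phim{\TTr(W^+)_\dd}\ul{\QQ}_{\Msp_{\ff,\dd}(Q)} \cong \iota_*\iota_0^!\ul{\QQ}$ on the total space. Smoothness of $\Nak(\ff,\dd)$ on the $\zeta^+$-stable locus identifies $\iota_0^!\ul{\QQ}$ with the constant sheaf up to a shift and Tate twist of codimension $\sum_i \dd_i^2$, and the dimension formula $\dim\Nak(\ff,\dd) = 2\ff\cdot\dd - 2\chi_Q(\dd,\dd)$ — which is even, justifying the footnote about half Tate twists — ensures the shifts and twists combine precisely to $\iota_*\nIC_{\Nak(\ff,\dd)}$, giving \eqref{rDIM}. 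The cohomological statement \eqref{HDIM} follows on applying $\HO$, with the prefactor $\LLL^{\chi_Q(\dd,\dd) - \ff\cdot\dd}$ arising from $\nIC_{\Nak(\ff,\dd)} = \ICS_{\Nak(\ff,\dd)}(\QQ) \otimes \LLL^{-\dim \Nak/2}$.

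The main obstacle is the propagation argument in the first step: showing cleanly that the annihilation relations $\omega_i a_{i,m}(1) = 0$, once conjugated through the $\CC\ol{Q}$-intertwining relations of (ii), imply $\omega$ vanishes on all of the submodule generated by the framings. The content here is that while $\zeta^+$-stability \emph{a priori} produces generators only under the action of $\CC Q^+$ (including the loops $\omega_i$), the critical equations turn out to be precisely what is needed to replace this by the action of $\CC\ol{Q_\ff}$ alone. The second step is then relatively routine: dimensional reduction applies directly on the vector-bundle locus $U$, support considerations upgrade from the base to the total space, and the Tate-twist bookkeeping reduces to the standard dimension count for Nakajima varieties.
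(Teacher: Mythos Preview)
Your argument is correct, and for the second half it is genuinely different from the paper's.

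For the identification of the critical locus, your direct analysis of the cyclic derivatives is essentially the same content as the paper's, which packages the computation as an isomorphism $\Jac(Q^+,W^+)\cong \Pi_{Q_\ff}[\omega]/\langle e_\infty\omega e_\infty\rangle$ and then reads off that $\zeta^+$-stable modules must have $\omega=0$. Your inductive ``propagation'' argument (that $\omega$ vanishes on the $\CC\ol{Q_\ff}$-submodule generated by the framing, hence on all of $\rho$ by stability) is exactly the module-theoretic unpacking of this algebra isomorphism, and it goes through cleanly; the scheme-theoretic equality then follows since the linearised intertwining equations span the $\omega$-directions and $d\mu$ is surjective on the stable locus, so the critical scheme is smooth of the correct codimension.

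For the sheaf-level isomorphism \eqref{rDIM}, the paper takes a different route: having already cited \eqref{HDIM} from \cite{preproj}, it uses smoothness of the critical locus and the holomorphic Bott--Morse lemma to identify $\phim{\TTr(W^+)_\dd}\nIC$ with $\iota_*\nIC_{\Nak(\ff,\dd)}$ \emph{analytically locally}, reducing the global statement to a monodromy calculation, which is then settled by matching the dimension of the lowest-degree cohomology against the already-known \eqref{HDIM}. Your approach instead applies dimensional reduction directly on the open locus $U$ where the underlying $\CC\ol{Q_\ff}$-module is stable (which you correctly observe contains the critical locus), obtains the isomorphism there, and extends by zero. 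This has two advantages: it produces \eqref{rDIM} as a \emph{natural} isomorphism rather than an abstract one, and it yields \eqref{HDIM} as a corollary rather than requiring it as input. The paper's approach, by contrast, is slightly more self-contained in that the Bott--Morse step is elementary, but it needs the separate monodromy argument and the prior reference.
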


\begin{proof}
The (Verdier dual of the) isomorphism \eqref{HDIM} in derived global sections is constructed in \cite[Thm.6.6]{preproj} via dimensional reduction, so the proof consists of lifting this isomorphism to the level of an isomorphism of mixed Hodge modules.

Let $c$ be an arrow from $\infty$ to $i$ in $Q^+$.  Then
\[
\partial W^+/\partial c^*=\omega_ic;\quad\quad
\partial W^+/\partial c =c^*\omega_i.
\]
For $a$ an arrow in $Q_1$ we have
\[
\partial W/\partial a^*= \omega_{t(a)}a-a\omega_{s(a)};\quad\quad
\partial W/\partial a=a^*\omega_{t(a)}-\omega_{s(a)}a^*.
\]
Putting these equalities together, we have an isomorphism of algebras
\begin{equation}
\label{Jacref}
\psi\colon\Jac(Q^+,W^+)\cong\Pi_{Q_{\ff}}[\omega]/ \langle \omega e_{\infty}\rangle
\end{equation}
where $\psi^{-1}(\omega)=\sum_{i\in Q_0}\omega_i$.  We consider the fine moduli space
\[
\mathcal{N}=\AS_{\dd^+}^{\zeta^+\sst}(\WT{Q_{\ff}})/\Gl_{\dd}.
\]
Then the critical locus of $\TTr(\WT{W_{\ff}})$ is identified with the total space of the bundle on $\Nak(\ff,\dd)$ for which the fibre over $\rho$ is the space of endomorphisms of $\rho$.  By stability, this is a rank one vector bundle.  By \eqref{Jacref}, $\crit(\TTr(W^+)_{\dd})$ is the zero section, i.e. it is $\Nak(\ff,\dd)$.

Since the (scheme-theoretic) critical locus of $\TTr(W^+)_{\dd}$ is smooth, by the holomorphic Bott--Morse lemma this function can be written analytically locally (on $\Nak(\ff,\dd)$) as 
\[
\TTr(W^+)_{\dd}=x_1^2+\ldots +x_e^2
\]
where $e$ is the codimension of $\Nak(\ff,\dd)$ inside $\Msp_{\ff,\dd}(Q)$, and so $\phim{\TTr(W^+)_{\dd}}\nIC_{\Msp_{\ff,\dd}(Q)}$ is analytically locally isomorphic to $\nIC_{\Nak(\ff,\dd)}$.  In particular, as a perverse sheaf it is locally isomorphic to $\QQ_{\Nak_{\ff,\dd}(Q)}[2\ff\cdot\dd-2\chi_Q(\dd,\dd)]$, and is thus determined by its monodromy\footnote{This is the monodromy around $\Nak_{\ff,\dd}(Q)$ and is unrelated to the ``monodromic'' in ``monodromic mixed Hodge module''.}.  Finally, in cohomological degree $2\chi_Q(\dd,\dd)-2\ff\cdot\dd$ the right hand side of \eqref{HDIM} is a vector space spanned by the components of $\Nak(\ff,\dd)$, while the left hand side is a vector space spanned by the components of $\Nak(\ff,\dd)$ on which the monodromy of the underlying perverse sheaf of $\phim{\TTr(W^+)_{\dd}}\nIC_{\Msp_{\ff,\dd}(Q)}$ is trivial.  It follows from the existence of the isomorphism \eqref{HDIM} that these dimensions are the same, and so the monodromy is trivial, and isomorphism \eqref{rDIM} follows.
\end{proof}

\subsubsection{CoHA modules from framed representations}
We recall a general construction, producing modules for cohomological Hall algebras out of moduli spaces of framed quiver representations, see \cite{Soi14} for related examples and discussion.

Let $\dd',\dd''\in \dvs$ be dimension vectors, with $\dd=\dd'+\dd''$.  We define $\AS_{\dd',\dd''^+}^{\zeta^+\sst}(Q^+)\subset \AS_{\dd^+}^{\zeta^+\sst}(Q^+)$ to be the subspace of $\CC Q^+$-modules $\rho$ such that the underlying $\CC \WT{Q}$-module of $\rho$ preserves the $Q_0$-graded flag 
\begin{equation}
\label{flag}
0\subset \CC^{\dd'}\subset \CC^{\dd}
\end{equation}
and for every arrow $c^*$ with $t(c^*)=\infty$ we have $\rho(c^*)(\CC^{\dd'})=0$.
Given such a $\rho$ we obtain a short exact sequence
\begin{equation}
\label{sesrho}
0\rightarrow \rho'\rightarrow \rho\rightarrow\rho''\rightarrow 0
\end{equation}
where $\udim(\rho')=(\dd',0)$ and $\udim(\rho'')=\dd''^+$.  We set
\[
\Msp_{\ff,\dd',\dd''}(Q)\coloneqq \AS_{\dd',\dd''^+}^{\zeta^+\sst}(Q^+)/\Gl_{\dd',\dd''}
\]
where $\Gl_{\dd',\dd''}\subset \Gl_{\dd}$ is the subgroup preserving the flag \eqref{flag}.  There are morphisms 
\begin{align*}
\pi_1\colon \Msp_{\ff,\dd',\dd''}(Q)\rightarrow &\Mst_{\dd'}(\WT{Q})\\
\pi_2\colon \Msp_{\ff,\dd',\dd''}(Q)\rightarrow&\Msp_{\ff,\dd}(Q)\\
\pi_2\colon \Msp_{\ff,\dd',\dd''}(Q)\rightarrow&\Msp_{\ff,\dd''}(Q)
\end{align*}
taking a point representing the short exact sequence \eqref{sesrho} to $\rho',\rho,\rho''$ respectively.  Then in the correspondence diagram
\[
\xymatrix{
&\Msp_{\ff,\dd',\dd''}(Q)\ar[dl]_{\pi_1\times \pi_3}\ar[dr]^{\pi_2}\\
\Mst_{\dd'}(\WT{Q})\times \Msp_{\ff,\dd''}(Q)&&\Msp_{\ff,\dd}(Q)
}
\]
the morphism $\pi_2$ is a proper morphism between smooth varieties, so that as in \S \ref{CCSec} we can use it to define a pushforward in critical cohomology.  Taking some care of the twists, we consider the morphisms of mixed Hodge modules
\begin{align*}
\alpha&\colon\ul{\QQ}_{\Mst_{\dd'}(\WT{Q})\times \Msp_{\ff,\dd''}(Q)}\otimes\LLL^{\heartsuit}\rightarrow (\pi_1\times \pi_3)_*\ul{\QQ}_{\Msp_{\ff,\dd',\dd''}(Q)}\otimes\LLL^{\heartsuit}\\
\beta&\colon\pi_{2,*}\ul{\QQ}_{\Msp_{\ff,\dd',\dd''}(Q)}\otimes\LLL^{\heartsuit}\rightarrow \ul{\QQ}_{\Msp_{\ff,\dd}(Q)}\otimes\LLL^{\chi_{\WT{Q}}(\dd,\dd)-\ff\cdot\dd}
\end{align*}
where $\heartsuit=\chi_{\WT{Q}}(\dd',\dd')+\chi_{\WT{Q}}(\dd'',\dd'')-\ff\cdot \dd''$ and $\beta$ is the Verdier dual of
\[
\ul{\QQ}_{\Msp_{\ff,\dd}(Q)}\otimes\LLL^{\chi_{\WT{Q}}(\dd,\dd)-\ff\cdot\dd}\rightarrow \pi_{2,*}\ul{\QQ}_{\Msp_{\ff,\dd',\dd''}(Q)}\otimes\LLL^{\chi_{\WT{Q}}(\dd,\dd)-\ff\cdot\dd}.
\]
For $\dd,\ff\in\dvs$ we define
\[
\HMall^*_{\ff,\dd}\coloneqq \HO\!\left(\Msp_{\ff,\dd}(Q),\phim{\TTTr(W^+)}\ul{\QQ}_{\Msp_{\ff,\dd}(Q)}\right)\otimes\LLL^{\chi_{\WT{Q}}(\dd,\dd)/2-\ff\cdot\dd}
\]
and we define $\HMall^*_{\ff}\coloneqq \bigoplus_{\dd\in\dvs}\HMall^*_{\ff,\dd}$.  Applying $\phim{\TTTr(W^+)}$ and taking hypercohomology, via the Thom--Sebastiani isomorphism we obtain the morphism
\[
\HO(\phim{\TTTr(W^+)}\beta)\circ\HO(\phim{\TTTr(W^+)}\alpha)\circ\TS\colon \HCoha_{\WT{Q},\WT{W},\dd'}\otimes\HMall^*_{\ff,\dd''}\rightarrow \HMall^*_{\ff,\dd}
\]
endowing $\HMall^*_{\ff}$ with the structure of a $\HCoha_{\WT{Q},\WT{W}}$-module.  By Proposition \ref{NQVDR} there is an isomorphism
\[
\HMall^*_{\ff,\dd}\cong \HO\!\left( \Nak(\ff,\dd),\QQ\right)\otimes\LLL^{\chi_Q(\dd,\dd)-\ff\cdot\dd}.
\]
Here we have used the equality $\dim(\Msp_{\ff,\dd}(Q))=-\chi_{\WT{Q}}(\dd,\dd)+2\ff\cdot\dd$ along with \eqref{HDIM}.  As such, we obtain an action of $\HCoha_{\Pi_Q}\cong\HCoha_{\WT{Q},\WT{W}}$ on the cohomology of Nakajima quiver varieties.  The proof that this action respects the multiplication in the cohomological Hall algebra is the standard modification of the standard proof of associativity in the cohomological Hall algebra.

\begin{remark}
Let $\Nak^{\SP}\!(\ff,\dd)\subset \Nak(\ff,\dd)$ be the subvariety for which the underlying $\ol{Q}$-module of $\rho$ lies in some Serre subcategory $\SP$.  Applying exceptional restriction functors to the above morphisms of mixed Hodge modules, we may likewise define an action of $\HCoha^{\SP}_{\Pi_Q}$ on 
\[
\bigoplus_{\dd\in\dvs}\HOBM(\Nak^{\SP}\!(\ff,\dd),\QQ)\otimes \LLL^{\ff\cdot\dd-\chi_Q(\dd,\dd)},
\]
although we will make no use of this generalisation here.
\end{remark}

\subsubsection{Kac--Moody Lie algebras and quiver varieties}
For the rest of \S \ref{NQVs} we assume that $Q$ has no edge-loops.  Let $i\in Q_0$ be a vertex and let $1_i\in\dvs$ be the basis vector for the vertex $i$.  Then there is an isomorphism
\[
\Psi_i\colon\HCoha_{\WT{Q},\WT{W},1_i}\cong\HO(\AAA{1}/\CC^*,\QQ)\cong \QQ[u]
\]
and we set 
\begin{equation}
\label{Fdef}
\alpha_i\coloneqq\Psi_i^{-1}(1)\in \HCoha_{\WT{Q},\WT{W},1_i}.
\end{equation}
The action of $\alpha_i$ provides a morphism $\alpha_i\cdot\colon \HMall_{\ff,\dd}^*\rightarrow \HMall_{\ff,\dd+1_i}^*$.  Consider the semisimplification map $\JH\colon \Msp_{\ff,\dd}(Q)\rightarrow \Msp_{\dd^+}(\ol{Q_{\ff}})$.  Following Lusztig \cite{Lus91} we consider the Lagrangian subvariety $\Lus(\ff,\dd)=\JH^{-1}(0)$.  There is a contracting $\mathbb{C}^*$ action on $\Nak(\ff,\dd)$, contracting it onto the projective variety $\Lus(\ff,\dd)$, and so we obtain the first in the sequence of isomorphisms
\begin{align}
\label{rtlSwitch}
\HO(\Nak(\ff,\dd),\QQ)\cong&\HO(\Lus(\ff,\dd),\QQ)\\
\nonumber
\cong& \HOc(\Lus(\ff,\dd),\QQ)\\
\nonumber
\cong&\HOBM(\Lus(\ff,\dd),\QQ)^*.
\end{align}
Note that since $\Lus(\ff,\dd)$ is Lagrangian, its top degree compactly supported cohomology is in degree 
\[
\dim(\Nak(\ff,\dd))=2\ff\cdot \dd-2\chi_Q(\dd,\dd).
\]
Setting 
\[
\HMall_{\ff,\dd}\coloneqq \HOBM\left(\Lus(\ff,\dd),\QQ\right)\otimes \LLL^{\ff\cdot\dd-\chi_Q(\dd,\dd)};\quad\quad
\HMall_{\ff}\coloneqq \bigoplus_{\dd\in\dvs}\HMall_{\ff,\dd}
\]
we deduce that there is an $\HCoha_{\Pi_Q}$ action on $\HMall_{\ff}$ by \textit{lowering} operators, induced by the $\HCoha_{\Pi_Q}$ action on $\bigoplus_{\dd}\HO(\Nak(\ff,\dd),\QQ)$ by \textit{raising} operators, for which $(\alpha_i\cdot )^*$ is the lowering operator constructed by Nakajima.  By the degree bound on the cohomology of $\HOc(\Lus(\ff,\dd),\QQ)$, 
\[
\HO^i\!\Mall_{\ff,\dd}=\begin{cases} 0&\textrm{if }i<0\\
\QQ\cdot\{\textrm{top-dimensional components of }\Lus(\ff,\dd)\}&\textrm{if }i=0.\end{cases}
\]
The main theorem regarding the operators $(\alpha_i\cdot)^*$ is the following part of Nakajima's work.
\begin{theorem}\cite{Nak94,Nak98}
There is an action of the Kac--Moody Lie algebra $\mathfrak{g}$ on each $\HMall_{\ff}$ sending the generators $f_i$ for $i\in Q_0$ to the operators $(\alpha_i\cdot)^*$.  With respect to this $\mathfrak{g}_Q$ action, the submodule $\HO^0\!\Mall_{\ff,\dd}$ is the irreducible highest weight module with highest weight $\ff$.
\end{theorem}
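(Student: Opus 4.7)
The plan is to leverage the CoHA module structure on $\HMall_\ff$ constructed above together with the BPS Lie algebra description from Theorem \ref{thma}. First, I would verify that the operators $f_i\coloneqq(\alpha_i\cdot)^*$ satisfy the Serre relations of the negative part $\mathfrak{n}^-_Q$ of $\mathfrak{g}_Q$. Each $\alpha_i\in \HCoha_{\Pi_Q,1_i}$ is, by Theorem \ref{thma}, a generator of the degree $1_i$ component of the BPS Lie algebra embedded in $\HCoha_{\Pi_Q}$ via $\UEA_B(\fg_{\Pi_Q})\cong \lP_{\leq 0}\HCoha_{\Pi_Q}$. By Proposition \ref{BPSvanProp}, $\fg_{\Pi_Q,(e+1)\cdot 1_i+1_j}=0$, where $e$ is the number of edges between $i$ and $j$, which forces the $(e+1)$-fold iterated bracket $[\alpha_i,[\alpha_i,\dots,[\alpha_i,\alpha_j]\cdots]]$ to vanish already in $\fg_{\Pi_Q}$; expanding this identity inside $\UEA_B(\fg_{\Pi_Q})$ gives the classical Serre relation, and the relation therefore acts trivially on every $\HCoha_{\Pi_Q}$-module, in particular on $\HMall_\ff$.

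Next, to upgrade this to a full $\mathfrak{g}_Q$-action, I would construct raising operators $e_i$ via the transposed Hecke correspondence (swapping the roles of $\pi_1\times \pi_3$ and $\pi_2$ in the defining correspondence diagram, after restricting to the Lagrangian $\Nak$-loci) and define the Cartan generators $h_i$ as scalar multiplication by $\ff_i-(1_i,\dd)_Q$ on the summand $\HMall_{\ff,\dd}$. The relations $[h_i,h_j]=0$, $[h_i,e_j]=a_{ij}e_j$, $[h_i,f_j]=-a_{ij}f_j$ follow at once from the definitions, while the critical relation $[e_i,f_j]=\delta_{ij}h_i$ comes down to computing the difference of the two composite correspondences on $\Nak(\ff,\dd)$; in the excess intersection formula, the signed contributions of the two correspondences cancel to give exactly the scalar $\ff_i-(1_i,\dd)_Q$.

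For the highest weight statement, observe that $\HO^0\!\Mall_{\ff,0}$ is one-dimensional, spanned by the fundamental class $v_\ff$ of the point $\Nak(\ff,0)=\{0\}$. This vector is killed by every $e_i$ (the relevant moduli space of short exact sequences is empty when $\dd=0$) and satisfies $h_i\cdot v_\ff = \ff_i\cdot v_\ff$, so it is a highest weight vector of weight $\ff$. To identify the cyclic submodule $\UEA(\mathfrak{n}^-_Q)\cdot v_\ff$ with all of $\HO^0\!\Mall_\ff$ and establish irreducibility, I would match the top-dimensional components of $\Lus(\ff,\dd)$ with the crystal basis of the integrable irreducible module $L(\ff)$ at weight $\ff-\dd$, and verify that the operators $(\alpha_i\cdot)^*$ implement the Kashiwara lowering operators.

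The main obstacle is the commutator $[e_i,f_j]=\delta_{ij}h_i$, which is the single relation that the BPS framework of this paper does not yield automatically; it requires a careful local intersection theory computation on the fibres of the Hecke correspondences, following \cite{Nak98}, together with sign corrections depending on the comparison of $\ff_i$ and $(1_i,\dd)_Q$. The Serre relations, by contrast, are essentially a formal consequence of the BPS vanishing in Proposition \ref{BPSvanProp}, and the highest weight identification is controlled by the standard crystal-theoretic structure on the irreducible components of $\Lus(\ff,\dd)$.
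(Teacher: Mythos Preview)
This theorem is \emph{not} proved in the paper; it is stated with a citation to \cite{Nak94,Nak98} and then used as a black box (specifically, in the proof of Theorem~\ref{KMLA} to establish injectivity of the map $\fn^-_{Q'}\to\HO^0(\fg_{\Pi_Q})$ via faithfulness of the action on $\bigoplus_\ff\HMall^*_\ff$). So there is no ``paper's own proof'' to compare your proposal against.

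That said, your proposal is best read as a sketch of how one might reorganize Nakajima's argument, and on those terms it has one genuinely new ingredient and two serious gaps. The new ingredient is your derivation of the Serre relations for the $f_i$ from the BPS vanishing of Proposition~\ref{BPSvanProp}; this is legitimate and not circular, since Proposition~\ref{BPSvanProp} is proved independently of Nakajima's theorem. However, note that in the paper's logical architecture this flow runs the other way: the paper \emph{uses} Nakajima's theorem to prove Theorem~\ref{KMLA}, so invoking Theorem~\ref{thma} as you do (to locate $\alpha_i$ inside $\UEA(\fg_{\Pi_Q})$) is fine only because that part of Theorem~\ref{thma} does not depend on Nakajima's result.

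The two gaps are exactly the ones you flag. First, the relation $[e_i,f_j]=\delta_{ij}h_i$ is the heart of the matter and you defer it entirely to ``a careful local intersection theory computation \ldots\ following \cite{Nak98}''; this is not a proof but a pointer to one. Second, the identification of $\HO^0\!\Mall_\ff$ with the irreducible module $L(\ff)$ via matching top-dimensional components of $\Lus(\ff,\dd)$ with a crystal basis is again a substantial piece of work (essentially Kashiwara--Saito/Nakajima), not something that follows from the CoHA framework. So your proposal is a plausible outline that replaces Nakajima's treatment of the Serre relations with a BPS-theoretic one, while leaving the remaining (and harder) steps to the original references.
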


The original statement of Nakajima's theorem does not involve any vanishing cycles, i.e. it only involves the right hand side of the isomorphism \eqref{HDIM}.  Likewise, the correspondences considered in \cite{Nak94, Nak98} come from an action of the Borel--Moore homology of the stack of $\Pi_Q$-representations, not from the critical cohomology of the stack of $\Pi_Q$-representations.  For the compatibility between the two actions via the dimensional reduction isomorphisms \eqref{HDIM} and \eqref{drnoalg} see e.g. \cite[Sec.4]{YZ16}.

\subsection{The subalgebra $\lP_{0}\!\HO^0\!\!\Coha_{\Pi_Q}$}
\label{KacH}
In this section we concentrate on the case in which $\SP=\CC\ol{Q}\lmod$, $G$ is trivial and $\zeta=(0,\ldots,0)$ is the degenerate stability condition (i.e. we essentially do not consider stability conditions).  Note that by \eqref{Kaccha}, the Lie algebra $\mathfrak{g}_{\Pi_Q}$ is concentrated in cohomological degrees less than or equal to zero, and so by Theorem \eqref{UEAthm} there is an isomorphism
\begin{equation}
\label{UTO}
\lP_{0}\!\HO^0\!\!\Coha_{\Pi_Q}\cong \UEA\!\left(\HO^0(\mathfrak{g}_{\Pi_Q})\right).
\end{equation}
We thus reduce the problem of calculating the left hand side to that of calculating $\HO^0(\mathfrak{g}_{\Pi_Q})$.  By \eqref{Kaccha} again, there is an equality
\begin{equation}
\label{clue1}
\dim(\HO^0(\mathfrak{g}_{\Pi_Q,\dd}))=\kac_{Q,\dd}(0).
\end{equation}
If $\dd_i\neq 0$ for some $i\in Q_0$ for which there is an edge-loop $b$, there is a free action of $\mathbb{F}_q$ on the set of absolutely indecomposable $\dd$-dimensional $\CC Q$-modules, defined by
\[
z\cdot \rho(a)=\begin{cases} \rho(a)+ z\cdot \Id_{e_i\cdot \rho} &\textrm{if }a=b\\
\rho(a) &\textrm{otherwise}\end{cases}
\]
and thus $\kac_{Q,\dd}(0)=0$, and so $\HO^0(\mathfrak{g}_{\Pi_Q,\dd})=0$.  It follows that for arbitrary $\dd\in\dvs$, if $\dd_i\neq 0$ for a vertex $i$ supporting an edge loop, then $\lP_{0}\!\HO^0\!\!\Coha^{\zeta}_{\Pi_Q,\dd}=0$.

We define $Q'$, the \textit{real subquiver} of $Q$, to be the full subquiver of $Q$ containing those vertices of $Q$ that do not support any edge-loops, along with all arrows between these vertices.  From the above considerations, we deduce that the inclusion of algebras in $\DlMHM(\Msp(\ol{Q}))$
\[
\bigoplus_{\dd\in\mathbb{N}^{Q'_0}}\rCoha_{\Pi_Q,\dd}= \rCoha_{\Pi_{Q'}}\hookrightarrow \rCoha_{\Pi_Q}
\]
becomes an isomorphism after applying $\lP_{0}\!\HO^0$.

Hausel's (first) famous theorem regarding Kac polynomials \cite{Hau10} states that
\begin{equation}
\label{clue2}
\kac_{Q',\dd}(0)=\dim(\mathfrak{g}_{Q',\dd})
\end{equation}
where $\mathfrak{g}_{Q'}$ is the Kac--Moody Lie algebra associated to the quiver (without edge-loops) $Q'$.  We will not recall the definition of $\fg_{Q'}$, since in any case it is a special case of the Borcherds--Bozec algebra (i.e. the case in which $I^{\Imag}=\emptyset$), which we recall in \S \ref{BBA} below.  
\sssct
Comparing \eqref{clue1} and \eqref{clue2} leads to the identity
\begin{equation}
\label{khaus}
\dim(\HO^0(\mathfrak{g}_{\Pi_Q,\dd}))=\dim(\mathfrak{g}_{Q',\dd})
\end{equation}
and from there to the obvious conjecture regarding the algebra  $\lP_{0}\!\HO^0\!\!\Coha^{\zeta}_{\Pi_Q}$, which we now prove.

\begin{theorem}
\label{KMLA}
There is an isomorphism of algebras 
\begin{equation}
\label{zpkm}
\UEA(\mathfrak{n}_{Q'}^-)\cong\lP_{0}\!\HO^0\!\!\Coha^{\zeta}_{\Pi_Q}
\end{equation}
where $\mathfrak{n}_{Q'}^-$ is the negative part of the Kac--Moody Lie algebra for the real subquiver of $Q$.  Moreover the isomorphism restricts to an isomorphism with the BPS Lie algebra
\begin{equation}
\label{LAI}
\fn^-_{Q'}\cong \HO^0(\fg_{\Pi_Q})
\end{equation}
under the isomorphism \eqref{UTO}.
\end{theorem}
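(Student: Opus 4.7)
By \eqref{UTO}, the algebra isomorphism \eqref{zpkm} is equivalent to the Lie algebra isomorphism \eqref{LAI}, so I focus on constructing the latter. The paragraphs preceding the theorem already show that $\HO^0(\fg_{\Pi_Q,\dd})=0$ for every $\dd\in\dvs$ supported at a loop-vertex of $Q$; since the same vanishing holds trivially for $\fn^-_{Q'}$, whose grading is over $\NN^{Q'_0}\subset\dvs$, the algebra morphism $\rCoha_{\Pi_{Q'}}\to\rCoha_{\Pi_Q}$ reduces the problem to the case $Q=Q'$, which I assume throughout.

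For each $i\in Q_0$, identity \eqref{Kaccha} gives $\dim\HO^0(\fg_{\Pi_Q,1_i})=\kac_{Q,1_i}(0)=1$; fix a nonzero $\tilde\alpha_i$ in this one-dimensional space. For distinct $i,j\in Q_0$ with $e_{ij}$ edges between them in the underlying graph, the iterated commutator $(\operatorname{ad}\tilde\alpha_i)^{e_{ij}+1}(\tilde\alpha_j)$ lies in $\HO^0(\fg_{\Pi_Q,\dd})$ with $\dd=(e_{ij}+1)1_i+1_j$, and vanishes because Proposition \ref{BPSvanProp} gives $\BPSh_{\Pi_Q,\dd}=0$ and hence $\fg_{\Pi_Q,\dd}=0$. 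Together with the automatic relation $[\tilde\alpha_i,\tilde\alpha_i]=0$, these are precisely the defining Serre relations of the negative part of the symmetric Kac--Moody algebra of $Q'$, so we obtain a graded Lie algebra homomorphism
\[
\Phi\colon\fn^-_{Q'}\longrightarrow\HO^0(\fg_{\Pi_Q}),\qquad f_i\longmapsto\tilde\alpha_i.
\]
Hausel's theorem \eqref{clue2} combined with \eqref{clue1} yields $\dim\fn^-_{Q',\dd}=\dim\HO^0(\fg_{\Pi_Q,\dd})$ in every degree, so it suffices to show $\Phi$ is injective.

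For injectivity I would pass to universal enveloping algebras and compose with the Nakajima module structure on $\bigoplus_{\ff\in\dvs}\HMall_\ff$ from \S \ref{NQVs}. After suitably rescaling the $\tilde\alpha_i$ (a harmless scalar ambiguity, since the relevant space is one-dimensional), the composition
\[
\UEA(\fn^-_{Q'})\xrightarrow{\UEA(\Phi)}\lP_{\leq 0}\!\HO^0\Coha_{\Pi_Q}\hookrightarrow\HCoha_{\Pi_Q}\longrightarrow\End\!\left(\bigoplus_{\ff\in\dvs}\HMall_\ff\right)
\]
sends each $f_i$ to the Nakajima operator associated to the class $\alpha_i$ of \eqref{Fdef}. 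By Nakajima's theorem recalled in \S \ref{NQVs}, each $\HO^0\Mall_\ff$ is isomorphic to the irreducible integrable highest-weight $\fg_{Q'}$-module $L(\ff)$, and a standard Verma-module argument shows that $\bigoplus_{\ff\in\dvs}L(\ff)$ is a faithful $\fn^-_{Q'}$-module: a nonzero $x\in\UEA(\fn^-_{Q'})$ of weight $-\mu$ acts nontrivially on $L(\ff)$ whenever $\ff$ is sufficiently dominant, because the surjection $M(\ff)\twoheadrightarrow L(\ff)$ is an isomorphism on the weight space $\ff-\mu$ for such $\ff$. Hence the composition, and therefore $\Phi$ itself, is injective.

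The main obstacle is not the Serre relations (which follow cleanly from Proposition \ref{BPSvanProp}) nor the dimension count (which is purely combinatorial via Hausel), but the identification step in the last paragraph: the generator $\tilde\alpha_i$ is characterised only intrinsically, up to scalar, as a generator of the one-dimensional $\HO^0(\fg_{\Pi_Q,1_i})$, whereas what acts on Nakajima quiver varieties is the correspondence-defined class $\alpha_i$ of \eqref{Fdef}. Both $\HO^0(\fg_{\Pi_Q,1_i})$ and $\lP_{\leq 0}\!\HO^0\Coha_{\Pi_Q,1_i}$ are one-dimensional and the inclusion between them is nonzero by \eqref{UTO}, so the comparison does reduce to a scalar rescaling; however, pinning down that this rescaling makes $\Phi(f_i)$ act as genuinely Nakajima's Chevalley generator requires unpacking the dimensional reduction \eqref{drnoalg} and the BPS embedding \eqref{BCA}, which is the content of \S \ref{NQVs} and \cite[Sec.4]{YZ16}.
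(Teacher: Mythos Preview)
Your proposal is correct and follows essentially the same route as the paper's proof: define a morphism $\fn^-_{Q'}\to\HO^0(\fg_{\Pi_Q})$ by sending Chevalley generators to degree-$1_i$ generators, verify the Serre relations via Proposition~\ref{BPSvanProp}, prove injectivity via Nakajima's faithful action on $\bigoplus_\ff\HMall_\ff$, and conclude via Hausel's dimension count~\eqref{clue2}.

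The only difference is a minor ordering choice. You pick $\tilde\alpha_i$ abstractly as any nonzero element of the one-dimensional $\HO^0(\fg_{\Pi_Q,1_i})$ and then, at the end, worry about identifying it up to scalar with the correspondence class $\alpha_i$ of~\eqref{Fdef}. The paper avoids this ``obstacle'' entirely by starting from $\alpha_i$ and checking directly that it lies in $\fg_{\Pi_Q,1_i}$: since $\Msp_{1_i}(\ol{Q})$ is a point, the less perverse filtration on $\HCoha_{\Pi_Q,1_i}$ agrees with the cohomological filtration, so $\alpha_i$ has less-perverse degree~$0$; and since $\Msp_{1_i}(\WT{Q})\cong\AAA{1}$, the element $\alpha_i$ has $\mathfrak{P}$-perverse degree~$1$, hence lies in $\fg_{\Pi_Q,1_i}$ by definition. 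This makes the identification with Nakajima's operators automatic rather than a loose end.
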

Although it is most natural to think of $\HO^0(\fg_{\Pi_Q})$ as a Lie algebra of raising operators, we have $\fn^-_{Q'}$ on the left hand side of \eqref{LAI} to match with the fact that, via Verdier duality \eqref{rtlSwitch} Nakajima's original action of this Lie algebra is via lowering operators on $\HOBM(\Lus(\ff,\dd),\QQ)$.  There is a canonical isomorphism $\fn^-_{Q'}\cong\fn^+_{Q'}$ given by the Chevalley involution..
\renewcommand*{\proofname}{Proof of Theorem \ref{KMLA}}
\begin{proof}
We construct the isomorphism \eqref{LAI}, then the isomorphism \eqref{zpkm} is constructed via \eqref{UTO}.

Consider the dimension vector $e=1_i$, where $i$ does not support any edge-loops.  The coarse moduli space $\Msp_{e}(\ol{Q})$ is just a point, and so the less perverse filtration on $\HCoha_{\Pi_Q,e}=\HO(\B \CC^*,\QQ)$ is just the cohomological filtration.  In particular, the element $\alpha_i$ from \eqref{Fdef} lies in less perverse degree 0.  On the other hand, there is an isomorphism $\Msp_{e}(\WT{Q})\cong \AAA{1}$ and writing
\[
\HCoha_{\WT{Q},\WT{W},e}=\HO(\AAA{1},\nIC_{\AAA{1}})\otimes \HO(\B\CC^*,\QQ)_{\vir}
\]
we see that $\alpha_i$ has perverse degree 1, i.e. by definition it is an element of $\fg_{\Pi_Q,e}$.

We claim that there is a Lie algebra homomorphism $\Phi\colon \mathfrak{n}^-_{Q'}\rightarrow \HO^0(\mathfrak{g}_{\Pi_Q})$ sending $f_i$ to $\alpha_i$.  The algebra $\mathfrak{n}_{Q'}^-$ is the free Lie algebra generated by $f_i$ for $i\in Q_0$ subject to the Serre relations:
\[
[f_i,\cdot]^{1-(1_i,1_j)_Q}(f_j)=0 
\]
where $(\cdot,\cdot)_Q$ is the symmetrized Euler form \eqref{Eulerform}.  So to prove the claim we only need to prove that the elements $\alpha_i$ satisfy the Serre relations.  This follows from the stronger claim: for any distinct pair of vertices $i,j\in Q_0$ if we set $\dd=1_j+(1-(1_i,1_j))1_i$ then there is an equality $\mathfrak{g}_{\Pi_Q,\dd}=0$.  This follows from Proposition \ref{BPSvanProp}.  Alternatively, this follows from \eqref{Kaccha} and the claim that $\kac_{Q,\dd}(t)=0$.  Since the Kac polynomial is independent of the orientation of $Q$ this equality is clear: if all the arrows are directed from $i$ to $j$, there are no indecomposable $\dd$-dimensional $KQ$-modules for any field $K$.

We next claim that the morphism $\Phi\colon \mathfrak{n}^-_{Q'}\rightarrow \HO^0(\mathfrak{g}_{\Pi_Q})$ is injective.  This follows from Nakajima's theorem, i.e. we have a commutative diagram
\[
\xymatrix{
\mathfrak{n}^-_{Q'}\ar[dr]_{u}\ar[r]_{\Phi}^{f_i\mapsto \alpha_i}&\HO^0(\mathfrak{g}_{\Pi_Q})\ar[d]\\
&\End_{\QQ}(\bigoplus_{\ff} \HMall^*_{\ff})
}
\]
via the module structure of each $\HMall^*_{\ff}$, and the morphism $u$ is injective since the representation $\bigoplus_{\ff}\HMall^*_{\ff}$ is a faithful representation (it contains the sum of all highest weight representations).

By Hausel's identity \eqref{clue2} the graded dimensions of the source and target of $\Phi$ are the same, and so $\Phi$ is an isomorphism.  Comparing with \eqref{UTO}, the induced morphism
\[
\UEA(\mathfrak{n}^-_{Q'})\rightarrow \lP_{0}\!\HO^0\!\!\Coha^{\zeta}_{\Pi_Q}
\]
is an isomorphism.
\end{proof}
\renewcommand*{\proofname}{Proof}
\subsubsection{Support dimension grading on $\fg_{\Pi_Q}$} The Lie algebra $\fg_{\Pi_Q}$ carries a cohomological grading, by construction, and we have shown that the highest graded piece with respect to the cohomological grading is isomorphic to $\fn_{Q'}^-$.  In this section we use the semisimplicity of the category of pure MHMs to provide an alternative grading on $\fg_{\Pi_Q}$, given by considering dimensions of supports in the decomposition theorem.  Again, we will find that $\fn_{Q'}^-$ occurs as a graded subalgebra.

By \S \ref{RLAsec} and Theorem \ref{purityThm}, there is a pure mixed Hodge module $\BPSh_{\Pi_Q}$ in $\MHM(\Msp(\ol{Q}))$ which moreover carries the structure of a Lie algebra object in this tensor category, such that we recover $\fg_{\Pi_Q}$ by applying the hypercohomology functor to $\BPSh_{\Pi_Q}$.  Since $\BPSh_{\Pi_Q}$ is pure, there is a canonical decomposition
\begin{equation}
\label{decar}
\BPSh_{\Pi_Q}\cong \bigoplus_{i\in S} \ICS_{\ol{Z_i}}(\mathcal{F}_i)
\end{equation}
where $Z_i\subset \Msp(\ol{Q})$ are distinct locally closed subvarieties indexed by a set $S$, $\mathcal{F}_i$ are pure semisimple variations of pure Hodge structure on $Z_i$, and $\ICS_{\ol{Z_i}}(\mathcal{F}_i)$ are their intermediate extensions.  Define
\[
S^{(d)}\coloneqq\{i\in S\;\lvert\; \dim(Z_i)=d\}.
\]
Then \eqref{decar} gives us the canonical decomposition
\[
\BPSh_{\Pi_Q}\cong\bigoplus_{d\geq 0}\BPSh_{\Pi_Q}^{(d)}
\]
where $\BPSh_{\Pi_Q}^{(d)}\coloneqq \bigoplus_{Z_i,\; i\in S^{(d)}} \ICS_{\ol{Z_i}}(\mathcal{F}_i)$.  Thus there is a canonical decomposition of $\fg_{\Pi_Q}=\bigoplus_{d\in\mathbb{N}}\fg_{\Pi_Q}^{(d)}$ with summands $\fg_{\Pi_Q}^{(d)}\coloneqq \HO(\BPSh_{\Pi_Q}^{(d)})$.
\begin{proposition}
The BPS Lie algebra $\fg_{\Pi_Q}$ carries an $\mathbb{N}$-grading, with graded pieces given by $\fg_{\Pi_Q}^{(d)}$.
\end{proposition}
\begin{proof}
It is sufficient to show that for $d'+d''\neq d$ the morphism
\begin{equation}
\label{agh}
\oplus_{\red,*}(\BPSh_{\Pi_Q}^{(d')}\boxtimes \BPSh_{\Pi_Q}^{(d'')})\rightarrow \BPSh_{\Pi_Q}^{(d)}
\end{equation}
is zero.  By purity of $\BPSh_{\Pi_Q}$ and finiteness of $\oplus_{\red}\colon \Msp(\ol{Q})\times \Msp(\ol{Q})\rightarrow \Msp(\ol{Q})$, the domain of \eqref{agh} is a direct sum of simple mixed Hodge modules supported on $(d'+d'')$-dimensional subvarieties of $\Msp(\ol{Q})$, while the target is by definition a direct sum of simple mixed Hodge modules supported on $d$-dimensional varieties.
\end{proof}
\begin{proposition}
There is an isomorphism of algebras $\fg_{\Pi_Q}^{(0)}\cong \fn_{Q'}^-$.
\end{proposition}
\begin{proof}
Firstly, if $\mathcal{F}$ is a MHM with zero-dimensional support, then $\HO(\mathcal{F})$ is concentrated in degree zero.  For each vertex $i\in Q'_0\subset Q_0$ we have $\BPSh_{\Pi_Q,1_i}\cong \ul{\QQ}_{\Msp_{1_i}(\ol{Q})}$, a simple MHM with zero-dimensional support.  Now by Theorem \ref{KMLA}, if we apply $\HO$ to the Lie subalgebra $\mathcal{F}$ generated by the objects $\BPSh_{\Pi_Q,1_i}$ for $i\in Q'_0$, we obtain the whole of $\HO^0\!\fg_{\Pi_Q}$, and so $\mathcal{F}$ contains all of the summands of $\BPSh_{\Pi_Q}$ with zero-dimensional support.
\end{proof}

\subsubsection{Ubiquity of Kac--Moody Lie algebras}
Theorem \ref{KMLA} shows that Kac--Moody Lie algebras are a natural piece of the BPS Lie algebra in the basic case in which we do not modify potentials, and do not restrict to a Serre subcategory.  The next proposition shows that Kac--Moody Lie algebras are a somewhat universal feature of the cohomological Hall algebras that we are considering.
\begin{proposition}
\label{bels}
Let $Q$ be a quiver, let $Q'$ be the real subquiver of $Q$, let $W_0\in\CC\ol{Q}_{\cyc}$ be a potential such that $\WT{W}+W_0$ is quasihomogeneous, and let $\SP$ be any Serre subcategory of $\CC\ol{Q}\lmod$ containing each of the 1-dimensional simple modules $S_i$ with dimension vector $1_i$, for $i\in Q'_0$.  Then there is a $\dvs$-graded inclusion of Lie algebras $\fn^-_{Q'}\hookrightarrow \fg_{\Pi_Q,W_0}^{\SP}$ with image the Lie subalgebra of $\fg_{\Pi_Q,W_0}^{\SP}$ generated by the graded pieces $\fg_{\Pi_Q,W_0,1_i}^{\SP}$ for $i\in Q'_0$.
\end{proposition}
\begin{proof}
By the proof of Theorem \ref{KMLA} the subalgebra $\fn^-_{Q'}\subset \fg_{\Pi_Q}$ is obtained by applying $\HO$ to the Lie subalgebra object $\mathcal{G}$ of $\BPSh_{\Pi_Q}$ generated by the objects $\BPSh_{\Pi_Q,1_{i}}$ for $i\in Q'_0$, i.e. $\fn^-_{Q'}\cong\HO\mathcal{G}$.  There is a decomposition
\[
\BPSh_{\Pi_Q}\cong \mathcal{G}\oplus\mathcal{L}
\]
of mixed Hodge modules, by purity of $\BPSh_{\Pi_Q}$, and hence the inclusion $\mathcal{G}\hookrightarrow \BPSh_{\Pi_Q}$ splits in the category of mixed Hodge modules equipped with a Lie algebra structure.  Recall that we denote by $\varpi'_{\red}\colon \Msp^{\Sp}(\ol{Q})\hookrightarrow \Msp(\ol{Q})$ the inclusion.  Applying $\HO\varpi'_{\red,*}\varpi_{\red}'^!\phim{W_0}$ gives an inclusion of Lie algebras
\[
\HO\varpi'_{\red,*}\varpi_{\red}'^!\phim{W_0}\mathcal{G}\hookrightarrow \fg_{\Pi_Q,W_0}^{\SP}.
\]
Each mixed Hodge module $\mathcal{G}_{\dd}$ is supported at the origin of $\Msp_{\dd}(\ol{Q})$, since $\mathcal{G}$ is generated by mixed Hodge modules supported on the nilpotent locus.  The Serre subcategory $\SP$ contains all nilpotent $\CC\ol{Q}$-modules supported on the subquiver $Q'$, since it is closed under extensions.  As such the natural morphisms
\[
\varpi'_{\red,*}\varpi_{\red}'^!\mathcal{G}\rightarrow \mathcal{G};\quad\quad
\phim{\TTr(W_0)}\mathcal{G}\rightarrow \mathcal{G},
\]
are isomorphisms, and $\varpi'_{\red,*}\varpi_{\red}'^!\phim{\TTr(W_0)}\mathcal{G}\cong\mathcal{G}$ as a Lie algebra object\footnote{In particular,  $\varpi'_{\red,*}\varpi_{\red}'^!\phim{\TTr(W_0)}\mathcal{G}$ is an object in the subcategory $\MHM(\Msp(\ol{Q}))$ of monodromy-free MMHMs.} in $\MMHM(\Msp(\ol{Q}))$, proving the corollary.
\end{proof}

\subsection{The subalgebra $\lP_{0}\!\HO^0\!\!\Coha^{\SSN}_{\Pi_Q}$}
\label{BBA}
Moving to the case of strongly semi-nilpotent $\Pi_Q$-modules (see \S \ref{2desc}) we calculate the subalgebra $\lP_{0}\!\HO^0\!\!\Coha^{\SSN}_{\Pi_Q}$, in order to compare with the work of Bozec \cite{Bo16} on Borcherds--Kac--Moody Lie algebras associated to quivers with loops.  Interestingly, we find that the BPS Lie algebra $\fg_{\Pi_Q}^{\SSN}$ is \textit{not} identified with Bozec's Lie algebra $\fg_Q$ under the natural isomorphism between their two enveloping algebras, although the two Lie algebras are isomorphic.
        
First we note that by \eqref{kssn} the Lie algebra $\fg_{\Pi_Q}^{\SSN}$ is concentrated in cohomologically nonnegative degrees.  Applying $\HO^0$ to \eqref{absPBW}, the morphism $\Sym(\HO^0\!(\fg^{\SSN}_{\Pi_Q}))\rightarrow \HO^0\!\!\Coha^{\SSN}_{\Pi_Q}$ is an isomorphism, and thus there is an identity
\begin{equation}
\label{Hzerodo}
\UEA(\HO^0\!(\fg_{\Pi_Q}^{\SSN}))=\HO^0\!\!\Coha^{\SSN}_{\Pi_Q}.
\end{equation}
Comparing with \eqref{UEAiso} we deduce that $\lP_{0}\!\HO^0\!\!\Coha^{\SSN}_{\Pi_Q}=\HO^0\!\!\Coha^{\SSN}_{\Pi_Q}$ and so for the rest of \S \ref{BBA} we just write $\HO^0\!\!\Coha^{\SSN}_{\Pi_Q}$ to denote this subalgebra.

Recall that we define the relative CoHA by restriction: $\rCoha^{\SSN}_{\Pi_Q}=\varpi'_{\red,*}\varpi'^!_{\red}\rCoha_{\Pi_Q}$.  For general Serre subcategories $\SP$, purity of $\rCoha^{\SP}_{\Pi_Q}$ does not follow from purity of $\rCoha_{\Pi_Q}$.  However in the current setting we can make use of the following observation\footnote{I am thankful to Geordie Williamson for pointing this out}:
\begin{lemma}
\label{G_lemma}
Let $X$ be a $\mathbb{C}^*$-equivariant variety, such that for all $x\in X$ the limit $\lim_{t\rightarrow 0} t\cdot x$ exists, and there is a retract of varieties $p\colon X\rightarrow X^{\mathbb{C}^*}$ sending $x$ to this limit.  Let $\mathcal{F}\in\Db(\MHM(X))$ be pure.  Let $i\colon X^{\mathbb{C}^*}\hookrightarrow X$ be the inclusion.  Then $i^*\mathcal{F}\in\Db(\MHM(X))$ is pure.
\end{lemma}
Since $i^*$ is not exact, it may be the case (and is the case in examples we consider in this section) that $i^*\mathcal{F}$ will not be a MHM, even if $\mathcal{F}$ is.
\begin{proof}
The conditions of the lemma ensure that there is an isomorphism $i^*\mathcal{F}\rightarrow p_*\mathcal{F}$ in $\Db(\MHM(X))$.  Purity of $\mathcal{F}$ ensures that $i^*\mathcal{F}$ is pure above, and $p_*\mathcal{F}$ is pure below.
\end{proof}
Since by Corollary \ref{relPurity} the relative Hall algebra $\rCoha_{\Pi_Q}$ is pure, we deduce:
\begin{corollary}
\label{PBBC}
The relative CoHA $\rCoha^{\SSN}_{\Pi_Q}\in\DlMHM(\Msp(\ol{Q}))$ is \textit{pure}.
\end{corollary}
\subsubsection{The Borcherds--Bozec algebra}
We write $Q_0=I^{\reel}\coprod I^{\Imag}$, where $I^{\reel}$ is the set of vertices that do not support an edge-loop, and $I^{\Imag}$ is the set vertices that do.  We furthermore decompose $I^{\Imag}=I^{\isot}\coprod I^{\hype}$ where $I^{\isot}$ is the set of vertices supporting exactly one edge-loop, and the vertices of $I^{\hype}$ support more than one edge-loop.  

Starting with the quiver $Q$ the \textbf{Borcherds--Bozec} algebra $\fg_Q$ is defined as follows.  We set
\[
I_{\infty}=(I^{\reel}\times\{1\})\coprod (I^{\Imag}\times\mathbb{Z}_{>0})
\]
and we extend the form \eqref{Eulerform} to a bilinear form on $\mathbb{N}^{I_{\infty}}$ by setting
\[
((1_{i',n}),(1_{j',m}))=mn(1_{i',}1_{j'})_Q
\]
and extending linearly.  The Lie algebra $\mathfrak{g}_Q$ is a Borcherds algebra associated to a generalised Cartan datum for which the Cartan matrix is the form $(\cdot,\cdot)$ expressed in the natural basis of $\mathbb{N}^{I_{\infty}}$.  More explicitly, we define $\fg_Q$ to be the free Lie algebra generated over $\QQ$ by $h_{i'},e_i,f_i$ for $i'\in Q_0$ and $i\in I_{\infty}$ subject to the relations
\begin{align*}
[h_{i'},h_{j'}]=&0\\
[h_{j'},e_{(i',n)}]=&n(1_{j'},1_{i'})_Q\cdot e_{(i',n)}\\
[h_{j'},f_{(i',n)}]=&-n(1_{j'},1_{i'})_Q\cdot f_{(i',n)}\\
[e_{j},\cdot]^{1-(j,i)}(e_i)=[f_{j},\cdot]^{1-(j,i)}(f_i)=&0 &\textrm{if }j\in I^{\reel}\times\{1\},\;i\neq j\\
[e_i,e_j]=[f_i,f_j]=&0&\textrm{if }(i,j)=0\\
[e_i,f_j]=&\delta_{i,j}nh_{i'}&\textrm{if }i=(i',n).
\end{align*}

The positive half $\mathfrak{n}_Q^+$ has an especially quick presentation: it is the Lie algebra over $\QQ$ freely generated by $e_{i}$ for $i\in I_{\infty}$, subject to the relations
\begin{align}
[e_{i},\cdot]^{1-(i,j)}(e_j)=0 &&\textrm{if }i\in I^{\reel}\times\{1\},\;i\neq j\label{Serre1}\\
[e_i,e_j]=0 &&\textrm{if }(i,j)=0.\label{Serre2}
\end{align}

\subsubsection{Lagrangian subvarieties}
Define $\Lambda(\dd)=\AS_{\dd}^{\mathcal{SSN}}(\ol{Q})\cap \mu^{-1}(0)$, the subvariety of $\AS_{\dd}(\ol{Q})$ parameterising strictly semi-nilpotent $\Pi_Q$-modules.  By \cite[Thm.1.15]{Bo16}, this is a Lagrangian subvariety of $\AS_{\dd}(\ol{Q})$.  If $\dd=n\cdotsh 1_{i'}$ for some $i'\in Q_0$ supporting $g$ loops, with $g\geq 1$, then by \cite[Thm.1.4]{Bo16} the irreducible components of $\Lambda(\dd)$ are indexed by tuples $(n^1,\ldots,n^r)$ such that $\sum n^s=n$, and the numbers $n^s$ are weakly decreasing if $g=1$.  Let $I_l$ be the two-sided ideal in $\CC \ol{Q}$ containing all those paths in $\ol{Q}$ containing at least $l$ instances of arrows $a\in Q_1$.  The tuple $c$ corresponding to a component $\Lambda(\dd)_c$ is given by the successive dimensions of the subquotients in the filtration
\[
0=I_r\cdotsh \rho\subset I_{r-1}\cdotsh \rho\subset\ldots \rho
\]
for $\rho$ a module parameterised by a generic point on $\Lambda(\dd)_c$.  For example there is an equality
\[
\Lambda(\dd)_{(n)}=\AS_{\dd}(Q^{\opp})\subset \AS^{\SSN}_{\dd}(\ol{Q})\cap \mu^{-1}(0).
\]
To translate Bozec's results into our setting we follow the arguments of \cite[Sec.2]{RS17}.  Unpacking the definitions, we have
\[
\HCoha^{\SSN}_{\Pi_Q}\coloneqq \bigoplus_{\dd\in\dvs}\HOBM\left(\Lambda(\dd)/\Gl_{\dd},\QQ\right)\otimes\LLL^{-\chi_Q(\dd,\dd)}.
\]
Since $\Lambda(\dd)$ is a Lagrangian subvariety of the $2(\dd\cdot\dd-\chi_Q(\dd,\dd))$-dimensional subvariety $\AS_{\dd}(\ol{Q})$, the irreducible components of $\Lambda(\dd)/\Gl_{\dd}$ are $-\chi_Q(\dd,\dd)$-dimensional.  It follows that $\HO^0\!\!\Coha^{\SSN}_{\Pi_Q}$ has a natural basis given by $[\Lambda(\dd)_e]$ where $\Lambda(\dd)_e$ are the irreducible components of $\Lambda(\dd)$.
\begin{theorem}\cite{Lucien22}(cf. also \cite[Prop.1.18, Thm.3.34]{Bo16})
\label{BoThm}
There is an isomorphism of algebras $\UEA(\mathfrak{n}_Q^+)\rightarrow \HO^0\!\!\Coha^{\SSN}_{\Pi_Q}$ which sends $e_{(i',n)}$ to $[\Lambda(n\cdotsh 1_{i'})_{(n)}]$.
\end{theorem}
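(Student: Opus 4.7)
The plan is to factor the theorem through the identification \eqref{Hzerodo}, which gives $\HO^0\!\!\Coha^{\SSN}_{\Pi_Q}\cong\UEA(\HO^0(\fg^{\SSN}_{\Pi_Q}))$. It therefore suffices to produce a Lie algebra isomorphism $\Psi\colon\mathfrak{n}_Q^+\xrightarrow{\sim}\HO^0(\fg^{\SSN}_{\Pi_Q})$ sending $e_{(i',n)}$ to the fundamental class $[\Lambda(n\cdotsh 1_{i'})_{(n)}]$. The overall strategy will mirror Theorem \ref{KMLA}: define $\Psi$ by sending generators to generators, verify well-definedness by checking the defining relations of $\mathfrak{n}_Q^+$, and then establish bijectivity by graded dimension counting.

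First I would check that each class $[\Lambda(n\cdotsh 1_{i'})_{(n)}]$ actually lies in the BPS subspace $\HO^0(\fg^{\SSN}_{\Pi_Q, n\cdotsh 1_{i'}})$, and not merely in the enveloping algebra. Using Bozec's description of the components of $\Lambda(n\cdotsh 1_{i'})$ via the $I_l$-filtration of $\CC\ol{Q}$, the component indexed by the length-one tuple $(n)$ is the unique one not reached by the CoHA shuffle product from lower-dimensional components, so the PBW decomposition \eqref{absPBW} (applied to the $\SSN$-case) singles it out as a primitive generator. Next I would verify the Serre relations for the $e_{(i',n)}$. For $i\in I^{\reel}\times\{1\}$, the relation \eqref{Serre1} follows by the same vanishing argument used in Theorem \ref{KMLA}: Proposition \ref{BPSvanProp} (combined with the trivial fact that no $\zeta$-semistable $\Pi_Q$-module of the Serre-relation dimension vector exists, an orientation-independent statement) forces $\fg^{\SSN}_{\Pi_Q,\gamma}=0$ for the relevant $\gamma$. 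For the commutativity relation \eqref{Serre2} when $(i,j)=0$, I would analogously pin down the vanishing of $\fg^{\SSN}_{\Pi_Q}$ in the bidegrees corresponding to the commutators, using Proposition \ref{TodaProp} to reduce to the degenerate stability condition and then to a direct computation on $\Lambda(\dd)$.

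Finally, surjectivity of $\Psi$ follows from Bozec's component description together with Theorem \ref{PBWtheorem}, since the basis $\{[\Lambda(\dd)_c]\}$ of $\HO^0\!\!\Coha^{\SSN}_{\Pi_Q}$ is obtained by iterated CoHA multiplication from the chosen generators; injectivity follows by comparing graded dimensions, as by \eqref{kssna} we have $\chi_t(\fg^{\SSN}_{\Pi_Q,\dd})=\kac^{\SSN}_{Q,\dd}(t)$ in cohomological degree $0$ (using also the purity/Tate-type statements recalled before \eqref{kssna}), which matches Bozec's computation of $\dim\mathfrak{n}^+_{Q,\dd}$. The main obstacle will be the verification of the relations involving the imaginary vertices (vertices in $I^{\Imag}$), particularly the commutativity relations \eqref{Serre2} at an isotropic vertex $i'\in I^{\isot}$, since these cannot be reduced to emptiness of a moduli stack as in the real case and require a concrete identification of the shuffle product of $[\Lambda(n\cdotsh 1_{i'})_{(n)}]$ and $[\Lambda(m\cdotsh 1_{i'})_{(m)}]$ inside $\HO^0\!\!\Coha^{\SSN}_{\Pi_Q, (n+m)\cdotsh 1_{i'}}$ with a linear combination of the non-$(n+m)$ components of $\Lambda((n+m)\cdotsh 1_{i'})$.
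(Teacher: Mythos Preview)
The paper does not prove this theorem; it is quoted as a result of Bozec \cite[Prop.~1.18, Thm.~3.34]{Bo16}, and the paper then \emph{uses} it to deduce Corollary~\ref{absIso} and the subsequent material in \S\ref{BBA}. So there is no proof in the paper to compare against, and your proposal should be judged on its own merits.

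Unfortunately, your strategy has a genuine error at the very first step. You want to show that each class $[\Lambda(n\cdotsh 1_{i'})_{(n)}]$ lies in the BPS subspace $\HO^0(\fg^{\SSN}_{\Pi_Q,\,n\cdotsh 1_{i'}})$, so that you can build a Lie algebra map $\Psi\colon \mathfrak{n}_Q^+\to \HO^0(\fg^{\SSN}_{\Pi_Q})$ with the stated effect on generators. But the paper shows explicitly that this is false for isotropic vertices: by the proposition at the end of \S\ref{IsotSec}, the BPS generator $\alpha_{i',n}$ is (up to scalar) the class $[\Lambda(\dd)_{(1^n)}]$, not $[\Lambda(\dd)_{(n)}]$. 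More conceptually, the paper emphasises that the image $F(\mathfrak{n}_Q^+)$ under Bozec's map \eqref{UEAiso} and the BPS Lie algebra $\HO^0(\fg^{\SSN}_{\Pi_Q})$ are \emph{distinct} Lie subalgebras of the common enveloping algebra $\HO^0\!\!\Coha^{\SSN}_{\Pi_Q}$, even though they are abstractly isomorphic (Corollary~\ref{absIso}). So your heuristic that ``the component indexed by $(n)$ is the unique one not reached by the CoHA shuffle product from lower-dimensional components, so the PBW decomposition singles it out as a primitive generator'' is incorrect: primitivity in the PBW sense (perverse degree $\leq 1$) is not the same as being a top component not hit by the correspondence from strictly smaller dimension vectors.

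The logical flow in the paper runs in the opposite direction to what you propose: Bozec's theorem is the input, \eqref{Hzerodo} (which is independent) identifies the target with $\UEA(\HO^0(\fg^{\SSN}_{\Pi_Q}))$, and only then does one extract an \emph{abstract} Lie isomorphism in Corollary~\ref{absIso} by choosing a minimal generating set $S$ of $\HO^0(\fg^{\SSN}_{\Pi_Q})$ and checking the Serre relations for $F^{-1}(S)$. That argument does not, and cannot, produce the specific assignment $e_{(i',n)}\mapsto[\Lambda(n\cdotsh 1_{i'})_{(n)}]$ at the Lie algebra level.
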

Combining with \eqref{Hzerodo} we obtain an isomorphism
\begin{equation}
\label{UEAiso}
F\colon \UEA(\mathfrak{n}_Q^+)\xrightarrow{\cong}\UEA(\HO^0\!(\fg_{\Pi_Q}^{\SSN})).
\end{equation}
\begin{corollary}
\label{absIso}
There exists an isomorphism of Lie algebras $\mathfrak{n}_Q^+\cong\HO^0\!(\fg_{\Pi_Q}^{\SSN})$ where the left hand side is the Borcherds--Bozec algebra of the full quiver $Q$.
\end{corollary}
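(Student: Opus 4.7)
The plan is to bootstrap an abstract Lie algebra isomorphism out of the existing algebra isomorphism $F$ of \eqref{UEAiso}, by producing explicit generators on the BPS side and matching them to Bozec's presentation.

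First I would extract a dimension count from $F$. Since $F$ is an isomorphism of $\dvs$-graded associative algebras between the universal enveloping algebras of two $\dvs$-graded, positively graded Lie algebras with finite-dimensional graded pieces, the PBW theorem applied to both sides gives
\[
\dim (\mathfrak{n}_Q^+)_{\dd} \;=\; \dim \HO^0(\fg_{\Pi_Q,\dd}^{\SSN}) \qquad \text{for every } \dd\in\dvs.
\]
In particular, each $\dd$-graded piece of $\HO^0(\fg_{\Pi_Q}^{\SSN})$ is nonzero exactly when the corresponding piece of $\mathfrak{n}_Q^+$ is.

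Next I would construct generators on the BPS side and assemble them into a Lie algebra homomorphism $\Phi\colon\mathfrak{n}_Q^+\to \HO^0(\fg_{\Pi_Q}^{\SSN})$. For the real simple roots, Corollary \ref{bels} (applied with $W_0=0$ and $\SP=\SSN$, which contains the simple modules $S_i$ for $i\in Q'_0$ since $\SSN$ is closed under extensions of vertex-supported composition factors) provides an embedding $\fn^-_{Q'}\hookrightarrow \HO^0(\fg_{\Pi_Q}^{\SSN})$; this supplies elements $\tilde e_{(i,1)}\in\HO^0(\fg_{\Pi_Q,1_i}^{\SSN})$ for $i\in I^{\reel}$ satisfying the real-root Serre relations \eqref{Serre1}. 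For each imaginary simple root $(i,n)$ with $i\in I^{\Imag}$, one chooses any nonzero element $\tilde e_{(i,n)}\in\HO^0(\fg_{\Pi_Q,n\cdot 1_i}^{\SSN})$; the dimension count above guarantees that a nonzero choice exists. Together these elements give a candidate homomorphism $\Phi$ from the free Lie algebra on the $e_{(i,n)}$.

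Third, verify that $\Phi$ descends through the Borcherds--Bozec relations. The mixed real--imaginary Serre relations for vanishing form follow from Proposition \ref{BPSvanProp} (or its mild extension) together with \eqref{Kaccha}/\eqref{kssn}, which force the relevant pieces of $\fg_{\Pi_Q}^{\SSN}$ in the forbidden degrees to vanish. The relations among imaginary generators at a vertex $i\in I^{\isot}$, where one needs $[\tilde e_{(i,n)},\tilde e_{(i,m)}]=0$, reduce to the fact that $\HO^0(\fg_{\Pi_Q,(n+m)1_i}^{\SSN})$ is one-dimensional (matching $\mathfrak{n}_{Q,(n+m)1_i}^+$): the bracket is a scalar multiple of $\tilde e_{(i,n+m)}$, and one adjusts $\tilde e_{(i,n+m)}$ inductively so that this scalar is zero (equivalently, one chooses the imaginary generators to be primitive with respect to the Hopf coproduct on $\HCoha^{\SSN}_{\Pi_Q}$ induced by the BPS identification). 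Finally, $\Phi$ is surjective because its image contains a generating set for $\HO^0(\fg_{\Pi_Q}^{\SSN})$ in each imaginary-vertex degree and the real-root Kac--Moody subalgebra, and hence is an isomorphism by the graded dimension count.

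The main obstacle is step three: verifying the Serre-type relations in the purely imaginary hyperbolic regime $i\in I^{\hype}$, where the graded pieces $\mathfrak{n}_{Q,n\cdot 1_i}^+$ have rank greater than one and the generators $e_{(i,n)}$ interact nontrivially. The correct choice of lifts $\tilde e_{(i,n)}$ there is precisely the obstruction recorded by the remark that $F$ itself does not send $\mathfrak{n}_Q^+$ onto $\HO^0(\fg_{\Pi_Q}^{\SSN})$; the point is to show that after modifying Bozec's generators $[\Lambda(n\cdot 1_i)_{(n)}]$ by their decomposable parts under the BPS--PBW decomposition, the resulting primitives still satisfy the Borcherds--Bozec presentation, which is really a statement that the two natural coproducts on $\HO^0\!\!\Coha^{\SSN}_{\Pi_Q}$ differ by a Hopf automorphism.
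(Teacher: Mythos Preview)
Your dimension count via PBW is correct and matches the paper's use of $F$. The real gap is in your third step. For the isotropic case your argument is circular: if the bracket $[\tilde e_{(i,n)},\tilde e_{(i,m)}]$ is a nonzero scalar multiple of some vector in the one-dimensional space $\HO^0(\fg_{\Pi_Q,(n+m)1_i}^{\SSN})$, then no choice of $\tilde e_{(i,n+m)}$ can make that bracket vanish; adjusting a generator in a higher degree does not affect a relation among lower-degree generators. The appeal to a Hopf coproduct on $\HCoha^{\SSN}_{\Pi_Q}$ is not justified anywhere in the paper, and your final sentence about two coproducts differing by a Hopf automorphism is an assertion with no argument behind it. You correctly flag the hyperbolic case as the obstacle, but in fact you have not handled the isotropic case either.

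The paper's proof sidesteps all of this by working on the \emph{other} side of $F$. One picks a minimal $\dvs$-graded generating set $S$ of $\HO^0(\fg_{\Pi_Q}^{\SSN})$; since $F$ is an algebra isomorphism, $F^{-1}(S)$ is a minimal generating set of $\UEA(\mathfrak n_Q^+)$, forcing $|S_\dd|$ to match the number of simple roots in degree $\dd$. Now verify the Serre relations for $S$ by transporting via $F^{-1}$ and computing inside $\UEA(\mathfrak n_Q^+)$, where every element of degree $m\cdot 1_{i'}$ is a polynomial in the standard generators $e_{(i',t)}$. Relation \eqref{Serre2} is immediate because the entire subalgebras supported at $i'$ and $j'$ commute when $(1_{i'},1_{j'})_Q=0$. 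For \eqref{Serre1}, the degree-$1_i$ element is a scalar multiple of $e_{(i,1)}$, and $\operatorname{ad}(e_{(i,1)})$ is a derivation of the enveloping algebra; the Leibniz rule plus the standard Serre relations $[e_{(i,1)},\cdot]^{1-t_r(1_i,1_j)_Q}(e_{(j,t_r)})=0$ then kill $\operatorname{ad}(e_{(i,1)})^{1-n(1_i,1_j)_Q}$ applied to any monomial $\prod_r e_{(j,t_r)}$ with $\sum t_r=n$. This gives a surjection $\mathfrak n_Q^+\twoheadrightarrow \HO^0(\fg_{\Pi_Q}^{\SSN})$, hence an isomorphism by the dimension count. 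The point you are missing is that $F$ already encodes the relations; you do not need to rediscover them geometrically.
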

\begin{proof}
Let $S$ be a minimal $\dvs$-graded generating set of $\HO^0\!(\fg_{\Pi_Q}^{\SSN})$.  Then $F^{-1}(S)$ is a minimal generating set of $\UEA(\mathfrak{n}_Q^+)$ and so $\lvert S_{\dd}\lvert$ is the number of degree $\dd$ generators of $\mathfrak{n}_Q^+$.  We show that the elements of $S$ satisfy the Serre relations.

Let $g,g'\in S$ be of degree $m\cdotsh 1_{i'}$ and $n\cdotsh 1_{j'}$ respectively.  If $(1_{i'},1_{j'})_Q=0$ then there are no edges between $i'$ and $j'$, and $g$ and $g'$ commute by the definition of the CoHA multiplication, i.e. they satisfy the Serre relation $[g,g']=0$, dealing with \eqref{Serre2}.  

We next consider \eqref{Serre1}.  Assume that there are no edge-loops of $Q$ at $i'$, so that up to a scalar multiple $g=F(e_{(i',1)})$.  Write $g'$ as a linear combination of monomials $\prod_{r=1}^l F(e_{(j',t_r)})$ for $t_r\in\mathbb{N}$ summing to $n$.  Since $[F(e_{(i',1)}),\cdot]$ is a derivation, the identity 
\[
[F(e_{(i',1)}),g']^{1-n(1_{i'},1_{j'})_Q}=0
\]
follows from the identities 
\[
[F(e_{(i',1)}),F(e_{(j',t_r)})]^{1-t_r(1_{i'},1_{j'})_Q}=0.
\]
So the generators $S$ satisfy the Serre relations and there is a surjection $\mathfrak{n}_Q^+\rightarrow \HO^0\!(\fg_{\Pi_Q}^{\SSN})$, which is injective since the graded pieces have the same dimensions.
\end{proof}

Although Corollary \ref{absIso} establishes that they are abstractly isomorphic, we spend the rest of \S \ref{BBA} investigating the \textit{difference} between the two Lie subalgebras $F(\mathfrak{n}_Q^+)$ and $\HO^0\!(\fg_{\Pi_Q}^{\SSN})$.
\subsubsection{Isotropic vertices} 
\label{IsotSec} Let $i'\in I^{\isot}$, and set $\dd=n\cdotsh 1_{i'}$.  Let 
\[
\Delta_n\colon \AAA{3}\hookrightarrow \Msp_{\dd}(\WT{Q})
\]
be the inclusion, sending $(z_1,z_2,z_3)\in\AAA{3}$ to the $\Jac(\WT{Q},\WT{W})$-representation for which the action of the three arrows $a,a^*,\omega_{i'}$ is scalar multiplication by $z_1,z_2,z_3$ respectively.  Then by \cite[Thm.5.1]{preproj} there is an isomorphism
\begin{equation}
\label{commBPS}
\BPSh_{\WT{Q},\WT{W},\dd}\cong \Delta_{n,*}\ul{\QQ}_{\AAA{3}}\otimes \LLL^{-3/2}=\Delta_{n,*}\nIC_{\AAA{3}}.
\end{equation}
Thus by \eqref{pfBP} there is an isomorphism
\begin{equation}
\label{2dcBPS}
\BPSh_{\Pi_Q,\dd}\cong \Delta_{\red,n,*}\ul{\QQ}_{\AAA{2}}\otimes \LLL^{-1}=\Delta_{\red,n,*}\nIC_{\AAA{2}}
\end{equation}
where $\Delta_{\red,n}\colon\AAA{2}\hookrightarrow \Msp_{n\cdot 1_{i'}}(\ol{Q})$ is the inclusion taking $z_1,z_2$ to the module $\rho$ for which $a$ and $a^*$ act via multiplication by $z_1$ and $z_2$ respectively.  Thus we find that
\begin{align}
\BPSh^{\SSN}_{\Pi_Q,\dd}\cong \varpi'_{\red,*}\varpi'^!_{\red}\Delta_{\red,n,*}\nIC_{\AAA{2}}\label{BPSSSN}
\cong \Delta^{\SSN}_{\red,n,*}\ul{\QQ}_{\AAA{1}}
\end{align}
where $\Delta^{\SSN}_{\red,n,*}\colon \AAA{1}\hookrightarrow \Msp_{n\cdot 1_{i'}}(\ol{Q})$ takes $z$ to the module $\rho$ for which $a^*$ acts via multiplication by $z$ and $a$ acts via the zero map.  By \eqref{2dInt} we deduce the following proposition.
\begin{proposition}
\label{IsoPure}
There is an isomorphism in $\DlMHM(\Msp(\ol{Q}))$
\begin{equation}
\label{gen}
\bigoplus_{n\geq 0}\rCoha^{\SSN}_{\Pi_Q,n\cdot 1_{i'}}\cong \Sym_{\oplus_{\red}}\!\left(\bigoplus_{n\geq 1}\Delta^{\SSN}_{\red,n,*}\ul{\QQ}_{\AAA{1}}\otimes\HO(\B \CC^*,\QQ)\right).
\end{equation}
\end{proposition}
In particular, we see directly that $\bigoplus_{n\geq 0}\rCoha^{\SSN}_{\Pi_Q,n\cdot 1_{i'}}$ is pure (without using Corollary \ref{PBBC}), as is
\begin{equation}
\label{fen}
\bigoplus_{n\geq 1}\BPSh^{\SSN}_{\Pi_Q,n\cdot 1_{i'}}\cong \bigoplus_{n\geq 1}\Delta^{\SSN}_{\red,n,*}\ul{\QQ}_{\AAA{1}}.
\end{equation}

\begin{remark}
The Verdier dual of \eqref{gen} is the isomorphism
\[
\bigoplus_{n\geq 0}\left(\Mst^{\SSN}_{n\cdot 1_{i'}}(\Pi_Q)\rightarrow \Msp_{n\cdot 1_{i'}}(\ol{Q})\right)_!\ul{\QQ}_{\Mst^{\SSN}_{n\cdot 1_{i'}}(\Pi_Q)}\cong \Sym_{\oplus}\!\left(\bigoplus_{n\geq 1,m\geq 0}\Delta^{\SSN}_{\red,n,*}\ul{\QQ}_{\AAA{1}}\otimes\LLL^{-m-1}\right).
\]
\end{remark}

From \eqref{BPSSSN} we deduce that there are isomorphisms
\begin{equation}
\label{IsotDone}
\Psi\colon \fg^{\SSN}_{\Pi_Q,\dd}\cong \HO(\AAA{1},\QQ)\cong\QQ
\end{equation}
as cohomologically graded vector spaces, i.e. $\fg^{\SSN}_{\Pi_Q,\dd}$ is one dimensional and concentrated in cohomological degree zero.  We denote by 
\begin{equation}
\label{Gdef}
\alpha_{i',n}=\Psi^{-1}(1)
\end{equation}
a basis element, so $\fg^{\SSN}_{\Pi_Q,\dd}=\QQ\cdotsh \alpha_{i',n}$.  By using the lift of the BPS Lie algebra $\mathfrak{g}_{\Pi_Q}$ to a Lie algebra object in $\MHM(\Msp(\Pi_Q))$ one can see directly (e.g. without the aid of Corollary \ref{absIso}) that for $m,n\in\ZZ_{\geq 1}$ with $m\neq n$ there is an equality $[\alpha_{i',m},\alpha_{i',n}]=0$.  In a little more detail, this follows because the Lie bracket 
\[
\fg^{\SSN}_{\Pi_Q,n\cdot 1_i}\otimes \fg^{\SSN}_{\Pi_Q,m\cdot 1_i}\rightarrow \fg^{\SSN}_{\Pi_Q,(m+n)\cdot 1_i}
\]
is defined by applying $\HO\varpi'_{\red,*}\varpi'^!_{\red}$ to the morphism of mixed Hodge modules
\begin{equation}
\label{nzer}
\Delta_{\red,n,*}\nIC_{\AAA{2}}\boxtimes_{\oplus_{\red}} \Delta_{\red,m,*}\nIC_{\AAA{2}}\rightarrow \Delta_{\red,m+n,*}\nIC_{\AAA{2}}.
\end{equation}
Since $m\neq n$ the morphism $\oplus_{\red}\circ (\Delta_{\red,n}\times\Delta_{\red,m})\colon\AAA{4}\rightarrow \Msp_{m+n}(\ol{Q})$ is injective.  It follows that the left hand side of \eqref{nzer} is simple, and not isomorphic to the (simple) right hand side, which has 2-dimensional support.  It follows that \eqref{nzer} is the zero map, since it is a morphism between distinct simple objects.  
\begin{proposition}
Let $i'\in Q_0^{\isot}$, and set $\dd= n\cdotsh 1_{i'}$ as above.  Up to multiplication by a scalar, there is an identity $\alpha_{i',n}=[\Lambda(\dd)_{(1^n)}]\in \HO^0\!\!\Coha_{\Pi_Q,\dd}^{\mathcal{SSN}}$.
\end{proposition}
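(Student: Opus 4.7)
The plan is to realise both $\alpha_{i',n}$ and $[\Lambda(\dd)_{(1^n)}]$ as generators of the one-dimensional kernel of the restriction map from $\HO^0\!\!\Coha^{\SSN}_{\Pi_Q,\dd}$ to an open subset where the BPS sheaf vanishes.

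Set $Z \coloneqq \Delta^{\SSN}_{\red,n}(\AAA{1}) \subset \Msp_\dd(\ol{Q})$ and $U \coloneqq \Msp_\dd(\ol{Q}) \setminus Z$. By \eqref{BPSSSN} the sheaf $\BPSh^{\SSN}_{\Pi_Q,\dd}$ is supported on $Z$, so $\alpha_{i',n}$ is killed by restriction to $U$. To see the same for $[\Lambda(\dd)_{(1^n)}]$ I would compute the image of this component under the semisimplification morphism: a generic representation $\rho$ on $\Lambda(\dd)_{(1^n)}$ has $\rho(a)$ a regular nilpotent Jordan block, and $\rho(a^*) \in \CC[\rho(a)]$ by the preprojective relation, so $\rho(a^*)$ has a single eigenvalue $c_0$ (its constant coefficient) and the semisimplification of $\rho$ is $n\cdotsh(0,c_0) \in Z$. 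Taking closures yields $\JH(\Lambda(\dd)_{(1^n)}) \subseteq Z$, so $[\Lambda(\dd)_{(1^n)}]$ also restricts to zero on $U$.

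The crucial remaining step is to show that the kernel of restriction is exactly one-dimensional. Combining \eqref{gen} with \eqref{BPSSSN}, in cohomological degree zero the complex $\rCoha^{\SSN}_{\Pi_Q,\dd}$ decomposes as $\bigoplus_{\lambda \vdash n}\mathcal{F}_\lambda$, where $\mathcal{F}_\lambda = f_{\lambda,*}\QQ$ for the finite map
\[
f_\lambda\colon \textstyle\prod_m \AAA{a_m} \longrightarrow \Msp_\dd(\ol{Q}),\qquad (z_{m,i}) \longmapsto \sum_{m,i} m\cdotsh(0,z_{m,i})
\]
(with $\lambda = 1^{a_1}2^{a_2}\cdots$ and $r = \sum_m a_m$ the total number of parts). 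Correspondingly the kernel splits as $\bigoplus_\lambda \HO^0_Z(\Msp_\dd(\ol{Q}),\mathcal{F}_\lambda)$. For $\lambda = (n)$, $f_\lambda$ is an isomorphism onto $Z$, giving $\HO^0_Z(\mathcal{F}_{(n)}) \cong \QQ$. For $\lambda \neq (n)$ the preimage $f_\lambda^{-1}(Z)$ is the small diagonal $\AAA{1} \hookrightarrow \prod_m\AAA{a_m}$, a regular embedding of codimension $r-1 \geq 1$; proper base change yields $i_Z^!\mathcal{F}_\lambda \cong \QQ_Z[-2(r-1)](-(r-1))$, whose zeroth hypercohomology $\HO^{2(r-1)}(\AAA{1},\QQ)$ vanishes. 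Hence the kernel is one-dimensional and $[\Lambda(\dd)_{(1^n)}] = c\cdotsh\alpha_{i',n}$ for some nonzero scalar $c$.

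The main obstacle will be the explicit identification $\mathcal{F}_\lambda \cong f_{\lambda,*}\QQ$ with $f_\lambda$ finite, which both governs the supports and licenses proper base change in the local cohomology calculation. This should follow from \eqref{BPSSSN} and the description of $\boxtimes_{\oplus_{\red}}$ as pushforward along the finite sum map $\oplus_{\red}$, together with compatibility of the PBW decomposition with this symmetric monoidal structure, though the $S_{a_m}$-invariants arising from repeated parts of $\lambda$ warrant careful tracking.
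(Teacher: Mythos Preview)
Your proof is correct and takes a genuinely different route from the paper's. Both proofs rest on the decomposition \eqref{gen} together with the explicit identification \eqref{BPSSSN}, but they extract the conclusion differently. The paper shows that both $\alpha_{i',n}$ and $[\Lambda(\dd)_{(1^n)}]$ arise by applying $\HO^0$ to morphisms $\Delta^{\SSN}_{\red,n,*}\ul{\QQ}_{\AAA{1}}\to \JH_{\red,*}\HA^{\SSN}_{\Pi_Q,\dd}$, and then observes that this Hom space is one-dimensional because the source factors through $\tau_{\leq 1}\JH_{\red,*}\HA^{\SSN}_{\Pi_Q,\dd}\cong \Delta^{\SSN}_{\red,n,*}\ul{\QQ}_{\AAA{1}}$. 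You instead show that both elements lie in the kernel of restriction to $U=\Msp_\dd(\ol{Q})\setminus Z$, and bound this kernel by a local cohomology computation $\HO^0_Z$ over the pieces of \eqref{gen}. The paper's argument is more categorical (truncations and Hom spaces in $\DlMHM$); yours is more explicitly geometric (supports, proper base change, Gysin shifts), and has the pleasant feature that the inclusion $\JH_{\red}(\Lambda(\dd)_{(1^n)})\subset Z$ is made transparent via the Jordan-block description of the component.

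Two small points to tighten. First, your phrase ``in cohomological degree zero the complex $\rCoha^{\SSN}_{\Pi_Q,\dd}$ decomposes as $\bigoplus_{\lambda}\mathcal{F}_\lambda$'' elides the $\HO(\B\CC^*,\QQ)$ factors in \eqref{gen}; you should note that each such factor contributes only in nonnegative cohomological degree, so the pieces $\mathcal{F}_\lambda\otimes u^{\mathbf{k}}$ with $\sum k_i>0$ have $\HO^0_Z$ vanishing for the same codimension reason, and hence do not disturb the one-dimensionality. Second, $\HO^0(\QQ_Z[-2(r-1)])=\HO^{-2(r-1)}(\AAA{1},\QQ)$, not $\HO^{2(r-1)}$; both vanish for $r\geq 2$, so the conclusion is unaffected. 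The symmetrisation over $\prod_m S_{a_m}$ that you flag is harmless: $i_Z^!$ commutes with taking invariants, and the preimage $f_\lambda^{-1}(Z)$ is the small diagonal regardless, so the Gysin shift computation survives passage to invariants.
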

\begin{proof}
Let $\Lambda(\dd)_{(1^n)}^{\circ}\subset \Lambda(\dd)_{(1^n)}$ be the complement to the intersection with the union of components $\Lambda(\dd)_\pi$ for $\pi\neq (1^n)$.  Denote by 
\begin{align*}
j\colon&\Lambda(\dd)_{(1^n)}/\Gl_{\dd}\rightarrow \Mst_{\dd}(\ol{Q})\\
j^{\circ}\colon &\Lambda(\dd)^\circ_{(1^n)}/\Gl_{\dd}\rightarrow \Mst_{\dd}(\ol{Q})
\end{align*}
the inclusions.  We have $\chi_{\tilde{Q}}(\dd,\dd)=-2n^2$.  Define
\begin{align*}
\mathcal{G}&=j_*j^!\ul{\QQ}_{\Mst_{\dd}(\ol{Q})}\otimes\LLL^{-n^2}\\
\mathcal{G}^\circ&=j^\circ_*j^{\circ,!}\ul{\QQ}_{\Mst_{\dd}(\ol{Q})}\otimes\LLL^{-n^2}\cong j^\circ_*\ul{\QQ}_{\Lambda(\dd)^{\circ}_{(1^n)}/\Gl_{\dd}}
\end{align*}
where the isomorphism is due to the fact that $\Lambda(\dd)^{\circ}_{(1^n)}/\Gl_{\dd}$ is smooth and has codimension $n^2$ inside $\Mst_{\dd}(\ol{Q})$.  Then since $j$ is closed, and $\Lambda(\dd)^\circ_{(1^n)}$ is open in $\Lambda(\dd)_{(1^n)}$, we have a diagram
\begin{equation}
\label{comp_emb}
\xymatrix{
\JH_{\red,*}\mathcal{G}^{\circ}&\ar[l]_-\xi\JH_{\red,*}\mathcal{G}\ar[r]^-q& \JH_{\red,*}\HA^{\SSN}_{\Pi_Q,\dd}
}
\end{equation}
Applying $\HO^0$, $\xi$ is an isomorphism, and $[\Lambda(\dd)_{(1^n)}]$ is defined to be $\HO^0(q)(\HO^0(\xi))^{-1}(1)$.  Applying $\bm{\tau}_{\mathrm{con}}^{\leq 0}$, the truncation functor induced by the non-perverse t structure, the diagram \eqref{comp_emb} becomes
\[
\xymatrix{
\Delta_{\red,n,*}^{\mathcal{SSN}}\ul{\QQ}_{\AAA{1}}&\ar[l]_-{\cong}\Delta_{\red,n,*}^{\mathcal{SSN}}\ul{\QQ}_{\AAA{1}}\ar[r]& \bm{\tau}_{\mathrm{con}}^{\leq 0}\JH_{\red,*}\HA^{\SSN}_{\Pi_Q,\dd}.
}
\]
The element $\alpha_{i',n}$ is likewise obtained by applying $\HO$ to a homomorphism
\[
q'\colon \BPSh^{\SSN}_{\Pi_Q,\dd}\cong\Delta_{\red,n,*}^{\mathcal{SSN}}\ul{\QQ}_{\AAA{1}}\rightarrow \JH_{\red,*}\HA^{\SSN}_{\Pi_Q,\dd}.
\]
The domain is a mixed Hodge module, shifted by cohomological degree 1, so that $q'$ factors through the morphism 
\[
\bm{\tau}^{\leq 1}\JH_{\red,*}\HA^{\SSN}_{\Pi_Q,\dd}\rightarrow \JH_{\red,*}\HA^{\SSN}_{\Pi_Q,\dd}.
\]
By \eqref{gen} there is an isomorphism $\bm{\tau}^{\leq 1}\JH_{\red,*}\HA^{\SSN}_{\Pi_Q,\dd}\cong \Delta_{\red,n,*}^{\mathcal{SSN}}\ul{\QQ}_{\AAA{1}}$ and so we deduce that \\$\dim\left(\Hom\left(\Delta_{\red,n,*}^{\mathcal{SSN}}\ul{\QQ}_{\AAA{1}},\JH_{\red,*}\HA^{\SSN}_{\Pi_Q,\dd}\right)\right)=1$, and the proposition follows.
\end{proof}

\begin{remark}
Comparing with Theorem \ref{BoThm}, we see that the Lie algebra $\HO^0\!\mathfrak{g}^{\SSN}_{\Pi_Q}\subset \HO^0\!\!\Coha^{\SSN}_{\Pi_Q}$ is \textit{not} identified with the Lie sub-algebra $\mathfrak{n}_Q^+$ under the Bozec-Hennecart isomorphism (Theorem \ref{BoThm}).
\end{remark}

\subsubsection{Hyperbolic vertices}
Suppose that $i'\in I^{\hype}$, i.e. $i'$ supports $l$ edge-loops with $l\geq 2$.  Let $n\in \mathbb{Z}_{>0}$, and set $\dd=n\cdot 1_{i'}$.  The variety $\Msp_{\dd}(\Pi_Q)$ is an irreducible variety of dimension $2+2(l-1)n^2$ by \cite[Thm.1.3]{CB01}.  We set
\begin{align*}
\Cusp_{i',n}\coloneqq&\nIC_{\Msp_{\dd}(\Pi_Q)}\in\MHM(\Msp_{\dd}(\ol{Q}))\\
\Cusp^{\mathcal{SSN}}_{i',n}\coloneqq&\varpi'_{\red,*}\varpi'^!_{\red}\nIC_{\Msp_{\dd}(\Pi_Q)}\in\Db(\MHM(\Msp_{\dd}(\ol{Q})))\\
\fc^{\mathcal{SSN}}_{i',n}\coloneqq &\HO\!\left(\Msp_\dd(\ol{Q}),\Cusp^{\mathcal{SSN}}_{i',n}\right).
\end{align*}
By Lemma \ref{G_lemma}, $\Cusp^{\mathcal{SSN}}_{i',n}$ is pure.
The following will be proved as a special case of the results in \S \ref{genSec}.
\begin{proposition}
Set $\mathcal{B}_n=\BPSh_{\Pi_Q,n \cdot 1_{i'}}\in\MHM(\Msp_{\dd}(\Pi_Q))$.  There is an inclusion $\Cusp_{i',n}\hookrightarrow \mathcal{B}_n$ and $\Cusp_{i',n}$ is primitive, i.e. the induced morphism
\begin{equation}
\label{prim1}
\Cusp_{i',n}\rightarrow \mathcal{B}_n/\left(\sum_{n'+n''=n} \image\left(\mathcal{B}_{n'}\boxtimes_{\oplus}\mathcal{B}_{n''}\xrightarrow{[\cdot,\cdot]}  \mathcal{B}_{n}\right)\right)
\end{equation}
is injective, where $[\cdot,\cdot]$ is the Lie bracket of \eqref{relLA2}.
\end{proposition}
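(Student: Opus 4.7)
The strategy combines purity of $\rCoha_{\Pi_Q,\dd}$ with geometric analysis of the multiplication map's support.

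By Corollary \ref{relPurity}, $\rCoha_{\Pi_Q,\dd}$ is pure, and the PBW decomposition \eqref{2dInt} shows its perverse cohomology is concentrated in non-negative degrees. Hence $\mathcal{B}_n = \tau_{\leq 0}\rCoha_{\Pi_Q,\dd}$ is a pure mixed Hodge module on $\Msp_{\dd}(\ol{Q})$ supported on the irreducible closed subvariety $\Msp_{\dd}(\Pi_Q)$, and by the decomposition theorem it decomposes as a direct sum of twisted IC sheaves. To exhibit $\Cusp_{i',n}=\nIC_{\Msp_{\dd}(\Pi_Q)}$ as a summand, I would analyze the generic stalk on the simple locus $\Msp^{\mathrm{simple}}_{\dd}(\Pi_Q)$, which is non-empty and open dense since $1_{i'}$ is a hyperbolic imaginary root and Crawley-Boevey's theorem produces simple modules of dimension $n\cdot 1_{i'}$. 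Over this locus the fibre of $\JH_{\red}$ is the classifying stack $B\Gm$, whose rational pushforward gives $\ul{\QQ}\otimes \HO(B\Gm,\QQ)$ in non-negative cohomological degrees. The truncation $\tau_{\leq 0}$ then isolates the rank-one constant summand in degree zero, so $\mathcal{B}_n|_{\Msp^{\mathrm{simple}}_{\dd}(\Pi_Q)}\cong \ul{\QQ}$, and the decomposition theorem forces $\nIC_{\Msp_{\dd}(\Pi_Q)}$ to appear as a summand of $\mathcal{B}_n$, yielding the required inclusion.

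For primitivity, the CoHA multiplication $\star$ is defined (diagram \eqref{2dc}) by pushforward along the direct sum morphism $\oplus_{\red}$, so for $n',n''\geq 1$ with $n'+n''=n$ the image of $\mathcal{B}_{n'}\boxtimes_{\oplus}\mathcal{B}_{n''}\to\mathcal{B}_n$ is supported inside the image of $\oplus_{\red}$, i.e., on the locus of $\Pi_Q$-modules admitting a non-trivial decomposition $\rho'\oplus \rho''$ with $\udim(\rho')=n'\cdot 1_{i'}$ and $\udim(\rho'')=n''\cdot 1_{i'}$. This is a proper closed subset of $\Msp_{\dd}(\Pi_Q)$, its complement containing the non-empty simple locus. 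Since $\Cusp_{i',n}$ is a simple perverse sheaf with full support on $\Msp_{\dd}(\Pi_Q)$, it cannot embed into any subobject supported on a proper closed subset. Therefore the composition $\Cusp_{i',n}\hookrightarrow \mathcal{B}_n\to \mathcal{B}_n/\left(\sum_{n'+n''=n,\,n',n''\neq 0}\image(\star)\right)$ is non-zero, and simplicity of $\Cusp_{i',n}$ upgrades this to injectivity.

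The main obstacle is controlling the normalizations in the generic stalk computation: the Tate twist $\LLL^{\chi_{\WT{Q}}(\dd,\dd)/2}$ and the cohomological shifts must be tracked carefully to confirm that the surviving constant summand lands precisely in perverse degree zero with trivial coefficients, and in particular to rule out a non-trivial rank-one local system on the smooth part of the simple locus. Verdier self-duality of $\BPSh_{\Pi_Q,\dd}$, which is pure by Theorem \ref{purityThm}, should help constrain the generic monodromy to be trivial.
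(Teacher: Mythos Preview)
Your approach is correct and matches the paper's proof in \S\ref{genSec}: both use purity of $\rCoha_{\Pi_Q,\dd}$ (Corollary~\ref{relPurity}) to semisimplify $\mathcal{B}_n$, identify the unique copy of $\nIC_{\Msp_{\dd}(\Pi_Q)}$ via the restriction to the stable/simple locus (where the map is a $\B\Gm$-gerbe), and then argue primitivity by observing that the image of multiplication is supported on the strictly polystable boundary. The paper routes the construction of the inclusion through the tripled quiver via \eqref{PreV} and \eqref{Vin}, whereas you work directly on $\Mst(\Pi_Q)\to\Msp(\Pi_Q)$, but the substance is identical; your worry about a nontrivial rank-one local system is unfounded, since over the simple locus the gerbe pushforward is manifestly $\ul{\QQ}\otimes\HO(\B\Gm,\QQ)$ with no monodromy.
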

By Theorem \ref{purityThm}, \eqref{prim1} is a morphism of semisimple objects, as well as being injective, and so it has a left inverse.  Applying $\HO\varpi'_{\red,*}\varpi'^!_{\red}$ we deduce that there is an injective morphism
\begin{equation}
\label{cin}
\mathfrak{c}^{\mathcal{SSN}}_{i',n}\hookrightarrow \mathfrak{g}^{\mathcal{SSN}}_{\Pi_Q,n}
\end{equation}
and moreover that the induced morphism
\begin{equation}
\label{csnin}
\mathfrak{c}^{\mathcal{SSN}}_{i',n}\rightarrow \mathfrak{g}^{\mathcal{SSN}}_{\Pi_Q,n}/\left(\sum_{n'+n''=n}\image \left( \mathfrak{g}^{\mathcal{SSN}}_{\Pi_Q,n'}\otimes \mathfrak{g}^{\mathcal{SSN}}_{\Pi_Q,n''}\xrightarrow{[\cdot,\cdot]}\mathfrak{g}^{\mathcal{SSN}}_{\Pi_Q,n}\right) \right)
\end{equation}
is injective.  Taking the zeroth cohomologically graded piece, we deduce from Corollary \ref{absIso} that 
\begin{equation}
\label{dimest}
\dim(\HO^0\!\fc^{\mathcal{SSN}}_{i',n})\leq 1.
\end{equation}

Since $\Cusp^{\mathcal{SSN}}_{i',n}$ is pure by Lemma \ref{G_lemma}, there is an isomorphism $\Cusp^{\mathcal{SSN}}_{i',n}\cong \Ho(\Cusp^{\mathcal{SSN}}_{i',n})$.  We claim that moreover there is an inclusion
\[
\ICS_{\Msp_{\dd}(Q^{\opp})}(\mathbb{Q})[1-\chi_Q(\dd,\dd)]\subset \Ho^{1-\chi_Q(\dd,\dd)}\Cusp^{\mathcal{SSN}}_{i',n}.
\]
This follows from the fact that $\Ho^{1-\chi_Q(\dd,\dd)}\Cusp^{\mathcal{SSN}}_{i',n}$ is semisimple (again by Lemma \ref{G_lemma}), and the fact that the LHS and the RHS are isomorphic after restriction to $\Msp^{\simp}_{\dd}(Q^{\opp})$, i.e. the LHS is the summand of the RHS with full support.  Since $\HO^0(\ICS_{\Msp_{\dd}(Q^{\opp})}(\mathbb{Q}))\cong\mathbb{Q}$ we deduce from \eqref{dimest} that
\begin{equation}
\label{hHyp}
\HO^0(\fc^{\mathcal{SSN}}_{i',n})\cong\mathbb{Q}.
\end{equation}

\begin{corollary}
\label{cortan}
The images of the inclusions $\HO^0\!\left(\fc_{i',n}^{\SSN}\right)\hookrightarrow \HO^0\!\left(\fg^{\SSN}_{\Pi_Q,n\cdot 1_{i'}}\right)$ generate $\bigoplus_{n\geq 1}\fg_{\Pi_Q,n\cdot 1_{i'}}$.
\end{corollary}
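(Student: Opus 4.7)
The plan is to combine Proposition \ref{HypeOne} (which identifies $\HO^0\fc^{\SSN}_{i',n}$ as one-dimensional) with Corollary \ref{absIso} (which identifies the ambient Lie algebra abstractly), reducing the claim to a generator count inside a free Lie algebra followed by an inductive bracket expansion. Throughout I interpret the target as the graded Lie subalgebra
\[
\mathfrak{a} \;:=\; \bigoplus_{n \geq 1} \HO^0\!\left(\fg^{\SSN}_{\Pi_Q,\, n \cdot 1_{i'}}\right) \;\subset\; \HO^0\fg^{\SSN}_{\Pi_Q},
\]
with graded piece $\mathfrak{a}_n$ indexed by the dimension vector $n \cdot 1_{i'}$; this is closed under the bracket because $\{n \cdot 1_{i'}\}_{n \geq 1}$ is closed under addition.

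First I would use Corollary \ref{absIso} to identify $\mathfrak{a}$ with the sub-Lie algebra of $\fn_Q^+$ generated by the elements $e_{(i',m)}$ for $m \geq 1$, and then argue that this subalgebra is freely generated. Since $i' \in I^{\hype}$ carries $l \geq 2$ edge-loops, one has $(1_{i'}, 1_{i'})_Q = 2 - 2l < 0$, hence $((i',m),(i',n)) = mn\,(1_{i'},1_{i'})_Q \neq 0$ for all $m, n \geq 1$. It follows that none of the defining relations of $\fn_Q^+$ (the real-vertex Serre relations and the orthogonality relations $[e_i,e_j]=0$ for $(i,j)=0$) involve only generators from $\{e_{(i',m)}\}_{m \geq 1}$, so $\mathfrak{a}$ is free on these generators. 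Consequently the quotient
\[
\mathfrak{a}_n \Big/ \sum_{\substack{n' + n'' = n \\ n', n'' \geq 1}} [\mathfrak{a}_{n'}, \mathfrak{a}_{n''}]
\]
is one-dimensional, spanned by the class of $e_{(i',n)}$.

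Next I would compare dimensions using \eqref{csnin}: the composition of $\xi_n$ with the projection onto the above quotient is injective by that display, and both source and target are one-dimensional, so the composition is an isomorphism. Hence $\xi_n(\HO^0\fc^{\SSN}_{i',n})$ hits a generator of $\mathfrak{a}_n$ modulo brackets from strictly lower dimensions.

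To conclude I would induct on $n$: given $x \in \mathfrak{a}_n$, pick $v \in \HO^0\fc^{\SSN}_{i',n}$ so that $x - \xi_n(v)$ lies in the bracket ideal, and apply the inductive hypothesis to each summand $\mathfrak{a}_{n'}$, $\mathfrak{a}_{n''}$ with $n', n'' < n$. The only substantive ingredient is the freeness assertion of the first step, where hyperbolicity of $i'$ is used to rule out any hidden relation among the $e_{(i',m)}$; this is what guarantees that exactly one new minimal generator is needed in each degree $n \cdot 1_{i'}$, and the remaining bracket expansion is routine.
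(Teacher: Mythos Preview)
Your proof is correct and follows essentially the same approach as the paper. The paper's proof is more terse: it notes that $\bigoplus_{n\geq 1}\HO^0\!\fg^{\SSN}_{\Pi_Q,n\cdot 1_{i'}}$ has exactly one simple imaginary root in each degree (which is your freeness computation via Corollary \ref{absIso}) and then invokes injectivity of \eqref{csnin} to conclude that the one-dimensional image of $\xi_n$ supplies that root; your explicit dimension count and induction on $n$ simply unpack this.
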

\begin{proof}
The result follows from the fact that $\bigoplus_{n\geq 1}\fg_{\Pi_Q,n\cdot 1_{i'}}$ has one simple imaginary root for each $n$, and injectivity of \eqref{csnin}.
\end{proof}

Let $\alpha_{i',n}$ be a generator of $\HO^0(\ICS_{\Msp_{\dd}(Q^{\opp})}(\mathbb{Q}))$, i.e
\begin{equation}
\label{Hdef}
\HO^0(\ICS_{\Msp_{\dd}(Q^{\opp})}(\mathbb{Q}))\cong \alpha_{i',n}\cdot\mathbb{Q}.
\end{equation}
Post-composing the inclusion from Corollary \ref{cortan} with the morphisms
\[
\HO^0\!\left(\fg^{\SSN}_{\Pi_Q,n\cdot 1_{i'}}\right)\hookrightarrow \HO^0\!\left(\UEA(\fg^{\SSN}_{\Pi_Q,\NN\cdot 1_{i'}})\right)\hookrightarrow \HO^0\!\!\Coha^{\SSN}_{\Pi_Q}
\]
we obtain the inclusion $j\colon \HO^0(\ICS_{\Msp_{\dd}(Q^{\opp})}(\mathbb{Q}))\hookrightarrow \HCoha^{\SSN}_{\Pi_Q}$.  If we express $j(\alpha_{i',n})$ in terms of Bozec's basis, it is easy to see that the coefficient of $[\Lambda(\dd)_{(n)}]$ is 1.  The question of what all of the other coefficients are (in particular, whether they are nonzero) seems to be quite difficult without an explicit description of $\rCoha_{\Pi_Q,\dd}^{\SSN}$  like Proposition \ref{IsoPure} in the hyperbolic case.  On the other hand \S \ref{IsotSec} already demonstrates that the isomorphism $F$ from \eqref{UEAiso} does not identify $\mathfrak{n}_Q^+$ and $\HO^0\!(\fg_{\Pi_Q}^{\SSN})$.

\begin{remark}
We have shown that the zeroth cohomologically graded pieces of $\fc^{\SSN}_{i',n}$ for $i'\in Q_0$ and $n\in\ZZ_{\geq 1}$ provide a complete set of generators for $\HO^0\!(\fg_{\Pi_Q}^{\SSN})$.  This provides evidence for Conjecture \ref{mainConj} below. 
\end{remark}

\section{Cuspidal cohomology}
\label{ccsec}
\subsection{Generators of $\fg^{\SP,G,\zeta}_{\Pi_Q,\theta}$}
\label{genSec}
In \S \ref{RLAsec} we constructed a lift of $\fg^{G,\zeta}_{\Pi_Q,\theta}$ to a Lie algebra object in the category $\MHM^G(\Msp^{\zeta\sst}_{\theta}(\Pi_Q))$.  In this section we will use this lift to produce canonical subspaces of generators for $\fg^{\SP,G,\zeta}_{\Pi_Q,\theta}$, which for reasons that will become clear we call ``cuspidal cohomology.''
\subsubsection{}
Fix a dimension vector $\dd$.  Recall that $\Msp^{G,\zeta\ssst}_{\dd}(\WT{Q})$ parameterises those $\CC\WT{Q}$-modules $\rho$ for which the underlying $\CC\ol{Q}$-module of $\rho$ is $\zeta$-semistable.  Let $U\subset \Msp^{G,\zeta\ssst}_{\dd}(\WT{Q})$ be the substack parameterising those modules for which the underlying $\CC\ol{Q}$-module is $\zeta$-stable, and define
\[
\mathfrak{U}=(\JH^{\circ})^{-1}(U)\subset \Mst^{G,\zeta\ssst}_{\dd}(\WT{Q})
\]
to be the open substack parameterising such modules.  Note that $U\subset \Msp^{G,\zeta\stab}_{\dd}(\WT{Q})$, so the morphism $\mathfrak{U}\rightarrow U$ is a $\CC^*$-gerbe.  Define
\[
V=U\cap \Msp_{\dd}^{G,\zeta\ssst}(\Jac(\WT{Q},\WT{W}));\quad\quad
\mathfrak{V}=(\JH^{\circ})^{-1}(V).
\]
Since $\sum_{i\in Q_0}\omega_i\in \Jac(\WT{Q},\WT{W})$ is central, it acts via scalar multiplication on any module represented by a point in $\mathfrak{V}$.   Arguing as in the proof of Proposition \ref{NQVDR} we deduce that
\begin{equation}
\label{Vdesc}
V= \left(\Msp_{\dd}^{\zeta\stab}(\Pi_Q)\times\AAA{1}\right)/G\subset \Msp_{\dd}^{G,\zeta\ssst}(\WT{Q}).
\end{equation}
  The variety $\Msp_{\dd}^{\zeta\stab}(\Pi_Q)$ is smooth, and so both $V$ and $\mathfrak{V}$ are smooth stacks.  We denote by $\overline{V}$ the closure of $V$ in $\Msp^{G,\zeta\ssst}(\WT{Q})$.  Similarly, arguing as in the proof of \eqref{rDIM} we deduce that $\phim{\TTTr(\WT{W})}\nnIC_{\mathfrak{U}}\cong \nnIC_{\mathfrak{V}}$, and thus
\begin{equation}
\label{Vin}
(U\hookrightarrow \Msp^{G,\zeta\sst}_{\dd}(\WT{Q}))^*\JH^G_*\phim{\TTTr(\WT{W})}\nnIC_{\Mst^{G,\zeta\sst}_{\dd}(\WT{Q})}\cong \nnIC_{V}\otimes\HO(\B\CC^*,\QQ)_{\vir}.
\end{equation}
By Corollary \ref{relPurity} the object  $\JH^G_*\phim{\TTTr(\WT{W})}\nnIC_{\Mst^{G,\zeta\sst}_{\dd}(\WT{Q})}$ is pure, and so in particular its first cohomology is a semisimple monodromic mixed Hodge module.  From \eqref{Vin} and the inclusion $\LLL^{1/2}\hookrightarrow \HO(\B\CC^*,\QQ)_{\vir}$ we deduce that there is a canonical morphism
\begin{equation}
\label{PreV}
\Gamma\colon\nnIC_{\ol{V}}\otimes\LLL^{1/2}\rightarrow \JH^G_*\phim{\TTTr(\WT{W})}\nnIC_{\Mst^{G,\zeta\sst}_{\dd}(\WT{Q})}
\end{equation}
for which there is a left inverse $\alpha$, by purity of the target.  By \eqref{Vdesc} we have $\ol{V}=\left(\Msp_{\dd}^{\zeta\sst}(\Pi_Q)\times\AAA{1}\right)/G$,
and so $r'_*\nnIC_{\ol{V}}\otimes\LLL^{1/2}\cong \nnIC_{\Msp_{\dd}^{G,\zeta\sst}(\Pi_Q)}$.

Set 
\begin{align}
\nonumber
\Cusp_{\Pi_Q,\dd}^{\SP,G,\zeta}&\coloneqq \begin{cases} \varpi'_{\red,*}\varpi'^!_{\red}\nnIC_{\Msp^{G,\zeta\sst}_{\dd}(\Pi_Q)}& \textrm{if }\Msp^{\zeta\stab}_{\dd}(\Pi_Q)\neq \emptyset\\
0&\textrm{otherwise}\end{cases}\\
\label{fcdef}
\fc^{\SP,G,\zeta}_{\Pi_Q,\dd}&\coloneqq \HO\!\Cusp_{\Pi_Q,\dd}^{\SP,G,\zeta}.
\end{align}
Then by base change there is an isomorphism
\begin{align*}
\fc^{\SP,G,\zeta}_{\Pi_Q,\dd}\cong &\HO\!\left(\Msp^{G,\zeta\sst}_{\dd}(\WT{Q}),\varpi'_*\varpi'^!\nnIC_{\ol{V}}\otimes\LLL^{1/2}\right).
\end{align*}

Applying $\HO\varpi'_*\varpi'^!$ to \eqref{PreV} we obtain a morphism
\[
\beta\colon\fc^{\SP,G,\zeta}_{\Pi_Q,\dd}\hookrightarrow\fg^{\WT{\SP},G,\zeta}_{\WT{Q},\WT{W},\dd}=\mathfrak{P}_{1}\!\HCoha_{\WT{Q},\WT{W},\dd}^{\WT{\SP},G,\zeta}
\]
which is an injection, since it has a left inverse (e.g. $\HO\varpi_*\varpi^!\bm{\tau}^{\leq 1}\alpha$).
\sssct
We can now prove (a generalisation of) Theorem \ref{mainThmB}.

\begin{theorem}
\label{thmBdone}
Let $\zeta\in\QQ^{Q_0}$ be a stability condition, and let $\theta\in\QQ$ be a slope.  For each $\dd\in\dvst$ there is a canonical decomposition
\[
\fg_{\Pi_Q,\dd}^{\SP,G,\zeta}\cong \fc^{\SP,G,\zeta}_{\Pi_Q,\dd}\oplus \mathfrak{l}
\]
for some mixed Hodge structure $\mathfrak{l}$, with $\fc^{\SP,G,\zeta}_{\Pi_Q,\dd}$ as in \eqref{fcdef}, such that for $\dd',\dd''\in \dvst$ such that $\dd'+\dd''=\dd$, the morphism $\fg_{\Pi_Q,\dd'}^{\SP,G,\zeta}\otimes\fg_{\Pi_Q,\dd''}^{\SP,G,\zeta}\xrightarrow{[\cdot,\cdot]}\fg_{\Pi_Q,\dd}^{\SP,G,\zeta}$ factors through the inclusion of $\mathfrak{l}$.
\end{theorem}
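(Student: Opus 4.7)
The plan is to lift the desired decomposition to the level of pure mixed Hodge modules on $\Msp_\dd^{G,\zeta\sst}(\ol{Q})$, exploiting the purity of the 2d BPS sheaf $\BPSh_{\Pi_Q,\dd}^{G,\zeta}$ guaranteed by Theorem \ref{purityThm}, and then to recover the mixed-Hodge-structure statement by applying $\HO\varpi'_{\red,*}\varpi'^!_{\red}$ at the end.

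First I would construct a canonical embedding $\nnIC_{\Msp_\dd^{G,\zeta\sst}(\Pi_Q)}\hookrightarrow \BPSh_{\Pi_Q,\dd}^{G,\zeta}$ by descending the morphism $\Gamma$ of \eqref{PreV} along $r'$, using the identification $\ol{V}\cong (\Msp_\dd^{\zeta\sst}(\Pi_Q)\times\AAA{1})/G$ provided by \eqref{Vdesc} and the 2d--3d comparison isomorphism \eqref{rbps2}. Purity then forces this inclusion to split as an isotypic summand, yielding
\[
\BPSh_{\Pi_Q,\dd}^{G,\zeta}\cong \nnIC_{\Msp_\dd^{G,\zeta\sst}(\Pi_Q)}\oplus\mathcal{L}
\]
in $\MHM(\Msp_\dd^{G,\zeta\sst}(\ol{Q}))$. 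Applying $\HO\varpi'_{\red,*}\varpi'^!_{\red}$ to this splitting will give the claimed decomposition $\fg_{\Pi_Q,\dd}^{\SP,G,\zeta} \cong \fc_{\Pi_Q,\dd}^{\SP,G,\zeta} \oplus \mathfrak{l}$ with $\mathfrak{l}\coloneqq \HO\varpi'_{\red,*}\varpi'^!_{\red}\mathcal{L}$.

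For the factoring claim, the Lie bracket on $\fg_{\Pi_Q,\theta}^{\SP,G,\zeta}$ is by construction obtained from the Lie bracket \eqref{relLA2} on the relative BPS sheaf by applying $\HO\varpi'_{\red,*}\varpi'^!_{\red}$, so it suffices to check that the composition
\[
\BPSh_{\Pi_Q,\dd'}^{G,\zeta}\boxtimes_{\oplus_{\red}^G}\BPSh_{\Pi_Q,\dd''}^{G,\zeta}\xrightarrow{[\cdot,\cdot]}\BPSh_{\Pi_Q,\dd}^{G,\zeta}\twoheadrightarrow \nnIC_{\Msp_\dd^{G,\zeta\sst}(\Pi_Q)}
\]
vanishes in $\DlMHM(\Msp_\dd^{G,\zeta\sst}(\ol{Q}))$. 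The source, being a pushforward along the direct sum morphism, is supported in the closed image of $\oplus_{\red}^G$; for $\dd',\dd''\in\dvst$ nonzero with $\dd'+\dd''=\dd$ every polystable module in this image is a proper direct sum of two slope-$\theta$ modules, and so cannot be $\zeta$-stable. Hence the source is supported in the complement of the open stable locus $\Msp_\dd^{\zeta\stab}(\Pi_Q)$, whereas the target is the intermediate extension of a local system from precisely this open stratum and therefore admits no nonzero morphism from any object supported in its complement. The composition must therefore vanish, forcing the bracket to factor through $\mathfrak{l}$.

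The main obstacle will be the first step: one needs to track carefully the half Tate twists by $\LLL^{1/2}$, and to manage the passage between the 3d critical description on $\Msp^{G,\zeta\sst}(\WT{Q})$ and the 2d description on $\Msp^{G,\zeta\sst}(\ol{Q})$, when descending $\Gamma$ to obtain the canonical $\nnIC$-summand. After that, the splitting is automatic from purity and semisimplicity of pure mixed Hodge modules, and the vanishing of the bracket on the cuspidal summand is a formal consequence of the support disjointness for $\oplus_{\red}^G$ combined with the universal property of intermediate extensions.
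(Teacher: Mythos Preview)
Your proposal is correct and follows essentially the same route as the paper: embed $\nnIC_{\Msp_\dd^{G,\zeta\sst}(\Pi_Q)}$ via $r'_*\Gamma$, split it off by purity, and kill the projection of the bracket onto it by noting that for nonzero $\dd',\dd''$ the image of $\oplus_{\red}^G$ lies in the strictly semistable boundary. The only cosmetic difference is that the paper works inside $\Ho^{G,0}\!\left(\rCoha_{\Pi_Q,\dd}^{G,\zeta}\right)$ with the CoHA multiplication rather than inside $\BPSh_{\Pi_Q,\dd}^{G,\zeta}$ with the Lie bracket, and it makes the multiplicity-one check over the stable locus explicit (which you are implicitly invoking when you call the embedding canonical).
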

\begin{proof}
We assume that $\Msp^{\zeta\stab}_{\dd}(\Pi_Q)\neq \emptyset$, as otherwise $\fc^{\SP,G,\zeta}_{\Pi_Q,\dd}=0$ and the statement is trivial.  Recall that by \eqref{2DIR} there is an isomorphism
\[
r'_*\JH^{\circ}_*\phim{\TTTr(\WT{W})}\nnIC_{\Mst^{G,\zeta\ssst}_{\dd}(\WT{Q})}\cong \JH^G_{\red,*}\iota_*\iota^!\ul{\QQ}_{\Mst^{G,\zeta\sst}_{\dd}(\ol{Q})}\otimes\LLL^{\chi_{\WT{Q}}(\dd,\dd)/2}=\rCoha_{\Pi_Q,\dd}^{G,\zeta}
\]
and by Corollary \ref{relPurity} these are pure complexes of monodromic mixed Hodge modules, so there is a decomposition
\begin{equation}
\label{2dDecomp}
\rCoha_{\Pi_Q,\dd}^{G,\zeta}\cong\bigoplus_{r\in R_n}\mathcal{F}_r[n]
\end{equation}
where each $\mathcal{F}_r$ is a simple mixed Hodge module, and each $R_n$ is some indexing set.

The stack $\Mst^{G,\zeta\stab}_{\dd}(\Pi_Q)$ is smooth, of codimension $\dd\cdot\dd-1$ inside $\Mst^{G,\zeta\stab}_{\dd}(\ol{Q})$, and so there is an isomorphism
\[
(\Mst^{G,\zeta\stab}_{\dd}(\Pi_Q)\hookrightarrow \Mst^{G,\zeta\sst}_{\dd}(\Pi_Q))^*\iota_*\iota^!\ul{\QQ}_{\Mst^{G,\zeta\sst}_{\dd}(\ol{Q})}\cong\ul{\QQ}_{\Mst^{G,\zeta\stab}_{\dd}(\Pi_Q)}\otimes\LLL^{\dd\cdot\dd-1}
\]
and thus an isomorphism
\begin{align*}
&(\Msp^{G,\zeta\stab}_{\dd}(\Pi_Q)\hookrightarrow \Msp^{G,\zeta\sst}_{\dd}(\Pi_Q))^*\rCoha_{\Pi_Q,\dd}^{G,\zeta}\\&\cong(\Mst^{G,\zeta\stab}_{\dd}(\Pi_Q)\rightarrow \Msp^{G,\zeta\stab}_{\dd}(\Pi_Q))_*\ul{\QQ}_{\Mst^{G,\zeta\stab}_{\dd}(\Pi_Q)}\otimes\LLL^{\chi_{\WT{Q}}(\dd,\dd)/2+\dd\cdot\dd-1}.
\end{align*}
Noting that $\Msp^{G,\zeta\stab}_{\dd}(\Pi_Q)$ is open inside $\Msp^{G,\zeta\sst}_{\dd}(\Pi_Q)$, of dimension $\chi_Q(\dd,\dd)-1=\chi_{\WT{Q}}(\dd,\dd)/2+\dd\cdot\dd-1$, we deduce that in the decomposition \eqref{2dDecomp}, in cohomological degree zero there is exactly one copy of the simple object $\nnIC_{\Msp^{G,\zeta\sst}_{\dd}(\Pi_Q)}$, and furthermore the morphism 
\[
r'_*\Gamma\colon\nnIC_{\Msp^{G,\zeta\sst}_{\dd}(\Pi_Q)}\rightarrow \rCoha_{\Pi_Q,\dd}^{G,\zeta}
\]
is the inclusion of this object.  Writing 
\begin{align*}
\Ho^{G,0}\!\left(\rCoha_{\Pi_Q,\dd}^{G,\zeta}\right)=&\nnIC_{\Msp^{G,\zeta\sst}_{\dd}(\Pi_Q)}\oplus \mathcal{G}\\
\mathcal{G}=&\bigoplus_{r\in R_0'}\mathcal{F}_r
\end{align*}
for $R'_0\subset R_0$, we claim that for all $\dd',\dd''\in\dvst$ with $\dd'\neq 0\neq \dd''$ and $\dd'+\dd''=\dd$ the multiplication
\begin{equation}
\label{alm}
\Ho^{G,0}\!\left(\rCoha^{G,\zeta}_{\Pi_Q,\dd'}\right)\boxtimes_{\oplus^G}\Ho^{G,0}\!\left(\rCoha^{G,\zeta}_{\Pi_Q,\dd''}\right)\rightarrow \Ho^{G,0}\!\left(\rCoha^{G,\zeta}_{\Pi_Q,\dd}\right)
\end{equation}
factors through the inclusion of $\mathcal{G}$.  This follows for support reasons: by our assumptions on $\dd',\dd''$ the supports of the semisimple object on the left hand side of \eqref{alm} are all in the boundary $\Msp^{G,\zeta\sst}_{\dd}(\Pi_Q)\setminus \Msp^{G,\zeta\stab}_{\dd}(\Pi_Q)$.  Applying $\HO\varpi'_{\red,*}\varpi'^!_{\red}$, there is a decomposition
\[
\lP_{0}\!\HO\!\Coha^{\Sp,G,,\zeta}_{\Pi_Q,\dd}\cong \fc^{\SP,G,\zeta}_{\Pi_Q,\dd}\oplus\HO\!\varpi'_{\red,*}\varpi'^!_{\red}\mathcal{G}
\]
and the multiplication 
\[
\lP_{0}\!\HO\!\!\Coha^{\Sp,G,,\zeta}_{\Pi_Q,\dd'}\otimes_{\HG}\lP_{0}\!\HO\!\!\Coha^{\Sp,G,,\zeta}_{\Pi_Q,\dd''}\rightarrow \lP_{0}\!\HO\!\!\Coha^{\Sp,G,,\zeta}_{\Pi_Q,\dd}
\]
factors through the inclusion of $\HO\!\varpi'_{\red,*}\varpi'^!_{\red}\mathcal{G}\eqqcolon\mathfrak{l}$, so that the commutator Lie bracket also factors through the inclusion of $\mathfrak{l}$.
\end{proof}

\subsection{The BPS Lie algebra $\mathfrak{g}_{\Pi_Q}$ for $Q$ affine}\label{AffineSec}
Let $Q$ be a quiver for which the underlying graph is an affine Dynkin diagram of extended ADE type.  We can use the fact that $\fg^G_{\Pi_Q}$ lifts to a Lie algebra object $\BPSh^G_{\Pi_Q}$ in $\MHM^G(\Msp(\Pi_Q))$ to calculate it completely.  

We denote by $q_{\dd}\colon \Msp^{\zeta\sst}_{\dd}(\Pi_Q)\rightarrow \Msp_{\dd}(\Pi_Q)$ the affinization map.  Let $\delta\in\dvs$ be the unique primitive imaginary simple root of the quiver $Q$.  Let $H\subset \Sl_2(\CC)$ be the Kleinian group corresponding to the underlying (finite) Dynkin diagram of $Q$ (obtained by removing a single vertex) via the McKay correspondence.  Then (see \cite{Kr89,CaSl98}) for a generic stability condition $\zeta\in\QQ^{Q_0}$ there is a commutative diagram
\[
\xymatrix{
X\ar[d]^p\ar[r]^-{\cong}&\Msp_{\delta}^{\zeta\sst}(\Pi_Q)\ar[d]^{q_{\delta}}\\
Y\ar[r]^-{\cong}&\Msp_{\delta}(\Pi_Q)
}
\]
where $p\colon X\rightarrow Y$ is the minimal resolution of the singularity $Y=\AAA{2}/H$.  Moreover by \cite{KV00} there is a derived equivalence
\[
\Psi\colon \Db(\Coh(X))\rightarrow \Db(\Pi_Q\lmod)
\]
restricting to an equivalence between complexes of modules with nilpotent cohomology sheaves and complexes of coherent sheaves with set-theoretic support on the exceptional locus of $p$.  For $\dd\in\dvs$ we denote by $0_{\dd}\in\Msp_{\dd}(\Pi_Q)$ the point corresponding to the unique semisimple nilpotent module of dimension vector $\dd$.

Set $r=\lvert Q_0\lvert -1$.  Via the explicit description of the representations of $KQ$ for $Q$ an affine quiver, we have the following identities
\begin{equation}
\label{affineKac}
\kac_{Q,\dd}(t)=\begin{cases} 1 &\textrm{if }\dd \textrm{ is a positive real root of }\mathfrak{g}_Q\\
t+r&\textrm{if } \dd\in\mathbb{Z}_{\geq 1}\cdot\delta\\
0&\textrm{otherwise.}\end{cases}
\end{equation}

\begin{proposition}
\label{affineProp}
There are isomorphisms
\begin{equation}
\label{affineBPS}
\BPSh_{\Pi_Q,\dd}\cong\begin{cases} \ul{\QQ}_{0_{\dd}}&\textrm{if }\dd \textrm{ is a positive real root of }\mathfrak{g}_Q\\
\Delta_{n,*}q_{\delta,*}\nIC_{\Msp_{\delta}^{\zeta\sst}(\Pi_Q)}&\textrm{if } \dd=n\cdot\delta\\
0&\textrm{otherwise,}\end{cases}
\end{equation}
where $\Delta_n\colon \Msp_{\delta}(\Pi_Q)\rightarrow \Msp_{n\cdot\delta}(\Pi_Q)$ is the embedding of the small diagonal.
\end{proposition}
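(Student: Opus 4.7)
The plan is to combine Proposition \ref{TodaProp}, purity (Theorem \ref{purityThm}), and the affine Kac polynomial calculation \eqref{affineKac} (via \eqref{Kaccha}), handling the three cases (positive real root, positive multiple of $\delta$, other) separately, and invoking Theorem \ref{KMLA} to pin down the support in the real-root case.

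First, combining Proposition \ref{TodaProp} applied to $(\WT{Q},\WT{W})$ with Theorem/Definition \ref{2dBPSdef} produces a natural isomorphism
\[
\BPSh_{\Pi_Q,\dd}\cong q_{\dd,*}\BPSh^{\zeta}_{\Pi_Q,\dd}
\]
for every $\dd$. For $\dd=\delta$ this settles the $n=1$ instance of the $n\delta$-case: in the Nakajima chamber $\Msp^{\zeta\sst}_\delta(\Pi_Q)=X$ is smooth and consists entirely of $\zeta$-stable modules, so the formula defining $\BPSh^\zeta_{\Pi_Q,\delta}$ gives $\BPSh^\zeta_{\Pi_Q,\delta}\cong\nIC_X$, and pushforward along $q_\delta$ yields the claim.

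For $\dd$ a positive real root, \eqref{affineKac} gives $\kac_{Q,\dd}(t)=1$; combined with \eqref{Kaccha} and purity this forces $\fg_{\Pi_Q,\dd}$ to be one-dimensional, concentrated in cohomological degree $0$ and weight $0$. Semisimplicity of pure mixed Hodge modules then implies $\BPSh_{\Pi_Q,\dd}\cong\ul{\QQ}_x$ for some closed point $x$. To pin down $x=0_\dd$ I would invoke Theorem \ref{KMLA}: each generator $\alpha_i\in\fg_{\Pi_Q,1_i}$ lifts to a mixed Hodge module class supported at the unique closed point $0_{1_i}\in\Msp_{1_i}(\ol Q)$, and the $\boxtimes_{\oplus_{\red}}$-product sends pairs of origin-supported classes to origin-supported classes; since $\HO^0(\fg_{\Pi_Q,\dd})$ is spanned by iterated Lie brackets of the $\alpha_i$ (by the isomorphism $\fn^-_Q\cong\HO^0(\fg_{\Pi_Q})$ of Theorem \ref{KMLA}), the entire class is supported at $0_\dd$. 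For $\dd$ neither a positive real root nor a positive multiple of $\delta$, \eqref{affineKac} gives $\kac_{Q,\dd}(t)=0$ and the same purity-plus-semisimplicity argument yields $\BPSh_{\Pi_Q,\dd}=0$.

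The genuinely delicate case is $\dd=n\delta$ with $n\geq 2$, where $\Msp^{\zeta\stab}_{n\delta}(\Pi_Q)=\emptyset$ for every King stability $\zeta$ (any $n\delta$-dimensional $\Pi_Q$-module admits a proper submodule of dimension a smaller multiple of $\delta$), so the na\"ive formula for $\BPSh^\zeta_{\Pi_Q,n\delta}$ does not immediately give the answer. Numerically, \eqref{affineKac} and \eqref{Kaccha} give $\chi_t(\fg_{\Pi_Q,n\delta})=t^{-1}+d$, matching $\chi_t(\HO(X,\nIC_X))$, so both sides of the claimed isomorphism have the same hypercohomology. To upgrade this to an isomorphism of mixed Hodge modules, I would adapt the strategy of \cite[Thm.5.1]{preproj} for the isotropic (Jordan) vertex: use the iterated CoHA multiplication $\star$ applied to $\BPSh_{\Pi_Q,\delta}$ inside $\rCoha_{\Pi_Q,n\delta}$ together with the relative PBW isomorphism \eqref{2dInt} to build a canonical morphism $\Delta_{n,*}q_{\delta,*}\nIC_X\to\BPSh_{\Pi_Q,n\delta}$. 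The main obstacle is showing this morphism is an isomorphism; the plan is to stratify $\Msp_{n\delta}(\Pi_Q)\cong\Sym^n(Y)$ by partitions $\lambda\vdash n$ recording the decomposition of a polystable $n\delta$-module, use \eqref{2dInt} to express the restriction of $\rCoha_{\Pi_Q,n\delta}$ to each non-diagonal stratum as an iterated $\boxtimes_{\oplus_{\red}}$-product of $\BPSh_{\Pi_Q,m\delta}$'s with $m<n$, and induct on $n$ to rule out any cuspidal summand of $\BPSh_{\Pi_Q,n\delta}$ supported outside the small diagonal $\Delta_n(Y)$.
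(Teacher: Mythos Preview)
Your proposal diverges from the paper's argument and has genuine gaps.

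For $\dd$ not a multiple of $\delta$, you claim that semisimplicity plus one-dimensional (resp.\ zero) hypercohomology forces $\BPSh_{\Pi_Q,\dd}\cong\ul\QQ_x$ (resp.\ $0$). This inference is not valid without further input: a nonzero simple pure mixed Hodge module on an affine variety can have vanishing hypercohomology, so neither conclusion follows from hypercohomology alone. Your Theorem~\ref{KMLA} argument only shows that the \emph{image} of the iterated Lie bracket map is supported at $0_\dd$; promoting this to a statement about all of $\BPSh_{\Pi_Q,\dd}$ again requires knowing that every nonzero summand contributes to hypercohomology. The missing ingredient is either the contracting $\CC^*$-action on $\Msp_\dd(\ol Q)$ (under which $\BPSh_{\Pi_Q,\dd}$ is equivariant), or, as the paper does, a direct geometric argument via the derived equivalence $\Db(\Coh(X))\simeq\Db(\Pi_Q\lmod)$: any compactly supported complex on $X$ not entirely supported on the exceptional locus splits off a skyscraper, whose image under $\Psi$ has dimension vector a multiple of $\delta$; hence for $\dd\notin\ZZ_{\geq 1}\cdot\delta$ every $\zeta$-semistable $\Pi_Q$-module of dimension $\dd$ is nilpotent, $q_{\dd,*}\BPSh^\zeta_{\Pi_Q,\dd}$ is supported at $0_\dd$, and purity plus the Kac polynomial determines it.

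For $\dd=n\delta$ with $n\geq 2$, your construction of the morphism is problematic: under the PBW isomorphism \eqref{2dInt} the $n$-fold CoHA product of $\BPSh_{\Pi_Q,\delta}$ lands in the $\Sym^n_{\oplus_{\red}}$-summand, not in $\BPSh_{\Pi_Q,n\delta}$, so it is unclear what map you are building. Your stratification plan also does not address extra simple summands supported on the small diagonal itself. The paper's argument is structurally different and rests on two ingredients you do not invoke: the identification $\Msp^{\zeta\sst}_{n\delta}(\Pi_Q)\cong\Sym^n(X)$ together with a local check on charts $U_i\cong\AAA{2}$ of the simply connected surface $X$, reducing to the Jordan-quiver computation \eqref{commBPS}; and the support lemma of \cite{preproj}, which directly forces $\BPSh^\zeta_{\Pi_Q,n\delta}$ onto the small diagonal.
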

\begin{proof}
By Proposition \ref{TodaProp} there is an isomorphism
\[
\BPSh_{\Pi_Q,\dd}\cong q_{\dd,*}\BPSh^{\zeta}_{\Pi_Q,\dd}.
\]
On the other hand, any complex of compactly supported coherent sheaves $\mathscr{F}$ on $X$ that is not entirely supported on the exceptional locus admits a direct sum decomposition $\mathscr{F}'\oplus\mathscr{F}''$ where $\mathscr{F}''$ is supported at a single point, so that $\Psi(\mathscr{F})$ admits a direct summand $N$ with dimension vector a multiple of $\delta$.  It follows that all points of $\Msp_{\dd}^{\zeta\sst}(\Pi_Q)$ correspond to nilpotent modules if $\dd$ is not a multiple of $\delta$, and so $q_{\dd,*}\BPSh^{\zeta}_{\Pi_Q,\dd}$ is supported at the origin.  Since by Theorem \ref{purityThm} $\BPSh_{\Pi_Q,\dd}$ is pure, and supported at a single point, it is determined by its hypercohomology $\fg_{\Pi_Q,\dd}$.  This hypercohomology is pure, of Tate type, with dimension given by the Kac polynomial, by the main result of \cite{preproj}.  This deals with the first and last cases of \eqref{affineBPS}.

For the second case, we consider the commutative diagram
\[
\xymatrix{
\CCoh_n(X)\ar[r]^h_{\cong}\ar[d]^g&\Mst_{n\cdot\delta}^{\zeta\sst}(\Pi_Q)\ar[d]^{\JH_{\red}}
\\
\Sym_n(X)\ar[r]^l_{\cong}&\Msp_{n\cdot\delta}^{\zeta\sst}(\Pi_Q).
}
\]
By Theorem \ref{purityThm}, $g_*h^*\iota^!\ul{\QQ}_{\Mst_{n\cdot \delta}^{\zeta\sst}(\ol{Q})}$
is pure, and we claim that it contains a single copy of $\Delta_{X,n,*}\nIC_{X}$.  Since $X$ is simply connected, we can cover $X$ by charts $U_i$ isomorphic to $\AAA{2}$ and check the claim on each of the open subvarieties $U_i$, at which point the claim follows by \eqref{2dcBPS}.  Finally, the BPS sheaf is supported on the small diagonal by the support lemma of \cite{preproj}, so $\Delta_{X,n,*}\nIC_X\cong l^*\BPSh_{\Pi_Q,n\cdot\delta}^{\zeta}$.  Then the second case follows by Proposition \ref{TodaProp}.
\end{proof}

Note that there is an isomorphism 
\begin{equation}
\label{affroo}
q_{\delta,*}\nIC_{\Msp_{\delta}^{\zeta\sst}(\Pi_Q)}\cong \nIC_{\Msp_{\delta}(\Pi_Q)}\oplus \ul{\QQ}_{0_{\delta}}^{\oplus d}
\end{equation}
since there are $r$ copies of $\mathbb{P}^1$ in the exceptional fibre of $q_{\delta}$.  We deduce from \eqref{affineKac} and \eqref{Kaccha} that $\HO^{-2}\fg_{\Pi_Q,n\cdot \delta}\cong\QQ$ is obtained by applying $\HO\Delta_{n,*}$ to the first summand of \eqref{affroo}.
\begin{proposition}
There is an isomorphism of Lie algebras
\begin{equation}
\label{aff}
\mathfrak{g}_{\Pi_Q}\cong \mathfrak{n}^-_{Q'}\oplus s\QQ[s]
\end{equation}
where $Q'$ is the real subquiver of $Q$ (i.e. it is equal to $Q$ unless $Q$ is the Jordan quiver, in which case it is empty) and $s\QQ[s]$ is given the trivial Lie bracket.  The monomial $s^n$ lives in $\dvs$-degree $n\cdot\delta$, and in cohomological degree $-2$.
\end{proposition}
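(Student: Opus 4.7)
The plan is to extract the Lie algebra structure of $\fg_{\Pi_Q}$ from the BPS sheaf computation of Proposition \ref{affineProp}, splitting $\fg_{\Pi_Q}$ by cohomological degree and exploiting degree considerations to force all non-trivial brackets to land in the degree-zero piece. First, I would apply $\HO$ to each of the cases of \eqref{affineBPS} to describe $\fg_{\Pi_Q,\dd}$ as a cohomologically graded vector space. For $\dd$ a positive real root, $\BPSh_{\Pi_Q,\dd}$ is the skyscraper $\ul{\QQ}_{0_{\dd}}$, so $\fg_{\Pi_Q,\dd}$ is one-dimensional and concentrated in cohomological degree $0$. For $\dd=n\delta$, the isomorphism $\BPSh_{\Pi_Q,n\delta}\cong\Delta_{n,*}q_{\delta,*}\nIC_{\Msp_{\delta}^{\zeta\sst}(\Pi_Q)}$ together with $\Msp_{\delta}^{\zeta\sst}(\Pi_Q)\cong X$ (the minimal resolution, a bouquet of $d$ two-spheres) gives $\fg_{\Pi_Q,n\delta}\cong \HO(X,\nIC_X)$, which is $d$-dimensional in cohomological degree $0$ and one-dimensional in cohomological degree $-2$ (the degree $-2$ part coming from $\HO^0(X,\QQ)$ via the $\nIC_{\Msp_{\delta}(\Pi_Q)}$ summand in \eqref{affroo}). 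For any other $\dd$, $\fg_{\Pi_Q,\dd}=0$. This gives the underlying graded vector space
\[
\fg_{\Pi_Q}=\HO^0(\fg_{\Pi_Q})\oplus\bigoplus_{n\geq 1}\HO^{-2}(\fg_{\Pi_Q,n\delta}),
\]
with the second summand one-dimensional in each $\dvs$-degree $n\delta$.

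Next I would identify $\HO^0(\fg_{\Pi_Q})$ as a Lie subalgebra with $\fn^-_{Q'}$ via Theorem \ref{KMLA}. In the non-Jordan case, $Q'=Q$, and $\fn^-_{Q'}$ is the negative part of the affine Kac--Moody algebra, whose real root spaces are one-dimensional and whose imaginary root spaces are $d$-dimensional, matching the count above. In the Jordan case, $Q'$ is empty, $\fn^-_{Q'}=0$, and $d=0$, so $\HO^0(\fg_{\Pi_Q})=0$ as well. In either case, the natural identification from Theorem \ref{KMLA} is a Lie algebra isomorphism, accounting for the first summand of \eqref{aff}.

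Finally, I would verify that $\bigoplus_{n\geq 1}\HO^{-2}(\fg_{\Pi_Q,n\delta})\cong s\QQ[s]$ is a central abelian ideal. All brackets involving a degree-$(-2)$ element vanish on cohomological grounds. A bracket $[\HO^0(\fg_{\Pi_Q,\alpha}),\HO^{-2}(\fg_{\Pi_Q,n\delta})]$ with $\alpha$ a positive real root and $n\geq 1$ lands in $\HO^{-2}(\fg_{\Pi_Q,\alpha+n\delta})$; since for untwisted affine type adding $n\delta$ to a positive real root yields another positive real root, the first step shows this target vanishes. Similarly a bracket $[\HO^{-2}(\fg_{\Pi_Q,m\delta}),\HO^{-2}(\fg_{\Pi_Q,n\delta})]$ lands in $\HO^{-4}(\fg_{\Pi_Q,(m+n)\delta})$, which is zero because $\fg_{\Pi_Q,(m+n)\delta}$ is supported in cohomological degrees $\{-2,0\}$. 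Thus the degree-$(-2)$ part is central, all non-trivial brackets live inside $\HO^0(\fg_{\Pi_Q})$, and the decomposition \eqref{aff} is an isomorphism of Lie algebras. The only non-routine point in this argument is the combinatorial fact that $\alpha+n\delta$ is again a real root (or not a root) for every positive real root $\alpha$ of the affine root system, which I expect to be the sole obstacle and which is standard for untwisted affine Kac--Moody algebras.
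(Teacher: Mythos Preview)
Your argument has a genuine gap in the centrality check. You verify that $[\HO^{-2}(\fg_{\Pi_Q,n\delta}),\HO^{-2}(\fg_{\Pi_Q,m\delta})]$ lands in $\HO^{-4}=0$, and that $[\HO^0(\fg_{\Pi_Q,\alpha}),\HO^{-2}(\fg_{\Pi_Q,n\delta})]$ vanishes when $\alpha$ is a \emph{real} root (since then $\alpha+n\delta$ is again real and $\HO^{-2}(\fg_{\Pi_Q,\alpha+n\delta})=0$). But you omit the case where $\alpha=m\delta$ is an \emph{imaginary} root. Here $\HO^0(\fg_{\Pi_Q,m\delta})$ is the $d$-dimensional piece, and the bracket $[\HO^0(\fg_{\Pi_Q,m\delta}),\HO^{-2}(\fg_{\Pi_Q,n\delta})]$ lands in $\HO^{-2}(\fg_{\Pi_Q,(m+n)\delta})\cong\QQ$, which is \emph{not} zero. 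So the cohomological-degree argument alone cannot force this bracket to vanish.

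This missing case is exactly what the paper's proof addresses, and it requires the sheaf-level lift of the Lie bracket. The element $s^n$ arises from the summand $\Delta_{n,*}\nIC_{\Msp_{\delta}(\Pi_Q)}$ of $\BPSh_{\Pi_Q,n\delta}$ (the summand with two-dimensional support), while the degree-zero classes in $\fg_{\Pi_Q,m\delta}$ arise from the skyscraper summands $\ul{\QQ}_{0_{m\delta}}^{\oplus d}$. The relevant bracket is $\HO$ applied to a morphism
\[
\Delta_{n,*}\nIC_{\Msp_{\delta}(\Pi_Q)}\boxtimes_{\oplus_{\red}}\ul{\QQ}_{0_{m\delta}}\longrightarrow \Delta_{(m+n),*}\nIC_{\Msp_{\delta}(\Pi_Q)}\oplus\ul{\QQ}_{0_{(m+n)\delta}}^{\oplus d}
\]
of semisimple mixed Hodge modules. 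The source is simple with two-dimensional support equal to the image of $\Msp_{\delta}(\Pi_Q)$ under $x\mapsto n\cdot x\oplus 0_{m\delta}$, whereas the simple summands of the target have support either the small diagonal $\Delta_{m+n}(\Msp_{\delta}(\Pi_Q))$ or the single point $0_{(m+n)\delta}$. Since these supports differ, the morphism is zero. Without this support argument (or something equivalent), your proof does not establish centrality of $s\QQ[s]$.
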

\begin{proof}
By \eqref{affineKac} and \eqref{Kaccha}, the graded dimensions of the two sides of \eqref{aff} match.  Furthermore, by Theorem \ref{KMLA} there is an isomorphism of Lie algebras between the zeroth cohomologically graded pieces of the RHS and LHS of \eqref{aff}.  So it is sufficient to prove that $\HO^{-2}(\mathfrak{g}_{\Pi_Q})$ is central, which (for cohomological degree reasons) amounts to showing that $[s^n,b]=0\in\fg_{\Pi_Q,\beta}$ for $b\in \HO^0(\fg_{\Pi_Q,\alpha})$ where $\alpha=n'\cdot\delta$ and $\beta=n\cdot\delta+\alpha$.  On the other hand, lifting the Lie bracket to $\MHM(\Msp(\Pi_Q))$, the morphism
\[
\QQ\otimes \QQ\xrightarrow{\cdot s^n\otimes\cdot b}\fg_{\Pi_Q,n\delta}\otimes \fg_{\Pi_Q,\alpha}\xrightarrow{[\cdot,\cdot]}\fg_{\Pi_Q,\beta}
\]
is obtained by applying $\HO$ to the morphism of mixed Hodge modules
\[
\Delta_{n,*}\nIC_{\Msp_{\delta}(\Pi_Q)}\boxtimes_{\oplus_{\red}}\QQ_{0_{\alpha}}\rightarrow \Delta_{(n+n'),*}\nIC_{\Msp_{\delta}(\Pi_Q)}\oplus\ul{\QQ}_{0_{\beta}}^{\oplus d}
\]
which is a morphism between semisimple mixed Hodge modules with differing supports, and is thus zero.
\end{proof}
\subsubsection{A deformed example}
\label{defEx}
In this subsection we give a curious example, which will not be used later in the paper.  It is an example of how deforming the potential can modify the BPS Lie algebra.  

Let $Q$ be the oriented $\hat{A}_r$ quiver, i.e. it contains $r+1$ vertices, along with an oriented cycle connecting them all.  Let $W_0=a_{r+1}a_{r}\cdots a_1$ be this cycle.  We will consider the quiver with potential $(\tilde{Q},\WT{W}+W_0)$.  The potential $\WT{W}+W_0$ is quasihomogeneous, for example we can give the arrows  $a_s$ weight 1, the arrows $a_s^*$ weight $d$, and the arrows $\omega_i$ weight zero, so that $\WT{W}+W_0$ has weight $r+1$.

As in \S \ref{DDRsec} we can calculate $\BPSh_{\Pi_Q,W_0}$ by applying $\phim{\TTr(W_0)}$ to $\BPSh_{\Pi_Q}$.  For $\dd$ not a multiple of the imaginary simple root, $\BPSh_{\Pi_Q,\dd}$ is supported at $0_{\dd}$ and so it follows that
\[
\BPSh_{\Pi_Q,W_0,\dd}\cong \phim{\TTr(W_0)}\BPSh_{\Pi_Q,\dd} \cong \BPSh_{\Pi_Q,\dd}.
\]
In particular, it follows that there is an injective map $l$ from the Lie subalgebra of $\BPSh_{\Pi_Q}$ generated by $\bigoplus_{i\in Q_0}\BPSh_{\Pi_Q,1_i}$, and the only dimension vectors for which this morphism can fail to be an isomorphism are dimension vectors $\ee=(n,\ldots,n)$ for some $n$.

Let $\ee$ be such a dimension vector.  Propositions \ref{TodaProp} and \ref{affineProp} together yield
\[
\BPSh_{\Pi_Q,W_0,\ee}\cong \Delta_{n,*}q_{(1,\ldots,1),*}\phim{g}\nIC_X
\]
where $X$ is the minimal resolution of the singular surface defined by $xy=z^r$, and $g=y$ is the function induced on it by $\TTr(W_0)$.  The reduced vanishing locus $X_0=g^{-1}(0)$ is given by the exceptional chain of $r$ copies of $\mathbb{P}^1$, along with a line intersecting one of them transversally.  In particular, the cohomology of $X_0$ is pure.  The preimage $X_1\coloneqq g^{-1}(1)$ is isomorphic to a copy of $\AAA{1}$.  Via the long exact sequence
\[
\rightarrow \HO^i(X,\phim{g}\ul{\QQ}_X)\rightarrow \HO^i(X_0,\QQ)\rightarrow \HO^i(X_1,\QQ)\rightarrow
\]
we deduce that there is an isomorphism $\HO(X,\phim{g}\ul{\QQ}_X)\cong \HO^2(X,\QQ)[-2]$, i.e. the vanishing cycle cohomology is isomorphic to the \textit{reduced} cohomology of $X$.  It follows by counting dimensions that the injective map $\HO(l_{\ee})$ is surjective.  We deduce that
\begin{equation}
\label{KacOut}
\fg_{\Pi_Q,W_0}\cong \mathfrak{n}^-_Q
\end{equation}
i.e. the BPS Lie algebra for the deformed potential is isomorphic to negative half of the usual Kac--Moody Lie algebra for $Q$.  

It is an interesting question whether for more general quivers there is a quasihomogeneous deformation $\WT{W}+W_0$ of the standard cubic potential so that \eqref{KacOut} holds.  A related question is: does the nonzero degree cohomology of the BPS Lie algebra $\fg_{\Pi_Q,W_0}$ vanish for generic (quasihomogeneous) deformations $W_0$?  In other words, is the BPS Lie algebra for a generic deformed 3-Calabi--Yau completion \cite{Kel11} of the preprojective algebra $\Pi_Q$ equal to the canonical Lie subalgebra $\fn_Q^-$?
\subsection{The spherical Borcherds algebra}
In this section we construct a natural homomorphism of cohomologically graded Lie algebras $\Phi\colon\fg^{\exte,\SP}_{\Pi_Q}\rightarrow \fg^{\SP}_{\Pi_Q}$ from the positive half of a Borcherds algebra, extending the inclusion of the Kac--Moody Lie algebra from \S \ref{KacH}.  In the case in which $\SP=\SSN$ the zeroth cohomologically graded piece of this morphism is the inclusion of the Borcherds--Bozec algebra.  The existence of the morphism $\Phi$ serves as further evidence towards Conjecture \ref{mainConj}, which we will finish the paper with.
\subsubsection{}
We introduce a little notation, in order to make the presentation uniform.  Given a tensor category $\mathscr{C}$ we denote by $\mathscr{C}_{\dvs}$ the category of $\dvs$-graded objects in $\mathscr{C}$.  Given $F\in\mathscr{C}_{\mathbb{N}^{Q_0}}$ we denote by $\Lie(F)$ the free Lie algebra generated by $F$.  I.e. we consider an (essentially unique) symmetric monoidal embedding $\Vect\hookrightarrow \mathscr{C}$, and embed $\mathscr{C}\hookrightarrow \mathscr{C}_{\dvs}$ as the category of objects concentrated in degree zero, and thus consider $\mathcal{L}ie$ as an operad in $\mathscr{C}_{\dvs}$, and take the free algebra over it generated by $F$.  We define $\Borch^+(F)$ to be the quotient of $\Lie(F)$ by the Lie ideal generated by the images of the morphisms
\begin{align}
\label{BorchRel}
(F_{\dd'}^{\otimes})^{1-(\dd',\dd'')_Q} \otimes F_{\dd''}\xrightarrow{[\cdot,[\cdot\ldots[\cdot,\cdot]\ldots]} F
\end{align}
over all pairs of dimension vectors $\dd', \dd''$ satisfying either of the conditions $(\dd',\dd'')_Q=0$ or $\dd'=1_i$ for $i\in Q'_0$.  We assume that $F_{\dd}=0$ if $\dd$ is not of the form $n\cdot \dd'$ for some $\dd'$ such that there exists a simple $\dd'$-dimensional $\Pi_Q$-module: as a result, the exponent $1-(\dd',\dd'')_Q$ can always be assumed to be strictly positive (see \cite[Prop.3.6]{Dav21c}).
\begin{example}
Consider the vector space $V\in\Vect_{\dvs}$ which has basis $e_i$ for $i\in I_{\infty}$, where $e_{(i',n)}$ is given degree $n\cdotsh 1_{i'}$.  Then $\mathfrak{n}^+_Q=\Borch^+(V)$.
\end{example}
For $i\in Q_0$ and $n\geq 1$ we denote by $\Delta_{i,n}\colon \Msp^{G}_{1_i}(\ol{Q})\rightarrow \Msp^{G}_{n\cdot 1_i}(\ol{Q})$ the embedding of the small diagonal.

\begin{proposition}
Let $\SP$ be a Serre subcategory of $\CC\ol{Q}\lmod$.  Set
\begin{align*}
\Prim^{G}_{\Pi_Q,\sph}\coloneqq &\bigoplus_{i\in Q_0^{\reel}}\nnIC_{\Msp^{G}_{1_i}(\ol{Q})}\oplus \bigoplus_{\substack{i\in Q_0^{\isot}\\n\geq 1}}\Delta_{i,n,*}\nnIC_{\Msp^{G}_{1_i}(\ol{Q})}\oplus\bigoplus_{\substack{i\in Q_0^{\hype}\\n\geq 1}}\Cusp^{G}_{\Pi_Q,n\cdot 1_{i}}\\
\prim^{\SP,G}_{\Pi_Q,\sph}\coloneqq &\HO\!\varpi'_{\red,*}\varpi'^!_{\red}\Prim^{G}_{\Pi_Q,\sph}.
\end{align*}
There are morphisms of Lie algebra objects 
\[
J:\Borch^+\!\left(\Prim^{G}_{\Pi_Q,\sph}\right)\rightarrow \BPSh_{\Pi_Q}^{G};\quad\quad
L^{\SP}\colon \Borch^+\!\left(\prim^{\SP,G}_{\Pi_Q,\sph}\right)\rightarrow \fg^{\SP,G}_{\Pi_Q}.
\]
extending embeddings of $\Prim^{G}_{\Pi_Q,\sph}$ and $\prim^{\SP,G}_{\Pi_Q,\sph}$, respectively, where in the second morphism, $\prim^{\SP,G}_{\Pi_Q,\sph}$ is considered as an object of $\HG\lmod$.
\end{proposition}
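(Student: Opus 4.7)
The plan is to first construct $J$ as a morphism of Lie algebra objects in $\MHM^G(\Msp^G(\ol{Q}))$, and then derive $L^{\SP}$ by applying $\HO\varpi'_{\red,*}\varpi'^!_{\red}$; this functor is lax monoidal for $\boxtimes_{\oplus_{\red}^G}$ by base change, and hence sends Lie algebra morphisms to Lie algebra morphisms in the category of $\HG$-modules.

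First I would build an embedding $j\colon \Prim^G_{\Pi_Q,\sph} \hookrightarrow \BPSh^G_{\Pi_Q}$ summand by summand. For $i \in Q_0^{\reel}$, $\Msp^G_{1_i}(\ol{Q})$ is a point and $\BPSh^G_{\Pi_Q,1_i} = \nnIC_{\Msp^G_{1_i}(\ol{Q})}$. For $i \in Q_0^{\isot}$, \eqref{commBPS} and \eqref{pfBP} together identify $\BPSh^G_{\Pi_Q,n\cdot 1_i}$ with $\Delta_{i,n,*}\nnIC_{\Msp^G_{1_i}(\ol{Q})}$. For $i \in Q_0^{\hype}$, $\Cusp^G_{\Pi_Q,n\cdot 1_i}$ is a canonical direct summand of $\BPSh^G_{\Pi_Q,n\cdot 1_i}$ by the construction in \S \ref{genSec}, split off via purity (Theorem \ref{purityThm}). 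Invoking the Lie algebra structure on $\BPSh^G_{\Pi_Q}$ constructed in \S \ref{RLAsec}, the universal property of the free Lie algebra extends $j$ to a morphism $\tilde{J}\colon \Lie(\Prim^G_{\Pi_Q,\sph}) \to \BPSh^G_{\Pi_Q}$.

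The substantive step is to show that $\tilde{J}$ annihilates the Lie ideal generated by the relations \eqref{BorchRel}, so that it descends to $J\colon \Borch^+(\Prim^G_{\Pi_Q,\sph}) \to \BPSh^G_{\Pi_Q}$. For a commutation relation at $(\dd',\dd'')_Q = 0$: the source $F_{\dd'}\boxtimes_{\oplus_{\red}^G}F_{\dd''}$ is simple, with support on the image of $\oplus_{\red}^G \circ (\Delta_{\dd'}\times\Delta_{\dd''})$, while the semisimple target $\BPSh^G_{\Pi_Q,\dd'+\dd''}$ decomposes into simple summands with incompatible supports (as in the argument for \eqref{nzer} in \S \ref{IsotSec}), forcing the bracket to vanish. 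For the Serre relation $[\tilde{J}(F_{1_i}),\cdot]^{1-(1_i,\dd'')_Q}(\tilde{J}(F_{\dd''})) = 0$ at a real vertex $i$ and $\dd'' = 1_j$, Proposition \ref{BPSvanProp} yields vanishing of the target $\BPSh^G_{\Pi_Q,(1-(1_i,1_j)_Q)\cdot 1_i + 1_j}$ itself. For $\dd'' = n\cdot 1_{i'}$ with $i'$ imaginary and $n \geq 1$, I would proceed by induction on $n$, using the Jacobi identity to reduce the iterated bracket to nested expressions whose innermost terms have dimension vector of the form treated by Proposition \ref{BPSvanProp}.

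The principal obstacle is precisely the Serre relation at a real vertex interacting with an imaginary primitive of multiplicity $n > 1$: unlike the $n=1$ case, $\BPSh^G_{\Pi_Q,(1+ne)\cdot 1_i + n\cdot 1_{i'}}$ is not forced to vanish, and one must combine careful support analysis with the inductive scheme above to rule out nonzero image. An alternative route, available when $\SP = \CC\ol{Q}\lmod$, is to exploit the faithful action of $\fg^G_{\Pi_Q}$ on $\bigoplus_{\ff}\HMall_{\ff}$, where the real-vertex Serre relations hold by the results of Lusztig and Nakajima recalled in \S \ref{NQVs}; the corresponding identities then propagate to arbitrary $\SP$ after applying $\varpi'_{\red,*}\varpi'^!_{\red}$, which preserves the relevant vanishings. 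Once the Borcherds relations are verified in $\BPSh^G_{\Pi_Q}$, $J$ is well defined, and $L^{\SP}$ follows by applying $\HO\varpi'_{\red,*}\varpi'^!_{\red}$ as outlined.
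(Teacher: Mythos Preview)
Your overall architecture matches the paper's: build an embedding of $\Prim^G_{\Pi_Q,\sph}$ into $\BPSh^G_{\Pi_Q}$, extend to the free Lie algebra, then verify the Borcherds relations \eqref{BorchRel}; and obtain $L^{\SP}$ by applying $\HO\varpi'_{\red,*}\varpi'^!_{\red}$. The real--real Serre relation via Proposition~\ref{BPSvanProp} is also what the paper does.

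The genuine gap is in your treatment of the relations involving imaginary generators. For the commutation relation $(\dd',\dd'')_Q=0$ with $\dd',\dd''$ imaginary, your support argument modeled on \eqref{nzer} is incomplete: in \S\ref{IsotSec} the target $\BPSh_{\Pi_Q,(m+n)\cdot 1_{i'}}$ was \emph{explicitly known} to be $\Delta_{\red,m+n,*}\nIC_{\AAA{2}}$ with $2$-dimensional support; for a general $\dd'+\dd''$ you have no such description of $\BPSh^G_{\Pi_Q,\dd'+\dd''}$, so you cannot conclude the supports are incompatible. For the Serre relation at a real vertex $i$ against $\Cusp^G_{\Pi_Q,n\cdot 1_{i'}}$ or $\Delta_{i',n,*}\nnIC$, your ``induction on $n$ via Jacobi'' does not work as stated: these imaginary primitives are single simple objects, not brackets of lower-degree pieces, so the Jacobi identity gives you nothing to reduce to. Your alternative route through the action on $\bigoplus_{\ff}\HMall_{\ff}$ is in the right spirit but appeals to Lusztig--Nakajima, whose results concern the Kac--Moody algebra $\fn^-_{Q'}$ (real vertices only) and say nothing about the Serre relations with imaginary generators of multiplicity $n>1$.

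The paper's actual argument for both of these cases is a single uniform trick that you are missing. Given the morphism $J'$ (iterated bracket or single bracket), the domain is a \emph{simple} mixed Hodge module because the relevant direct-sum map $h$ is injective; the target $\BPSh^G_{\Pi_Q}$ is semisimple by purity (Theorem~\ref{purityThm}). Hence $J'$ is either zero or the inclusion of a direct summand. To rule out the latter, the paper applies $\HO^0 g_*g^!$ where $g\colon\Msp^{\SSN,G}(\ol{Q})\hookrightarrow\Msp^G(\ol{Q})$ is the inclusion of the strictly semi-nilpotent locus. On the domain this produces a copy of $\QQ$ (using \eqref{IsotDone} and Proposition~\ref{HypeOne}), while on the target it lands in $\HO^0(\fg^{\SSN}_{\Pi_Q})$, which by Corollary~\ref{absIso} (i.e.\ Bozec's theorem) is the Borcherds--Bozec algebra $\fn^+_Q$. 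In $\fn^+_Q$ the Serre and commutation relations hold by definition, so $\HO^0 g_*g^! J'=0$; but if $J'$ were the inclusion of a summand, $\HO^0 g_*g^!$ of it would be the inclusion of $\QQ$, a contradiction. This ``$\SSN$ detection'' argument is the key missing idea in your proposal.
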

\begin{proof}
The morphism $L^{\SP}$ is obtained as $\HO\!\varpi'_{\red,*}\varpi'^!_{\red}J$, so we concentrate on $J$.  Firstly, note that for $i\in Q_0$ there is an equality $\BPSh^G_{\Pi_Q,1_i}=\nnIC_{\Msp^G_{1_i}(\ol{Q})}$ so that the first summand of $\Prim^{G}_{\Pi_Q,\sph}$ naturally embeds inside $\BPSh^G_{\Pi_Q}$.  Secondly, as in Proposition \ref{IsoPure} there is an embedding (unique up to scalar) of $\Delta_{i,n,*}\nnIC_{\Msp^{G}_{1_i}(\ol{Q})}$ inside $\BPSh_{\Pi_Q,n\cdot 1_i}^G$ for each isotropic $i$.  Thirdly, for $i$ hyperbolic the morphism \eqref{PreV} provides an embedding $\Cusp^{G}_{\Pi_Q,n\cdot 1_i}\subset \BPSh_{\Pi_Q}^{G}$.  We claim that these embeddings induce the morphism $J$.  

To prove the claim, we need to check the relation \eqref{BorchRel}.  Note that if $i$ and $j$ are both real, this follows immediately from Proposition \ref{BPSvanProp}.  Otherwise, we need something a little more subtle, i.e. the decomposition theorem.

Let $i\in I^{\reel}$, let $(j,n)\in I^{\Imag}$, and set $e=1-((i,1),(j,n))$.  Set 
\[
\Msp_{i}=\Msp^{G}_{1_i}(\Pi_Q);\quad\quad
\Msp_{j,n}=\begin{cases}\Delta_{j,n}(\Msp_j)&\textrm{if }j\in Q_0^{\isot}\\ \Msp^{G}_{n\cdot 1_j}(\Pi_Q)&\textrm{if }j\in Q_0^{\hype}.\end{cases}
\]
Then we wish to show that the morphism
\[
J'\colon \underbrace{\nnIC_{\Msp_{i}}\boxtimes_{\oplus^G}\cdots\boxtimes_{\oplus^G}\nnIC_{\Msp_{i}}}_{e \textrm{ times}}\boxtimes_{\oplus^G}\nnIC_{\Msp_{j,n}}\rightarrow \BPSh_{\Pi_Q}^{G}
\]
given by the iterated Lie bracket (as in \eqref{BorchRel}) is the zero morphism.  For this, we note that the morphism provided by taking direct sums
\[
h\colon \underbrace{\Msp_{i}\times_{\B G} \cdots\times_{\B G}\Msp_i}_{e \textrm{ times}} \times_{\B G} \Msp_{j,n}\rightarrow\Msp^G_{e\cdot 1_i+\dd}(\Pi_Q)
\]
is injective, and so a closed embedding, and so since $\nnIC_{\Msp_{i}}$ and $\nnIC_{\Msp_{j,n}}$ are simple, the domain of $J'$ is a simple object.  We denote the domain of $J'$ by $\mathcal{R}$.  Since by Theorem \ref{purityThm} the target of $J'$ is semisimple we deduce that $J'$ is nonzero only if there is a direct sum decomposition $\BPSh_{\Pi_Q}^{G}\cong \mathcal{R}\oplus\mathcal{G}$ and $J'$ fits into a commutative diagram
\[
\xymatrix{
\ar[rd]_{\iota_{\mathcal{R}}}\mathcal{R}\ar[r]^-{J'}& \BPSh_{\Pi_Q}^{G}\\
&\mathcal{R}\oplus\mathcal{G}\ar[u]^{\cong}
}
\]
where $\iota_{\mathcal{R}}$ is the canonical inclusion.  

For a contradiction, we assume that this is indeed so.  Now we apply $\HO^0\!g_{*}g^!$, where 
\[
g\colon \Msp^{\SSN,G}(\ol{Q})\rightarrow \Msp^G(\ol{Q})
\]
is the inclusion of the strictly semi-nilpotent locus.  By \eqref{IsotDone} in the case of isotropic $j$, and \eqref{hHyp} in the hyperbolic case, there is an isomorphism $\HO^0\!g_{*}g^!\nnIC_{\Msp_{j,n}}\cong \QQ$ and so, since $\HO^0(\nnIC_{\Msp_i})\cong \QQ$ we deduce that $\HO^0\!g_{*}g^!\mathcal{R}\cong\QQ$, so that $\HO^0\!g_{*}g^!\iota_{\mathcal{R}}=\iota_{\QQ}\neq 0$.  On the other hand, $\HO^0\!\!g_{*}g^!J'\colon \QQ\rightarrow \fg^{\SSN}_{\Pi_Q}$ is the morphism taking $1\in\QQ$ to $[\alpha_i,\cdot]^e(\alpha_{j,n})$, with $\alpha_i$ and $\alpha_{j,n}$ defined as in \eqref{Fdef}, \eqref{Gdef}, \eqref{Hdef}.  By Corollary \ref{absIso} we have $[\alpha_i,\cdot]^e(\alpha_{j,n})=0$, giving the contradiction.

Now assume that both $i$ and $j$ are imaginary, and $(1_i,1_j)_Q=0$.  Fix $m\geq 1$.  Then, similarly to above, we wish to show that the morphism
\[
\nnIC_{\Msp_{i,m}}\boxtimes_{\oplus^G}\nnIC_{\Msp_{j,n}}\rightarrow \BPSh^G_{\Pi_Q}
\]
provided by the Lie bracket in $\BPSh^G_{\Pi_Q}$ is the zero map.  Again, applying $\HO^0\!g_*g^!$ this follows from Corollary \ref{absIso} and injectivity of the morphism $\oplus^G\colon \Msp_{i,m}\times_{\B G} \Msp_{j,n}\rightarrow \Msp_{m\cdot 1_i+n\cdot 1_j}^G(\ol{Q})$.  We thus have defined the morphism $J$.  
\end{proof}

It is of course very natural to make the following
\begin{conjecture}
The morphisms $J$ and $L^{\SP}$ are injective.
\end{conjecture}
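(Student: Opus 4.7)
The plan is to first establish injectivity of $J$ as a morphism of Lie algebra objects in $\MHM^G(\Msp^G(\ol{Q}))$, and then deduce injectivity of $L^{\SP}$ by applying $\HO\varpi'_{\red,*}\varpi'^!_{\red}$ to a splitting of the image. By Theorem \ref{purityThm} and Corollary \ref{3dpurityThm}, the target $\BPSh^G_{\Pi_Q}$ is semisimple, so the image of $J$ admits a direct complement and is determined by which isotypic components it meets. Each generating summand in $\Prim^G_{\Pi_Q,\sph}$ has already been identified as a simple summand of $\BPSh^G_{\Pi_Q}$ in the construction of $J$: for real vertices by direct inspection, for isotropic vertices via the computation \eqref{BPSSSN}, and for hyperbolic vertices via the splittable inclusion \eqref{PreV}.

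The core of the proof of injectivity of $J$ is then an inductive argument on the dimension vector $\dd$: at each step one must show that the image of the iterated Lie brackets of primitives inside $\BPSh^G_{\Pi_Q,\dd}$ has exactly the rank predicted by $\Borch^+(\Prim^G_{\Pi_Q,\sph})_{\dd}$, i.e. that the only relations among these brackets are the Serre-type ones already encoded in the construction of $\Borch^+$. To detect linear independence, my first approach would be to upgrade Nakajima's faithful action of $\fg_{\Pi_Q}$ on $\bigoplus_\ff \HMall^*_\ff$ from \S \ref{NQVs} to a faithful representation of $\Borch^+(\Prim^G_{\Pi_Q,\sph})$: the Kac--Moody part is classical and was already used in the proof of Theorem \ref{KMLA}; for the isotropic primitives $\alpha_{i',n}$ of \eqref{Gdef}, faithfulness should follow from their Heisenberg-type action on cohomology of Hilbert schemes, using the explicit description of $\BPSh^G_{\Pi_Q,n\cdot 1_{i'}}$ in Proposition \ref{IsoPure}; for hyperbolic primitives $\alpha_{i',n}$ of \eqref{Hdef}, one would need to construct the analogous geometric operators along the lines of \cite{BoSch19,SchICM} and prove faithfulness on quiver varieties of sufficient rank.

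Injectivity of $L^{\SP}$ would then be a formal consequence. Since the image of $J$ is split inside the semisimple object $\BPSh^G_{\Pi_Q}$, applying $\HO\varpi'_{\red,*}\varpi'^!_{\red}$ yields a split inclusion of $\HO\varpi'_{\red,*}\varpi'^!_{\red}\mathrm{Image}(J)$ into $\fg^{\SP,G}_{\Pi_Q}$. One then checks that $L^{\SP}$ factors through this inclusion: the Serre-relation morphisms defining $\Borch^+(\prim^{\SP,G}_{\Pi_Q,\sph})$ are obtained, on the relevant isotypic summands, by applying $\HO\varpi'_{\red,*}\varpi'^!_{\red}$ to those defining $\Borch^+(\Prim^G_{\Pi_Q,\sph})$, and this functor is exact when restricted to pure semisimple complexes. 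As a verification, this strategy already recovers the known low-dimensional instances, such as the affine case treated in \S \ref{AffineSec} via Proposition \ref{affineProp}.

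The main obstacle is the faithfulness step for hyperbolic vertices. For real and isotropic generators, one can piece together existing work; but for hyperbolic $i'$ the Borcherds-type raising operators on quiver varieties are not yet constructed in the literature in a form suitable for a faithfulness argument. In fact, resolving this step appears to be essentially equivalent to the positive-half version of Bozec--Schiffmann's conjecture \cite[Conj.1.3]{BoSch19}: an alternative, purely character-theoretic attack, comparing the Hilbert series of $\Borch^+(\Prim^G_{\Pi_Q,\sph})$ with the Kac polynomial identities \eqref{Kaccha}, would in turn require a Bozec--Schiffmann-type description of the imaginary root multiplicities in $\fg^G_{\Pi_Q}$ --- and so a complete proof of the present conjecture is likely to need, or imply, a resolution of theirs.
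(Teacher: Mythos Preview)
The statement you are attempting to prove is stated in the paper as a \emph{conjecture}, not a theorem: the paper does not provide a proof, and explicitly leaves it open. Immediately after stating it, the paper remarks only that ``The results of \S\ref{genSec} imply the conjecture in case there are no hyperbolic vertices.'' So there is no proof in the paper to compare your proposal against.

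Your proposal is honest about this: you correctly identify the hyperbolic case as the essential obstruction, and you correctly observe that resolving it would be closely tied to (a version of) the Bozec--Schiffmann conjecture. That diagnosis matches the paper's own assessment. What you have written is a proof \emph{strategy}, not a proof, and you say as much in your final paragraph.

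One point of comparison worth noting: for the partial case with no hyperbolic vertices, your suggested route (faithfulness of Nakajima-type and Heisenberg-type actions on quiver-variety cohomology) differs from what the paper indicates. The paper points to \S\ref{genSec}, whose arguments are support-theoretic: primitivity of the cuspidal summands $\Cusp^{G}_{\Pi_Q,\dd}$ is established by showing that the relevant Lie brackets land in summands supported on the strictly polystable boundary, using purity and the decomposition theorem rather than any representation-theoretic faithfulness. For real vertices the paper does use Nakajima's faithful action (in the proof of Theorem~\ref{KMLA}), so there your approach aligns; but for isotropic vertices the paper's route is via the explicit description \eqref{BPSSSN} and support arguments, not via Heisenberg operators. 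Your Heisenberg approach is plausible but would require additional work not in the paper.

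In short: there is no gap to name in your argument beyond the one you already name yourself, and that gap is precisely why the paper records this as a conjecture rather than a theorem.
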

The results of \S \ref{genSec} imply the conjecture in case there are no hyperbolic vertices.  In contrast with the Hall algebra $\HCoha_{\Pi_Q}^{\SSN}$, which is generated by the subspaces $\HCoha_{\Pi_Q,n\cdot 1_i}^{\SSN}$ \cite[Prop.5.8]{ScVa20}, the Lie algebra $\fg^{\SP}_{\Pi_Q}$ is almost never generated by the subspaces $\fg^{\SP}_{\Pi_Q,n\cdot 1_i}$, so that $L^{\Sp}$ is almost never surjective.  For instance, if $Q$ has no edge loops, then the image of $L$ lies entirely in cohomological degree zero, while unless $Q$ is of finite type, $\fg_{\Pi_Q}$ will have pieces in strictly negative cohomological degree.  (Recall that by convention $L=L^{\CC\ol{Q}\lmod}$, and in this case we have $\chi_t(\fg_{\Pi_Q})=\kac_{Q,\dd}(t^{-1})$.)

\subsection{The main conjecture for BPS Lie algebras}
\label{conjecturesSec}
We finish the paper with our main conjecture regarding the structure of BPS Lie algebras for preprojective CoHAs.  Put informally, the conjecture states that we have found \textit{all} of the generators of $\fg_{\Pi_G}^{\SP}$, which we moreover conjecture is a Borcherds algebra.  To state the conjecture fully, we make the following definitions.  First fix a Serre subcategory $\SP$, a stability condition $\zeta\in\QQ^{Q_0}$, and $\theta\in\QQ$.  Set 
\[
E=\{\dd\in\dvst \colon \Msp^{\zeta\stab}_{\dd}(\Pi_Q)\neq \emptyset\}.
\]

We set 
\[
\CStab^{G,\zeta}_{\Pi_Q,\theta}\coloneqq \bigoplus_{\dd\in E}\nnIC_{\Msp^{G,\zeta\sst}_{\dd}(\Pi_Q)};\quad\quad
\fstab^{\SP,G,\zeta}_{\Pi_Q,\theta}\coloneqq \HO\!\varpi'_{\red,*}\varpi'^!_{\red}\CStab^{G,\zeta}_{\Pi_Q,\theta}.
\]
We consider $\fstab^{\SP,G,\zeta}_{\Pi_Q,\theta}$ as a $\HG$-module below.  By \S \ref{genSec} there are inclusions $\CStab^{G,\zeta}_{\Pi_Q,\theta}\hookrightarrow \BPSh^{G,\zeta}_{\Pi_Q,\theta}$.  In the case in which the stability condition is trivial and $\theta=0$, $\HO$ of this inclusion is the inclusion of all the real simple roots, as well as cuspidal cohomology.  To cover isotropic generators, we define 
\[
U=\{n\cdot \dd\colon (\dd,\dd)_Q=0, \;\dd\in \dvst, \;n\geq 2\}.
\]
By \cite{CB01}, $U\cap E=\emptyset$.  Arguing as in \S \ref{AffineSec}, for primitive $\dd\in U$ and $n\geq 1$ there are embeddings
\[
\Delta_{n,\dd,*}\nnIC_{\Msp^{G,\zeta\sst}_{\dd}(\Pi_Q)}\hookrightarrow \BPSh^{G,\zeta}_{\Pi_Q,\theta},
\]
where $\Delta_{n,\dd}\colon \Msp^{G,\zeta\sst}_{\dd}(\Pi_Q)\hookrightarrow \Msp^{G,\zeta\sst}_{n\cdot \dd}(\Pi_Q)$ is the diagonal embedding.  Accordingly, we define
\[
\CIsot^{G,\zeta}_{\Pi_Q,\theta}\coloneqq \bigoplus_{\substack{n\cdot \dd\in U\\ \dd\textrm{ primitive}}}\Delta_{n,\dd,*}\nnIC_{\Msp^{G,\zeta\sst}_{\dd}(\Pi_Q)};\quad\quad
\fisot^{\SP,G,\zeta}_{\Pi_Q,\theta}\coloneqq \HO\!\varpi'_{\red,*}\varpi'^!_{\red}\CIsot^{G,\zeta}_{\Pi_Q,\theta}.
\]
We can now state the main conjecture\footnote{Now a theorem, see \cite{DHSM23}.}:
\begin{conjecture}
\label{mainConj}
The above inclusions extend to an isomorphism of Lie algebra objects in $\MHM^G(\Msp^{\zeta\sst}_{\theta}(\ol{Q}))$
\[
\Borch^+\!\left(\CStab^{G,\zeta}_{\Pi_Q,\theta}\oplus \CIsot^{G,\zeta}_{\Pi_Q,\theta}\right)\cong \BPSh_{\Pi_Q,\theta}^{G,\zeta}.
\]
Applying $\HO\!\varpi'_{\red,*}\varpi'^!_{\red}$, we obtain isomorphisms of $\HG$-linear Lie algebras $\Borch^+\!\left(\fstab^{\SP,G,\zeta}_{\Pi_Q,\theta}\oplus \fisot^{\SP,G,\zeta}_{\Pi_Q,\theta}\right)\cong\fg^{\SP,G,\zeta}_{\Pi_Q,\theta}$.
\end{conjecture}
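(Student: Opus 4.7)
The plan is to prove the conjecture in three stages: construction of the morphism $\Phi\colon \Borch^+(\CStab^{G,\zeta}_{\Pi_Q,\theta}\oplus \CIsot^{G,\zeta}_{\Pi_Q,\theta})\to \BPSh_{\Pi_Q,\theta}^{G,\zeta}$, verification that $\Phi$ is injective, and finally (the main obstacle) proof that $\Phi$ is surjective. The final ``applying $\HO\!\varpi'_{\red,*}\varpi'^!_{\red}$'' statement then follows formally. Throughout, I will use the purity statement of Corollary \ref{relPurity} to reduce questions about the complex $\BPSh_{\Pi_Q,\theta}^{G,\zeta}$ to questions about its simple summands.

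First, I would construct $\Phi$ by extending the embeddings of $\CStab$ and $\CIsot$ already produced in \S \ref{genSec} and in the affine analysis of \S \ref{AffineSec}. To check that these embeddings satisfy the defining relations of a Borcherds-type Lie algebra, I would adapt the support arguments from the construction of the spherical Borcherds morphism $J$ in the previous section. The commutation relation for pairs $(\dd',\dd'')$ with $(\dd',\dd'')_Q = 0$ should follow because the summation morphism $\oplus_{\red}^G$ maps an injective image of the tensor product of simple mixed Hodge modules into a higher-dimension piece whose only simple summands with the same support are already primitive; thus any Lie bracket morphism between simple objects with distinct support must vanish. The Serre relations when one input is a real simple root follow directly from a strengthening of Proposition \ref{BPSvanProp}, while the analogous vanishing when both inputs are imaginary but $(\dd',\dd'')_Q = 0$ should follow from a direct generalisation of the calculation in \S \ref{IsotSec}, again combined with support arguments.

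Second, to establish injectivity of $\Phi$, my plan is to construct a faithful module for $\Borch^+(\CStab\oplus\CIsot)$ extending Nakajima's action on $\bigoplus_{\ff}\HMall_\ff$ from \S \ref{NQVs}. Framings at isotropic and hyperbolic vertices should produce new integrable modules, for which the character-formula-type argument of Theorem \ref{KMLA} (and its Borcherds analogue) can be used to deduce faithfulness from standard highest-weight theory for Borcherds algebras. Third, and this is where I expect the genuine difficulty to reside, is surjectivity. The strategy is to stratify $\Msp^{G,\zeta\sst}_{\dd}(\Pi_Q)$ by isomorphism classes of semisimple objects, decompose each simple summand of the semisimple object $\BPSh^{G,\zeta}_{\Pi_Q,\dd}$ according to its support stratum, and argue inductively in $\dd$: a summand supported on a stratum of polystable modules of type $\dd = \sum n_i\dd^{(i)}$ with more than one $\dd^{(i)}$ (or with $n_i \geq 2$ for non-isotropic $\dd^{(i)}$) should arise as the image of a Lie bracket of BPS sheaves in the smaller degrees $\dd^{(i)}$. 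Summands supported generically on the stable locus fall into $\CStab$ by the analysis of \S \ref{genSec}, while summands supported on the isotropic diagonals fall into $\CIsot$ by an affine-type reduction as in Proposition \ref{affineProp}, using Proposition \ref{TodaProp} to reduce from the semistable to the affine moduli space.

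The hard part will be the inductive step for surjectivity — showing that an arbitrary simple summand $\ICS_Y(\mathcal{L})$ of $\BPSh^{G,\zeta}_{\Pi_Q,\dd}$ supported on a proper stratum $Y$ really is hit by some iterated Lie bracket of BPS sheaves in smaller degrees. This amounts to a classification theorem for ``imaginary non-cuspidal'' simple summands of the BPS sheaf, and seems to require both a sharper support theorem than the one proved in \cite{preproj} (controlling the local systems $\mathcal{L}$ that can appear) and a combinatorial/geometric model for the decomposition of $\Msp^{G,\zeta\sst}_{\dd}(\Pi_Q)$ analogous to Bozec's partition parametrisation \cite{Bo16} used in \S \ref{BBA}. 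A plausible route, worth attempting first, is to combine the deformed dimensional reduction of \S \ref{DDRsec} with a suitable choice of generic $W_0$ in order to simplify the critical locus enough that all summands of $\BPSh$ become visible, and then track the resulting generators back through the undeformed picture via a specialisation argument.
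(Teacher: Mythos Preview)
The statement you are attempting to prove is labelled \emph{Conjecture} in the paper, and the paper does not provide a proof of it. There is therefore no ``paper's own proof'' to compare your proposal against. The paper presents this as the main open problem motivating the work, and explicitly says (immediately before the conjecture) that ``we finish the paper with our main conjecture''. So your proposal is not a proof attempt to be checked against a known argument; it is a research plan for an open problem.

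That said, your plan correctly identifies where the genuine obstruction lies: surjectivity. The construction of the morphism $\Phi$ and the verification of the Borcherds relations are essentially already carried out in the paper (the spherical case is done in the section preceding the conjecture, and your proposed extension to the full $\CStab\oplus\CIsot$ uses the same support-and-simplicity arguments). Injectivity is also stated as a separate, weaker conjecture in the paper (for the spherical morphism), so even that is not known in general. Your surjectivity strategy --- stratify by polystable type and show that every non-generic simple summand of $\BPSh$ is hit by an iterated bracket --- is the natural one, but the step you flag as ``hard'' is precisely the content of the conjecture: there is currently no mechanism for ruling out simple summands of $\BPSh^{G,\zeta}_{\Pi_Q,\dd}$ supported on boundary strata that are \emph{not} in the image of any bracket. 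Your suggestion to use a generic deformation $W_0$ and specialise back is interesting but speculative; the paper's own deformed example (\S\ref{defEx}) shows that deformation can kill cohomology rather than make it visible, so tracking generators through specialisation would require control you do not yet have.
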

Setting $\zeta=(0,\ldots,0), \theta=0,\SP=\CC\ol{Q}\lmod$ and $G=\{1\}$ this conjecture implies the Bozec--Schiffmann conjecture on the Kac polynomials for $Q$, as well as giving a precise interpretation for the cuspidal cohomology in this case.

\bibliographystyle{alpha}
\bibliography{Literatur}

\vfill

\textsc{\small B. Davison: School of Mathematics, University of Edinburgh, James Clerk Maxwell Building, Peter Guthrie Tait Road, King's Buildings, Edinburgh EH9 3FD, United Kingdom}\\
\textit{\small E-mail address:} \texttt{\small ben.davison@ed.ac.uk}\\

\end{document}